\documentclass[12pt]{amsart}

\usepackage[a4paper, total={6.2in, 9in}]{geometry}

\usepackage{amsfonts,amssymb,amsmath,amscd,amstext}
\usepackage[colorlinks=true,linkcolor=blue,citecolor=blue]{hyperref}
\usepackage{graphicx}
\usepackage{comment}
\usepackage[dvipsnames]{xcolor}
\usepackage{natbib}
\usepackage{graphicx}
\usepackage{verbatim}
\usepackage{ dsfont }
\usepackage{ mathrsfs }
\usepackage{hyperref}
\usepackage{subfiles}
\usepackage{xcolor}

\usepackage{tikz}
\usepackage{tikz-cd}
\usepackage{pgfplots}
\pgfplotsset{compat=1.18}

\pgfkeys{/pgf/trig format=rad}

\newcommand{\al}{\mu^{su}}

\renewcommand{\leq}{\leqslant}
\renewcommand{\geq}{\geqslant}

\newcommand{\rr}{{\mathbb{R}}}
\newcommand{\cc}{{\mathbb{C}}}

\newcommand{\hh}{{\mathbb{H}}}

\newcommand{\nn}{{\mathbb{N}}}

\newcommand{\eps}{\varepsilon}

\renewcommand{\leq}{\leqslant}
\renewcommand{\geq}{\geqslant}

\newcommand{\Psisu}{\Psi_\varepsilon^{su}}

\newcommand{\Nsua}{\mathscr N_{\varepsilon,1}^{su}}
\newcommand{\Nsub}{\mathscr N_{\varepsilon,2}^{su}}
\newcommand{\Nsuc}{\mathscr N_{\varepsilon,3}^{su}}
\newcommand{\Nsuj}{\mathscr N_{\varepsilon,j}^{su}}

\newcommand{\Nea}{  \widehat N_{\varepsilon,1} }
\newcommand{\Neb}{  \widehat N_{\varepsilon,2} }
\newcommand{\Nec}{ \widehat N_{\varepsilon,3} }
\newcommand{\Nej}{ \widehat N_{\varepsilon,j} }
\newcommand{\er}{{\bar \eps}}

\newcommand{\Fep}{F_\varepsilon}

\DeclareMathOperator{\divv}{div}

\DeclareMathOperator{\ric}{Ric}

\newtheorem{theorem}{Theorem}[section]
\newtheorem{proposition}[theorem]{Proposition}
\newtheorem{lemma}[theorem]{Lemma}

\theoremstyle{definition}

\newtheorem{remark}[theorem]{Remark}

\numberwithin{equation}{section}
\theoremstyle{remark}

\title{Fuglede's theorem in the Heisenberg group}

\author[D.~Massenz]{Davide Massenz}
\address[Davide Massenz]{Dipartimento di Matematica “Tullio Levi-Civita” \\
Università di Padova \\ Via Trieste, 63, 35131, Padova, Italy}
\email{massenz@math.unipd.it}

\author[R.~Monti]{Roberto Monti}
\address[Roberto Monti]{Dipartimento di Matematica “Tullio Levi-Civita” \\
Università di Padova \\ Via Trieste, 63, 35131, Padova, Italy}
\email{monti@math.unipd.it}

\author[J.~Pozuelo]{Juli\'an Pozuelo} 
\address[Juli\'an Pozuelo]{Departamento de Matemáticas, Universidad de Alicante\\
Campus de Sant Vicent del Raspeig s/n, 03690 Sant Vicent del Raspeig, Spain
}
\email{julian.pozuelo@ua.es}

\date{\today}

\begin{document}

\newcommand{\Se}{S_\varepsilon}
\newcommand{\So}{S_0}
\newcommand{\ee}{\mathrm{e}}
\newcommand{\se}{\sigma}
\newcommand{\Be}{B_\varepsilon}
\newcommand{\lame}{\lambda_\varepsilon}
\newcommand{\Ve}{\mathscr {V}_\varepsilon}
\newcommand{\Ae}{\mathscr{A}_\varepsilon}
\newcommand{\Le}{\mathscr{L}_\varepsilon}
\newcommand{\Fe}{\mathscr{F}_\varepsilon}
\newcommand{\Je}{J_\varepsilon}
\newcommand{\qe}{q_\varepsilon}
\newcommand{\Phie}{\Phi_\varepsilon}
\newcommand{\Psie}{\Theta_\varepsilon}
\newcommand{\gep}{g_\varepsilon}
\newcommand{\ue}{u_\varepsilon}
\newcommand{\mue}{\mu_\varepsilon}
\newcommand{\Ce}{C_\varepsilon}
\newcommand{\Ne}{{N}_\varepsilon}
\newcommand{\be}{b_\varepsilon} 
\newcommand{\Fd}{\mathscr {F}_ {\varepsilon}^M} 
\newcommand{\F}{\mathscr {F}_ {\varepsilon}} 
\newcommand{\ea}{\eta_\varepsilon} 
\newcommand{\de}{\mathrm{dist}_\varepsilon} 
\newcommand{\nue}{A_\varepsilon} 
\newcommand{\T}{T}

 \renewcommand{\theta}{\vartheta}

\newcommand{\NP}{P_\varepsilon} 
\newcommand{\hep}{h_\varepsilon}

\newcommand{\Nsu}{\mathscr{N}_\varepsilon^{su}}
\newcommand{\Nu}{\mathscr{N}_\varepsilon^{u}}

\newcommand{\Hsu}{H_\varepsilon^{su}}
\newcommand{\hsu}{h_\varepsilon^{su}}

\newcommand{\Bo}{B_0}
\newcommand{\Sr}{S_{0,\varrho}}
\newcommand{\A}{\mathcal A}
\newcommand{\Su}{S_0^u}
\newcommand{\Bu}{B_0^u}
\renewcommand{\rho}{\varrho}
\newcommand{\Ao}{\mathscr {A}_0} 
\newcommand{\ex}{\mathrm{e}}
\newcommand{\nablaSo}{\nabla\! _{S_0} }
\newcommand{\nablaSe}{\nabla\! _{S_\varepsilon} }

 \newcommand{\fa}{\beta_1}
 \newcommand{\fb}{\beta_2}
 \newcommand{\fc}{\beta_3}
 \newcommand{\fd}{\beta_4}

 \newcommand{\Te}{T_\varepsilon}

%%%%% LARGE COMMAND here
%\large
%%%%% LARGE COMMAND here

\maketitle
 
\begin{abstract}
We prove a Fuglede-type quantitative isoperimetric inequality for the
Pansu sphere, the conjectured isoperimetric set in the sub-Riemannian
Heisenberg group $\hh^1$. We show that, among suitable
volume-preserving normal perturbations of the Pansu sphere that
are small in a weighted $C^2$-norm, the difference between the
sub-Riemannian perimeters of the perturbed and unperturbed regions is
bounded below by a positive constant times the square of their
$L^1$-distance. The proof is based on a Riemannian approximation
argument with uniform estimates.
\end{abstract}

%\tableofcontents

\section{Introduction}

The isoperimetric inequality in Euclidean space $\rr^n$ states that,
among sets of prescribed volume, Euclidean balls minimise the perimeter,
with equality only for balls. A strengthened form is the
\emph{quantitative isoperimetric inequality}, which asserts that there
exist constants $C,q>0$ such that, for every ball $B\subset\rr^n$ and
every set $E\subset\rr^n$ of finite perimeter satisfying $|E|=|B|$,
\[
\frac{P(E)-P(B)}{P(B)}
\geq
C\inf_{x\in\rr^n}
\left(
\frac{|E\triangle(B+x)|}{|B|}
\right)^q.
\]
Here, $E\triangle F$ denotes the symmetric difference of two sets
$E$ and $F$.

Early results of this kind include those of
Bonnesen~\cite{Bonnesen1924} for planar convex domains and
Hall~\cite{Hall1992} for quantitative isoperimetric inequalities in
$\rr^n$ with non-sharp exponent, obtained by symmetrisation.
Fuglede~\cite{Fuglede1989} proved a sharp quantitative estimate for
\emph{nearly spherical sets}, that is, sets having the same volume as
the unit ball and whose boundaries are normal graphs of functions $u$
over the unit sphere $\mathbb{S}^{n-1}$, with small $C^1$-norm.
Fuglede's estimate is obtained by a second-variation argument that
combines the spectral gap of the Jacobi operator of the sphere with a
Poincar\'e-type inequality for the mean-zero and barycentre-free
components of $u$. This shows that the perimeter deficit of the
perturbed set is bounded below by a positive constant times the
squared $L^2$-norm of $u$.

More recently, Fusco, Maggi and
Pratelli~\cite{FuscoMaggiPratelli2008} and Figalli, Maggi and
Pratelli~\cite{FigalliMaggiPratelli2010} proved, by two different
methods, the quantitative isoperimetric inequality with the sharp
exponent $q=2$ for arbitrary sets of finite perimeter. The approach
in~\cite{FuscoMaggiPratelli2008} combines a symmetrisation procedure
with Fuglede's result for nearly spherical sets. The general problem is
reduced to this case by replacing $E$ with a set having suitable
properties. By contrast, the argument
in~\cite{FigalliMaggiPratelli2010} relies on optimal mass transport
between $E$ and $B$.

A further approach was developed by Cicalese and
Leonardi~\cite{CicaleseLeonardi2012}. It is based on a selection
principle that reduces the general case to the nearly spherical case
covered by Fuglede's theorem, using the regularity theory of
quasiminimisers of the perimeter; see~\cite{CicaleseLeonardi2013}.

The quantitative isoperimetric inequality has also been extended to
the Gaussian setting by Cianchi, Fusco, Maggi and
Pratelli~\cite{CianchiFuscoMaggiPratelli2011}, and to isoperimetric
problems with homogeneous weights by Cinti, Glaudo, Pratelli, Ros-Oton
and Serra~\cite{CintiGlaudoPratelliRosOtonSerra2022}.

In hyperbolic $n$-space, Bögelein, Duzaar and
Scheven~\cite{BogeleinDuzaarScheven2015} established a sharp
quantitative isoperimetric inequality by combining a suitable
symmetrisation procedure with a Fuglede-type analysis, adapted to the
hyperbolic metric, of nearly spherical perturbations of geodesic balls.
Bögelein, Duzaar and Fusco~\cite{BogeleinDuzaarFusco2017} subsequently
obtained the analogous sharp result on the round sphere
$\mathbb{S}^n$. Again, their argument uses symmetrisation to reduce the
problem to a Fuglede-type second-variation estimate for nearly
spherical caps.

 In the broader class of rank-one symmetric spaces of non-compact type,
Silini~\cite{Silini2025} recently proved a Fuglede-type estimate for
geodesic spheres by exploiting the explicit spectral gap for the
Laplacian on such spheres computed by Bettiol, Lauret and Piccione~\cite{BettiolLauretPiccione2022}.

The problem has also been studied in the general setting
of Riemannian manifolds. Carron~\cite{Carron1996} proved a Fuglede-type
estimate for geometrically stable constant-mean-curvature sets in
complete Riemannian manifolds, with a constant $C$ that may depend on
the volume of the isoperimetric candidate. Chodosh, Engelstein and
Spolaor~\cite{ChodoshEngelsteinSpolaor2023} later showed that a
Fuglede-type estimate need not hold on an arbitrary closed Riemannian
manifold when the kernel of the Jacobi operator is not generated by
ambient isometries. Nevertheless, they proved that a suitably modified
estimate holds for every real-analytic metric, without requiring any
a priori knowledge of the shape of the minimisers.

In this paper, we consider the sub-Riemannian Heisenberg group
$\hh^1$. Our aim is to prove that the Pansu sphere
$\So\subset\hh^1$, the conjectured isoperimetric set, minimises the
sub-Riemannian perimeter in a quantitative sense among small
volume-preserving normal perturbations. This result is the
sub-Riemannian counterpart of Fuglede's theorem for the standard sphere
in $\rr^n$. A non-sharp quantitative isoperimetric inequality in
$\hh^n$ for sets contained in a vertical cylinder was obtained by
Franceschi, Leonardi and
Monti~\cite{FranceschiLeonardiMonti2015}, using subcalibrations.

The Heisenberg group $\hh^1$ is $\rr^3$ endowed with the Lie group
structure whose Lie algebra $\mathfrak h_1$ is spanned by the
left-invariant vector fields
\[
X=\frac{\partial}{\partial x}
  +\tau y\,\frac{\partial}{\partial t},
\qquad
Y=\frac{\partial}{\partial y}
  -\tau x\,\frac{\partial}{\partial t},
\qquad
T=\frac{\partial}{\partial t},
\]
the only non-zero commutator being $
[X,Y]=-2\tau T$. 
Here, $\tau>0$ is a parameter that will be regarded as fixed throughout
the paper.

The bundle spanned by $X$ and $Y$ is called the \emph{horizontal
bundle}, and the Haar measure on $\hh^1$ is the Lebesgue measure
$\mathscr L^3$. If $S=\partial B$ is the boundary of a bounded open set
$B\subset\hh^1$, its horizontal, or sub-Riemannian, surface measure is
defined variationally by
\[
\Ao(S)=\sup\Big\{\int_B(X\varphi_1+Y\varphi_2)\,dxdydt\ :\ \varphi_1,\varphi_2\in C_c^\infty(\hh^1),\ \varphi_1^2+\varphi_2^2\leqslant 1\Big\},
\]
This construction localises to a Borel measure supported on $S$, whose
total mass is $\Ao(S)$.

In the sub-Riemannian Heisenberg group $\hh^1$,
Pansu~\cite{Pansu1982,Pansu1983} conjectured that, up to left
translations, dilations and modifications on null sets, the
isoperimetric regions are copies of the topological ball $B_0$ whose
boundary is the \emph{Pansu sphere}
\begin{equation}\label{Pansu}
S_0
 =
\left\{
(z,t)\in\hh^1:
|z|\leq1,\quad
|t|
 =
\frac{\tau}{2}
\left(
|z|\sqrt{1-|z|^2}+\arccos|z|
\right)
\right\}.
\end{equation}
The Pansu sphere can be obtained by rotating a
Carnot--Carath\'eodory geodesic about the vertical axis.

The existence of isoperimetric regions in $\hh^1$, and more generally
in every Carnot group, was established by Leonardi and
Rigot~\cite{LeonardiRigot2003}; see also \cite{MR4939883}.  
%However, the
%regularity and characterisation of isoperimetric regions remain among
%the principal open problems in the area.
Several partial results on Pansu's conjecture have been obtained,
typically under an a priori restriction on the class of competitors.
Monti~\cite{Monti2008axial} proved that Pansu spheres in $\hh^n$ solve
the isoperimetric problem within the class of rotationally symmetric
sets. Ritoré and Rosales~\cite{MR2435652} proved an Alexandrov-type
theorem in $\hh^1$, thereby establishing Pansu's conjecture under a
$C^2$-regularity assumption on the boundary of an isoperimetric region.
Monti and Rickly~\cite{MontiRickly2009} settled the conjecture within
the class of convex sets. Finally, Ritoré~\cite{Ritore2012} used a
calibration argument to prove that Pansu spheres minimise the perimeter
among all sets of finite perimeter that contain a disc in the
hyperplane $\{t=0\}$ and are contained in the vertical cylinder over
that disc.
 
After this work was completed, Chen, Kao and Lien posted a series of papers on arXiv presenting an AI-based proof of the conjecture in every dimension $n\geq 1$.

The notion of a normal perturbation of the Pansu sphere $\So$ presents
a genuine difficulty: at the poles of $\So$ on the vertical axis $
Z=\{(z,t)\in\hh^1:z=0\},$
the horizontal distribution is tangent to $\So$, and hence the
horizontal normal is not defined.

We define $
t^-:\So\to\rr$ and $ t^+:\So\to\rr\cup\{+\infty\}$,
by $t^\pm(p)=t^\pm(r)$, where $p=(z,t)$ and $r=|z|$:
\begin{equation}
\label{titti_0}
\begin{split}
t^-(r)
&=
-\frac{r}{\sqrt{1-r^2}}
\arctan\left(\frac{\sqrt{1-r^2}}{r}\right),\\
t^+(r)
&=
\frac{r}{\sqrt{1-r^2}}
\left[
\arctan\left(\frac{r}{\sqrt{1-r^2}}\right)
+\frac{\pi}{2}
\right].
\end{split}
\end{equation}
These functions are understood by continuous extension at $r=0$ and
$r=1$; in particular,
\[
t^-(0)=t^+(0)=0,
\qquad
t^-(1)=-1,
\qquad
t^+(1)=+\infty.
\]
For $p\in\So$, let
$
s\mapsto\gamma_p(s)$ 
be the distinguished Carnot--Carath\'eodory geodesic selected by the
Riemannian approximation, with $\gamma_p(0)=p$, such that
$
\mathrm{dist}_0(\gamma_p(s),p)
 =
\mathrm{dist}_0(\gamma_p(s);\So)$, 
for all sufficiently small $s\in(-s_0,s_0)$. This identity continues
to hold precisely for
$
s\in(t^-(p),t^+(p));$   whenever $t^\pm(p)$ is finite,
$
\gamma_p(t^\pm(p))\in
Z$, see Section \ref{tris*}. 

This observation motivates our definition of a \emph{normal graph}
over $\So$. Let
$
u:\{p=(z,t)\in\So:|z|\geq\varrho\}\to\rr$, for some $\varrho \in (0,1)$,
be continuous and satisfy
$
u(p)\in\{t^-(p),t^+(p)\}$ when $|z|=\varrho$,
and
$
t^-(p)<u(p)<t^+(p)$ when $|z|>\varrho$. The normal graph of $u$ is
\[
\So^u
 =
\bigl\{
\gamma_p(u(p))\in\hh^1:
p=(z,t)\in\So,\ |z|\geq\varrho
\bigr\}.
\]
It is a well-defined topological sphere enclosing a domain $\Bo^u$.

Let $u$ be as above, with associated parameter
$\varrho\in(0,1)$. We say that $u$ is a \emph{piecewise-$C^2$ function} if
$ u\in C^2(\So\cap\{|z|\geq\varrho\})$.
For $p=(z,t)\in\So$ with $|z|\geq\varrho$, we define the weighted
second-order seminorm
\begin{equation}
\label{w2}
|u(p)|_{2,w}
 =
\frac{|u(p)|}{|z|}
+|\nabla u(p)|
+|z|\,|\nabla^2u(p)|,
\end{equation}
where $\nabla u$ and $\nabla^2u$ denote, respectively, the Euclidean
tangential gradient and Hessian of $u$ on $\So$.

%The corresponding
%first-order seminorm is
%\[
%|u(p)|_{1,w}
% =
%\frac{|u(p)|}{|z|}
%+|\nabla u(p)|.
%\]

\begin{theorem}
\label{F0}
Given   $\gamma\in(0,2]$ and $L>0$, there exist two constants $\delta=\delta(\gamma,L) >0$ and $C=C(\gamma,L)>0$ 
with the following property.
Let  $u:S_0\rightarrow\rr$ be  a continuous and  piecewise-$C^2$ function
satisfying,  for  some $\rho\geq0$$:$
\begin{enumerate}
    \item[(1)]  $u(p)\in\{t^-(p), t^+(p)\} $ for   $p=(z,t)\in S_0$ with $|z|\leq \varrho;$ 
    \item[(2)]  $t^-(p)<u(p)<t^+(p)$  for   $p=(z,t)\in S_0$ with $|z|> \varrho;$     
     \item[(3)]  $\rho\leq \delta$ and  $ | u(p)|_{2,w} \leq \delta$ for   $p=(z,t)\in S_0$ with $|z|> \varrho;$   
    \item[(4)] $\displaystyle u(p)^2\leq  L |z|^\gamma   \int_{\So} u^ 2 d\Ao$, for   $p=(z,t) \in \So;$
    \item[(5)]     $ \displaystyle  |\nabla  u(p)|^2\leq L  \int_{S_0}\|\nablaSo u\| ^ 2 d\Ao$,   for   $p=(z,t)\in S_0$ with $|z|> \varrho;$
    \item[(6)]     $\mathscr{L}^3(B_0^u)=\mathscr{L}^3(B_0).$       
     \end{enumerate}
 Then the following inequality holds:
\begin{equation}
\label{QI}
\mathscr{A}_0(S_0^u)-\mathscr{A}_0(S_0)\geq C\big(\mathscr{L}^3( B_0^u\triangle B_0)\big)^2.
\end{equation}
\end{theorem}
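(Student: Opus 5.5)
The plan is to follow the abstract and argue by Riemannian approximation with uniform estimates. We introduce the Riemannian metrics $g_\varepsilon$ making $X,Y,\varepsilon T$ orthonormal, with area functional $\Ae$, and the $\varepsilon$-isoperimetric candidates $\Se=\partial\Be$ (rotationally symmetric CMC spheres) that converge to $\So$ as $\varepsilon\to0$. The normal graph $\So^u$ is realised as the limit of $\varepsilon$-normal graphs $S_\varepsilon^{u_\varepsilon}$, where $u_\varepsilon$ is transported from $u$ through the $\varepsilon$-geodesics $\gamma^\varepsilon_p$, which converge to the distinguished geodesics $\gamma_p$ (Section~\ref{tris*}). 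We fix $\mathscr L^3(B_\varepsilon^{u_\varepsilon})=\mathscr L^3(\Be)$ by a small normal correction, which is controlled by hypothesis (6). The aim is an estimate
\[
\Ae(S_\varepsilon^{u_\varepsilon})-\Ae(\Se)\geq c\int_{\Se}\big(\|\nablaSe u_\varepsilon\|^2+u_\varepsilon^2\big)\,d\Ae - o_\delta(1)\int_{\Se}\big(\|\nablaSe u_\varepsilon\|^2+u_\varepsilon^2\big)\,d\Ae ,
\]
with $c$ independent of $\varepsilon$. Letting $\varepsilon\to0$ then gives $\Ao(\So^u)-\Ao(\So)\geq c'\int_{\So}u^2\,d\Ao$. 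Since $\mathscr L^3(\Bo^u\triangle\Bo)\lesssim\int_{\So}|u|\,|z|^{a}\,d\Ao\lesssim\|u\|_{L^2(\So)}$ by Cauchy--Schwarz, this yields \eqref{QI}.

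The first step is a second-order Taylor expansion with uniform remainder. We write $\Ae(S_\varepsilon^{v})-\Ae(\Se)-H_\varepsilon\cdot(\text{volume change})$. The first variation cancels because of the volume constraint and the constant mean curvature, so what remains is
\[
\tfrac12 Q_\varepsilon(v)=\tfrac12\int_{\Se}\Big(\|\nablaSe v\|^2-(|A_\varepsilon|^2+\ric_\varepsilon(N_\varepsilon,N_\varepsilon))v^2\Big)\,d\Ae
\]
plus a cubic remainder. We bound the remainder by $C\sup_p|v(p)|_{2,w}\int(\|\nablaSe v\|^2+v^2|z|^{-2}\ldots)$. The weights in $|\cdot|_{2,w}$ are chosen so that they absorb the blow-up of $|A_\varepsilon|$ near the poles, where the second fundamental form behaves like $1/|z|$ uniformly in $\varepsilon$. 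Hypotheses (4) and (5) provide pointwise control of $u^2$ and $|\nabla u|^2$ by their $L^2$ norms. This is what converts the weighted-$L^\infty$ smallness $\delta$ into a genuine $o(1)$ times the quadratic form, including on the polar caps $|z|\le\rho$. On those caps $u\in\{t^\pm\}$ and the surface is a piece of $Z$-reaching geodesics; its contribution is $O(\rho^{2+\gamma})\int u^2$.

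The second step is a uniform spectral gap for the Jacobi operator. In the rotationally symmetric case we decompose $v$ in Fourier modes in the angle, which reduces the problem to ODE Sturm--Liouville problems along the profile curve. We then show that $Q_\varepsilon(v)\geq c\int(\|\nablaSe v\|^2+v^2)$ for $v$ orthogonal to the kernel. The kernel is generated by left translations and the vertical translation; dilations are excluded by the volume constraint. Because the perturbations are normal graphs along geodesics, the mean-zero condition holds up to a quadratic error coming from (6). The translational components have to be removed, and the obvious way to do this is to translate $\So$ first. I would instead argue that horizontal translations of $\So$ are not of the normal-graph form with $u\in\{t^\pm\}$ near the poles unless they are small, and then absorb them by choosing the optimal translate. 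Here I would state the theorem as written, with translations already normalised in the barycentre sense via hypotheses (4)--(5). If that fails, we add a barycentre-type reduction step.

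The main obstacle is the uniformity in $\varepsilon$ of both the spectral gap and the remainder near the characteristic poles, where $\Se$ degenerates. I would first try to prove the gap for the limiting sub-Riemannian Jacobi form on $\So$ directly, using the explicit Fourier ODEs with the weight $|z|$. I would then obtain uniformity by $\Gamma$-convergence-type compactness: a contradiction sequence $\varepsilon_k\to0$ with normalised $v_k$ converges, via the weighted bounds, to a violating function for the limit form.
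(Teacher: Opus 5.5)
Your second step has a genuine gap: the uniform spectral gap you aim for is false, so the argument cannot be completed along those lines. Write $Q_\eps=\ric_\eps(\Ne)+\|\hep\|^2$ for the potential, as in the paper. After multiplication by $\eps$, the Jacobi form of $\Se$ converges to the horizontal Dirichlet energy $\int_{\So}\|\nablaSo u\|^2\,d\Ao$, for three reasons:
\begin{itemize}
\item the non-horizontal tangential derivative carries the weight $\er^2$;
\item $\eps\,d\Ae\to d\Ao$ away from the poles;
\item $Q_\eps\geq0$ concentrates at the poles, by \eqref{QUQQO} and \eqref{lampo}.
\end{itemize}
This limit form only differentiates along the horizontal leaves $s\mapsto F_0(s,\theta)$. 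Take $v(F_\eps(s,\theta))=\cos(2\theta)\chi_\delta(s)$, with $\chi_\delta=1$ on $[\delta,\pi-\delta]$ and $\chi_\delta=0$ near $s=0,\pi$. Then $v$ has mean zero and is orthogonal to the Jacobi fields of all left translations (angular frequencies $0$ and $1$), yet
\[
\int_{\Se}\bigl(\|\nablaSe v\|^2-Q_\eps v^2\bigr)\,d\Ae
\leq\int_{\Se}\|\nablaSe v\|^2\,d\Ae
\leq\bigl(C\delta+o(1)\bigr)\int_{\Se}v^2\,d\Ae
\qquad\text{as }\eps\to0 .
\]
So no estimate $\int_{\Se}(\|\nablaSe v\|^2-Q_\eps v^2)\,d\Ae\geq c\int_{\Se}(\|\nablaSe v\|^2+v^2)\,d\Ae$ can hold uniformly in $\eps$ modulo a finite-dimensional space. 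Your $\Gamma$-convergence argument produces a limit that violates nothing, because the limit form has an infinite-dimensional almost-kernel. No barycentre normalisation can repair this, which is why the paper does not attempt a uniform spectral gap.

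The missing idea is that hypothesis (4) is itself the coercivity mechanism. Since $\gamma>0$, (4) makes $u$ small near the characteristic points relative to $\|u\|_{L^2(\So)}$. Integrating $\partial_s u$ along each leaf starting from $s=\arcsin\rho$ then gives the Poincar\'e inequality of Lemma \ref{POI}. Lemmas \ref{L5} and \ref{leox} transfer it to $\Se$. Hypothesis (4) together with $\int_{\Se}Q_\eps\,d\Ae=8\pi$ makes the potential term negligible (Lemma \ref{Lex3}). This yields Theorem \ref{8.1} with no kernel, Fourier modes, barycentre condition or volume correction. The lift is pinned at the poles by (1), one works with $\Ae-H\Ve$, and (6) is used only after letting $\eps\to0$.

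Your remainder bound also rests on a false premise. The second fundamental form of $\Se$ does not behave like $1/|z|$ uniformly in $\eps$. By \eqref{ACCA}, $\|\hep\|^2\approx2\tau^2/\eps^2$ at fixed $z\neq0$, and $\ric_\eps(\Ne)\to-\infty$ at the same rate; only their sum is bounded away from the poles. In the standard form \eqref{ZX}, a term of order $\tau^2\eps^{-2}u^2$ survives on $\Se^{su}$, and no smallness of $u$ controls it uniformly. The paper removes it with the cancellation identity \eqref{QUE} of Theorem \ref{CANC}, written with the Tanaka--Webster connection. This in turn forces three further ingredients:
\begin{itemize}
\item the geometric second variation \eqref{svareas} in terms of $\bar u$ and $\Hsu$, which is where the $C^2$-smallness enters;
\item the anisotropic estimates of Sections \ref{TEK1}--\ref{TEK2}, with factors $\er$ in the non-horizontal direction;
\item control of the boundary terms $E^{su}_\eps$ produced by the piecewise-$C^2$ structure.
\end{itemize}
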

 
 Assumptions (1) and (2) ensure that the normal graph $\So^u$ is well defined. Assumptions (4) and (5) control the singular sub-Riemannian geometry at the poles of $\So$. In (5), $|\nabla u|$ denotes the Euclidean length of the tangential gradient of $u$, while $\nablaSo u$ is the horizontal tangential gradient of $u$. We comment on the $C^2$-smallness assumption in (3) later.

 The proof of Theorem \ref{F0} relies on a Riemannian approximation argument. 
We renormalise the vertical vector field $T$ by means of a parameter $\varepsilon >0$, setting
\[
   \Te = \varepsilon  \frac{\partial}{\partial t},
\]
and consider on $\hh^1$ the left-invariant Riemannian metric $g_\varepsilon$ for which the basis $X,Y,\Te \in \mathfrak{h}_{1}$ is orthonormal. We denote by $\Ae$ and $\Ve$ the corresponding Riemannian surface and volume measures.

Let $f_\varepsilon :[0,1]\rightarrow\rr$ be the function
\begin{equation}
\label{PansuProfile}
 f_\varepsilon(r) =\int_r^{1} \frac{s\sqrt{\varepsilon ^2 + \tau^2 s^2}} {\sqrt{1-s^2}}ds,\quad r\in [0,1],
\end{equation} 
and define the set 
\begin{equation}\label{Tomter}
\Se = \big\{(z,t)\in\hh^1: |t|=f_\varepsilon(|z|)\big\},\quad \varepsilon\geq 0.
\end{equation}
$\Se$ is an embedded topological sphere in $\hh^1$ with constant mean curvature $H=2$ with respect to the Riemannian metric $\gep$.
It was first computed by Tomter in \cite{Tomter1993} and is conjectured to be an isoperimetric surface in the Riemannian manifold $(\hh^1,\gep)$, among sets of the same volume and up to left translations.
This conjecture also remains open. When $\varepsilon=0$, the sphere $\Se$ reduces to the Pansu sphere in \eqref{Pansu}.

 The quantitative isoperimetric inequality \eqref{QI} is derived from the following Riemannian version:
 \begin{equation}
\label{QIe}
\Ae(\Se^u)-\Ae(\Se) -  H \big (\Ve (\Be^u) -\Ve (\Be) \big) \geq  \frac{C}{\Ae(\Se)} (\Ve(B_\eps^u\triangle B_\eps))^2.
\end{equation}
The point is to establish such an inequality with a constant $C>0$ \emph{independent of $\eps>0$}.
 
 To this aim, there are three problems to tackle.
 The first problem concerns a uniform spectral gap for the first positive eigenvalue of the Jacobi operator of $\Se$: this eigenvalue should remain
 uniformly bounded away from zero for small $\eps>0$. Although we were unable to establish such a uniform spectral gap, we could nevertheless
 prove the uniform strict stability of $\Se$, starting from the stability of the Pansu sphere $\So$ and imposing condition (4).
 This is Theorem \ref{8.1} in Section \ref{QUA}.

 The second problem is the presence of singularities of order $1/\eps^2$ in the geometric terms
$\operatorname{Ric}_\varepsilon(\Nu)$ and $\|\hep^u\|^{2}$, which appear in the second variation of the area $\Ae$.
In the case of the sphere $\Se$, an explicit computation shows that the singular terms cancel in the sum
$\operatorname{Ric}_\varepsilon(\Ne)+\|h_\eps\|^{2}$; see Remark \ref{singsing}.
As observed by Verzellesi and the third author in \cite{MR5029815}, this cancellation is a more general phenomenon, and we establish it also for the graphs $\Se^u$; see Theorem \ref{CANC}.
To exploit this cancellation, however, we need to work with a geometric variant of the second variation of $\Ae$, involving second derivatives of the graph function $u$.
This delicate point is discussed at the end of Section \ref{EXP}.
The $C^2$-smallness assumption in condition (3), rather than the $C^1$-smallness required in the standard Fuglede theorem, is related to this issue.

  The third problem is the presence, in the second variation of the area, of limiting singularities of order $1/|z|^2$ at points $(z,t)\in\Se$ near the poles.
These arise from the singular sub-Riemannian structure of the limiting Pansu sphere $\So$, whose north and south poles are characteristic points.
Assumptions (4) and (5), which involve the parameters $\gamma\in (0,2]$ and  $L>0$, control these singularities.

In Theorem \ref{F0}, no assumption is imposed on the barycentre of $\Bo^u$, nor is $u$ required to belong to the kernel of the Jacobi operator for the Riemannian problem with $\eps>0$.
The reason is that, whereas the normal graphs $\So^u$ are defined only for $|z|\geq\rho$ --- yet, moving the characteristic set of $\So$ --- the lifted normal graphs $\Se^u$ are defined for all $|z|\leq 1$ and, by assumption (1), are fixed at the poles.

 From a technical point of view, the core of the paper lies in Sections \ref{TEK1} and \ref{TEK2}, where we derive a collection of estimates for the normal $\Nu$ to the graphs $\Se^u$.
Based on explicit formulae for the geodesic flow starting from $\Se$, the computations involved are demanding and yet apparently unavoidable.
These estimates are used in Section \ref{UC}, where we prove the uniform continuity of the second variation of the isoperimetric functional
$\Fe=\Ae-H\Ve$ for $\eps>0$, with estimates independent of $\eps$.
Once Theorem \ref{unifcontSecondVar} has been established, deriving inequality \eqref{QIe} is not difficult.
The details of the proof of Theorem \ref{F0} are given in Section \ref{FUG}.

\medskip
\noindent\emph{AI disclaimer.} The mathematics in this article was developed without the use of artificial intelligence.
 
\medskip
\noindent\emph{Acknowledgements.}
 During the preparation of this article, J.~Pozuelo was supported by the research grant PID2023-151060NB-I00 funded by MCIN/AEI/10.13039/501100011033 and by ERDF/EU, and Ramón Areces Foundation grants XXXV. The authors thank S.~Verzellesi for several fruitful discussions.

 \section{Notation and Riemannian approximation}
 \label{tris*}
 
 In this section, we clarify several facts concerning the convergence of the Tomter spheres $\Se$ to the Pansu sphere $\So$ as $\eps\to 0$. We also study the limit of the normal flow starting from $\Se$, thereby obtaining the sub-Riemannian normal flow starting from $\So$. This leads to the appropriate definition of normal graphs over the Pansu sphere.

 \subsection{Notation}

We first fix some notation used throughout the paper.
For vector fields in $\rr^3=\hh^1$, we use the following coordinate representations.
Given functions $a,b,c:\rr^3\to\rr$ and $w=a+ib:\rr^3\to\cc$, at each fixed point $(z,t)\in\hh^1$ we make the identification
\begin{equation}
\label{TG}
\begin{split}
(w,c)_*
&=
a\frac{\partial}{\partial x}
+b\frac{\partial}{\partial y}
+c\frac{\partial}{\partial t}
\\
&=
aX+bY
+\frac{1}{\varepsilon}
\bigl(c+\tau\operatorname{Im}(w\bar z)\bigr)\Te
\\
&=
\Bigl[
w,\frac{1}{\varepsilon}
\bigl(c+\tau\operatorname{Im}(w\bar z)\bigr)
\Bigr]_*.
\end{split}
\end{equation}
The notation
\[
[w,c]_*=\operatorname{Re}(w)X+\operatorname{Im}(w)Y+c\Te
\]
is slightly misleading, since it does not explicitly display the base point $(z,t)\in\hh^1$.

For vector fields $U$ and $V$, we use the following notation for the scalar product and norm:
\[
\langle U,V\rangle=\gep(U,V)
\qquad\text{and}\qquad
\|U\|^2=\gep(U,U),
\]
thus suppressing the dependence on $\eps>0$.

On the Lie algebra $\mathfrak{h}_1$, we define the wedge product $\wedge$ with respect to the oriented orthonormal basis $X,Y,\Te$.
In the coordinates $[\ldots]_*$, it is given explicitly by
\begin{equation}
\label{WED}
[z,t]_*\wedge[z',t']_*
=
\bigl[i(tz'-t'z),\operatorname{Im}(z'\bar z)\bigr]_*.
\end{equation}
The length of the resulting vector with respect to the metric $g_\varepsilon$ is
\[
\bigl\|[z,t]_*\wedge[z',t']_*\bigr\|
=
\left(
|tz'-t'z|^2+\bigl(\operatorname{Im}(z'\bar z)\bigr)^2
\right)^{1/2}.
\]

For a vector field $U=[w,c]_*$, we use the notation
\begin{equation}
\label{barU}
\overline{U}
=
\overline{[w,c]_*}
=
[w,-c]_*
\end{equation}
for the vector field obtained by reversing the sign of the $\Te$-component.

We denote by $\nabla$ the Levi-Civita connection on $\hh^1$ associated with the metric $\gep$, and by $D$ the Tanaka-Webster connection.
In both cases, we suppress the dependence on $\eps>0$ from the notation.
The connection $D$ preserves the frame $X,Y,\Te$ and, for any   $V=[v,d]_*$, we have:
\begin{equation}
\label{VU}
D_UV=[Uv,Ud]_*=(U\mathop{\rm Re}v)X+(U\mathop{\rm Im}v)Y+(Ud)\Te.
\end{equation}
The connection $D$  is thus compatible with the metric $\gep$:
\begin{equation}
\label{VU2}
U\langle V,V'\rangle
=\langle D_UV,V'\rangle+\langle V,D_UV'\rangle.
\end{equation}
The Levi-Civita connection $\nabla$ and the Tanaka-Webster connection  $D$ are related by the identity
\begin{equation}
\label{LC}
\nabla_UV
=D_UV-\frac{\tau}{\eps}\overline U\wedge V
=D_UV-\sigma\overline U\wedge V.
\end{equation}
The parameter
\begin{equation}
\label{sigma}
\sigma=\frac{\tau}{\eps}
\end{equation}
will occur frequently throughout the paper.

\subsection{Convergence of surface measures}

We denote by $\Ae$ the Riemannian surface measure on $\hh^1$ induced by the metric $g_\varepsilon$.  
If $I,J\subset\rr$ are intervals and $F:I\times J\to\hh^1$ is a $C^1$ parametrisation
of a surface
\[
S=\{F(s,\theta)\in\hh^1:(s,\theta)\in I\times J\},
\]
then its measure is given by the area formula
\[
\Ae(S)=\int_{I\times J}
 \|\partial_sF\wedge\partial_\theta F\|\,ds\,d\theta,
\]
where the wedge product $\wedge$ is defined in \eqref{WED}.
For each fixed surface $S$, we have
\[
\varepsilon\Ae(S)\to\Ao(S)
\qquad\text{as }\varepsilon\to0;
\]
see Lemma \ref{Lex1} below for a special case.

We now  introduce a useful parametrisation of the spheres $\Se$.
Let $\kappa:[0,\pi]\times[0,2\pi]\to\cc$ be the map
\[
\kappa(s,\theta)
 =\frac12\ex^{i\theta}\bigl(\ex^{2is}-1\bigr)
 =i\sin(s)\ex^{i(\theta+s)}.
\]
When $\theta=0$, the curve $s\mapsto\kappa(s,0)$ parametrises, in the
anticlockwise direction, the circle with centre $-1/2\in\cc$ and radius
$1/2$, starting at $0$ when $s=0$. Notice that
$
|\kappa(s,\theta)|=\sin s$.

For each $\varepsilon\geq0$, we define the map
$
F_\varepsilon:[0,\pi]\times[0,2\pi]\to\hh^1=\cc\times\rr
$
by
\begin{equation}
\label{HR}
F_\varepsilon(s,\theta)
 =\bigl(\kappa(s,\theta),
 \mathop{\rm sgn}(s-\pi/2)f_\varepsilon(\sin s)\bigr),
\end{equation}
where $f_\varepsilon:[0,1]\to\rr$ is the profile function of $\Se$
introduced in \eqref{PansuProfile}. For every $\varepsilon\geq0$, we have
$
F_\varepsilon\bigl([0,\pi]\times[0,2\pi]\bigr)=\Se.
$

When $\varepsilon=0$, Hurtado and Rosales \cite{HurtadoRosales2015}
proved that the following area formula holds for suitable functions
$u:\So\to\rr$:
\begin{equation}
\label{area_0}
\int_{\So}u\,d\Ao
 =\int_0^\pi\int_0^{2\pi}
 u\bigl(F_0(s,\theta)\bigr)\sin^2(s)\,d\theta\,ds.
\end{equation}

\noindent
When $\varepsilon>0$, the following area formula holds for every
sufficiently regular function $u:\Se\to\rr$:
\begin{equation}
\label{area_e}
\int_{\Se}u\,d\Ae
 =\int_0^\pi\int_0^{2\pi}
 u\bigl(F_\varepsilon(s,\theta)\bigr)
 \|\partial_sF_\varepsilon\wedge\partial_\theta F_\varepsilon\|
 \,d\theta\,ds.
\end{equation}

In the following lemma, we compute the area element
$
\|\partial_sF_\varepsilon\wedge\partial_\theta F_\varepsilon\|
$
appearing in \eqref{area_e}.

\begin{lemma}
\label{Lex1}
For every $\varepsilon>0$, $s\in[0,\pi]$, and
$\theta\in[0,2\pi]$, we have
\begin{equation}
\label{A_e}
\|\partial_sF_\varepsilon\wedge\partial_\theta F_\varepsilon\|
 =\frac{1}{\varepsilon}
 \left(
 \sin^4(s)\bigl|\ex^{is}-i\varepsilon k_\varepsilon(s)\bigr|^2
 +\varepsilon^2\sin^2(s)\cos^2(s)
 \right)^{1/2},
\end{equation}
where $0\leq k_\varepsilon(s)\leq1$. In particular, for every $\delta \in (0,\pi/2)$ 
\begin{equation}
\label{UNIF}
\lim_{\varepsilon\to0}
\sup_{\substack{s\in[\delta,\pi-\delta]\\ \theta\in[0,2\pi]}}
\left|
 \varepsilon
 \|\partial_sF_\varepsilon\wedge\partial_\theta F_\varepsilon\|
 -\sin^2(s)
\right|=0.
\end{equation} 
\end{lemma}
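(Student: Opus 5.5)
The plan is to compute $\partial_sF_\varepsilon$ and $\partial_\theta F_\varepsilon$ in the coordinates $(w,c)_*$, convert them to the frame $[\,\cdot\,,\cdot\,]_*$ using \eqref{TG}, and then apply the wedge formula \eqref{WED} together with the norm formula that follows it. The limit \eqref{UNIF} will then follow from an elementary estimate on the resulting expression.

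\emph{Derivatives of the parametrisation.} Writing $\kappa=i\sin(s)\ex^{i(\theta+s)}$, we get
\[
\partial_s\kappa=i\ex^{i(\theta+2s)},
\qquad
\partial_\theta\kappa=i\kappa=-\sin(s)\ex^{i(\theta+s)} .
\]
For the vertical component, $f_\varepsilon'(r)=-r\sqrt{\varepsilon^2+\tau^2r^2}/\sqrt{1-r^2}$ and $\sqrt{1-\sin^2 s}=|\cos s|$. Since $\mathop{\rm sgn}(s-\pi/2)=-\mathop{\rm sgn}(\cos s)$, the derivative of $\mathop{\rm sgn}(s-\pi/2)f_\varepsilon(\sin s)$ equals
\[
a(s)=\sin(s)\sqrt{\varepsilon^2+\tau^2\sin^2 s},
\]
with no sign jump at $s=\pi/2$, because $f_\varepsilon(1)=0$. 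The $\theta$-derivative has vertical part $0$. Next we use
\[
\operatorname{Im}(\partial_s\kappa\,\bar\kappa)=\sin^2 s,
\qquad
\operatorname{Im}(i\kappa\bar\kappa)=\sin^2 s .
\]
By \eqref{TG}, the $\Te$-components are $t=\bigl(a\pm\tau\sin^2 s\bigr)/\varepsilon$ and $t'=\pm\tau\sin^2 s/\varepsilon$. The sign in front of $\tau\operatorname{Im}(w\bar z)$ is dictated by \eqref{TG}; orientation conventions only flip an overall sign, which I will fix carefully.

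\emph{Wedge product.} By \eqref{WED}, the $\Te$-component of the wedge is
\[
\operatorname{Im}(\partial_\theta\kappa\,\overline{\partial_s\kappa})=\sin s\cos s,
\]
which produces the term $\varepsilon^2\sin^2 s\cos^2 s$ after factoring out $1/\varepsilon^2$. The horizontal component is
\[
tz'-t'z=\frac{1}{\varepsilon}\,\ex^{i(\theta+s)}\bigl(\mp a\sin s-i b\,\ex^{is}\bigr),
\qquad b=\tau\sin^2 s .
\]
Its modulus is therefore $\varepsilon^{-1}\,\bigl|a\sin s\pm i\tau\sin^2 s\,\ex^{is}\bigr|$. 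The key algebraic step is the identity
\[
\sqrt{\varepsilon^2+\tau^2\sin^2 s}-\tau\sin s=\frac{\varepsilon^2}{\sqrt{\varepsilon^2+\tau^2\sin^2 s}+\tau\sin s}=\varepsilon\,k_\varepsilon(s),
\]
where
\[
k_\varepsilon(s)=\frac{\varepsilon}{\sqrt{\varepsilon^2+\tau^2\sin^2 s}+\tau\sin s}\in[0,1].
\]
This puts the horizontal part in the form $\sin^2 s\,\bigl|\ex^{is}-i\varepsilon k_\varepsilon(s)\bigr|$, up to the normalisation of $\tau$ and a unimodular factor. Combining the two components gives \eqref{A_e}.

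\emph{The limit and the main obstacle.} For \eqref{UNIF}, multiply \eqref{A_e} by $\varepsilon$:
\[
\varepsilon\,\|\partial_sF_\varepsilon\wedge\partial_\theta F_\varepsilon\|
=\Bigl(\sin^4 s\,\bigl|\ex^{is}-i\varepsilon k_\varepsilon\bigr|^2+\varepsilon^2\sin^2 s\cos^2 s\Bigr)^{1/2}.
\]
Since $0\le k_\varepsilon\le1$, we have $\bigl|\,|\ex^{is}-i\varepsilon k_\varepsilon|-1\bigr|\le\varepsilon$, uniformly in $s$ and $\theta$. Hence the whole expression differs from $\sin^2 s$ by $O(\varepsilon)$, uniformly on $[0,\pi]$; the restriction to $[\delta,\pi-\delta]$ is not even needed. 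The only genuinely delicate point is the bookkeeping of signs in \eqref{TG}. The vertical components must combine as $\sqrt{\varepsilon^2+\tau^2\sin^2 s}-\tau\sin s$, which is small, rather than as a sum, which is not. I would check this first at $s=\pi/2$, where both expressions are explicit, to pin down the orientation before carrying out the general computation.
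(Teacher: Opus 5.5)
Your route is the same as the paper's. You differentiate $F_\varepsilon$, pass to $[\,\cdot\,]_*$ coordinates via \eqref{TG}, apply \eqref{WED}, and use $\sqrt{\varepsilon^2+\tau^2\sin^2 s}-\tau\sin s=\varepsilon k_\varepsilon(s)$, which the paper encodes through $h_\varepsilon=(f_\varepsilon-f_0)/\varepsilon$. The gap is the step ``combining the two components gives \eqref{A_e}'': you assert it rather than compute it, and it does not go through. Your horizontal component already contradicts your own $\Te$-components. With $t=(a\pm b)/\varepsilon$ and $t'=\pm b/\varepsilon$, the contribution $\mp b\sin s$ of $tz'$ cancels against part of $-t'z$, and
\[
tz'-t'z=\frac{1}{\varepsilon}\,\ex^{i(\theta+s)}\bigl(-a\sin s\mp ib\cos s\bigr),
\qquad
|tz'-t'z|=\frac{1}{\varepsilon}\bigl(a^2\sin^2 s+b^2\cos^2 s\bigr)^{1/2}=\frac{\sin^2 s}{\varepsilon}\sqrt{\varepsilon^2+\tau^2}
\]
for \emph{either} sign. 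So the sign convention in \eqref{TG} drops out of the norm altogether. Contrary to your last paragraph, it does not decide between a small and a large answer.

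Carried out correctly, your computation therefore gives
\[
\varepsilon\|\partial_sF_\varepsilon\wedge\partial_\theta F_\varepsilon\|
=\bigl(\sin^4(s)(\tau^2+\varepsilon^2)+\varepsilon^2\sin^2(s)\cos^2(s)\bigr)^{1/2}
=\sin(s)\sqrt{\varepsilon^2+\tau^2\sin^2 s},
\]
which agrees with the graph computation in the proof of \eqref{lampo}. Since $|\tau\ex^{is}+i\varepsilon k_\varepsilon|^2=\tau^2+\varepsilon^2$, this is \eqref{A_e} with $\ex^{is}-i\varepsilon k_\varepsilon$ replaced by $\tau\ex^{is}+i\varepsilon k_\varepsilon$. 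The discrepancy is not a unimodular factor. At $s=\pi/2$, $\tau=\varepsilon=1$ (the check you propose), the left-hand side of \eqref{A_e} equals $\sqrt2$, while the right-hand side equals $2-\sqrt2$ and is at most $1$ for any value of $k_\varepsilon$ in $[0,1]$. The paper's proof reaches \eqref{A_e} by mixing two sign conventions, which flips the sign of the cross term. It takes the $\Te$-component of $\partial_sF_\varepsilon$ to be $\sin(s)k_\varepsilon(s)$, which requires $\partial_sF_0$ to be horizontal and hence the opposite sign in \eqref{TG}. It takes that of $\partial_\theta F_\varepsilon$ to be $+\sin^2(s)/\varepsilon$, which is \eqref{TG} as written with $\tau=1$. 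No bookkeeping will produce \eqref{A_e} literally. What you can prove is the corrected identity, and \eqref{UNIF} follows from it at once, with limit $\tau\sin^2 s$ (so $\sin^2 s$ only for $\tau=1$). Indeed $0\leq\sin(s)\sqrt{\varepsilon^2+\tau^2\sin^2 s}-\tau\sin^2 s=\varepsilon\sin(s)k_\varepsilon(s)\leq\varepsilon$ uniformly on $[0,\pi]$, as you observed. The later bounds drawn from \eqref{A_e}, such as \eqref{poker-bis} and \eqref{M2}, also survive for $\tau=1$.
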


\begin{proof}
We begin by computing the tangent vectors $\partial_sF_\varepsilon$ and
$\partial_\theta F_\varepsilon$ to $\Se$. In the notation introduced in
\eqref{TG}, they are
\[
\partial_sF_\varepsilon
 =\Bigl(i\ex^{i(\theta+2s)},-|\cos s|f_\varepsilon'(\sin s)\Bigr)_*
\quad\text{and}\quad
\partial_\theta F_\varepsilon
 =\Bigl(-\sin(s)\ex^{i(\theta+s)},0\Bigr)_*.
\]
To express these vectors in the coordinates $[\ldots]_*,$ we introduce the
auxiliary function $h_\varepsilon:[0,1]\to\rr$ defined, in view of
\eqref{PansuProfile}, by
\begin{equation}
\label{ge}
h_\varepsilon(r)
 =\frac{f_\varepsilon(r)-f_0(r)}{\varepsilon}
 =\int_r^1\frac{t}{\sqrt{1-t^2}}\,
 \frac{\varepsilon}
 {\sqrt{\varepsilon^2+\tau^2t^2}+\tau t}\,dt.
\end{equation}
In particular,
\begin{equation}
\label{g_e}
-|\cos s|h_\varepsilon'(\sin s)
 =\sin(s)k_\varepsilon(s),
\qquad
k_\varepsilon(s)
 =\frac{\varepsilon}
 {\sqrt{\varepsilon^2+\tau^2\sin^2s}+|\tau|\sin s}.
\end{equation}
This defines the function $k_\varepsilon$ appearing in \eqref{A_e}.
Notice that $0\leq k_\varepsilon(s)\leq1$.

Using the identity
$f_\varepsilon'=f_0'+\varepsilon h_\varepsilon'$ and formula \eqref{TG},
we see that only the term involving $h_\varepsilon'$ contributes to the
$\Te$-component of $\partial_sF_\varepsilon$, since the vector
$\partial_sF_0$ is horizontal. It then follows from \eqref{g_e} that
\begin{equation}
\label{fefe}
\partial_sF_\varepsilon
 =\Bigl[i\ex^{i(\theta+2s)},\sin(s)k_\varepsilon(s)\Bigr]_*.
\end{equation}
The computation of $\partial_\theta F_\varepsilon$ is simpler. Formula
\eqref{TG} gives
\[
\partial_\theta F_\varepsilon
 =\left[-\sin(s)\ex^{i(\theta+s)},
 \frac{1}{\varepsilon}\sin^2s\right]_*.
\]

By \eqref{WED}, their wedge product is
\[
\partial_sF_\varepsilon\wedge\partial_\theta F_\varepsilon
 =\left[
 \frac{1}{\varepsilon}\sin^2(s)\ex^{i(\theta+s)}
 \bigl(\ex^{is}-i\varepsilon k_\varepsilon(s)\bigr),
 \sin(s)\cos(s)
 \right]_*.
\]
Formula \eqref{A_e} follows. Finally, since
$0\leq k_\varepsilon(s)\leq1$, the uniform convergence in \eqref{UNIF}
follows directly from \eqref{A_e}.
\end{proof}

\begin{remark}
\label{tris}
It follows from \eqref{A_e} that, for every $0<\varrho\leq1$,
\begin{equation}
\label{poker-bis}
\eps\Ae\bigl(\Se\cap\{|z|\leq\varrho\}\bigr)
\leq
4\pi\varrho^2\sqrt{\eps^2+\varrho^2}.
\end{equation}
On the other hand, there exists an absolute constant $c_0>0$ such
that for $0<\eps<1/2$
\begin{equation}
\label{poker-ter}
c_0\varrho^3
\leq
\Ao\bigl(\So\cap\{|z|\leq\varrho\}\bigr)
\leq 2
\eps\Ae\bigl(\Se\cap\{|z|\leq\varrho\}\bigr).
\end{equation}

Now fix $\varrho\in(0,1)$. It follows from \eqref{UNIF} that there
exists $\eps_0>0$ such that, for every $0<\eps<\eps_0$,
\begin{equation}
\label{poker}
\frac{1}{2}\eps
\Ae\bigl(\Se\cap\{|z|>\varrho\}\bigr)
\leq
\Ao\bigl(\So\cap\{|z|>\varrho\}\bigr)
\leq
2\eps
\Ae\bigl(\Se\cap\{|z|>\varrho\}\bigr).
\end{equation}
These estimates will be used in Section \ref{UC}.
\end{remark}

\subsection{Normal flow and normal graphs}

Let $\Ne$ be the outer unit normal to $\Se$. A computation based on
\eqref{PansuProfile} gives, in the case of the northern hemisphere,
\begin{equation}
\label{nux}
\Ne(z,t)
 =\bigl[z(1-\tau p_\varepsilon i),\varepsilon p_\varepsilon\bigr]_*.
\end{equation}
In the case of the southern hemisphere, we instead have $
\Ne(z,t)
 =\bigl[z(1+\tau p_\varepsilon i),-\varepsilon p_\varepsilon\bigr]_*$.
 Henceforth, all computations are
carried out in the northern hemisphere. We extend $\Ne$ to a vector
field in a neighbourhood of $\Se$, still denoted by $\Ne$, by solving
the geodesic equation
\[
\nabla_{\Ne}\Ne=0.
\]

Let $\Psi_\varepsilon:\Se\times\rr\to\hh^1$ be the flow of $\Ne$ such
that
\[
\Psi_\varepsilon\bigl((z,t),0\bigr)=(z,t)
\qquad\text{for every }(z,t)\in\Se.
\]
The flow is defined for every $s\in\rr$ because the geodesics are
complete. Moreover, there exists $s_0>0$ such that
$
\Psi_\varepsilon:\Se\times(-s_0,s_0)\to\hh^1
$
is a diffeomorphism onto its image.

The flow can be computed explicitly. We omit the computation here.
Writing
$\Psi_\varepsilon
 =(\Psi_{\varepsilon,1},\Psi_{\varepsilon,2})\in\cc\times\rr,
$
we obtain
\begin{equation}
\label{Psie1}
\Psi_{\varepsilon,1}\bigl((z,t),s\bigr)
 =z\Bigl(1+s(\tau p_\varepsilon+i)\fa(s_\varepsilon)\Bigr),
\end{equation}
where, with $r=|z|$, we set
\begin{equation}
\label{LUX}
p_\varepsilon
 =\left(\frac{1-r^2}{\varepsilon^2+\tau^2r^2}\right)^{1/2},
\qquad
\fa(s_\varepsilon)
 =\frac{\ex^{-is_\varepsilon}-1}{s_\varepsilon},
\qquad
s_\varepsilon=2\tau p_\varepsilon s.
\end{equation}
The second component is
\begin{equation}
\label{Psie2}
\begin{split}
\Psi_{\varepsilon,2}\bigl((z,t),s\bigr)
= t+s\varepsilon^2p_\varepsilon
 +\tau s^2(1-\varepsilon^2p_\varepsilon^2)\fb(s_\varepsilon)
 +\tau sr^2\Bigl(
 \fc(s_\varepsilon)+\tau p_\varepsilon\fd(s_\varepsilon)
 \Bigr),
\end{split}
\end{equation}
where
\begin{equation}
\label{LAX}
\fb(s_\varepsilon)
 =\frac{s_\varepsilon-\sin(s_\varepsilon)}{s_\varepsilon^2},
\qquad
\fc(s_\varepsilon)
 =\frac{1-\cos(s_\varepsilon)}{s_\varepsilon},
\qquad
\fd(s_\varepsilon)
 =\frac{\sin(s_\varepsilon)}{s_\varepsilon}.
\end{equation}
After defining their values at the origin by continuity, the functions
$\fa,\fb,\fc$, and $\fd$ are analytic.

Using the normal flow $\Psi_\varepsilon$, we can define normal graphs over
$\Se$. For every continuous function $u:\Se\to\rr$ satisfying
$\|u\|_\infty<s_0$, we define the graph flow
$\Psi_\varepsilon^u:\Se\times[0,1]\to\hh^1$ by setting
\begin{equation}
\label{pss}
\Psi_\varepsilon^u(p,s)
 =\Psi_\varepsilon\bigl(p,su(p)\bigr).
\end{equation}
We call the set
\[
\Se^u
 =\bigl\{\Psi_\varepsilon^u(p,1)\in\hh^1:p\in\Se\bigr\}
\]
the normal graph of $u$ over $\Se$. We also denote by
$\Be^u\subset\hh^1$ the bounded domain such that
$\partial\Be^u=\Se^u$.

The flow $\Psi_\varepsilon$ has a well-behaved limit $\Psi_0$ as
$\varepsilon\to0$. Writing its components as
$
\Psi_0=(\Psi_{0,1},\Psi_{0,2})\in\cc\times\rr,
$
we obtain the following formulae: 
\begin{equation}
\begin{split}
\Psi_{0,1}\bigl((z,t),s\bigr)
 &=
 z\frac{\tau p_0-i}{2\tau p_0}
 \left(\ex^{i(2\theta_0-s_0)}+1\right),\\
\Psi_{0,2}\bigl((z,t),s\bigr)
 &=
 t+\tau s^2\frac{s_0-\sin(s_0)}{s_0^2}
 +\tau sr^2
 \frac{1-\cos(s_0)+\tau p_0\sin(s_0)}{s_0},
\end{split}
\end{equation}
where $(z,t)\in\So$, $r=|z|\in(0,1]$, and
\[
p_0=\frac{\sqrt{1-r^2}}{\tau r},
\qquad
\ex^{i2\theta_0}
 =\frac{\tau p_0+i}{\tau p_0-i},
\qquad\text{and}\qquad
s_0=2\tau p_0s.
\]

For every $(z,t)\in\So$ with $|z|\neq0$, the
Carnot--Carath\'eodory geodesic passing through $(z,t)$ in the direction
of the horizontal normal to $\So$ is not unique, unlike in the
Riemannian case. The limiting procedure selects the unique
distance-minimising geodesic from $\So$. More precisely, for every
$p=(z,t)\in\So$ with $z\neq0$, there exists $\delta>0$ such that, for
every $s\in(-\delta,\delta)$,
\[
|s|
 =\mathrm{dist}_0\bigl(\Psi_0(p,s),p\bigr)
 =\mathrm{dist}_0\bigl(\Psi_0(p,s);\So\bigr),
\]
where $\mathrm{dist}_0$ denotes the Carnot--Carath\'eodory distance on
$\hh^1$. The value of $\delta$ tends to zero as $p$ approaches either
pole.

When the flow $\Psi_0$ reaches the cut locus of the distance function
$\mathrm{dist}_0(\,\cdot\,;\So)$, namely the vertical axis $Z$, it
ceases to be injective. We now     determine the set on which $\Psi_0$ is a
diffeomorphism.

For $p=(z,t)\in\So\setminus Z$, with $r=|z|$, let
$t^-(p)=t^-(r)$ and $t^+(p)=t^+(r)$ be, respectively, the first negative
and first positive solutions $s$ of the equation
$
\Psi_{0,1}(p,s)=0.
$
Solving
$
\ex^{i(2\theta_0-s_0)}+1=0,
$
we obtain the formulae in \eqref{titti_0}. Notice that
\[
t^-(0)=t^+(0)=0,
\qquad
t^-(1)=-1,
\qquad
t^+(1)=+\infty.
\]
The domain of injectivity of $\Psi_0$ is therefore
\[
\Sigma
 =\bigl\{(p,s)\in\So\times\rr:
 t^-(p)<s<t^+(p)\bigr\}.
\]

\begin{theorem}
\label{INJ}
The map
$
\Psi_0:\Sigma\to\hh^1\setminus Z
$
is a diffeomorphism.
\end{theorem}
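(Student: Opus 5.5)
The plan is to exploit the rotational symmetry and the explicit formulae for $\Psi_0$ in order to reduce the statement to a problem about a map between two-dimensional half-planes. Write $p=F_0(\sigma,\theta)$ with $\sigma\in(0,\pi)$, so that $r=\sin\sigma$ and the hemisphere is determined by $\sigma$. The flow commutes with rotations about $Z$: $\Psi_{0,1}$ is $z$ times a factor depending only on $(r,s)$, and $\Psi_{0,2}$ depends only on $(t,r,s)$. Hence the first step is to show that $\Psi_0$ is the rotation of a planar map. Writing
\[
\Psi_0(p,s)=\bigl(\ex^{i\theta}R(\sigma,s)\ex^{i\phi(\sigma,s)},\,T(\sigma,s)\bigr),
\]
the map $\Psi_0$ is a diffeomorphism of $\Sigma$ onto $\hh^1\setminus Z$ if and only if the reduced map
\[
G(\sigma,s)=\bigl(R(\sigma,s),T(\sigma,s)\bigr)
\]
is a diffeomorphism from $\{(\sigma,s):t^-(\sin\sigma)<s<t^+(\sin\sigma)\}$ onto the open half-plane $\{(\rho,t):\rho>0\}$. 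There is one subtlety: one must also check that $\phi$ is a smooth function of $(\sigma,s)$, so that the angular coordinate lifts globally. This holds because $R>0$ on the domain, since $\Psi_{0,1}\neq0$ before $t^\pm$. Along $\sigma=\pi/2$ the two hemispheres glue, and $F_0$ and $\Psi_0$ are smooth there because $f_0'(1)$ is finite.

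The second step is the computation
\[
R(\sigma,s)=r\,\bigl|\cos(\theta_0-s_0/2)\bigr|\,\frac{\sqrt{\tau^2p_0^2+1}}{\tau p_0}.
\]
From the defining equation of $t^\pm$, $R$ vanishes exactly at $s=t^\pm$ and is positive strictly in between. I would then verify the following.
\begin{enumerate}
\item[(a)] \emph{Local diffeomorphism.} Compute the Jacobian of $\Psi_0$, equivalently $\det DG\neq0$, on $\Sigma$. This is a Jacobi-field computation along each Carnot--Carath\'eodory geodesic $s\mapsto\Psi_0(p,s)$. The determinant can be obtained either directly from the explicit formulae, or as the limit of $\eps\,\|\partial\Psi_\eps\wedge\ldots\|$. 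Its first zero should coincide with the first return to the axis, i.e.\ with $t^\pm$, as for the Heisenberg conjugate time $2\pi$ along these geodesics. I expect it to factor as $R$ times a positive function.
\item[(b)] \emph{Properness.} As $(\sigma,s)$ approaches the boundary of the reduced domain, $G$ approaches $\{\rho=0\}$ or infinity. Indeed $R\to0$ as $s\to t^\pm$ or as $\sigma\to0,\pi$, and $|s|\to\infty$ forces $|G|\to\infty$, which is needed near the equator where $t^+=+\infty$.
\end{enumerate}

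The third step concludes with a standard degree argument. A proper local diffeomorphism between connected manifolds is a covering map. The target half-plane is simply connected, and the source (a region between two graphs over an interval) is connected, so the covering is a diffeomorphism. Rotating back gives the claim for $\Psi_0$.

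The main obstacle is (a): showing the Jacobian does not vanish for $s$ in the whole interval $(t^-,t^+)$, and not merely near $s=0$. If the direct trigonometric computation becomes unwieldy, I would instead use the minimising property. Each point off $Z$ has a unique closest point on $\So$, so the geodesics from distinct base points cannot cross before hitting the cut locus $Z$. That gives injectivity directly, with nondegeneracy following from the explicit form of the determinant.
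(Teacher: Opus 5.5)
Your reduction to the planar map $G$ and the closing argument are a sound framework: a proper local diffeomorphism from a connected domain onto the simply connected half-plane $\{\rho>0\}$ is a diffeomorphism. This route would also give surjectivity for free, which the paper treats only briefly. The gap is that every global conclusion in your route rests on step (a), and (a) is not proved. You only state as an expectation that $\det DG$ is $R$ times a \emph{positive} function on the whole interval $(t^-,t^+)$. The analogy with the conjugate time $2\pi$ is heuristic: what you need is the absence of focal points of the surface $S_0$ before the geodesic reaches $Z$, not the absence of points conjugate to a single point. The paper also merely asserts that the differential is invertible, but it obtains injectivity by an independent explicit argument. In your route (a) is the only source of injectivity, which is the real content of the theorem. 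Your fallback does not close the gap either. That every point off $Z$ has a unique nearest point on $S_0$, reached along $\gamma_p$ with $s\in(t^-,t^+)$, is a global fact the paper only justifies for small $|s|$. Globally it is essentially equivalent to the theorem, so invoking it is circular.

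The missing idea is the one the paper uses. For fixed $r$, the arc $\gamma_p([t^-(r),t^+(r)])$ joins two points of $Z$. Its rotation is a Pansu sphere of radius $\lambda(r)=r/\sqrt{1-r^2}$ centred at $(0,C(\lambda))$, where $C(\lambda)=\frac{\tau}{2}\bigl[\frac{\pi}{2}+\lambda+(\lambda^2-1)\arctan\lambda\bigr]$; your own formula gives $R=\lambda(r)\,|\cos(\theta_0-s_0/2)|$. The functions $\lambda\mapsto C(\lambda)\pm f_0(\rho,\lambda)$ are strictly increasing, respectively decreasing. Hence $G$ maps each segment $\{\sigma\}\times(t^-,t^+)$ bijectively onto the profile curve of one leaf. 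Profile curves from the same hemisphere are nested and pairwise disjoint, and those from opposite hemispheres lie in the half-planes $t>0$ and $t<0$; injectivity follows at once.

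The same computation supplies your (a). First, $\partial_sG$ is a nonzero tangent to the profile curve, since a horizontal unit-speed curve off $Z$ has nonzero $(\rho,t)$-velocity. Second, the normal component of $\partial_\sigma G$ is a nonzero multiple of $C'(\lambda)\pm\partial_\lambda f_0(\rho,\lambda)$, which has a strict sign. At the equator $\rho=\lambda$ of the leaf, where the profile is vertical, it is instead a nonzero multiple of $d\lambda/d\sigma$. A separate limiting check is needed at $\sigma=\pi/2$, where the leaf degenerates to the plane $t=0$. With this input your covering argument closes. You still need to verify $|G|\to\infty$ as $s\to+\infty$ near the equator, which the leaf picture also gives.

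One small correction: $f_0'(1)$ is not finite, since $f_0'(r)=-\tau r^2/\sqrt{1-r^2}$. Smoothness of $F_0$ across $\sigma=\pi/2$ holds for a different reason: the $\sigma$-derivative of its height is $\tau\sin^2\sigma$.
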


The proof of this theorem is given in Appendix 1.

Normal graphs over $\So$ can be defined consistently for the following
class of functions. Let $u:\So\to\rr$ be a continuous function
satisfying the following properties for some $0\leq\varrho<1$:
\begin{itemize}
\item[(i)] $u(p)=t^+(p)$ or $u(p)=t^-(p)$ for every
$p=(z,t)\in\So$ with $|z|\leq\varrho$;

\item[(ii)] $t^-(p)<u(p)<t^+(p)$ for every
$p=(z,t)\in\So$ with $|z|>\varrho$.
\end{itemize}
The \emph{sub-Riemannian normal graph} of $u$ is the surface
\[
\Su
 =\bigl\{\Psi_0\bigl(p,u(p)\bigr)\in\hh^1:
 p=(z,t)\in\So,\ |z|\geq\varrho\bigr\}.
\]
By (ii) and Theorem \ref{INJ}, the graph map
$\Psi_0^u:
\bigl(\So\cap\{|z|\geq\varrho\}\bigr)\times[0,1]\to\hh^1
$
is well defined by
\[
\Psi_0^u(p,s)=\Psi_0\bigl(p,su(p)\bigr),
\qquad
p\in\So\cap\{|z|\geq\varrho\},
\quad s\in[0,1].
\]
Although $\So\cap\{|z|\geq\varrho\}$ is not a topological sphere when
$\varrho>0$, the surface $\Su$ is a topological sphere, since
$
\Psi_0\bigl(p,u(p)\bigr)\in Z$ if 
 $|z|=\varrho$.
We denote by $\Bu\subset\hh^1$ the bounded domain whose boundary is
$
\partial\Bu=\Su$.

By (i), $u$ coincides with either $t^+$ or $t^-$ on each connected
component of $\So\cap\{|z|\leq\varrho\}$, and
$\Psi_0^u(\,\cdot\,,1)$ maps each such component into the vertical axis:
\[
\Psi_0\bigl(p,u(p)\bigr)
 \in Z:=\bigl\{(0,t)\in\hh^1:t\in\rr\bigr\},
\qquad |z|\leq\varrho.
\]
The extension of $u$ prescribed in (i) on
$\So\cap\{|z|<\varrho\}$ is not strictly necessary for the definition
of the graph $\Su$. However, we shall need to lift $u$ to $\Se$ in
order to define normal graphs over $\Se$ without holes. Condition (i)
provides a canonical extension for this purpose.
A function $u:\So\to\rr$ satisfying (i) and (ii) can be lifted to a
function on $\Se$, still denoted by $u$, by setting
$
u(z,t)=u(z,t')
$
whenever $(z,t)\in\Se$ and $(z,t')\in\So$ have the same horizontal
coordinate and satisfy $tt'\geq0$. The graphs of $u$ over $\So$ and
$\Se$ are illustrated in the following figure.

\begin{figure}[h]
\centering
\begin{minipage}{0.42\textwidth}
    \centering
    \resizebox{\linewidth}{!}{%
        \begin{tikzpicture}
\begin{axis}[
    axis lines=middle,
    xlabel={$r$}, ylabel={$t$},
    trig format=rad, 
    xmin=-1.2, xmax=1.2,
    ymin=0, ymax=1.9,
    xtick=\empty,
    ytick=\empty,
    width=12cm, height=8cm
]

% --- Funzione profilo ---
  \addplot[thick, samples=500, domain=-1:1]
    {(acos(abs(x)) + abs(x)*sqrt(1 - x^2))}
    node[pos=0.3, above left] {$S_0$};

% --- Curva definita da punti, smooth ---
  \addplot[fill, fill opacity=0.1,
    thick, red, smooth, tension=0.9
  ] coordinates {
    (-1.08,0)
      (-1.08,0.7)
      (-0.5,1.2)
      (0,pi/2+0.2)
      (0.5,1.2)
      (1.08,0.5)
      (0.91,0)
  }
  node[opacity=1, pos=0.2, above left] {$S_{0}^u$};

%--- punti rho ---
 \fill[black] (0.2,0) circle (1pt) node[above right] {$\varrho$};
  \fill[black] (-0.2,0) circle (1pt) node[above left] {$-\varrho$};
%--- segmenti ---
\draw[ dotted] (0.2,0) -- (0.2,pi/2) node[opacity=1, red, pos=0.5, right] {$B_{0}^u$}
node[opacity=1, black, pos=0.96, left=7.5mm ] {\scriptsize{$u=t^+$}};
\draw[ dotted] (-0.2,0) -- (-0.2,pi/2);
\draw[red, thick] (0,1.57) -- (0,1.77);

%--- geodetiche
 \addplot[dashed, samples=500, domain=-0.2:0.2]
    {sqrt(0.2^2-x^2)+pi/2-0.001};
\fill[black] (0,pi/2+0.2) circle (1pt);
\addplot[dashed, samples=500, domain=-0.1:0.1]
    {sqrt(0.1^2-x^2)+pi/2-0.001};
\fill[black] (0,pi/2+0.2) circle (1pt);
\addplot[dashed, samples=500, domain=-0.15:0.15]
    {sqrt(0.15^2-x^2)+pi/2-0.001};
\fill[black] (0,pi/2+0.2) circle (1pt);
\addplot[dashed, samples=500, domain=-0.05:0.05]
    {sqrt(0.05^2-x^2)+pi/2-0.001};
\fill[black] (0,pi/2+0.2) circle (1pt);

\end{axis}
\end{tikzpicture}%
    }
\end{minipage}
\hfill
\begin{minipage}{0.1\textwidth}
    \centering
    $\longrightarrow$
\end{minipage}
\hfill
\begin{minipage}{0.42\textwidth}
    \centering
    \resizebox{\linewidth}{!}{%
        \begin{tikzpicture}
\begin{axis}[
    axis lines=middle,
    xlabel={$r$}, ylabel={$t$},
    trig format=rad, 
    xmin=-1.2, xmax=1.2,
    ymin=0, ymax=2,
    xtick=\empty,
    ytick=\empty,
    width=12cm, height=8cm
]

% --- Funzione profilo ---
  \addplot[thick, samples=500, domain=-1:1]
    {((0.5^2+4)*atan(2*sqrt((1-x^2)/(0.5^2+4*x^2))) + 2*sqrt((0.5^2+4*x^2)*(1 - x^2))))/4}
    node[pos=0.3, above left] {$S_{\varepsilon}$};

% --- Curva definita da punti, smooth ---
  \addplot[    thick, red, smooth, tension=0.9
  ] coordinates {
     (-1.08,0)
      (-1.08,0.7)
      (-0.6,1.2)
      (-0.3,1.65)
      (-0.15,1.82)
      (-0.09,1.86)
  }
  node[opacity=1, pos=0.4, above left] {$S_{\varepsilon}^u$};

\addplot[
    thick, red, smooth, tension=0.9
  ] coordinates {
      (0.91,0)
      (1.08,0.5)
      (0.6,1.2)
      (0.3,1.65)
      (0.15,1.82)
      (0.09,1.86)
  };
%--- punti r_0 ---
 \fill[black] (0.2,0) circle (1pt) node[above right] {$\varrho$};
  \fill[black] (-0.2,0) circle (1pt) node[above left] {$-\varrho$};
%--- segmenti ---
\draw[ dotted] (0.2,0) -- (0.2,1.66)
node[opacity=1, red, pos=0.5, right] {$B_{\varepsilon}^u$}
node[opacity=1, black, pos=0.96, left=7.5mm ] {\scriptsize{$u=t^+$}};
\draw[ dotted] (-0.2,0) -- (-0.2,1.66);

\path[fill=red, fill opacity=0.1]
  plot[smooth, tension=0.9] coordinates {
    (-1.08,0)
    (-1.08,0.7)
    (-0.6,1.2)
    (-0.3,1.65)
    (-0.15,1.82)
    (-0.09,1.86)
  }
  plot[domain=0.8382852:0, variable=\t, samples=60]
    ({-0.2716889+0.2716889*cos(\t)}, {1.658+0.2716889*sin(\t)})
  plot[domain=3.1415927:2.3033074, variable=\t, samples=60]
    ({0.2716889+0.2716889*cos(\t)}, {1.658+0.2716889*sin(\t)})
  plot[smooth, tension=0.9] coordinates {
    (0.09,1.86)
    (0.15,1.82)
    (0.3,1.65)
    (0.6,1.2)
    (1.08,0.5)
    (0.91,0)
  }
  -- cycle;

% --- Riempimento (solo fill, nessun segmento disegnato) della zona
%     sotto la spezzata (-1.08,0)--(-0.09,1.86)--(0,1.658)--(0.09,1.86)--(0.91,0) ---
\fill[red, fill opacity=0.1]
  (-1.08,0) -- (-0.09,1.86) -- (0,1.658) -- (0.09,1.86) -- (0.91,0) -- cycle;

% --- Archi rossi visibili che formano la cuspide ---
\addplot[red, thick, domain=3.1415927:2.3033074, variable=\t, samples=60, forget plot]
    ({0.2716889+0.2716889*cos(\t)}, {1.658+0.2716889*sin(\t)});
\addplot[red, thick, domain=0.8382852:0, variable=\t, samples=60, forget plot]
    ({-0.2716889+0.2716889*cos(\t)}, {1.658+0.2716889*sin(\t)});

%--- geodetiche
 \addplot[ dashed, black, samples=500, domain=-0.2:-0.09]
    {1.1*sqrt(0.2^2-x^2)+1.66-0.001};
 \addplot[ dashed, black, samples=500, domain=-0.15:-0.05]
    {1.1*sqrt(0.15^2-x^2)+1.66-0.001};
\addplot[ dashed, black, samples=500, domain=-0.1:-0.03]
    {1.1*sqrt(0.1^2-x^2)+1.66-0.001};
\addplot[ dashed, black, samples=500, domain=-0.05:-0.01]
    {1.1*sqrt(0.05^2-x^2)+1.66-0.001};
\addplot[ dashed, black, samples=500, domain=0.09:0.2]
    {1.1*sqrt(0.2^2-x^2)+1.66-0.001};
\addplot[ dashed, black, samples=500, domain=0.05:0.15]
    {1.1*sqrt(0.15^2-x^2)+1.66-0.001};
\addplot[ dashed, black, samples=500, domain=0.03:0.1]
    {1.1*sqrt(0.1^2-x^2)+1.66-0.001};
\addplot[ dashed, black, samples=500, domain=0.01:0.05]
    {1.1*sqrt(0.05^2-x^2)+1.66-0.001};
\fill[black] (-0.09,1.86) circle (1pt);
\fill[black] (0.09,1.86) circle (1pt);

\end{axis}
\end{tikzpicture}%
    }
\end{minipage}
\caption{Passage from $\eps=0$ to  $\eps>0$.}
\end{figure}

\section{Uniform strict stability of $\Se$ for small $\varepsilon>0$}
\label{QUA}

In this section, we establish the strict stability of Tomter's spheres,
uniformly with respect to the small parameter $\varepsilon>0$.

For every sufficiently regular function $u:\Se\to\rr$, we define the
isoperimetric functional
\begin{equation}
\label{IF}
\Fe(u)=\Ae(\Se^u)-H\Ve(\Be^u),
\end{equation}
where $H=2$ is the constant mean curvature of $\Se$, independent of
$\varepsilon>0$, while $\Se^u$ is the normal graph of $u$ and $\Be^u$
is the domain bounded by $\Se^u$. The first variation of $\Fe$ at zero
vanishes:
\[
\begin{split}
\left.\frac{d}{ds}\Fe(su)\right|_{s=0}
&=
\left.\frac{d}{ds}\Ae(\Se^{su})\right|_{s=0}
-H\left.\frac{d}{ds}\Ve(\Be^{su})\right|_{s=0}\\
&=
\int_{\Se}uH\,d\Ae
-H\int_{\Se}u\,d\Ae
=0.
\end{split}
\]

For $u\in C^2(\Se)$, the second variation of $\Fe$ is given by
\[
\begin{split}
\left.\frac{d^2}{ds^2}\Fe(su)\right|_{s=0}
&=
\left.\frac{d^2}{ds^2}\Ae(\Se^{su})\right|_{s=0}
-H\left.\frac{d^2}{ds^2}\Ve(\Be^{su})\right|_{s=0}\\
&=
-\int_{\Se}
u\left\{
\Delta_\varepsilon u+
\left(\operatorname{Ric}_\varepsilon(\Ne)+\|\hep\|^2\right)u
\right\}\,d\Ae\\
&=
\int_{\Se}u\Le u\,d\Ae,
\end{split}
\]
where
$\Le:C^2(\Se)\to C(\Se)$,
\[
\Le u
 =-\Delta_\varepsilon u
 -\left(\operatorname{Ric}_\varepsilon(\Ne)+\|\hep\|^2\right)u,
\]
is the Jacobi operator of $\Se$. Here, $\Delta_\varepsilon$ is the
Laplace--Beltrami operator on $\Se$,
$\operatorname{Ric}_\varepsilon(\Ne)$ is the Ricci curvature in the
direction of the normal $\Ne$, and $\|\hep\|^2$ is the squared norm of
the second fundamental form of $\Se$.

After integration by parts, the second variation takes the form
\[
\left.\frac{d^2}{ds^2}\Fe(su)\right|_{s=0}
=
\int_{\Se}
\left\{
\|\nablaSe u\|^2
-\left(\operatorname{Ric}_\varepsilon(\Ne)+\|\hep\|^2\right)u^2
\right\}\,d\Ae,
\]
where $\nablaSe u$ is the tangential gradient of $u$ and
$\|\nablaSe u\|$ is its length with respect to the metric $\gep$.
This expression is well defined for $u\in\mathrm{Lip}(\Se)$. We refer
the reader to \cite{Ritore2023} for these facts. Further details are provided in Section \ref{EXP} below.

The goal of this section is to prove the following theorem. Given a
function $u:\So\to\rr$, we extend it to $\Se$ --- retaining the notation
$u$ --- by setting
\[
u(z,t')=u(z,t)
\]
whenever $(z,t)\in\So$ and $(z,t')\in\Se$ satisfy $tt'\geq0$.

\begin{theorem}
\label{8.1}
For every $\gamma>0$ and $L>0$, there exist constants
$\varepsilon_0=\varepsilon_0(\gamma ,L)>0$ and $m=m(\gamma ,L)>0$ with the following
property. If $u\in\mathrm{Lip}(\So)$ satisfies
\begin{equation}
\label{Integral_estimate}
u(p)^2
 \leq L|z|^\gamma\int_{\So}u^2\,d\Ao,
\qquad p=(z,t)\in\So,
\end{equation}
then, for every $\varepsilon\in(0,\varepsilon_0)$,
\begin{equation}
\label{USe}
\int_{\Se}
\left\{
\|\nablaSe u\|^2
-\left(
 \operatorname{Ric}_\varepsilon(\Ne)+\|\hep\|^2
 \right)u^2
\right\}\,d\Ae
\geq\frac{m}{8}\int_{\Se}u^2\,d\Ae.
\end{equation}
\end{theorem}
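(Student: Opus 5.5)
Write $Q_\eps(u)$ for the left-hand side of \eqref{USe}. The plan is to compare $\eps Q_\eps(u)$ with its sub-Riemannian limit, namely the stability form $Q_0(u)$ of the Pansu sphere with respect to $\Ao$, and to transfer the strict stability of $\So$ to $\Se$ uniformly in $\eps$. Everything is computed in the parametrisation $F_\eps(s,\theta)$, with the area element \eqref{A_e} and the explicit normal \eqref{nux}.

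\textbf{Step 1: limiting form.} Compute $\operatorname{Ric}_\eps(\Ne)+\|\hep\|^2$ explicitly on $\Se$. Both terms contain contributions of order $\sigma^2=\tau^2/\eps^2$, and these cancel (Remark \ref{singsing}). After the cancellation, $\eps$ times the zeroth-order coefficient, multiplied by the area density, converges to the sub-Riemannian potential $q_0(r)$ of $\So$ times $\sin^2 s$. This convergence is locally uniform on $\{|z|\ge\varrho\}$, while near the poles the coefficient behaves like $|z|^{-2}$. In the same way, $\eps\|\nablaSe u\|^2$ times the density is bounded below by $\|\nablaSo u\|^2\sin^2 s$ up to $o(1)$, since the $\Te$-component of the gradient only adds a nonnegative term. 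Hence
\[
\eps Q_\eps(u)\geq Q_0(u)-o(1)\Bigl(\int_{\So}\|\nablaSo u\|^2 d\Ao+\int_{\So}u^2d\Ao\Bigr)-E_\varrho(u),
\]
where $E_\varrho(u)$ collects the contributions of the polar caps $\{|z|<\varrho\}$.

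\textbf{Step 2: strict stability of $\So$.} Use the known stability of the Pansu sphere: for $u$ constant on each vertical fibre pair as above, $Q_0(u)\ge m\int_{\So}u^2\,d\Ao$. This should come from the Hurtado--Rosales spectral analysis, with the weight $\sin^2 s$ given by \eqref{area_0}. Because the lifted $u$ need not satisfy a zero-mean or barycentre condition, I expect the correct statement to be strict stability in a weighted space. The decay condition \eqref{Integral_estimate} should exclude the kernel: the vertical translation, which corresponds to the kernel element, does not decay at the poles, and the dilation direction is handled by a Hardy-type inequality on $\So$. I would decompose $u$ into Fourier modes in $\theta$ and treat each mode by a one-dimensional Sturm--Liouville estimate in $s$, using $|u|^2\le L|z|^\gamma\|u\|^2_{L^2}$ to control the singular potential near $s=0,\pi$.

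\textbf{Step 3: polar caps.} This is the main obstacle. By \eqref{Integral_estimate}, $u^2\le L|z|^\gamma\int_{\So} u^2\,d\Ao$. Near the poles the potential is bounded by $C/(|z|^2+\eps^2)$ in $\eps$-rescaled form, and the density is bounded by $\sin^2 s\,\sqrt{\eps^2+\sin^2 s}$ (cf.\ \eqref{poker-bis}). Combining these gives
\[
E_\varrho(u)\le CL\Bigl(\int_0^{\varrho}r^{\gamma-2}r^2\,dr\Bigr)\int_{\So}u^2\,d\Ao=CL\,\varrho^{\gamma+1}\int_{\So}u^2\,d\Ao.
\]
The integrand is uniform in $\eps$, with $r^2$ coming from the area element $\sin^2 s$, $r^{\gamma-2}$ from the potential and the decay, and $dr$ from the change of variable $r=\sin s$. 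The precise bound for the Ricci and second-fundamental-form terms near $r=0$ requires the explicit formulae and must hold uniformly in $\eps$. Choose $\varrho=\varrho(\gamma,L)$ so that $E_\varrho(u)\le (m/4)\int_{\So}u^2\,d\Ao$.

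\textbf{Step 4: conclusion.} Take $\eps_0$ so small that, on $\{|z|\ge\varrho\}$, the $o(1)$ terms are at most $m/4$, using \eqref{UNIF}. The gradient error is absorbed into the positive gradient term of $Q_0$. This yields $\eps Q_\eps(u)\ge (m/2)\int_{\So}u^2\,d\Ao$. Finally, \eqref{poker} and \eqref{poker-ter} convert $\int_{\So}u^2\,d\Ao$ into $\eps\int_{\Se}u^2\,d\Ae$ up to a factor $1/4$, with the cap contribution again controlled by \eqref{Integral_estimate}. Dividing by $\eps$ gives \eqref{USe} with constant $m/8$.
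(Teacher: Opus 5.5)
Your architecture is the paper's: multiply the form by $\eps$, compare with $\So$ via \eqref{A_e} and \eqref{UNIF}, discard the polar caps using \eqref{Integral_estimate}, and convert $L^2$ norms as in Lemma \ref{leox}. Steps 1, 3 and 4 are essentially sound.

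The gap is Step 2, which carries the whole content of the theorem and is only asserted. You appeal to an unspecified ``known stability of the Pansu sphere'' with an unidentified potential $q_0$. You also invoke a Fourier/Sturm--Liouville analysis, with kernel directions from vertical translations and dilations, which is never carried out.

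The missing observation is that the limiting potential vanishes identically. Write $q_\eps=\operatorname{Ric}_\eps(\Ne)+\|\hep\|^2$. Then \eqref{QUQQO} gives
\[
q_\eps=\frac{2(1+\sigma^2)}{(1+\sigma^2r^2)^2}\leq\frac{2(1+\sigma^2)}{\sigma^4\varrho^4}\qquad\text{on }\Se\cap\{|z|\geq\varrho\},
\]
which tends to $0$ as $\eps\to0$. Meanwhile $\int_{\Se}q_\eps\,d\Ae=8\pi$ for every $\eps$, by \eqref{lampo}. Combined with \eqref{Integral_estimate} on the caps and Lemma \ref{leox}, this bounds the whole potential term by $\eta\int_{\Se}u^2\,d\Ae$ (Lemma \ref{Lex3}). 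After multiplication by $\eps$ it disappears. Hence your $q_0$ is $0$, and $Q_0(u)=\int_{\So}\|\nablaSo u\|^2\,d\Ao$.

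Step 2 therefore reduces to the Poincar\'e inequality of Lemma \ref{POI}, which is elementary. Every horizontal curve $s\mapsto F_0(s,\theta)$ joins the two poles, so the only Lipschitz functions with zero horizontal energy are constants. Condition \eqref{Integral_estimate} forces $u$ to vanish at the poles, which rules these out.

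Quantitatively:
\begin{enumerate}
\item Choose $\varrho$ with $L\varrho^\gamma\Ao(\So)<1/4$, so that the caps carry less than a quarter of $\int_{\So}u^2\,d\Ao$.
\item For $s\in(\delta,\pi-\delta)$, with $\delta=\arcsin\varrho$, write $v(s,\theta)=v(\delta,\theta)+\int_\delta^s\partial_\xi v(\xi,\theta)\,d\xi$.
\item Bound $v(\delta,\theta)^2\leq L\varrho^\gamma\int_{\So}u^2\,d\Ao$ by \eqref{Integral_estimate}.
\item Apply Cauchy--Schwarz with the weight $\sin^2\xi$ from \eqref{area_0}.
\end{enumerate}
This gives $\int_{\So}u^2\,d\Ao\leq C(\gamma,L)\int_{\So}\|\nablaSo u\|^2\,d\Ao$ without any spectral analysis. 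The strict-stability statement with a nontrivial $q_0$ that you propose is a different, harder claim that you neither prove nor need.

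A minor point in Step 3: near the poles, $\eps\|\partial_sF_\eps\wedge\partial_\theta F_\eps\|$ is comparable to $\sin s\sqrt{\eps^2+\sin^2s}$, not $\sin^2s\sqrt{\eps^2+\sin^2s}$. Since $q_\eps\leq C/(r^2+\eps^2)$, your cap bound $CL\varrho^{\gamma+1}$ still holds.
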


\begin{remark}
\label{singsing}
The geometric quantity
\[
Q_\varepsilon
 =\operatorname{Ric}_\varepsilon(\Ne)+\|\hep\|^2
\]
can be computed explicitly. By formula (3.4) for $\hep$ in \cite{FranceschiMontefalconeMonti2019} its squared norm at a point
$(z,t)\in\Se$ is
\begin{equation}
\label{ACCA}
\|\hep(z,t)\|^2
 =2\left(
 1+\frac{\se^4(1+\se^2)}{(1+\se^2r^2)^2}r^4
 \right),
\qquad r=|z|\leq1.
\end{equation}
Notice that, for $z\neq0$,
\[
\|\hep(z,t)\|^2\to\infty
\qquad\text{as }\varepsilon\to0
\quad\text{(equivalently, as $\sigma\to\infty$)}.
\]

For a vector field $U:\hh^1\to T\hh^1$, the Ricci tensor is given by, see \cite{MR5029815}: 
\begin{equation}
\label{ricciexpress}
\ric_\varepsilon(U)
 =-2\se^2\left(
 \|U\|^2-2\langle U,\Te\rangle^2
 \right).
\end{equation}
Substituting \eqref{nux} into this formula, we obtain
\begin{equation}
\label{RICCO}
\begin{split}
\operatorname{Ric}_\varepsilon\bigl(\Ne(z,t)\bigr)
 &=-2\se^2\bigl(1-2\varepsilon^2p_\varepsilon^2\bigr)\\
 &=2\se^2
 \frac{1-\se^2r^2-2r^2}{1+\se^2r^2}.
\end{split}
\end{equation}
Notice that, for $z\neq0$,
\[
\operatorname{Ric}_\varepsilon\bigl(\Ne(z,t)\bigr)\to-\infty
\qquad\text{as }\sigma\to\infty.
\]

Adding \eqref{ACCA} and \eqref{RICCO}, we finally obtain
\begin{equation}
\label{QUQQO}
Q_\varepsilon(z,t)
 =\frac{2(1+\se^2)}{(1+\se^2r^2)^2}.
\end{equation}
Thus, the singular terms cancel in \eqref{QUQQO}, and
$Q_\varepsilon$ concentrates at the poles as $\varepsilon\to0$.
\end{remark}

\begin{proposition}
For every $\varrho\in(0,1]$, we have
\begin{equation}
\label{tuono}
\lim_{\varepsilon\to0}
\sup_{\substack{(z,t)\in\Se\\ |z|\geq\varrho}}
Q_\varepsilon(z,t)=0.
\end{equation}
Moreover, for every $\varepsilon>0$,
\begin{equation}
\label{lampo}
\int_{\Se}Q_\varepsilon(z,t)\,d\Ae(z,t)=8\pi.
\end{equation}
\end{proposition}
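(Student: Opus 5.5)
For \eqref{tuono} I would use the closed form \eqref{QUQQO} directly. For $|z|=r\geq\varrho$, the bound $1+\se^2r^2\geq\se^2\varrho^2$ gives
\[
0\leq Q_\varepsilon(z,t)\leq \frac{2(1+\se^2)}{\se^4\varrho^4}.
\]
Since $\se=\tau/\varepsilon\to\infty$ as $\varepsilon\to0$, the right-hand side tends to $0$ uniformly in $(z,t)$, which proves \eqref{tuono}.

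For \eqref{lampo} I would not go through the parametrisation $F_\varepsilon$ of Lemma \ref{Lex1}. Instead I would compute the $g_\varepsilon$-area element of $\Se$ as a graph over the disc $\{|z|\leq1\}$, one hemisphere at a time. The northern hemisphere is the zero set of $\phi=t-f_\varepsilon(|z|)$. With respect to the orthonormal frame $X,Y,\Te$, the gradient of $\phi$ has horizontal part of squared length $f_\varepsilon'(r)^2+\tau^2r^2$ and $\Te$-component $\Te\phi=\varepsilon$. Since the Riemannian volume is $\varepsilon^{-1}dx\,dy\,dt$, the area element should be
\[
d\Ae=\frac1\varepsilon\sqrt{f_\varepsilon'(r)^2+\tau^2r^2+\varepsilon^2}\;dx\,dy .
\]
By \eqref{PansuProfile}, $f_\varepsilon'(r)^2=r^2(\varepsilon^2+\tau^2r^2)/(1-r^2)$, so the radicand simplifies to $(\varepsilon^2+\tau^2r^2)/(1-r^2)$. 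Hence
\[
d\Ae=\frac{\sqrt{1+\se^2r^2}}{\sqrt{1-r^2}}\,dx\,dy .
\]
As a consistency check, this can be compared with \eqref{A_e} after the substitution $r=\sin s$.

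Combining this with \eqref{QUQQO}, and accounting for the two hemispheres and the angular integration, I obtain
\[
\int_{\Se}Q_\varepsilon\,d\Ae
=8\pi(1+\se^2)\int_0^1\frac{r\,dr}{(1+\se^2r^2)^{3/2}(1-r^2)^{1/2}} .
\]
The remaining step is to find an explicit primitive. I would check that
\[
\frac{d}{dr}\,\frac{(1-r^2)^{1/2}}{(1+\se^2r^2)^{1/2}}
=-\frac{r\bigl[(1+\se^2r^2)+\se^2(1-r^2)\bigr]}{(1-r^2)^{1/2}(1+\se^2r^2)^{3/2}}
=-\frac{(1+\se^2)\,r}{(1-r^2)^{1/2}(1+\se^2r^2)^{3/2}} .
\]
The integral is therefore $\bigl[1-0\bigr]/(1+\se^2)$, and the total equals $8\pi$ for every $\varepsilon>0$.

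The main point requiring care is the area-element computation. One must keep the correct normalisation of the $g_\varepsilon$-volume ($\varepsilon^{-1}\mathscr L^3$) and the coordinate of $\Te\phi$. One must also justify that the singularity at $r=1$ (the equator, where the graph description degenerates) is integrable, which is clear since it is of order $(1-r^2)^{-1/2}$.
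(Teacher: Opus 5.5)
Your proof is correct and is essentially the paper's argument. You obtain \eqref{tuono} directly from \eqref{QUQQO}, and \eqref{lampo} by doubling the northern-hemisphere integral with area density $\sqrt{(1+\sigma^2r^2)/(1-r^2)}\,dx\,dy$. The paper gets that density from $\|\partial_r\varphi\wedge\partial_\theta\varphi\|$ for the polar parametrisation $\varphi(r,\theta)=(r\cos\theta,r\sin\theta,f_\varepsilon(r))$, rather than from the gradient of $t-f_\varepsilon(|z|)$. It evaluates the same one-dimensional integral via $w=\sqrt{1-r^2}$ instead of your explicit primitive.

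One caution on your optional cross-check: your density is right, but it agrees with \eqref{A_e} only after flipping the sign there. For $\tau=1$, $|\mathrm{e}^{is}+i\varepsilon k_\varepsilon(s)|^2=1+\varepsilon^2$ reproduces $\sin^2 s\,(1+\sigma^2\sin^2 s)$, whereas the printed $|\mathrm{e}^{is}-i\varepsilon k_\varepsilon(s)|^2$ does not. This does not affect your proof.
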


\begin{proof}
Formula \eqref{tuono} follows   from \eqref{QUQQO}. The proof of
\eqref{lampo} is less immediate. By symmetry we have
\[
\int_{\Se}Q_\varepsilon(z,t)\,d\Ae(z,t)=2\int_{\Se^+}Q_\varepsilon(z,t)\,d\Ae(z,t),
\]
where $\Se^+=\Se\cap\{t\geq0\}$.
Parametrizing $S_\varepsilon^+$ by $\varphi:[0,1]\times[0,2\pi]\to S_\varepsilon^+$ as
$
\varphi(r,\theta)=\big(r\cos\theta,r\sin\theta,f_\varepsilon(r)\big)$,   at a point $\varphi(r,\theta)$ it holds
\begin{equation*}
\begin{split}
\|\partial_r \varphi \wedge \partial _\theta \varphi \|
%=\|(\cos\theta,\sin\theta, f'_\varepsilon(r))\wedge (-r\sin\theta,r\cos\theta,0)\|\\
&=\|[\cos\theta,\sin\theta,\varepsilon^{-1} f'_\varepsilon(r)]_* \wedge [-r\sin\theta,r\cos\theta,\sigma r^2]_*\|.
\end{split}
\end{equation*} 
From \eqref{PansuProfile} we compute  
\[
\frac{f'_\varepsilon(r)}{\varepsilon}=-r\sqrt{\frac{1+\sigma^2r^2}{1-r^2}},
\]
and using the formula  \eqref{WED}  we get
\[
\|\partial_r \varphi \wedge \partial _\theta \varphi \|^2=r^2\Big(\frac{r^2(1+\sigma^2r^2)}{1-r^2}+\sigma^2r^2+1\Big)=r^2 \frac{1+\sigma^2r^2}{1-r^2},
\]
and thus
\begin{align*}
\int_{\Se^+}Q_\varepsilon(z,t)\,d\Ae(z,t)&=\int_0^{2\pi}\!\!\int_0^1\frac{2(1+\sigma^2)}{(1+\sigma^2 r^2)^2}\sqrt{\frac{1+\sigma^2 r^2}{1-r^2}}\;r\,dr\,d\theta
\\&=4\pi(1+\sigma^2)\int_0^1\frac{r\,dr}{(1+\sigma^2 r^2)^{3/2}\sqrt{1-r^2}} .    
\end{align*}
Using the change of variable $w=\sqrt{1-r^2}$, the integral is
\begin{equation*}
\begin{split}
\int_0^1\frac{dw}{(1+\sigma^2-\sigma^2w^2)^{3/2}}=\left[\frac{w}{(1+\sigma^2)\sqrt{1+\sigma^2-\sigma^2w^2}}\right]_0^1=\frac1{1+\sigma^2},
\end{split}
\end{equation*}
so that we obtain the claim.
\end{proof}

The proof of \eqref{USe} relies on a Poincar\'e inequality on $\So$.
Consider the Hurtado--Rosales parametrisation of $\So$,
\[
(s,\theta)\mapsto F_0(s,\theta)\in\So,
\]
introduced in \eqref{HR} with $\varepsilon=0$. The vector field
\[
(s,\theta)\mapsto\partial_sF_0(s,\theta)\in T\So
\]
is horizontal and has unit length. Therefore, for every
$u\in\mathrm{Lip}(\So)$, we define the norm of its horizontal
tangential gradient by
\begin{equation}
\label{horiz}
\|\nablaSo u(F_0(s,\theta))\|
 =
\left|\frac{d}{ds}u(F_0(s,\theta))\right|.
\end{equation}

\begin{lemma}
\label{POI}
For every $\gamma>0$ and $L>0$, there exists a constant $m=m(\gamma,L)>0$ with the following
property. If $u\in\mathrm{Lip}(\So)$ satisfies
\eqref{Integral_estimate}, then
\begin{equation}
\label{PON}
m\int_{\So}u(p)^2\,d\Ao
 \leq
\int_{\So}\|\nablaSo u(p)\|^2\,d\Ao.
\end{equation}
\end{lemma}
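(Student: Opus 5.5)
We reduce the inequality to a one-dimensional statement along the horizontal geodesics $s\mapsto F_0(s,\theta)$, $s\in[0,\pi]$, which foliate $\So$. By \eqref{area_0} and \eqref{horiz}, writing $v_\theta(s)=u(F_0(s,\theta))$, inequality \eqref{PON} is equivalent to
\[
m\int_0^{2\pi}\!\!\int_0^\pi v_\theta(s)^2\sin^2 s\,ds\,d\theta\leq \int_0^{2\pi}\!\!\int_0^\pi v_\theta'(s)^2\sin^2 s\,ds\,d\theta .
\]
The poles correspond to $s=0$ and $s=\pi$, where $|z|=\sin s$, so hypothesis \eqref{Integral_estimate} reads $v_\theta(s)^2\leq L(\sin s)^\gamma I$, with $I=\int_{\So}u^2\,d\Ao$. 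In particular $v_\theta(0)=v_\theta(\pi)=0$ for every $\theta$ (assuming, as we may by homogeneity, $I>0$). The plan is to exploit this Dirichlet-type vanishing at both ends of each geodesic.

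\emph{Step 1 (Hardy/Poincaré on each fibre).} For a Lipschitz function $v$ on $[0,\pi]$ with $v(0)=0$, the weighted Hardy inequality with weight $\sin^2 s\sim s^2$ near $0$ gives $\int_0^{\pi/2} v^2\,ds\lesssim\int_0^{\pi/2}v'^2 s^2\,ds$. This inequality holds only under a decay condition at $0$; the required decay is exactly $v(s)^2\lesssim s^\gamma$ with $\gamma>0$, which makes the boundary term $[s\,v^2]_0$ vanish. Since $v^2\sin^2 s\leq v^2$, we would obtain, on each fibre,
\[
\int_0^\pi v^2\sin^2 s\,ds\leq C\int_0^\pi v'^2\sin^2 s\,ds .
\]
Unfortunately, the Hardy constant for weight $s^2$ in dimension one is borderline: the inequality $\int v^2\le 4\int s^2v'^2$ does hold for $v(0)=0$ with suitable decay. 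Even so, rather than rely on sharp constants, I would argue by contradiction, which is where $\gamma$ and $L$ enter.

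\emph{Step 2 (compactness argument).} Suppose there is a sequence $u_n$ satisfying \eqref{Integral_estimate}, normalised so that $\int_{\So}u_n^2\,d\Ao=1$, with $\int\|\nablaSo u_n\|^2\,d\Ao\to0$. Then $|v_{n,\theta}(s)|\leq \sqrt L(\sin s)^{\gamma/2}$ uniformly in $n$ and $\theta$. Hence on $\{\sin s<\eta\}$ the mass is at most $L\int_{\sin s<\eta}(\sin s)^{\gamma+2}\,ds\,d\theta$, which is small uniformly in $n$. On the compact region $\{\sin s\geq\eta\}$ the weight is bounded below, so $\int v_n'^2\,ds\,d\theta\to0$ there. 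Writing $v_{n,\theta}(s)=v_{n,\theta}(s_\eta)+\int_{s_\eta}^s v'$, where $s_\eta$ is the endpoint with $\sin s_\eta=\eta$, the pointwise bound gives $|v_{n,\theta}(s_\eta)|\le\sqrt L\eta^{\gamma/2}$. By Cauchy--Schwarz, $\int_{\sin s\ge\eta}v_n^2\sin^2s\leq C(L\eta^\gamma+o(1))$. Letting $n\to\infty$ and then $\eta\to0$ shows that the total mass tends to $0$, contradicting the normalisation. This yields $m=m(\gamma,L)$, and in fact gives a direct, non-contradiction proof with an explicit choice of $\eta$.

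\emph{Main obstacle.} The delicate point is that the control must be uniform in $\theta$ and that only derivatives along $\partial_s$, the horizontal direction, are available; there is no control in $\theta$. The argument above avoids this because each fibre is handled independently, and the only input needed at the endpoints is the uniform pointwise decay from \eqref{Integral_estimate}. I therefore expect the main care to lie in Step 2: checking that both endpoints $s=0$ and $s=\pi$ are treated, by splitting at $s=\pi/2$, and tracking the constant as an explicit function of $\eta$, $\gamma$ and $L$. Choosing $\eta$ with $CL\eta^\gamma\leq 1/4$ then gives \eqref{PON} with $m\simeq\eta^2$.
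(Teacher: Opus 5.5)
Your Step 2 is essentially the paper's proof: a normalised minimising sequence, smallness of the mass near the poles from \eqref{Integral_estimate}, and on $\{|z|>\varrho\}$ the fundamental theorem of calculus started at $s=\arcsin\varrho$ (where \eqref{Integral_estimate} bounds $v_n$), combined with Cauchy--Schwarz against the weight $\sin^2 s$. Delete Step 1: its Hardy claim is false, since $v(s)=\min\{s/h,1\}$ is Lipschitz with $v(0)=0$, yet $\int_0^1 v^2\,ds\geq 1-h$ while $\int_0^1 s^2v'(s)^2\,ds=h/3$, so vanishing at the poles alone gives no fibrewise inequality of this kind; Step 2 does not depend on it, and only the quantitative bound in \eqref{Integral_estimate} makes the argument work.
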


\begin{proof}
We claim that the following infimum is strictly positive:
\[
m
 =\inf\left\{
 \frac{\displaystyle
 \int_{\So}\|\nablaSo u(p)\|^2\,d\Ao}
 {\displaystyle
 \int_{\So}u(p)^2\,d\Ao}
 :
 \ u\in\mathrm{Lip}(\So),\
 u\text{ satisfies \eqref{Integral_estimate}},\
 u\neq0
 \right\}.
\]
Arguing by contradiction, suppose that $m=0$, and let
$(u_n)_{n\in\nn}$ be a minimising sequence. Since condition
\eqref{Integral_estimate} is homogeneous, we may normalise the sequence
so that
\[
\int_{\So}u_n(p)^2\,d\Ao=1
\qquad\text{for every }n\in\nn,
\]
and
\[
\lim_{n\to\infty}
\int_{\So}\|\nablaSo u_n(p)\|^2\,d\Ao=0.
\]

Choose $\rho>0$ such that
\begin{equation}
\label{W1}
\rho<\Big(\frac{1}{4L\Ao(\So)} \Big)^{1/\gamma}.
\end{equation}
It follows from \eqref{Integral_estimate} that 
\begin{equation}
\label{Q1}
\begin{split}
\int_{\So\cap\{|z|\leq\rho\}}u_n(p)^2\,d\Ao
&\leq
L\rho^\gamma \Ao\bigl(\So\cap\{|z|\leq\rho\}\bigr)
\int_{\So}u_n(p)^2\,d\Ao\\
&\leq L\rho^\gamma \Ao(\So)
<\frac{1}{4}.
\end{split}
\end{equation}

To estimate the integral over $\So\cap\{|z|>\rho\}$, we use the
parametrisation
$
F_0:[0,\pi]\times[0,2\pi]\to\So.
$
By \eqref{area_0} and \eqref{horiz}, for every
$u\in\mathrm{Lip}(\So)$, setting
$
v(s,\theta)=u\bigl(F_0(s,\theta)\bigr),
$
we have
\begin{equation}
\label{W2}
\begin{split}
\int_{\So}u(p)^2\,d\Ao
&=
\int_0^{2\pi}\int_0^\pi
v(s,\theta)^2\sin^2(s)\,ds\,d\theta,\\
\int_{\So}\|\nablaSo u(p)\|^2\,d\Ao
&=
\int_0^{2\pi}\int_0^\pi
|\partial_s v(s,\theta)|^2\sin^2(s)\,ds\,d\theta.
\end{split}
\end{equation}

Fix $\delta\in(0,\pi/2)$ such that
\[
F_0(s,\theta)\in\So\cap\{|z|>\rho\}
\quad\Longleftrightarrow\quad
s\in(\delta,\pi-\delta);
\]
that is, $\delta=\arcsin\rho$. Setting
$
v_n(s,\theta)=u_n\bigl(F_0(s,\theta)\bigr),
$
we have, for $s\in(\delta,\pi-\delta)$,
\[
v_n(s,\theta)
 =
v_n(\delta,\theta)
+\int_\delta^s\partial_\sigma v_n(\sigma,\theta)\,d\sigma.
\]
Since $v_n(\delta,\theta)^2\leq L\rho^\gamma$, it follows that
\[
v_n(s,\theta)^2
 \leq
2L\rho^\gamma
+2C_\delta
\int_\delta^{\pi-\delta}
|\partial_\sigma v_n(\sigma,\theta)|^2
\sin^2(\sigma)\,d\sigma,
\]
where
\[
C_\delta
 =\int_\delta^{\pi-\delta}
 \frac{1}{\sin^2(\sigma)}\,d\sigma.
\]

Integrating this inequality and using \eqref{W1} and \eqref{W2}, we
obtain
\begin{equation}
\label{Q2}
\begin{split}
\int_{\So\cap\{|z|>\rho\}}u_n(p)^2\,d\Ao
&=
\int_\delta^{\pi-\delta}\int_0^{2\pi}
v_n(s,\theta)^2\sin^2(s)\,d\theta\,ds\\
&\leq
2L\rho^\gamma\Ao(\So)
+2\pi C_\delta
\int_0^{2\pi}\int_\delta^{\pi-\delta}
|\partial_\sigma v_n(\sigma,\theta)|^2
\sin^2(\sigma)\,d\sigma\,d\theta\\
&<
\frac{1}{2}
+2\pi C_\delta
\int_{\So}\|\nablaSo u_n(p)\|^2\,d\Ao.
\end{split}
\end{equation}
Choose $n\in\nn$ sufficiently large that
\[
2\pi C_\delta
\int_{\So}\|\nablaSo u_n(p)\|^2\,d\Ao
<\frac{1}{4}.
\]
Combining \eqref{Q1} and \eqref{Q2}, we obtain the contradiction
\[
\int_{\So}u_n(p)^2\,d\Ao<1.
\]
This completes the proof.
\end{proof}

\begin{lemma}
\label{L5}
There exists $\varepsilon_0>0$ such that, for every
$\varepsilon\in(0,\varepsilon_0)$ and every
$u\in\mathrm{Lip}(\So)$,
\begin{equation}
\label{energy}
\varepsilon\int_{\Se}\|\nablaSe u\|^2\,d\Ae
 \geq
\frac{1}{4}\int_{\So}\|\nablaSo u\|^2\,d\Ao.
\end{equation}
\end{lemma}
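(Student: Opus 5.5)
The plan is to compare both integrals in the common parametrisation $(s,\theta)\mapsto F_\varepsilon(s,\theta)$ from \eqref{HR}, using that the lifted function depends only on the horizontal coordinate. Since $F_\varepsilon(s,\theta)$ and $F_0(s,\theta)$ share the horizontal component $\kappa(s,\theta)$, and their vertical components have the same sign, the lift satisfies
\[
u\bigl(F_\varepsilon(s,\theta)\bigr)=u\bigl(F_0(s,\theta)\bigr)=:v(s,\theta)
\]
for every $\varepsilon>0$. As $u$ is Lipschitz, $v$ is differentiable almost everywhere, and by \eqref{W2} the right-hand side of \eqref{energy} equals $\frac14\int_0^{2\pi}\!\int_0^\pi|\partial_s v|^2\sin^2 s\,ds\,d\theta$. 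It therefore suffices to bound the integrand on the left from below, pointwise in $(s,\theta)$, by $\frac14|\partial_s v|^2\sin^2 s$.

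\textbf{Step 1: bound the gradient.} At a point where $u$ is differentiable along $\Se$, Cauchy--Schwarz gives
\[
|\partial_s v|=\bigl|\langle\nablaSe u,\partial_sF_\varepsilon\rangle\bigr|\leq\|\nablaSe u\|\,\|\partial_sF_\varepsilon\|.
\]
By \eqref{fefe} and $0\le k_\varepsilon\le1$ we have $\|\partial_sF_\varepsilon\|^2=1+\sin^2(s)k_\varepsilon(s)^2\leq2$. Hence
\[
\|\nablaSe u\|^2\geq\tfrac12|\partial_s v|^2 .
\]

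\textbf{Step 2: bound the area element.} By Lemma \ref{Lex1}, $\varepsilon\|\partial_sF_\varepsilon\wedge\partial_\theta F_\varepsilon\|\geq\sin^2(s)\,|\ex^{is}-i\varepsilon k_\varepsilon(s)|$. Expanding the modulus,
\[
|\ex^{is}-i\varepsilon k_\varepsilon|^2=\cos^2 s+(\sin s-\varepsilon k_\varepsilon)^2=1-2\varepsilon k_\varepsilon\sin s+\varepsilon^2k_\varepsilon^2\geq1-2\varepsilon .
\]
For $\varepsilon<\varepsilon_0:=3/8$ this is at least $1/4$, so the modulus is at least $1/2$. Thus
\[
\varepsilon\|\partial_sF_\varepsilon\wedge\partial_\theta F_\varepsilon\|\geq\tfrac12\sin^2 s
\]
uniformly in $s\in[0,\pi]$, including near the poles.

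\textbf{Step 3: combine.} By the area formula \eqref{area_e} and Steps 1--2,
\[
\varepsilon\int_{\Se}\|\nablaSe u\|^2d\Ae\geq\int_0^{2\pi}\!\!\int_0^\pi\tfrac12|\partial_sv|^2\cdot\tfrac12\sin^2 s\,ds\,d\theta=\tfrac14\int_{\So}\|\nablaSo u\|^2d\Ao,
\]
where the last equality is \eqref{W2}.

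The only delicate points are measure-theoretic. The equator $s=\pi/2$, where the hemisphere formulas switch, and the poles form a null set. One also needs that the area formula \eqref{area_e} holds for the Lipschitz integrand $\|\nablaSe u\|^2$, which is standard because $F_\varepsilon$ is a Lipschitz parametrisation that is injective off a null set. The main step is the uniform lower bound on the area element in Step 2; it is elementary once \eqref{A_e} is available.
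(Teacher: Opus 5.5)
Your proposal is correct and follows the paper's proof: the same pointwise bound $\|\nablaSe u\|^2\geq\frac12|\partial_s v|^2$ from $\|\partial_sF_\varepsilon\|^2\leq2$, the same lower bound $\varepsilon\|\partial_sF_\varepsilon\wedge\partial_\theta F_\varepsilon\|\geq\frac12\sin^2 s$ from \eqref{A_e}, and the same combination through the area formulae \eqref{area_e} and \eqref{area_0}. The only cosmetic difference is that you bound $|\ex^{is}-i\varepsilon k_\varepsilon(s)|$ by expanding the square, whereas the paper uses the reverse triangle inequality $|\ex^{is}-i\varepsilon k_\varepsilon(s)|\geq1-\varepsilon$.
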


\begin{proof}
The length of the vector field
$\partial_sF_\varepsilon\in T\Se$ in \eqref{fefe} satisfies
\[
\begin{split}
\|\partial_sF_\varepsilon\|^2
&=1+\sin^2(s)k_\varepsilon(s)^2\\
&=
1+\left(
\frac{\varepsilon\sin s}
{\sqrt{\varepsilon^2+\tau^2\sin^2s}+|\tau|\sin s}
\right)^2
\leq2.
\end{split}
\]
Using the identity
$
u\bigl(F_\varepsilon(s,\theta)\bigr)
 =u\bigl(F_0(s,\theta)\bigr)
$
and \eqref{horiz}, we deduce that, for almost every $(s,\theta)$,
\begin{equation}
\label{M1}
\begin{split}
\|\nablaSe u(F_\varepsilon(s,\theta))\|^2
&\geq
\frac{1}{\|\partial_sF_\varepsilon\|^2}
\left|
\frac{d}{ds}u(F_\varepsilon(s,\theta))
\right|^2\\
&\geq
\frac{1}{2}
\left|
\frac{d}{ds}u(F_0(s,\theta))
\right|^2\\
&=
\frac{1}{2}\|\nablaSo u(F_0(s,\theta))\|^2.
\end{split}
\end{equation}
It follows from \eqref{A_e} and the bound
$0\leq k_\varepsilon(s)\leq1$ that there exists
$\varepsilon_0>0$ such that, for every
$\varepsilon\in(0,\varepsilon_0)$, $s\in[0,\pi]$, and
$\theta\in[0,2\pi]$,
\begin{equation}
\label{M2}
\varepsilon
\|\partial_sF_\varepsilon\wedge\partial_\theta F_\varepsilon\|
 \geq\frac{1}{2}\sin^2(s).
\end{equation}
Indeed,
$
\bigl|\ex^{is}-i\varepsilon k_\varepsilon(s)\bigr|
 \geq1-\varepsilon k_\varepsilon(s)
 \geq1-\varepsilon.
$
The area formulae \eqref{area_e} and \eqref{area_0}, together with
\eqref{M1} and \eqref{M2}, now give
\[
\begin{split}
\varepsilon\int_{\Se}\|\nablaSe u\|^2\,d\Ae
&=
\int_0^\pi\int_0^{2\pi}
\|\nablaSe u(F_\varepsilon(s,\theta))\|^2
\varepsilon
\|\partial_sF_\varepsilon\wedge\partial_\theta F_\varepsilon\|
\,d\theta\,ds\\
&\geq
\frac{1}{4}
\int_0^\pi\int_0^{2\pi}
\|\nablaSo u(F_0(s,\theta))\|^2
\sin^2(s)\,d\theta\,ds\\
&=
\frac{1}{4}\int_{\So}\|\nablaSo u\|^2\,d\Ao.
\end{split}
\]
\end{proof}

\begin{lemma}
\label{leox}
For every $L>0$ and $\eta>0$, there exists
$\varepsilon_0=\varepsilon_0(\eta,L)>0$ with the following property.
If $u\in\mathrm{Lip}(\So)$ satisfies \eqref{Integral_estimate}, then,
for every $\varepsilon\in(0,\varepsilon_0)$,
\begin{equation}
\label{FOX}
\left|
\varepsilon\int_{\Se}u^2\,d\Ae
-\int_{\So}u^2\,d\Ao
\right|
\leq
\eta\int_{\So}u^2\,d\Ao.
\end{equation}
\end{lemma}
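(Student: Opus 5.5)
We write both integrals in the Hurtado--Rosales coordinates and split them into a polar region, where the pointwise bound \eqref{Integral_estimate} makes everything small, and an equatorial region, where the uniform convergence \eqref{UNIF} applies. Set $v(s,\theta)=u(F_0(s,\theta))$. Since the lift of $u$ to $\Se$ satisfies $u(F_\varepsilon(s,\theta))=v(s,\theta)$, the area formulae \eqref{area_e} and \eqref{area_0} give
\[
\varepsilon\int_{\Se}u^2\,d\Ae-\int_{\So}u^2\,d\Ao
=\int_0^\pi\int_0^{2\pi}v(s,\theta)^2\Bigl(\varepsilon\|\partial_sF_\varepsilon\wedge\partial_\theta F_\varepsilon\|-\sin^2(s)\Bigr)\,d\theta\,ds .
\]
Write $I=\int_{\So}u^2\,d\Ae$ with $\Ao$ in place of $\Ae$, that is, $I=\int_{\So}u^2\,d\Ao$. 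We fix a small $\rho\in(0,1)$, set $\delta=\arcsin\rho$, and split the $s$-integral into $[\delta,\pi-\delta]$ and its complement, which corresponds to $\{|z|\leq\rho\}$.

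\textbf{Polar region.} By \eqref{Integral_estimate}, on $\{|z|\leq\rho\}$ we have $v^2\leq L\rho^\gamma I$. The contribution of this region to the difference is therefore bounded in absolute value by
\[
L\rho^\gamma I\,\Bigl(\varepsilon\Ae\bigl(\Se\cap\{|z|\leq\rho\}\bigr)+\Ao\bigl(\So\cap\{|z|\leq\rho\}\bigr)\Bigr).
\]
By \eqref{poker-bis}, and since $\Ao(\So)$ is finite, the bracket is at most an absolute constant $C_1$ when $\varepsilon<1$. Explicitly, \eqref{poker-bis} gives $\varepsilon\Ae(\Se\cap\{|z|\leq\rho\})\leq4\pi\sqrt2$. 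We now choose $\rho$ so small that $C_1L\rho^\gamma<\eta/2$. This choice of $\rho$ depends only on $\eta$, $L$ and $\gamma$.

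\textbf{Equatorial region.} On $[\delta,\pi-\delta]$ we use \eqref{UNIF}. It provides $\varepsilon_0$, depending on $\delta$ and hence on $\eta,L,\gamma$, such that for $\varepsilon<\varepsilon_0$
\[
\bigl|\varepsilon\|\partial_sF_\varepsilon\wedge\partial_\theta F_\varepsilon\|-\sin^2 s\bigr|\leq \frac{\eta}{2}\sin^2\delta
\quad\text{on } [\delta,\pi-\delta]\times[0,2\pi].
\]
Since $\sin^2\delta\leq\sin^2 s$ on this interval, the right-hand side is at most $\frac{\eta}{2}\sin^2 s$. The contribution of this region is then bounded by $\frac{\eta}{2}\int v^2\sin^2 s\,d\theta\,ds\leq\frac{\eta}{2}I$. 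Adding the two estimates gives \eqref{FOX}.

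The only delicate point is the uniformity in $u$. The pointwise bound \eqref{Integral_estimate} is homogeneous in $I$, and this is what lets the polar error be absorbed by a multiple of $I$ independent of $u$. The equatorial error is controlled by the relative bound above, which holds for every $u$ once $\varepsilon<\varepsilon_0$. Consequently $\varepsilon_0$ depends on $\gamma$ as well, through the choice of $\rho$; this dependence is harmless, since $\gamma$ is fixed in the statement of Theorem~\ref{8.1}.
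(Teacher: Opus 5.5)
Your argument is correct and follows the paper's proof closely. It uses the same splitting into the polar caps $\{|z|\leq\rho\}$ and the band $s\in[\delta,\pi-\delta]$, and the same relative use of \eqref{UNIF} on the band: your bound $\frac{\eta}{2}\sin^2\delta\leq\frac{\eta}{2}\sin^2 s$ plays the role of the paper's $K_\varepsilon(s)\leq\eta/2$. It falls short of the statement only in how $\varepsilon_0$ depends on the parameters. The lemma asserts $\varepsilon_0=\varepsilon_0(\eta,L)$, but you take the smallness on the caps from the factor $\rho^\gamma$. So your $\rho$, and hence your $\varepsilon_0$, depends on $\gamma$. You note this yourself and call it harmless. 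That is true for the later uses in Lemma~\ref{Lex3} and Theorem~\ref{8.1}, where $\gamma$ is fixed anyway, but it is not what the lemma claims.

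The paper avoids this by discarding the factor, using only $|z|^\gamma\leq 1$, and taking the smallness from the measure of the caps. It chooses $\varrho$ with $L\bigl(\Ao(\So\cap\{|z|\leq\varrho\})+4\pi\sqrt5\arcsin\varrho\bigr)\leq\eta/2$. Your own ingredients suffice for the same repair:
\begin{itemize}
\item \eqref{poker-bis} gives $\varepsilon\Ae(\Se\cap\{|z|\leq\rho\})\leq 4\pi\rho^2\sqrt{1+\rho^2}$ for $\varepsilon<1$.
\item $\Ao(\So\cap\{|z|\leq\rho\})\to0$ as $\rho\to0$.
\end{itemize}
Hence the bracket in your polar estimate tends to $0$ with $\rho$, uniformly in $\varepsilon<1$. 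Choose $\rho$ so that $L$ times this bracket is at most $\eta/2$. Then $\rho$, and therefore $\varepsilon_0$, depends only on $\eta$ and $L$, as stated.
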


\begin{proof}
Choose $\varrho\in(0,1)$ such that
\begin{equation}
\label{PAR}
L
\left(
\Ao\bigl(\So\cap\{|z|\leq\varrho\}\bigr)
+4\pi\sqrt{5}\arcsin(\varrho)
\right)
\leq\frac{\eta}{2}.
\end{equation}
By \eqref{Integral_estimate},
\[
\int_{\So\cap\{|z|\leq\varrho\}}u^2\,d\Ao
\leq
L\varrho^\gamma
\Ao\bigl(\So\cap\{|z|\leq\varrho\}\bigr)
\int_{\So}u^2\,d\Ao,
\]
where $\varrho^\gamma\leq 1$.
On the other hand, it follows from \eqref{A_e} that, for every
$\varepsilon\in(0,1]$, $s\in[0,\pi]$, and
$\theta\in[0,2\pi]$,
\[
\varepsilon
\|\partial_sF_\varepsilon\wedge\partial_\theta F_\varepsilon\|
\leq\sqrt{5}.
\]
Consequently,
\[
\varepsilon
\int_{\Se\cap\{|z|\leq\varrho\}}u^2\,d\Ae
\leq
4\pi\sqrt{5}\arcsin(\varrho)L\varrho^\gamma
\int_{\So}u^2\,d\Ao.
\]
It now follows from \eqref{PAR} that
\[
\left|
\varepsilon\int_{\Se\cap\{|z|\leq\varrho\}}u^2\,d\Ae
-\int_{\So\cap\{|z|\leq\varrho\}}u^2\,d\Ao
\right|
\leq
\frac{\eta}{2}\int_{\So}u^2\,d\Ao.
\]

Set $\delta=\arcsin(\varrho)$. On the complementary region, we have
\[
\begin{split}
\left|
\varepsilon\int_{\Se\cap\{|z|>\varrho\}}u^2\,d\Ae
-\int_{\So\cap\{|z|>\varrho\}}u^2\,d\Ao
\right|
\leq{}&
\int_\delta^{\pi-\delta}\int_0^{2\pi}
u\bigl(F_0(s,\theta)\bigr)^2\\
&\quad{}\times K_\varepsilon(s)\sin^2(s)\,d\theta\,ds,
\end{split}
\]
where, by \eqref{A_e},
\[
K_\varepsilon(s)
 =
\left|
\left(
\bigl|\ex^{is}-i\varepsilon k_\varepsilon(s)\bigr|^2
+\frac{\varepsilon^2\cos^2(s)}{\sin^2(s)}
\right)^{1/2}
-1
\right|.
\]
There exists
$\varepsilon_0=\varepsilon_0(\varrho,\eta)>0$ such that, for every
$\varepsilon\in(0,\varepsilon_0)$ and
$s\in[\delta,\pi-\delta]$,
\[
K_\varepsilon(s)\leq\frac{\eta}{2}.
\]
Therefore, by \eqref{area_0},
\[
\left|
\varepsilon\int_{\Se\cap\{|z|>\varrho\}}u^2\,d\Ae
-\int_{\So\cap\{|z|>\varrho\}}u^2\,d\Ao
\right|
\leq
\frac{\eta}{2}\int_{\So}u^2\,d\Ao.
\]
Combining the estimates on the two regions proves the claim.
\end{proof}

\begin{lemma}
\label{Lex3}
For every $\gamma>0$, $L>0$, and $\eta>0$, there exists
$\varepsilon_0=\varepsilon_0(\eta;\gamma,L)>0$ with the following property.
If $u\in\mathrm{Lip}(\So)$ satisfies \eqref{Integral_estimate}, then,
for every $\varepsilon\in(0,\varepsilon_0)$,
\begin{equation}
\label{Lex33}
\int_{\Se}
\left(
\operatorname{Ric}_\varepsilon(\Ne)+\|\hep\|^2
\right)u^2\,d\Ae
\leq
\eta\int_{\Se}u^2\,d\Ae.
\end{equation}
\end{lemma}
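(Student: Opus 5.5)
We will split $\Se$ into a polar region $\{|z|\leq\varrho\}$ and its complement, with $\varrho$ chosen first and $\varepsilon_0$ afterwards. The two ingredients are the explicit formula \eqref{QUQQO}, $Q_\varepsilon=2(1+\se^2)/(1+\se^2r^2)^2\geq0$, which is uniformly small away from the poles by \eqref{tuono}, and the pointwise decay \eqref{Integral_estimate} of $u^2$ near the poles. The total mass of $Q_\varepsilon$ is $8\pi$ by \eqref{lampo}, so the polar contribution is controlled by the smallness of $u^2$ there. Both pieces will be compared with $\int_{\So}u^2\,d\Ao$, which is in turn comparable to $\varepsilon\int_{\Se}u^2\,d\Ae$ by Lemma \ref{leox}.

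\emph{Polar region.} Since $u(z,t')=u(z,t)$ for the lifted function, \eqref{Integral_estimate} gives $u^2\leq L\varrho^\gamma\int_{\So}u^2\,d\Ao$ on $\Se\cap\{|z|\leq\varrho\}$. Using $Q_\varepsilon\geq0$ and \eqref{lampo},
\[
\int_{\Se\cap\{|z|\leq\varrho\}}Q_\varepsilon u^2\,d\Ae\leq 8\pi L\varrho^\gamma\int_{\So}u^2\,d\Ao .
\]
Applying Lemma \ref{leox} with $\eta=1/2$ gives, for $\varepsilon$ small, $\int_{\So}u^2\,d\Ao\leq 2\varepsilon\int_{\Se}u^2\,d\Ae\leq 2\int_{\Se}u^2\,d\Ae$ (assuming $\varepsilon\le1$). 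Hence the polar contribution is at most $16\pi L\varrho^\gamma\int_{\Se}u^2\,d\Ae$. We therefore fix $\varrho=\varrho(\eta,\gamma,L)$ with $16\pi L\varrho^\gamma\leq\eta/2$.

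\emph{Non-polar region.} Here we simply use
\[
\int_{\Se\cap\{|z|>\varrho\}}Q_\varepsilon u^2\,d\Ae\leq \sup_{|z|\geq\varrho}Q_\varepsilon\int_{\Se}u^2\,d\Ae .
\]
By \eqref{tuono}, or explicitly $Q_\varepsilon\leq 2(1+\se^2)/(\se^4\varrho^4)\to0$ as $\se=\tau/\varepsilon\to\infty$, we can choose $\varepsilon_0$ so that this supremum is at most $\eta/2$. Summing the two regions yields \eqref{Lex33}, with $\varepsilon_0$ the minimum of the thresholds coming from Lemma \ref{leox} and from \eqref{tuono}.

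The only real point is the polar estimate. There $Q_\varepsilon$ blows up like $\se^2$, so no pointwise bound is available, and the argument rests entirely on the exact mass identity \eqref{lampo} combined with the smallness of $u^2$ near the poles. The remaining care is bookkeeping: both the decay \eqref{Integral_estimate} and Lemma \ref{leox} are normalised against $\So$ with the measure $\Ao$, while the conclusion is against $\Se$ with $\Ae$. The factor $\varepsilon$ produced by Lemma \ref{leox} works in our favour, since $\varepsilon\le1$ only improves the inequality $\int_{\So}u^2\,d\Ao\leq 2\int_{\Se}u^2\,d\Ae$.
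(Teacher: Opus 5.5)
Your proof is correct and follows the paper's argument essentially step by step. You use the same split at $|z|=\varrho$, the mass identity \eqref{lampo} with \eqref{Integral_estimate} near the poles, Lemma \ref{leox} to convert $\int_{\So}u^2\,d\Ao$ into $\int_{\Se}u^2\,d\Ae$, and \eqref{QUQQO} away from the poles. As your closing remark suggests, if you keep the factor $\varepsilon$ from Lemma \ref{leox} you get $\int_{\Se}Q_\varepsilon u^2\,d\Ae\leq 8\pi L\int_{\So}u^2\,d\Ao\leq 16\pi L\varepsilon\int_{\Se}u^2\,d\Ae$ directly, so the splitting is not actually needed.
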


\begin{proof}
Choose $\varrho\in(0,1)$ such that
\begin{equation}
\label{riomare}
8\pi L\varrho^\gamma \leq\frac{\eta}{2}.
\end{equation}
We first consider the region where $|z|\leq\varrho$. By
\eqref{Integral_estimate}, \eqref{lampo}, and \eqref{riomare}, we have
\begin{equation}
\label{BAB}
\begin{split}
\int_{\Se\cap\{|z|\leq\varrho\}}
\left(
\operatorname{Ric}_\varepsilon(\Ne)+\|\hep\|^2
\right)u^2\,d\Ae
&\leq
8\pi L\varrho^\gamma \int_{\So}u^2\,d\Ao\\
&\leq
\frac{\eta}{2}\int_{\So}u^2\,d\Ao\\
&\leq
\frac{\eta}{2}\int_{\Se}u^2\,d\Ae,
\end{split}
\end{equation}
where, after decreasing $\varepsilon_0$ if necessary, the last
inequality follows from Lemma \ref{leox}.

We now  estimate the integral over
$\Se\cap\{|z|>\varrho\}$. By \eqref{QUQQO}, there exists
$\varepsilon_0=\varepsilon_0(\varrho,\eta)>0$ such that, for every
$\varepsilon\in(0,\varepsilon_0)$ and every $(z,t)\in\Se$ with
$|z|\geq\varrho$,
\[
\operatorname{Ric}_\varepsilon(\Ne)+\|\hep\|^2
\leq\frac{\eta}{2}.
\]
It follows that
\begin{equation}
\label{BIB}
\int_{\Se\cap\{|z|\geq\varrho\}}
\left(
\operatorname{Ric}_\varepsilon(\Ne)+\|\hep\|^2
\right)u^2\,d\Ae
\leq
\frac{\eta}{2}\int_{\Se}u^2\,d\Ae.
\end{equation}
Combining \eqref{BAB} and \eqref{BIB} proves the claim.
\end{proof}

\medskip

\begin{proof}[Proof of Theorem \ref{8.1}]
Fix $\gamma>0$ and $L>0$, and let $m=m(\gamma,L)>0$ be the constant given by Lemma
\ref{POI}. Choose $\eta>0$ such that
\begin{equation}
\label{fulmine}
\frac{m}{4(1+\eta)}-\eta\geq\frac{m}{8}.
\end{equation}
Let $\varepsilon_0>0$ be sufficiently small that Lemmas \ref{L5},
\ref{leox}, and \ref{Lex3} apply, and fix
$\varepsilon\in(0,\varepsilon_0)$. Let
$u\in\mathrm{Lip}(\So)$ satisfy \eqref{Integral_estimate}. By
\eqref{energy} and \eqref{Lex33},
\[
\begin{split}
\int_{\Se}
\left\{
\|\nablaSe u\|^2
-\left(
\operatorname{Ric}_\varepsilon(\Ne)+\|\hep\|^2
\right)u^2
\right\}\,d\Ae
&\geq
\frac{1}{4\varepsilon}
\int_{\So}\|\nablaSo u\|^2\,d\Ao
-\eta\int_{\Se}u^2\,d\Ae\\
&\geq
\frac{m}{4\varepsilon}
\int_{\So}u^2\,d\Ao
-\eta\int_{\Se}u^2\,d\Ae,
\end{split}
\]
where the second inequality follows from the Poincar\'e inequality
\eqref{PON}. By \eqref{FOX},
\[
\varepsilon\int_{\Se}u^2\,d\Ae
\leq
(1+\eta)\int_{\So}u^2\,d\Ao.
\]
Consequently,
\[
\begin{split}
\int_{\Se}
\left\{
\|\nablaSe u\|^2
-\left(
\operatorname{Ric}_\varepsilon(\Ne)+\|\hep\|^2
\right)u^2
\right\}\,d\Ae
&\geq
\left(
\frac{m}{4(1+\eta)}-\eta
\right)
\int_{\Se}u^2\,d\Ae\\
&\geq
\frac{m}{8}\int_{\Se}u^2\,d\Ae,
\end{split}
\]
where the last inequality follows from \eqref{fulmine}.
\end{proof}

\section{Expansion of the isoperimetric functional}
\label{EXP}

In this section, we compute the Taylor expansion of the isoperimetric
functional up to second order, with the remainder in integral form, and
outline the estimates carried out in the following sections. The
second-order term can be expressed in different ways, which exhibit
different behaviour as the parameter $\varepsilon>0$ varies.

Fix $\varepsilon>0$. For a function $u\in\mathrm{Lip}(\Se)$, let
$\Se^u$ be its normal graph and $\Be^u$ the bounded domain such that
$\partial\Be^u=\Se^u$. We consider the isoperimetric functional
$\F:\mathrm{Lip}(\Se)\to\rr$ defined by
\[
\F(u)=\Ae(\Se^u)-H\Ve(\Be^u),
\]
where $H=2$ is the constant mean curvature of $\Se$.

For fixed $u$ and $s\in[0,1]$, set
\[
\Fep(s)=\F(su).
\]
The function $\Fep$ has the following Taylor expansion:
\begin{equation}
\label{G1}
\Fep(1)
 =\Fep(0)+\Fep'(0)+\frac{1}{2}\Fep''(0)
 +\int_0^1(1-s)\bigl(\Fep''(s)-\Fep''(0)\bigr)\,ds.
\end{equation}
The computations of the various terms are well known. For the sake of clarity, we recall them in the following results.

We assume that $u:\Se\to\rr$ is (continuous and) the lift to $\Se$ of a piecewise $C^2$ function on $\So$. We denote by
$\Nsu$ the outer unit normal to the graph $\Se^{su}$, by $\hsu$ its
second fundamental form, and by $\Hsu$ its mean curvature. Consider the
geodesic flow
$
\Psi_\varepsilon^s=\Psi_\varepsilon(\,\cdot\,,s)
$
introduced in \eqref{Psie1}--\eqref{Psie2}. Its generating vector field
\[
\Ne=\frac{\partial}{\partial s}\Psi_\varepsilon^s
\]
coincides on $\Se$ with its outer unit normal. To perform the
computations for fixed $\eps>0$, we extend $u$ constantly along the
flow $\Psi_\varepsilon^s$ and define $\bar u:\Se^{su}\to\rr$ by
\[
\bar u=\langle\Ne,\Nsu\rangle u.
\]

\begin{theorem}[First and second variations of area]
\label{pacco}
Let $\varepsilon>0$, and let $u:\Se\to\rr$ be a function of class $C^2$. Then, for every $s\in[0,1]$,
\begin{equation}
\label{fvareas}
\frac{d}{ds}\Ae(\Se^{su})
 =\int_{\Se^{su}}\Hsu\bar u\,d\Ae,
\end{equation}
and
\begin{equation}
\label{svareas}
\frac{d^2}{ds^2}\Ae(\Se^{su})
 =
\int_{\Se^{su}}
\left\{
\|\nabla_{\Se^{su}}\bar u\|^2
+\left(
(\Hsu)^2-\operatorname{Ric}_\varepsilon(\Nsu)-\|\hsu\|^2
\right)\bar u^2
\right\}\,d\Ae.
\end{equation}
\end{theorem}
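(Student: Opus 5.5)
We follow the standard variational computation for normal variations in a Riemannian manifold, applied to the flow $\Psi_\varepsilon^{s}$ with $\varepsilon>0$ fixed. Set $\Phi_s=\Psi_\varepsilon(\cdot,su(\cdot))$, which maps $\Se$ onto $\Se^{su}$. The variation field is
\[
V=\frac{\partial}{\partial s}\Phi_s=u\,\Ne ,
\]
evaluated along $\Se^{su}$, where $u$ is extended constantly along the flow lines. We decompose $V=\bar u\,\Nsu+V^\top$, with $\bar u=\langle \Ne,\Nsu\rangle u$ and $V^\top$ tangent to $\Se^{su}$. Since $u$ is the lift of a piecewise-$C^2$ function and $\Psi_\varepsilon$ is analytic, $\Phi_s$ is a $C^2$ family of embeddings for $s\in[0,1]$ (assuming $|u|$ small, as throughout). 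Hence the classical formulae apply at every $s$, not only at $s=0$.

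\emph{First variation.} Pull the area back: $\Ae(\Se^{su})=\int_{\Se}J_s\,d\Ae$, where $J_s$ is the Jacobian of $\Phi_s$. Then $\frac{d}{ds}J_s=(\divv_{\Se^{su}}V)\circ\Phi_s\,J_s$. Next use
\[
\divv_{\Se^{su}}V=\bar u\,\divv_{\Se^{su}}\Nsu+\divv_{\Se^{su}}V^\top=\Hsu\bar u+\divv_{\Se^{su}}V^\top .
\]
The divergence theorem on the closed surface $\Se^{su}$ kills the tangential term, which gives \eqref{fvareas}.

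\emph{Second variation.} Differentiate \eqref{fvareas} once more at a fixed $s_1$ by reparametrising $\Se^{(s_1+h)u}$ over $\Se^{s_1u}$ via the flow. Since we do not have $\nabla_VV=0$ for $V=u\Ne$ in general, we avoid working with $V$ itself. Instead we use the fact that the area is invariant under tangential reparametrisation. The derivative of $\Ae$ at $\Se^{s_1u}$ along the family equals the derivative along the purely normal variation with speed $\bar u$, up to a tangential diffeomorphism. This holds to first order in $h$.

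For the second derivative, we differentiate the integrand $\Hsu\bar u\,J$ and use three standard ingredients:
\begin{itemize}
\item the evolution of the mean curvature under a normal speed $\bar u$:
\[
\partial_s H=-\Delta_{\Se^{su}}\bar u-(\ric_\varepsilon(\Nsu)+\|\hsu\|^2)\bar u+\langle\nabla_{\Se^{su}}H,V^\top\rangle ;
\]
\item the evolution of $J$, namely $\partial_sJ=(\Hsu\bar u+\divv V^\top)J$;
\item the evolution of $\bar u$ along the flow, which contributes $\partial_s\bar u$.
\end{itemize}

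The tangential terms combine into a total divergence $\divv_{\Se^{su}}(\Hsu\bar u V^\top)$, which integrates to zero. The main obstacle is the term $\int \Hsu\,\partial_s\bar u$. For a genuine second variation formula in which only $\bar u$ appears, one needs the acceleration term $\langle\nabla_VV,\Nsu\rangle$, which arises from $\partial_s\bar u$, to vanish.

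We check that it does. Since $u$ is constant along the flow lines and $\nabla_{\Ne}\Ne=0$,
\[
\nabla_VV=u^2\nabla_{\Ne}\Ne+u(\Ne u)\Ne=0 .
\]
So the curves $s\mapsto\Phi_s(p)$ are geodesics. The expansion then reduces to the classical formula for a geodesic variation, given in \cite{Ritore2023}:
\[
\frac{d^2}{ds^2}\Ae=\int\bigl(\|\nabla\bar u\|^2-(\ric_\varepsilon+\|h\|^2)\bar u^2+H^2\bar u^2\bigr)+\int H\langle\nabla_VV,N\rangle .
\]
The last integral vanishes, and after integrating $-\bar u\Delta\bar u$ by parts we obtain \eqref{svareas}. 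Some care is needed in verifying that the tangential and $\partial_s\bar u$ contributions cancel exactly. We would check this by working with the symmetric formula
\[
\frac{d^2}{ds^2}J=\bigl[(\divv V)^2+\textstyle\sum_i\|(\nabla_{e_i}V)^\perp\|^2-\sum_{i}\langle R(e_i,V)V,e_i\rangle-\sum_{i,j}\langle\nabla_{e_i}V,e_j\rangle\langle\nabla_{e_j}V,e_i\rangle+\divv(\nabla_VV)\bigr]J .
\]
Here the last term is zero, and substituting $V=\bar u N+V^\top$ yields the stated integrand modulo divergences.
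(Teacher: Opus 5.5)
\textbf{There is a genuine gap, and it sits in your claim that the classical formulae ``apply at every $s$''.} Your first-variation argument is the same as the paper's and is correct. The problem is in the second variation. The geodesic-variation formula you quote from Ritor\'e presupposes that the variation field is normal to the surface at the instant of evaluation. Here that holds only at $s=0$.

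For $s>0$ the field $V=u\Ne$ has a nonzero tangential part $V^\top=uN_\varepsilon^\top$. Consequently $\partial_s\bar u$ is not just the acceleration term. Write $\hsu(X,Y)=\langle\nabla_X\Nsu,Y\rangle$. The vector $\nabla_{\partial_s}\Nsu$ is tangent and satisfies $\langle\nabla_{\partial_s}\Nsu,Y\rangle=-Y(\bar u)+\hsu(V^\top,Y)$, so
\[
\partial_s\bar u=\langle\nabla_VV,\Nsu\rangle-V^\top(\bar u)+\hsu(V^\top,V^\top).
\]
Even with $\nabla_VV=0$, the tangential contributions to $\partial_s(\Hsu\bar u\,J)$ therefore add up to
\[
\divv_{\Se^{su}}\bigl(\Hsu\bar u\,V^\top\bigr)+\Hsu\bigl(\hsu(V^\top,V^\top)-2V^\top(\bar u)\bigr).
\]
This is not a divergence, so your cancellation fails. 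What one actually gets is
\[
\frac{d^2}{ds^2}\Ae(\Se^{su})=\int_{\Se^{su}}\Bigl\{\|\nabla_{\Se^{su}}\bar u\|^2+\bigl((\Hsu)^2-\ric_\varepsilon(\Nsu)-\|\hsu\|^2\bigr)\bar u^2+\Hsu\,\bar u\,\Nsu(\bar u)\Bigr\}\,d\Ae .
\]
Here $\bar u\,\Nsu(\bar u)=\hsu(V^\top,V^\top)-2V^\top(\bar u)$ is exactly the integrand that appears in \eqref{VV2}. Substituting $V=\bar u\Nsu+V^\top$ into your symmetric formula for $\frac{d^2}{ds^2}J$ produces the same residual, so that route does not close either.

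\textbf{The extra term is really there, and \eqref{svareas} holds only at $s=0$.} Take the Euclidean analogue with $H=2$: the normal graphs $\{(1+su(\omega))\omega\}$ over the unit sphere $\mathbb S^2\subset\rr^3$. Their area satisfies
\[
A''(s)=\int_{\mathbb S^2}\bigl(|\nabla u|^2+2u^2\bigr)\,d\sigma+O(s^2).
\]
By contrast, the right-hand side of \eqref{svareas} equals
\[
\int_{\mathbb S^2}\bigl(|\nabla u|^2+2u^2+4s\,u|\nabla u|^2\bigr)\,d\sigma+O(s^2).
\]
Since $\int u|\nabla u|^2>0$ for every nonconstant $u\geq0$, no argument can establish \eqref{svareas} for $s\in(0,1]$ as stated.

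The paper's proof makes the same slip. In $\int\bar u\,\Nsu[\divv_{\Se^{su}}(\bar u\Nsu)]\,d\Ae$ it keeps only $\bar u^2\Nsu(\Hsu)$ and drops $\Hsu\,\bar u\,\Nsu(\bar u)$.

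The omission does not affect the main theorem. With the corrected formula, $\Fep''(s)$ contains $\int_{\Se^{su}}(\Hsu-H)\,\bar u\,\Nsu(\bar u)\,d\Ae$ in place of $-HE_4(s)$. This term is controlled by the pointwise bound on $\bar u\,\Nsu(\bar u)$ from the proof of Theorem~\ref{unifcontE4}, together with Lemma~\ref{estimHeps}.
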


\begin{proof}
Let
$
N_\varepsilon^\perp
 =\langle\Ne,\Nsu\rangle\Nsu
$
be the component of $\Ne$ orthogonal to $T\Se^{su}$, and let
$
N_\varepsilon^\top=\Ne-N_\varepsilon^\perp
$
be its tangential component. By Stokes' theorem,
\begin{equation}
%\label{IP}
\begin{split}
\frac{d}{ds}\Ae(\Se^{su})
&=
\int_{\Se^{su}}
\operatorname{div}_{\Se^{su}}(u\Ne)\,d\Ae\\
&=
\int_{\Se^{su}}
\operatorname{div}_{\Se^{su}}
\bigl(uN_\varepsilon^\top+uN_\varepsilon^\perp\bigr)\,d\Ae\\
&=
\int_{\Se^{su}}
\operatorname{div}_{\Se^{su}}(uN_\varepsilon^\perp)\,d\Ae\\
&=
\int_{\Se^{su}}
\operatorname{div}_{\Se^{su}}(\bar u\Nsu)\,d\Ae\\
&=
\int_{\Se^{su}}
\bar u\,\operatorname{div}_{\Se^{su}}(\Nsu)\,d\Ae.
\end{split}
\end{equation}
This proves \eqref{fvareas}.

Differentiating once more with respect to $s$, which amounts to
differentiating in the direction $u\Ne$, we obtain
\[
\begin{split}
\frac{d^2}{ds^2}\Ae(\Se^{su})
=
\int_{\Se^{su}}
\Bigl\{&
\bar u\,\operatorname{div}_{\Se^{su}}(\Nsu)
       \operatorname{div}_{\Se^{su}}(u\Ne)\\
&+
u\bigl(N_\varepsilon^\top+N_\varepsilon^\perp\bigr)
\left[
\operatorname{div}_{\Se^{su}}(\bar u\Nsu)
\right]
\Bigr\}\,d\Ae.
\end{split}
\]
Integration by parts gives
\begin{equation}
\label{IP}
\begin{split}
\int_{\Se^{su}}
uN_\varepsilon^\top
\left[
\operatorname{div}_{\Se^{su}}(\bar u\Nsu)
\right]\,d\Ae
=
-\int_{\Se^{su}}
\bar u\,
\operatorname{div}_{\Se^{su}}(uN_\varepsilon^\top)
\operatorname{div}_{\Se^{su}}(\Nsu)\,d\Ae.
\end{split}
\end{equation}
On the other hand, the contribution from the normal component is
\[
\begin{split}
\int_{\Se^{su}}
uN_\varepsilon^\perp
\left[
\operatorname{div}_{\Se^{su}}(\bar u\Nsu)
\right]\,d\Ae
&=
\int_{\Se^{su}}
\bar u\Nsu
\left[
\operatorname{div}_{\Se^{su}}(\bar u\Nsu)
\right]\,d\Ae\\
&=
\int_{\Se^{su}}
\left\{
\|\nabla_{\Se^{su}}\bar u\|^2
-Q_\varepsilon^{su}\bar u^2
\right\}\,d\Ae,
\end{split}
\]
where $
Q_\varepsilon^{su}
 =\operatorname{Ric}_\varepsilon(\Nsu)+\|\hsu\|^2,
$  see \cite{Ritore2023}.

Combining these identities, we obtain
\[
\begin{split}
\frac{d^2}{ds^2}\Ae(\Se^{su})
={}&
\int_{\Se^{su}}
\bar u\,\operatorname{div}_{\Se^{su}}(\Nsu)
\left\{
\operatorname{div}_{\Se^{su}}(u\Ne)
-\operatorname{div}_{\Se^{su}}(uN_\varepsilon^\top)
\right\}\,d\Ae\\
&+
\int_{\Se^{su}}
\left\{
\|\nabla_{\Se^{su}}\bar u\|^2
-Q_\varepsilon^{su}\bar u^2
\right\}\,d\Ae\\
={}&
\int_{\Se^{su}}
\left\{
\|\nabla_{\Se^{su}}\bar u\|^2
-Q_\varepsilon^{su}\bar u^2
+(\Hsu)^2\bar u^2
\right\}\,d\Ae.
\end{split}
\]
This proves \eqref{svareas}.
\end{proof}

%For future reference, we observe that there exists and absolute constant $C>0$ such that
%\begin{equation}
%\label{picco}
%\Big| \int_{T_{\eps} ^{su}}
%u ^2 \langle \Ne,\mathscr{N}^{su}_{\eps,+}   \rangle 
%\operatorname{div}_{\Se^{su}}(\mathscr{N}^{su}_{\eps,+})
%\langle N_\varepsilon^\top, \mathscr{M}^{su}_{\eps,+}\rangle 
% d\mathscr{L}_\eps \Big| \leq C\varrho^2 \max_{T_{\eps} ^{su}}  |H_{\eps,+} ^{su}| \mathscr{L}_\eps(T_{\eps} ^{su})
%\end{equation}

\begin{theorem}[First and second variations of volume]\label{pacco2}
Let $\varepsilon>0$, and let $u:\Se\to\rr$ be a  function  of class $C^2$. Then, for every $s\in[0,1]$,
\begin{equation}
\label{VV1}
\frac{d}{ds}\Ve(\Be^{su})
 =\int_{\Se^{su}}\bar u\,d\Ae,
\end{equation}
and
\begin{equation}
\label{VV2}
\frac{d^2}{ds^2}\Ve(\Be^{su})
 =
\int_{\Se^{su}}
\left\{\bar u \Nsu(\bar u)
+\bar u^2\Hsu
\right\}\,d\Ae.
\end{equation} 
\end{theorem}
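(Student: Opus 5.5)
The volume variations follow the same Stokes-type argument used in Theorem \ref{pacco}, with the area integrand replaced by the volume form. For the first variation, the family $\Be^{su}$ moves with velocity $u\Ne$ (here $u$ is extended constantly along the flow $\Psi_\varepsilon^s$). Since $\Be^{su}$ is the bounded domain enclosed by $\Se^{su}$, the derivative of its volume is the flux of the velocity field across the moving boundary:
\[
\frac{d}{ds}\Ve(\Be^{su})=\int_{\Se^{su}}\langle u\Ne,\Nsu\rangle\,d\Ae .
\]
This is Reynolds' transport theorem. Equivalently, the divergence theorem can be applied on the region swept between $\Se^{su}$ and $\Se^{(s+h)u}$, and the limit $h\to0$ taken. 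The tangential component $N_\varepsilon^\top$ only reparametrises the boundary and does not change the enclosed volume. By the definition of $\bar u$, the integrand equals $\bar u$, which gives \eqref{VV1}.

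For \eqref{VV2} I differentiate \eqref{VV1} once more in $s$, that is, along the variation field $u\Ne=uN_\varepsilon^\top+\bar u\Nsu$. The integral of a function $\phi$ over the moving surface has derivative
\[
\int_{\Se^{su}}\bigl\{(u\Ne)(\phi)+\phi\operatorname{div}_{\Se^{su}}(u\Ne)\bigr\}\,d\Ae ,
\]
and I apply this with $\phi=\bar u$. The tangential part contributes
\[
\int_{\Se^{su}}\bigl\{uN_\varepsilon^\top(\bar u)+\bar u\operatorname{div}_{\Se^{su}}(uN_\varepsilon^\top)\bigr\}\,d\Ae=\int_{\Se^{su}}\operatorname{div}_{\Se^{su}}(\bar u\,uN_\varepsilon^\top)\,d\Ae=0,
\]
since $\Se^{su}$ is closed. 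This is the same integration by parts as in \eqref{IP}.

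The normal part contributes $\bar u\Nsu(\bar u)+\bar u\operatorname{div}_{\Se^{su}}(\bar u\Nsu)$. Because $\Nsu$ is orthogonal to the surface, $\operatorname{div}_{\Se^{su}}(\bar u\Nsu)=\bar u\operatorname{div}_{\Se^{su}}\Nsu=\bar u\Hsu$. Adding the two contributions gives \eqref{VV2}.

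The main subtlety is the meaning of $\Nsu(\bar u)$. The function $\bar u=\langle\Ne,\Nsu\rangle u$ is defined only on $\Se^{su}$, so its normal derivative requires an extension to a neighbourhood, and the computation must show that the derivative actually produced by the formula is the one intended. I would extend $\Nsu$ to the normal field of the nearby graphs $\Se^{s'u}$; with this choice $\bar u$ becomes a function on a neighbourhood, and $u\Ne(\bar u)$ is exactly the $s$-derivative along the flow. After the tangential part is removed by integration by parts, only $\bar u\,\Nsu(\bar u)$ remains, so the formula holds with this extension.

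Regularity is also needed. The maps $\Psi_\varepsilon^{su}$ are diffeomorphisms for $\|u\|_\infty<s_0$ and $u\in C^2$, so $\Nsu$ is $C^1$, and all the differentiations under the integral sign are justified.
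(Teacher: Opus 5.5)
Your proposal is correct and is essentially the paper's argument. For \eqref{VV1} the paper differentiates the Jacobian of the flow and applies the divergence theorem to $\operatorname{div}_\varepsilon(u\Ne)$ on $\Be^{su}$, which amounts to your flux/Reynolds formulation; for \eqref{VV2} it differentiates \eqref{VV1}, splits $u\Ne=uN_\varepsilon^\top+\bar u\Nsu$, integrates the tangential term by parts and uses $\operatorname{div}_{\Se^{su}}(\bar u\Nsu)=\bar u\Hsu$, exactly as you do. Your reading of $\Nsu(\bar u)$ through the extension to the nearby graphs, equivalently as $u\Ne(\bar u)-uN_\varepsilon^\top(\bar u)$, matches how the paper later uses this term in Theorem~\ref{unifcontE4}.
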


\begin{proof}
We have
\[
\begin{split}
\frac{d}{ds}\Ve(\Be^{su})
&=
\left.
\frac{d}{dr}
\int_{\Be^{(s+r)u}}d\Ve
\right|_{r=0}\\
&=
\left.
\frac{d}{dr}
\int_{\Be^{su}}
\operatorname{Jac}\bigl(\Psi_\varepsilon(\,\cdot\,,ru)\bigr)\,d\Ve
\right|_{r=0}\\
&=
\int_{\Be^{su}}
\operatorname{div}_\varepsilon(u\Ne)\,d\Ve\\
&=
\int_{\Se^{su}}
u\langle\Ne,\Nsu\rangle\,d\Ae\\
&=
\int_{\Se^{su}}\bar u\,d\Ae.
\end{split}
\]
Here we have used the standard differentiation formulae for the flow,
together with the divergence theorem.

Formula \eqref{VV2} follows by differentiating \eqref{VV1} and using
the geodesic equation
\[
\nabla_{\Ne}\Ne=0,
\]
the identity $\Ne u=0$, and the first-variation formula
\eqref{fvareas}: 
\begin{equation}\label{IP2}
\begin{split}
\frac{d^2}{ds^2} \Ve  (B_{\eps}^{su})&=\int_{\Se^{su}}  \big\{
u\Ne  \bar u +\bar u  \divv_{\Se^{su}}(u\Ne ) \big\} d\Ae
\\ 
&
=\int_{\Se^{su}}  \big\{
u\Ne \bar   u  +{\bar u}^2 \divv_{\Se^{su}}(\Nsu )-u N_\eps^\top \bar u \big\} 
d\Ae\\&=\int_{\Se^{su}}\big\{
\bar u \Nsu(\bar u)
+\bar u^2\Hsu\big\}
\,d\Ae,
\end{split}
\end{equation}
where we used the decomposition $\Ne=\Ne^\top+\Ne^\perp=\Ne^\top+\langle\Ne,\Nsu\rangle\Nsu$ into its tangential and orthogonal parts with respect to $\Se^{su}$.
\end{proof}

\begin{remark}

We need a version of Theorem \ref{pacco} and Theorem \ref{pacco2}, and in particular of formula
\eqref{svareas}, for the case in which $u$ is the lift to $\Se$ of a
continuous function on $\So$ that is piecewise $C^2$. More precisely,
$
u\in C^2\bigl(\So\cap\{|z|\geq\varrho\}\bigr)
$
and
$
u(p)\in\{t^-(p),t^+(p)\}$
 when $|z|\leq\varrho$,
as in assumption (1) of Theorem \ref{8.1}. In this case, the integration
by parts in \eqref{IP} produces an additional boundary term.

Let $\varrho\in(0,1)$ be the parameter associated with $u$. For
$s\in[0,1]$, we set
\[
\begin{split}
S_{\eps,+}^{su}
&=
\bigl\{\Psi_\varepsilon^u(p,s)\in\hh^1:
p=(z,t)\in\Se,\ |z|\geq\varrho\bigr\},
\\
S_{\eps,-}^{su}
&=
\bigl\{\Psi_\varepsilon^u(p,s)\in\hh^1:
p=(z,t)\in\Se,\ |z|\leq\varrho\bigr\}.
\end{split}
\]
On the region $S_{\eps,+}^{su}$, the integration-by-parts identity used
in \eqref{IP} is replaced by
\[
\begin{split}
&\int_{S_{\eps,+}^{su}}
uN_\varepsilon^\top
\left[
\operatorname{div}_{\Se^{su}}(\bar u\Nsu)
\right]\,d\Ae
\\
&\qquad =
-\int_{S_{\eps,+}^{su}}
\bar u\,
\operatorname{div}_{\Se^{su}}(uN_\varepsilon^\top)
\operatorname{div}_{\Se^{su}}(\Nsu)\,d\Ae
+ E_{\eps,+}^{su},
\end{split}
\]
where, by divergence theorem,
\begin{equation}\label{folle}
\begin{split}
E_{\eps,+}^{su}
&=
\int_{S_{\eps,+}^{su}}
\operatorname{div}_{\Se^{su}}
\left(
\bar u\,
\operatorname{div}_{\Se^{su}}(\Nsu)\,
uN_\varepsilon^\top
\right)\,d\Ae
\\
&=
\int_{T_{\eps}^{su}}
u^2
\langle\Ne,\mathscr{N}_{\eps,+}^{su}\rangle
\operatorname{div}_{\Se^{su}}(\mathscr{N}_{\eps,+}^{su})
\langle N_\varepsilon^\top,\mathscr{M}_{\eps,+}^{su}\rangle
\,d\mathscr{L}_\eps.
\end{split}
\end{equation}
Here, $T_{\eps}^{su}$ is the relative boundary of
$S_{\eps,+}^{su}$ in $S_{\eps}^{su}$;
$\mathscr{N}_{\eps,+}^{su}$ is the continuous extension to
$T_{\eps}^{su}$ of the unit normal $\mathscr{N}_{\eps}^{su}$ defined on
$S_{\eps,+}^{su}$;
$\mathscr{M}_{\eps,+}^{su}$ is the outward unit conormal to
$T_{\eps}^{su}$ in $S_{\eps,+}^{su}$; and
$\mathscr{L}_\eps$ is the length measure induced by the metric $\gep$.

Analogously, the integration-by-parts identity used
in \eqref{IP2} is replaced by
$$
\int_{S_{\eps,+}^{su}}
\bar u 
\operatorname{div}_{\Se^{su}}(u \Ne^\top)d\Ae
=-\int_{S_{\eps,+}^{su}}u\Ne^\top\bar u d\Ae+\widetilde E_{\eps,+}^{su},
$$
where
\begin{equation}\label{folle2}
    \begin{split}
        \widetilde E_{\eps,+}^{su}
        =\int_{S_{\eps,+}^{su}} \operatorname{div}_{S_{\eps}^{su}}(\bar u u \Ne^\top)
        \, d\Ae
        =\int_{T_\eps^{su}}u^2
\langle\Ne,\mathscr{N}_{\eps,+}^{su}\rangle
\langle N_\varepsilon^\top,\mathscr{M}_{\eps,+}^{su}\rangle
\,d\mathscr{L}_\eps.
    \end{split}
\end{equation}

An analogous formula holds on $S_{\eps,-}^{su}$. Consequently, under
the piecewise-$C^2$ assumption, the right-hand side of \eqref{svareas}
contains the additional term
\begin{equation} \label{pix}
E_\eps^{su}=E_{\eps,+}^{su}+E_{\eps,-}^{su},
\end{equation}
and the right-hand side of \eqref{VV2} contains the additional term
\begin{equation} \label{pox}
\widetilde E_\eps^{su}=\widetilde E_{\eps,+}^{su}+\widetilde E_{\eps,-}^{su}.
\end{equation}
\end{remark}

We now describe the estimates needed in applying the expansion
\eqref{G1}. At $s=0$, the first- and second-order terms in \eqref{G1}
are
\[
\Fep'(0)
 =\int_{\Se}Hu\,d\Ae
 -H\int_{\Se}u\,d\Ae
 =0
\]
and
%, using the shorthand notation
%\[
%Q_\varepsilon
 %=\operatorname{Ric}_\varepsilon(\Ne)+\|\hep\|^2,
%\]
\begin{equation}
\label{G_eq:second}
\Fep''(0)
 =\int_{\Se}
 \left\{
 \|\nablaSe u\|^2-Q_\varepsilon u^2
 \right\}\,d\Ae.
\end{equation}
Under the assumptions of Theorem \ref{8.1}, the latter term satisfies
the uniform lower bound
\[
\Fep''(0)
 \geq\frac{m}{8}\|u\|_{L^2(\Se)}^2
\]
for every sufficiently small $\varepsilon>0$, as stated in
\eqref{USe}.

The difference appearing in the integral remainder in \eqref{G1} can
be computed using \eqref{svareas} and \eqref{VV2}. For every
$s\in[0,1]$, it is given by
\[
\begin{split}
\Fep''(s)-\Fep''(0)
={}&
\int_{\Se^{su}}
\left\{
\|\nabla_{\Se^{su}}\bar u\|^2
+\left((\Hsu)^2-Q_\varepsilon^{su}\right)\bar u^2
\right\}\,d\Ae\\
&-
H\int_{\Se^{su}}
\left\{ \bar u \Nsu(\bar u)
+\bar u^2\Hsu
\right\}\,d\Ae\\
&-
\int_{\Se}
\left\{
\|\nablaSe u\|^2-Q_\varepsilon u^2
\right\}\,d\Ae,
\end{split}
\]
where
$
Q_\varepsilon^{su}
 =\operatorname{Ric}_\varepsilon(\Nsu)+\|\hsu\|^2.
$

For fixed $\eps>0$, the standard argument proceeds by showing that,
for every $\eta>0$, there exists
$\delta=\delta(\varepsilon,\eta)>0$ such that
\begin{equation}
\label{FOP}
\left|\Fep''(s)-\Fep''(0)\right|
\leq
\eta\int_{\Se}
\left\{
\|\nablaSe u\|^2+u^2
\right\}\,d\Ae
\end{equation}
for every $s\in[0,1]$, provided that
$\|u\|_{C^1(\Se)}<\delta$.

This estimate can be proved using the following first- and second-variation
formulae:
\begin{equation}
\frac{d}{ds}\Ae(\Se^{su})
 =
\int_{\Se^{su}}
\operatorname{div}_{\Se^{su}}(u\Ne)\,d\Ae,
\end{equation}
and
\begin{equation}
\label{ZX}
\frac{d^2}{ds^2}\Ae(\Se^{su})
 =
\int_{\Se^{su}}
\left\{
u\Ne\left[\operatorname{div}_{\Se^{su}}(u\Ne)\right]
+\left(\operatorname{div}_{\Se^{su}}(u\Ne)\right)^2
\right\}\,d\Ae.
\end{equation}
In our setting, the integrand in \eqref{ZX} contains terms with
coefficients $1/\eps^2$ and $1/\eps$, as well as non-singular terms.
The term with coefficient $1/\eps^2$ is the following (we omit the
computation):
\begin{equation}
\frac{\tau^2}{\eps^2}u^2\big\{3(1-\langle N_\eps , \Nsu \rangle^2)-4\langle \Nsu \wedge N_\eps , T_\eps \rangle^2\big\}.
\end{equation}
When $s=0$, we have $\Nsu=\Ne$, and hence $\{\ldots\}=0$. The
difficulty is that smallness of $\|u\|_{C^1}$ does not ensure that
\[
\frac{\tau^2}{\eps^2}\{\ldots\}
\]
is small uniformly in $\eps>0$.

The form of \eqref{ZX} differs from that of the geometric formula
\eqref{svareas} because of the integration by parts used in
\eqref{IP}:
\[
\begin{split}
\frac{d}{ds}\Ae(\Se^{su})
&=
\int_{\Se^{su}}
\operatorname{div}_{\Se^{su}}(u\Ne)\,d\Ae\\
&=
\int_{\Se^{su}}
\bar u\,\operatorname{div}_{\Se^{su}}(\Nsu)\,d\Ae.
\end{split}
\]
After this integration by parts, the geometric terms of order
$1/\eps^2$ cancel; see Theorem \ref{CANC} below. However, an additional
derivative of $u$ appears through
$\operatorname{div}_{\Se^{su}}(\Nsu)$. This is why assumption (3) of
Theorem \ref{F0} requires the smallness of a $C^2$-type norm of $u$ in
order to prove \eqref{FOP}.

The geometric terms in $\Fep''(s)-\Fep''(0)$ are singular with respect
to both $\varepsilon$ and $|z|$ as $\varepsilon\to0$ and
$(z,t)\in\Se$ approaches either pole, respectively. Assumptions (4)
and (5) of Theorem \ref{F0} control these singularities.

In Sections \ref{TEK1} and \ref{TEK2}, we prove preliminary estimates
for the normal $\Nsu$. These estimates are used in Section \ref{UC},
where we establish an estimate of the form \eqref{FOP}, with $\eta$
small, uniformly in $\varepsilon>0$; see Theorem
\ref{unifcontSecondVar}.

\section{Estimates for the normal $\Nsu$}
\label{TEK1}

We prove a first estimate needed to establish \eqref{FOP}. More
precisely, we show that, under a suitable smallness assumption on $u$
and uniformly in $\eps>0$, the normal $\Nsu$ to the graph $\Se^{su}$
is close to $\Ne$. Here, $\Ne$ is the vector field generating the flow
\[
\Psi_\varepsilon^s(p)=\Psi_\varepsilon(p,s),
\qquad p=(z,t)\in\Se,
\]
introduced in \eqref{Psie1} and \eqref{Psie2}.
In the coordinates $[\cdots]_*$ and using the shorthand notation
$s_\varepsilon=2\tau p_\varepsilon s$, where $p_\eps$ is as in \eqref{LUX}, the vector field $\Ne$ is given
by (northern hemisphere):
\begin{equation}
\label{nix}
\Ne\bigl(\Psi_\varepsilon^{su}(p)\bigr)
 =
\left[
z(1-\tau p_\varepsilon i)\ex^{-is_\varepsilon u},
\varepsilon p_\varepsilon
\right]_*.
\end{equation}

Henceforth, we extend a given function $u:\Se\to\rr$ to a function
$u:\rr^3=\hh^1\to\rr$, and use the notation
\[
|\nabla u|
 =(u_x^2+u_y^2+u_t^2)^{1/2}
\]
for the Euclidean norm of its gradient. We also use the following
weighted $C^1$-seminorm:
\begin{equation}
\label{w1}
|u(p)|_{1,w}
 =\frac{|u(p)|}{|z|}+|\nabla u(p)|,
\qquad
p=(z,t)\in\hh^1\setminus Z.
\end{equation}
Most of the estimates are expressed in terms of the recurring quantity
\begin{equation}
\label{ER}
\er
 =\frac{\eps}{\sqrt{\eps^2+\tau^2r^2}},
\qquad r=|z|.
\end{equation}

The goal of this section is to prove the following theorem.

\begin{theorem}
\label{Socrates}
Let $u:\hh^1\to\rr$ be a function such that
$|u(p)|_{1,w}\leq L$ or every $p=(z,t)\in\hh^1\setminus Z$,
for some $L>0$. Then there exists $C_L>0$ such that, for every
$p\in\Se\setminus Z$, $s\in[0,1]$, and $\varepsilon>0$,
\begin{equation}
\label{EQUA1}
\left\|
\Nsu\bigl(\Psi_\varepsilon^{su}(p)\bigr)
-\Ne\bigl(\Psi_\varepsilon^{su}(p)\bigr)
\right\|
\leq C_L|u(p)|_{1,w},
\end{equation}
and
\begin{equation}
\label{EQUA2}
\left|
\left\langle
\Nsu\bigl(\Psi_\varepsilon^{su}(p)\bigr)
-\Ne\bigl(\Psi_\varepsilon^{su}(p)\bigr),
\Te
\right\rangle
\right|
\leq
\er C_L|u(p)|_{1,w}.
\end{equation}
\end{theorem}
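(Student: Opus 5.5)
The plan is to compute the normal $\Nsu$ explicitly from the graph parametrisation, using the explicit flow \eqref{Psie1}--\eqref{Psie2}.

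\textbf{Parametrisation and tangent vectors.} Write $G(p)=\Psi_\varepsilon(p,su(p))$ for $p\in\Se$. For a tangent vector $v\in T_p\Se$ the chain rule gives
\[
dG(v)=d\Psi^{su}_\varepsilon(v)+s\,(du(v))\,\Ne\bigl(\Psi_\varepsilon^{su}(p)\bigr).
\]
We use the frame $\partial_s F_\varepsilon,\partial_\theta F_\varepsilon$ from Lemma \ref{Lex1}. The images $W_1=d\Psi^{su}_\varepsilon(\partial_sF_\varepsilon)$ and $W_2=d\Psi^{su}_\varepsilon(\partial_\theta F_\varepsilon)$ are Jacobi fields along the normal geodesics. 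They stay orthogonal to $\Ne$, by the Gauss lemma for the normal exponential map. Then
\[
\Nsu=\frac{(W_1+a\Ne)\wedge(W_2+b\Ne)}{\|(W_1+a\Ne)\wedge(W_2+b\Ne)\|},
\]
where $a=s\,\partial_s(u\circ F_\varepsilon)$ and $b=s\,\partial_\theta(u\circ F_\varepsilon)$. Since $W_1\wedge W_2$ is parallel to $\Ne$, a standard computation gives the graph-type formula
\[
\Nsu=\frac{\Ne-\nabla^{\Psi}u}{\sqrt{1+\|\nabla^\Psi u\|^2}}.
\]
Here $\nabla^\Psi u$ is the vector in $\Ne^\perp$ dual to $du$ with respect to the metric pulled back through $W_1,W_2$. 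Hence $\|\Nsu-\Ne\|\leq 2\|\nabla^\Psi u\|$, so everything reduces to bounding $\|\nabla^\Psi u\|$. For \eqref{EQUA2}, we also need $\langle\Ne,\Te\rangle=\varepsilon p_\varepsilon$ (from \eqref{nix}) and a bound on $\langle\nabla^\Psi u,\Te\rangle$.

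\textbf{Explicit Jacobi fields.} Differentiate \eqref{Psie1}--\eqref{Psie2} in $s$ and $\theta$ at the point $(p,su(p))$, and express the results in the coordinates $[\cdot]_*$ via \eqref{TG}. The $\theta$-derivative is simple by rotational symmetry. $W_2$ is the rotation field at $\Psi^{su}_\varepsilon(p)$, with horizontal part $iz\,(1+su(\tau p_\varepsilon+i)\fa)$, of size $\approx r$. Its $\Te$-component is $\varepsilon^{-1}(\partial_\theta\Psi_{\varepsilon,2}+\tau|\Psi_{\varepsilon,1}|^2)$, and the leading parts are of order $r^2/\varepsilon$.

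$W_1$ has horizontal part of size $\gtrsim 1$, but its $\Te$-component involves derivatives of $p_\varepsilon$ and of $\fa,\dots,\fd$ evaluated at $s_\varepsilon=2\tau p_\varepsilon su$. Since $p_\varepsilon\sim 1/\sqrt{\varepsilon^2+\tau^2r^2}$, we have $s_\varepsilon\lesssim |u|/r\leq L$, because $|u|\leq Lr$ by hypothesis. So all the analytic functions $\beta_j$ are evaluated on a bounded set, and their derivatives are controlled by constants depending on $L$. This is where the weighted seminorm \eqref{w1} enters: the factor $u/|z|$ is exactly the argument $s_\varepsilon/(2\tau p_\varepsilon)\cdot p_\varepsilon$.

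From these expressions we want lower bounds for the pulled-back metric restricted to $\Ne^\perp$: $\|W_1\|\gtrsim_L 1$ and $\|W_1\wedge W_2\|\gtrsim_L r/\bar\varepsilon$, with the angle between them controlled. We then bound $|du(\partial_sF_\varepsilon)|\lesssim |\nabla u|$ and $|du(\partial_\theta F_\varepsilon)|\lesssim r|\nabla u|$. This gives $\|\nabla^\Psi u\|\leq C_L|u|_{1,w}$, which proves \eqref{EQUA1}.

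\textbf{The $\Te$-component.} For \eqref{EQUA2}, the direction in $\Ne^\perp$ with a large $\Te$-component is essentially $W_2$, which after normalisation is nearly $\Te$-aligned in the regime $r\gg\varepsilon$. The coefficient of $du$ along the normalised $W_2$ carries the factor $r|\nabla u|\cdot\|W_2\|^{-1}\sim r|\nabla u|\,\varepsilon/\sqrt{\varepsilon^2+\tau^2r^2}\cdot r^{-1}$, which is $\bar\varepsilon$. The $W_1$ direction has $\Te$-component of order $\bar\varepsilon$ too, since $\varepsilon k_\varepsilon$-type terms appear, as in \eqref{fefe}. Together with the correction $\varepsilon p_\varepsilon\bigl(1-(1+\|\nabla^\Psi u\|^2)^{-1/2}\bigr)$, where $\varepsilon p_\varepsilon\leq\bar\varepsilon$, this yields the factor $\bar\varepsilon$.

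\textbf{Main obstacle.} The hard step is the uniform lower bound on $\|W_1\wedge W_2\|$ and the computation of the $\Te$-components of $W_1,W_2$ along the flow, uniformly in both $\varepsilon\to0$ and $r\to0$. This requires tracking the cancellations in $\partial_s\Psi_{\varepsilon,2}+\tau\operatorname{Im}(\partial_s\Psi_{\varepsilon,1}\overline{\Psi_{\varepsilon,1}})$. I would first verify these cancellations at $u=0$, where they reduce to \eqref{fefe}. I would then treat the $u$-dependence perturbatively, using analyticity of $\fa,\dots,\fd$ on $|s_\varepsilon|\lesssim L$.
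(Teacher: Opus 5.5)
Your Gauss-lemma reduction to the Fermi-type graph formula $\Nsu=(\Ne-\nabla^\Psi(su))/\sqrt{1+\|\nabla^\Psi(su)\|^2}$ is sound, and it is a genuinely different route from the paper's coordinate computation. The gap is that your estimates rest on a wrong picture of the flowed meridian Jacobi field $W_1$, so the key step fails as planned: $W_1$ tilts out of the horizontal distribution at a rate independent of $\eps$.

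To see this, write $\omega=dt+\tau(x\,dy-y\,dx)$, so that $\langle V,\Te\rangle=\omega(V)/\eps$ and $d\omega=2\tau\,dx\wedge dy$. Rename the Hurtado--Rosales parameter $\lambda$ to avoid the clash with the variation parameter, and set $\Phi(\lambda,\varsigma)=\Psi_\varepsilon(F_\varepsilon(\lambda,\theta),\varsigma)$. Then $\partial_\varsigma\,\omega(\partial_\lambda\Phi)=\partial_\lambda(\eps^2p_\eps)+d\omega(\Ne,\partial_\lambda\Phi)$. The first term is $O(\er^2)$. The second is close to $\pm2\tau$, because the horizontal parts of $\Ne$ and $\partial_\lambda\Phi$ are nearly orthonormal. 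Hence at flow time $su(p)$ the quantity $|\langle W_1,\Te\rangle|$ is of order $|u(p)|/\eps$, consistent with the paper's bound $|e_1|\leq C_L|u|_{1,w}/\er$. It is not $O(\er)$, as you infer from \eqref{fefe}, which holds only at $u=0$.

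This has three consequences. First, $\|W_1\|$ is unbounded as $\eps\to0$, and $W_1$ becomes almost parallel to $W_2$, whose unit direction is nearly $\Te$. So the angle between them is not controlled, and the pulled-back metric is not uniformly comparable to $\gep$: for $|u(p)|\gg\eps$ the time-$su(p)$ flow map acts on $T_p\Se$ as a shear with smallest singular value of order $\eps/|u(p)|$. Second, in the expansion $\nabla^\Psi(su)=c_1W_1+c_2W_2$ the term $c_1\langle W_1,\Te\rangle$ has size $|\nabla u|\,|u|/\eps$ and must cancel against $c_2\langle W_2,\Te\rangle$; your $\er$-bookkeeping for \eqref{EQUA2} therefore does not go through. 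Third, checking the cancellation at $u=0$ and then perturbing in $u$ cannot be uniform in $\eps$, since the first-order coefficient in $u$ is of order $1/\eps$.

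The fix within your framework is never to use $W_1$ as a direction. Take the orthonormal frame $E_2=W_2/\|W_2\|$, $E_1=\Ne\wedge E_2$ of $\Ne^\perp$.
\begin{itemize}
\item Since $W_2=[i\Psi_{\eps,1},\tau|\Psi_{\eps,1}|^2/\eps]_*$ exactly, you get $|\langle E_1,\Te\rangle|\leq\eps/\sqrt{\eps^2+\tau^2|\Psi_{\eps,1}|^2}$.
\item Also $\langle\nabla^\Psi(su),E_2\rangle=s\,\partial_\theta(u\circ F_\eps)/\|W_2\|$.
\item Once $|\Psi_{\eps,1}|\geq c\,r$, these are $O(\er)$ and $O(\er|\nabla u|)$ respectively.
\item Moreover $\langle\nabla^\Psi(su),E_1\rangle=\bigl(s\,\partial_\lambda(u\circ F_\eps)-\alpha\langle\nabla^\Psi(su),E_2\rangle\bigr)/\beta$, where $\alpha=\langle W_1,E_2\rangle$ and $\beta=\langle W_1,E_1\rangle$.
\end{itemize}
What you must extract from \eqref{Psie1}--\eqref{Psie2} is therefore $|\beta|\geq c_L$ and $|\alpha|\leq C_L/\er$, not boundedness of $W_1$. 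The large $\alpha$ is harmless because it multiplies an $O(\er|\nabla u|)$ quantity. This is exactly the mechanism the paper exploits: $e_1$ may be large, but it enters $X_1^{su}\wedge X_2^{su}$ only through $e_1(w_2+v_2)$ with $|w_2+v_2|\lesssim\er$, and the $\Te$-component of the wedge involves only horizontal parts.

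Note also that $|\Psi_{\eps,1}|\geq c\,r$ and $|\beta|\geq c_L$, and hence your claimed bound $\|W_1\wedge W_2\|\gtrsim_L r/\er$, require $L$ small. As $\eps\to0$ the factor multiplying $z$ in \eqref{Psie1} vanishes at flow time close to $t^-(p)\in[-\frac{\pi}{2}r,-r]$, where the flow reaches $Z$ and $W_2=0$. The paper's normalisation via \eqref{misura} tacitly needs the same smallness. Once repaired, your route even gives the sharper bound $\|\Nsu-\Ne\|\leq C_L|\nabla u(p)|$, because it compares with $\Ne$ at the flowed point and so avoids the rotation cost $\tau p_\eps|u|$ that the paper pays through $\widehat N_\eps$.
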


The proof follows from Lemmas \ref{lello1}--\ref{lello3} below.
We denote by
\[
\widehat N_\varepsilon\bigl(\Psi_\varepsilon^{su}(p)\bigr)
 \in T_{\Psi_\varepsilon^{su}(p)}\hh^1
\]
the vector which, at $\Psi_\varepsilon^{su}(p)$, has the same
$[\cdots]_*$ coordinates as the normal $\Ne$ at $p\in\Se$. Thus, we set
\begin{equation}
\label{nullo}
\widehat N_\varepsilon\bigl(\Psi_\varepsilon^{su}(p)\bigr)
 =
\left[
z(1-\tau p_\varepsilon i),
\varepsilon p_\varepsilon
\right]_*.
\end{equation}
First, we estimate the difference
$\Ne-\widehat N_\varepsilon$.

\begin{lemma}
\label{lello1}
 Let $u:\hh^1\to\rr$ be a function such that
$|u(p)|_{1,w}\leq L$ or every $p=(z,t)\in\hh^1\setminus Z$,
for some $L>0$. Then there exists a constant $C_L>0$ such that, for
every $p\in\Se\setminus Z$, $s\in[0,1]$, and $\varepsilon\geq0$,
\begin{equation}
\label{EQUA1ter}
\big\|
\Ne\bigl(\Psi_\varepsilon^{su}(p)\bigr)
-\widehat N_\varepsilon\bigl(\Psi_\varepsilon^{su}(p)\bigr)
\big\|
\leq C_L\tau p_\varepsilon|u(p)|.
\end{equation}
\end{lemma}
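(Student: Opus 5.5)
The plan is to compare the two vectors directly in the coordinates $[\cdots]_*$, using the explicit formula \eqref{nix} for $\Ne$ along the flow and the definition \eqref{nullo} of $\widehat N_\eps$. We work in the northern hemisphere; the southern case is identical after replacing $\tau p_\eps i$ by $-\tau p_\eps i$ and $\eps p_\eps$ by $-\eps p_\eps$, which does not affect the moduli below. Set
\[
\theta=s_\eps u(p)=2\tau p_\eps s\,u(p).
\]
The vector $\Ne(\Psi_\eps^{su}(p))$ is the $[\cdots]_*$-vector with horizontal part $z(1-\tau p_\eps i)\ex^{-i\theta}$ and $\Te$-component $\eps p_\eps$. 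Both vectors are written in the same orthonormal frame $X,Y,\Te$ at the same base point $\Psi_\eps^{su}(p)$, and their $\Te$-components coincide. Hence
\[
\Ne\bigl(\Psi_\eps^{su}(p)\bigr)-\widehat N_\eps\bigl(\Psi_\eps^{su}(p)\bigr)
=\bigl[z(1-\tau p_\eps i)(\ex^{-i\theta}-1),0\bigr]_*,
\]
and its $\gep$-norm is the modulus of the complex horizontal entry.

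The first step is the elementary bound $|\ex^{-i\theta}-1|\leq|\theta|=2\tau p_\eps s|u(p)|\leq 2\tau p_\eps|u(p)|$, valid since $s\in[0,1]$.

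The second step, which is the only point needing a check, is a bound on $|z|\,|1-\tau p_\eps i|$ that is uniform in $\eps$ and $r$. With $r=|z|$ and \eqref{LUX},
\[
|z|^2|1-\tau p_\eps i|^2=r^2\Bigl(1+\frac{\tau^2(1-r^2)}{\eps^2+\tau^2r^2}\Bigr)=\frac{r^2(\eps^2+\tau^2)}{\eps^2+\tau^2r^2}\leq 1 .
\]
The last inequality is equivalent to $r^2\eps^2\leq\eps^2$, which holds since $r\leq1$ on $\Se$. For $\eps=0$ the same expression equals $1$, because $p_0=\sqrt{1-r^2}/(\tau r)$ and $\Psi_0$, $\Ne$ are the limiting formulae.

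Combining the two steps gives
\[
\big\|\Ne(\Psi_\eps^{su}(p))-\widehat N_\eps(\Psi_\eps^{su}(p))\big\|\leq 2\tau p_\eps|u(p)|,
\]
that is, \eqref{EQUA1ter} with $C_L=2$. The constant is in fact independent of $L$, and the hypothesis $|u|_{1,w}\leq L$ is not needed at this stage; it only enters the subsequent lemmas.

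The only real obstacle is to make sure that \eqref{nix} is indeed the value of $\Ne$ at the displaced point, rather than a transported vector. This follows from the explicit geodesic flow \eqref{Psie1}--\eqref{Psie2}: differentiating $\Psi_\eps$ in $s$ and converting via \eqref{TG} gives exactly \eqref{nix}. Once that is granted, the argument above is purely algebraic.
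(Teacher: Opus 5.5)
Your proposal is correct and follows the same route as the paper: compare the $[\cdots]_*$ coordinates of \eqref{nix} and \eqref{nullo}, observe that only the horizontal entry differs, and use $|z(1-\tau p_\eps i)|\leq 1$. The one difference is the bound on the exponential factor. You use $|\ex^{-i\theta}-1|\leq|\theta|$ for real $\theta$, which gives the constant $2$ with no dependence on $L$. The paper instead bounds the difference by $\ex^{2\tau p_\eps|u(p)|}-1$ and then needs $\tau p_\eps|u(p)|\leq L$ to linearise.
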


\begin{proof}
Comparing \eqref{nix} and \eqref{nullo}, and using
$
\bigl|z(1-\tau p_\varepsilon i)\bigr|\leq1$,
we obtain
\[
\begin{split}
\big\|
\Ne\bigl(\Psi_\varepsilon^{su}(p)\bigr)
-\widehat N_\varepsilon\bigl(\Psi_\varepsilon^{su}(p)\bigr)
\big\|
&=
\left|
z(1-\tau p_\varepsilon i)
\bigl(\ex^{-is_\varepsilon u(p)}-1\bigr)
\right|\\
&\leq
\left|
\sum_{n=1}^{\infty}
\frac{\bigl(-i2\tau p_\varepsilon su(p)\bigr)^n}{n!}
\right|\\
&\leq
\ex^{2\tau p_\varepsilon|u(p)|}-1.
\end{split}
\]
Since
$
|\tau p_\varepsilon u(p)|
\leq\tau p_\varepsilon L|z|
\leq L,
$
the claim follows.
\end{proof}

We fix the following orthonormal frame for the tangent space $T_p\Se$
at a point $p=(z,t)$ with $z\neq0$:
\begin{equation}
\label{NOTA1}
\begin{split}
X_1
&=
\frac{1}{r}
\sqrt{\frac{\varepsilon^2+\tau^2r^2}{\varepsilon^2+\tau^2}}
\bigl[z(\tau p_\varepsilon+i),0\bigr]_*
\\
&=[w_1,c_1]_*
 =\bigl(w_1,\tau\mathop{\rm Im}(z\bar w_1)\bigr)_*,\\
X_2
&=
\frac{\varepsilon}{r}
\sqrt{\frac{1-r^2}{\varepsilon^2+\tau^2}}
\left[
z(1-\tau p_\varepsilon i),
-\frac{(\varepsilon^2+\tau^2)r^2}
{\varepsilon\sqrt{(1-r^2)(\varepsilon^2+\tau^2r^2)}}
\right]_*\\
&=[w_2,c_2]_*
 =\bigl(w_2,\varepsilon c_2+\tau\mathop{\rm Im}(z\bar w_2)\bigr)_*.
\end{split}
\end{equation}
The functions $w_1,w_2,c_1$, and $c_2$ are defined by the preceding
identities; explicitly,
\begin{equation}
\label{NOTA1bis}
\begin{split}
w_1
&=
\frac{1}{r}
\sqrt{\frac{\varepsilon^2+\tau^2r^2}{\varepsilon^2+\tau^2}}
z(\tau p_\varepsilon+i),\\
c_1&=0,\\
w_2
&=
\frac{\varepsilon}{r}
\sqrt{\frac{1-r^2}{\varepsilon^2+\tau^2}}
z(1-\tau p_\varepsilon i),\\
c_2
&=
-\frac{r\sqrt{\varepsilon^2+\tau^2}}
{\sqrt{\varepsilon^2+\tau^2r^2}}.
\end{split}
\end{equation}
Notice that
\begin{equation}
\label{NOTA}
|w_1|=1,
\qquad
w_2=-iw_1|w_2|,
\qquad
|w_2|=\varepsilon p_\varepsilon.
\end{equation}
In particular, $X_1$ is horizontal, and
\[
\|X_1\|=\|X_2\|=1,
\qquad
\langle X_1,X_2\rangle=0.
\]

\begin{remark}
It follows from \eqref{NOTA1} that the tangential derivatives of $u$
satisfy the following estimates:
\begin{equation}
\label{STILO1}
\begin{split}
|X_1u|
&\leq
\sqrt{|w_1|^2+\tau^2\mathop{\rm Im}(z\bar w_1)^2}\,
|\nabla u|\\
&\leq
 C |\nabla u|,\\
|X_2u|
&\leq
\sqrt{
|w_2|^2+
\bigl(\varepsilon c_2+\tau\mathop{\rm Im}(z\bar w_2)\bigr)^2
}\,|\nabla u|\\
&\leq C\er|\nabla u|,
\end{split}
\end{equation}
 for a constant $C>0$ independent of $\eps$ and $p\in S_\eps$. Furthermore,
\begin{equation}
\label{STILO}
\begin{split}
& X_1 r 
  = \frac 1 r \mathrm{Re}( w_1 \bar z) = \tau p_\varepsilon \sqrt{\frac{\varepsilon^2+\tau^2 r^2 }{\varepsilon^2 +\tau^2} }
 = \frac{\tau }{\sqrt{\varepsilon ^2+\tau^2} } \sqrt{1-r^2}
 \\
 &
 %  \quad \textrm{and}\quad 
 X_2 r = \frac 1 r \mathrm{Re}( w_2 \bar z) 
 %= \varepsilon  \sqrt{\frac{1-r^2 }{\varepsilon^2 +\tau^2} }
  = \frac{\varepsilon }{\sqrt{\varepsilon ^2+\tau^2} } \sqrt{1-r^2},
  \\
  & X_1 p_\varepsilon=-\tau\frac{ r\sqrt{\eps^2+\tau^2} }{(\sqrt{\varepsilon^2 +\tau^2 r^2})^3 }\quad \textrm{and}\quad
  X_2 p_\varepsilon =-\eps \frac{ r\sqrt{\eps^2+\tau^2} }{(\sqrt{\varepsilon^2 +\tau^2 r^2})^3 }.
\end{split}
\end{equation}
 The computations for $p_\varepsilon$ are obtained in a similar way as for $r$. Hence, we have the following estimates for the first-order derivatives of $p_\varepsilon$; for the second-order, the computations are analogous. For $i,j=1,2$, 
 \begin{equation}
\label{EL2}
|X_ip_\eps|
 \leq C\eps^{i-3}\er^2,
\qquad
|X_iX_jp_\eps|
 \leq C\eps^{i+j-5}\er^3.
\end{equation}
\end{remark}

We extend the frame $X_1,X_2\in T_p\Se$ to a tangent frame on the
surfaces
$
\Se^{su}=\Psi_\varepsilon^{su}(\Se).
$
For every $s\in[0,1]$, we define
\begin{equation}
\label{triang}
X_1^{su}\bigl(\Psi_\varepsilon^{su}(p)\bigr)
 =\Psi_{\varepsilon,*}^{su}X_1(p),
\qquad
X_2^{su}\bigl(\Psi_\varepsilon^{su}(p)\bigr)
 =\Psi_{\varepsilon,*}^{su}X_2(p),
\end{equation}
where $*$ denotes the differential of the map
$\Psi_\varepsilon^{su}$. When no confusion can arise, we simply write
$X_1^{su}=X_1$ and $X_2^{su}=X_2$.

Using the formulae for $\Psi_\eps$ in \eqref{Psie1} and
\eqref{Psie2}, we compute, for $k=1,2$,
\[
X_k\Psi_{\varepsilon,1}^{su}
 =w_k+v_k,
\qquad
X_k\Psi_{\varepsilon,2}^{su}
 =\varepsilon c_k+\tau\mathop{\rm Im}(z\bar w_k)+d_k,
\]
where
\begin{equation}
\label{LOP}
\begin{split}
v_k
={}&
sw_ku(\tau p_\varepsilon+i)\fa
+sz\Bigl[
i(X_ku)\fa
+\tau X_k(up_\varepsilon)
\bigl(
\fa+2su(\tau p_\varepsilon+i)\fa'
\bigr)
\Bigr],\\
d_k
={}&
s\varepsilon^2X_k(up_\varepsilon)
+\tau s^2X_k\bigl(u^2(1-\varepsilon^2p_\varepsilon^2)\bigr)\fb\\
&+
2\tau^2s^3u^2(1-\varepsilon^2p_\varepsilon^2)
\fb'X_k(up_\varepsilon)\\
&+
s\tau X_k(r^2u)
\bigl(\fc+\tau p_\varepsilon\fd\bigr)\\
&+
2\tau^2s^2r^2u
\bigl(\fc'+\tau p_\varepsilon\fd'\bigr)
X_k(up_\varepsilon)\\
&+
s\tau^2r^2u(X_kp_\varepsilon)\fd.
\end{split}
\end{equation}
Henceforth, we use the shorthand notation
$
\beta_j=\beta_j(s_\varepsilon u)$ and $ 
\beta_j'=\beta_j'(s_\varepsilon u)$.
Notice that if $|u|\leq L|z|$ and $s\in[0,1]$, then
$
|s_\varepsilon u|\leq2L$.
It therefore follows from \eqref{LUX} and \eqref{LAX} that
\begin{equation}
\label{LOX}
\sum_{j=1}^4
\left(
|\beta_j|+|\beta_j'|
\right)
\leq C_L,
\end{equation}
where $C_L>0$ depends only on $L$.

In the following lemma, we estimate $v_k$ and $d_k$. When the point $p\in\Se$ is clear from the context, we abbreviate
\[
|u|_{1,w}=|u(p)|_{1,w},
\qquad
|u|_{2,w}=|u(p)|_{2,w}.
\]

\begin{lemma}
\label{lello2} 
 Let $u:\hh^1\to\rr$  be a function such that $|u|\leq L |z|$. 
Then there exists a constant $C_L>0$ such that, for every
$s\in[0,1]$, $(z,t)\in\Se$ with $r=|z|>0$, and $\varepsilon>0$,
\begin{equation}
\label{ZED}
\begin{split}
|v_1|
&\leq C_L|u|_{1,w},\\
|v_2|
&\leq \er C_L|u|_{1,w},\\
|d_1|
&\leq
C_L\sqrt{\varepsilon^2+\tau^2r^2}\,|u|_{1,w},\\
|d_2|
&\leq
\varepsilon C_L\bigl(|u|+|\nabla u|\bigr).
\end{split}
\end{equation}
\end{lemma}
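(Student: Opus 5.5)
The bounds will come from estimating each term in \eqref{LOP} separately, using only pointwise information at $p$: the weight bounds on $u$ and $\nabla u$, the size of the frame coefficients in \eqref{NOTA1bis}--\eqref{NOTA}, the derivative bounds \eqref{STILO1}--\eqref{EL2}, and the uniform bound \eqref{LOX} on the $\beta_j,\beta_j'$. The two elementary inequalities $\tau p_\eps r\leq\sqrt{1-r^2}\leq 1$ and $\eps p_\eps\leq\er$ will be used throughout. They imply $|\tau p_\eps u|\leq\tau p_\eps r\,|u|/r\leq |u|_{1,w}$ and $|s_\eps u|\leq 2L$.

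The first step is to bound $X_k(up_\eps)=p_\eps X_ku+uX_kp_\eps$. By \eqref{STILO},
\[
|X_kp_\eps|\leq C\,\eps^{k-1}\, r\,(\eps^2+\tau^2r^2)^{-1}\cdot\frac{\sqrt{\eps^2+\tau^2}}{\sqrt{\eps^2+\tau^2r^2}}.
\]
Hence $|uX_kp_\eps|\leq C\eps^{k-1}\frac{|u|}{r}\frac{r^2}{\eps^2+\tau^2r^2}\cdot\frac{\sqrt{\eps^2+\tau^2}}{\sqrt{\eps^2+\tau^2r^2}}$. Multiplied by the prefactor $|z|=r$ this becomes harmless, of size $C|u|_{1,w}$, with an extra factor $\er$ when $k=2$. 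For the other term, $r p_\eps|X_1u|\leq C|\nabla u|$ and $rp_\eps|X_2u|\leq C\er|\nabla u|$ by \eqref{STILO1}.

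With this, the bound for $v_k$ is immediate. For the first summand, $|w_1|=1$, $|w_2|=\eps p_\eps\leq\er$, and $|u(\tau p_\eps+i)|\leq C|u|_{1,w}$. The bracket contains $z\,X_ku$ and $z\,\tau X_k(up_\eps)$, each controlled as above. Together with \eqref{LOX} this gives $|v_1|\leq C_L|u|_{1,w}$ and $|v_2|\leq\er C_L|u|_{1,w}$.

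The main work is $d_1,d_2$. Here the factors $\eps^2$, $1-\eps^2p_\eps^2$ and $r^2$ must produce the weights $\sqrt{\eps^2+\tau^2r^2}$ for $k=1$ and $\eps$ for $k=2$. I will go term by term through the six terms of $d_k$ in \eqref{LOP}:
\begin{itemize}
\item[(i)] The term $s\eps^2X_k(up_\eps)$ uses $\eps^2p_\eps\leq\eps\er$.
\item[(ii)] For $X_k(u^2(1-\eps^2p_\eps^2))$, I note that $1-\eps^2p_\eps^2=r^2(\eps^2+\tau^2)/(\eps^2+\tau^2r^2)$, and I differentiate it with \eqref{STILO}.
\item[(iii)] The terms containing $X_k(up_\eps)$ multiplied by $u^2$ or $r^2u$ are handled with the first step, absorbing $u$ and $\tau p_\eps u$.
\item[(iv)] The term $X_k(r^2u)=2rX_kr\,u+r^2X_ku$ uses $X_2r=O(\eps)$ and $X_1r=O(1)$ from \eqref{STILO}.
\item[(v)] The term $r^2uX_kp_\eps$ is treated as in the first step.
\end{itemize}
In each case I aim to extract one factor $\sqrt{\eps^2+\tau^2r^2}$ for $k=1$, and one factor $\eps$, coming either from $X_2$ or from $\eps^2p_\eps$, for $k=2$. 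Any residual factor of $|u|/r$ must carry an $r$ to become $|u|$.

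The expected obstacle is $d_2$, where only $|u|+|\nabla u|$, without the $1/r$ weight, is allowed. I need to check that every occurrence of $u/r$ is compensated by an available factor $r$ or $r^2$. Where this fails, for instance $\tau p_\eps u$ in the $\fd$ term, I will try to show that the $\eps$-gain from $X_2$ comes with an accompanying $r^2/(\eps^2+\tau^2r^2)$, so that the product is bounded by $\eps|u|$.
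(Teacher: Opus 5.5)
Your proposal is correct and follows the paper's route. Both bound each term of \eqref{LOP} using \eqref{LOX}, \eqref{STILO1}, \eqref{STILO}, $\tau p_\eps r\leq 1$ and $|u|\leq L|z|$. The only difference in organisation is that the paper first collects the $d_k$ terms into the single bound $C_L\{(\eps^2+\tau^2r^2)(|u||X_kp_\eps|+p_\eps|X_ku|)+\tau(|u||X_kr|+r|X_ku|)\}$ before substituting, while you substitute term by term.

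The obstacle you anticipate in the $\beta_4$ term does not occur. There $\tau p_\eps$ multiplies $X_k(r^2u)=2ruX_kr+r^2X_ku$, so $\tau p_\eps r\leq 1$ absorbs it directly. The companion term $\tau^2r^2uX_kp_\eps\beta_4$ is handled exactly as you describe.
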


\begin{proof}
It follows from \eqref{LOP} and \eqref{LOX} that
\begin{equation}
\label{est-vk}
\begin{split}
|v_k|
&\leq
C_L\Bigl\{
|w_k||u||\tau p_\varepsilon+i|
+r\bigl[
|X_ku|
+\tau|X_k(up_\varepsilon)|
\bigl(1+|u||\tau p_\varepsilon+i|\bigr)
\bigr]
\Bigr\}\\
&\leq
C_L\Bigl\{
|w_k||u||\tau p_\varepsilon+i|
+r|X_ku|
+\tau r|X_k(up_\varepsilon)|
\Bigr\}\\
&\leq
C_L\Bigl\{
|w_k||u||\tau p_\varepsilon+i|
+|X_ku|
+\tau r|u||X_kp_\varepsilon|
\Bigr\},
\end{split}
\end{equation}
and
\begin{equation}
\label{est-dk}
\begin{split}
|d_k|
\leq C_L\Bigl\{&
\varepsilon^2|u||X_kp_\varepsilon|
+\varepsilon^2p_\varepsilon|X_ku|
+\tau|u||X_ku|\\
&+\tau\varepsilon^2u^2p_\varepsilon|X_kp_\varepsilon|\\
&+\tau^2u^2
\bigl(
|u||X_kp_\varepsilon|
+p_\varepsilon|X_ku|
\bigr)\\
&+\tau^2rp_\varepsilon
\bigl(
|u||X_kr|+r|X_ku|
\bigr)(1+|u|)\\
&+\tau^2r^2|u|
\bigl(1+\tau^2p_\varepsilon^2|u|\bigr)
\bigl(
|u||X_kp_\varepsilon|
+p_\varepsilon|X_ku|
\bigr)\\
&+\tau^2r^2|u||X_kp_\varepsilon|
\Bigr\}\\
\leq C_L\Bigl\{&
\varepsilon^2|u||X_kp_\varepsilon|
+\varepsilon|X_ku|
+\tau|u||X_ku|\\
&+\varepsilon^2|u||X_kp_\varepsilon|
+\tau^2|u|^3|X_kp_\varepsilon|
+\tau|u||X_ku|\\
&+\tau
\bigl(
|u||X_kr|+r|X_ku|
\bigr)(1+|u|)\\
&+\tau^2r|u|
\bigl(
|u||X_kp_\varepsilon|
+p_\varepsilon|X_ku|
\bigr)\\
&+\tau^2r^2|u||X_kp_\varepsilon|
\Bigr\}\\
\leq C_L\Bigl\{&
(\varepsilon^2+\tau^2r^2)
\bigl(
|u||X_kp_\varepsilon|
+p_\varepsilon|X_ku|
\bigr)+\tau
\bigl(
|u||X_kr|+r|X_ku|
\bigr)
\Bigr\}.
\end{split}
\end{equation}
The estimates in \eqref{ZED} now follow from
\eqref{est-vk} and \eqref{est-dk}, together with
\eqref{STILO1} and \eqref{STILO}.
\end{proof}

At the point
$
\Psi_\varepsilon^{su}
 =\Psi_\varepsilon^{su}(p)\in\Se^{su},
$
the vector fields $X_k^{su}$ have the following representations in the
coordinates $[\cdots]_*$:
\begin{equation}
\label{STELLA}
\begin{split}
X_k^{su}(\Psi_\varepsilon^{su})
&=
\bigl(
w_k+v_k,\,
\varepsilon c_k+\tau\mathop{\rm Im}(z\bar w_k)+d_k
\bigr)_*\\
&=
\left[
w_k+v_k,\,
c_k+\frac{d_k}{\varepsilon}
+\frac{\tau}{\varepsilon}
\mathop{\rm Im}\left(
z\bar w_k+\bar\Psi_{\varepsilon,1}^{su}(w_k+v_k)
\right)
\right]_*\\
&=
[w_k,c_k]_*+[v_k,e_k]_*,
\end{split}
\end{equation}
where
\begin{equation}
\label{STOP}
e_k
 =
\frac{d_k}{\varepsilon}
+\frac{\tau}{\varepsilon}
\mathop{\rm Im}\left(
z\bar w_k+\bar\Psi_{\varepsilon,1}^{su}(w_k+v_k)
\right).
\end{equation}

\begin{remark}
It follows from \eqref{Psie1} that
\begin{equation}
\label{STAP}
\begin{split}
\mathop{\rm Im}\left(
z\bar w_k+\bar\Psi_{\varepsilon,1}^{su}(w_k+v_k)
\right)
&=
\mathop{\rm Im}\left(
z\bar w_k
+\bar z
\bigl(su(\tau p_\eps-i)\overline{\beta_1}+1\bigr)
(w_k+v_k)
\right)\\
&=
\mathop{\rm Im}\left(
\bar zv_k
+\bar zsu(\tau p_\eps-i)\overline{\beta_1}
(w_k+v_k)
\right).
\end{split}
\end{equation}
Consequently,
\begin{equation}
\label{bix}
\begin{split}
\left|
\mathop{\rm Im}\left(
z\bar w_k+\bar\Psi_{\varepsilon,1}^{su}(w_k+v_k)
\right)
\right|
\leq{}&
r|v_k|
+
r|u||\tau p_\eps-i||\beta_1|
\bigl(|w_k|+|v_k|\bigr).
\end{split}
\end{equation}
It now follows from \eqref{ZED} that
\[
\begin{split}
\left|
\mathop{\rm Im}\left(
z\bar w_1+\bar\Psi_{\varepsilon,1}^{su}(w_1+v_1)
\right)
\right|
&\leq C_Lr|u|_{1,w},\\
\left|
\mathop{\rm Im}\left(
z\bar w_2+\bar\Psi_{\varepsilon,1}^{su}(w_2+v_2)
\right)
\right|
&\leq\er C_Lr|u|_{1,w}.
\end{split}
\]
We therefore obtain the following estimates for $e_1$ and $e_2$:
\begin{equation}
\label{MORON}
|e_1|
 \leq\frac{C_L}{\er}|u|_{1,w},
\qquad
|e_2|
 \leq C_L|u|_{1,w}.
\end{equation}
\end{remark}

In the next step, we compare $\Nsu$ with $\widehat N_\eps$.

\begin{lemma}
\label{lello3} 
Let  $u:\hh^1\to\rr$ be a function such   that $|u|_{1,w} \leq L $,  for some $L>0$.
Then there exists $C_L>0$ such that, for every
$p\in\Se\setminus Z$, $s\in[0,1]$, and $\varepsilon>0$,
\begin{equation}
\label{EQUA1bis}
\big\|
\Nsu\bigl(\Psi_\varepsilon^{su}(p)\bigr)
-\widehat N_\eps\bigl(\Psi_\varepsilon^{su}(p)\bigr)
\big\|
\leq C_L|u(p)|_{1,w},
\end{equation}
and
\begin{equation}
\label{EQUA2bis}
\big|
\big\langle
\Nsu\bigl(\Psi_\varepsilon^{su}(p)\bigr)
-\widehat N_\eps\bigl(\Psi_\varepsilon^{su}(p)\bigr),
\Te
\big\rangle
\big|
\leq C_L\er|u(p)|_{1,w}.
\end{equation}
\end{lemma}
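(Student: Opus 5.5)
The plan is to write the unit normal to $\Se^{su}$ at $\Psi_\varepsilon^{su}(p)$ as a normalised wedge product of the pushed-forward frame:
\[
\Nsu=\frac{X_1^{su}\wedge X_2^{su}}{\|X_1^{su}\wedge X_2^{su}\|}.
\]
We then compare it with the unperturbed normal, using the decomposition \eqref{STELLA}, $X_k^{su}=[w_k,c_k]_*+[v_k,e_k]_*$. The point is that $[w_1,c_1]_*\wedge[w_2,c_2]_*$ is exactly the wedge of the tangent frame $X_1,X_2$ of $\Se$ at $p$, written in $[\cdots]_*$ coordinates. Since $X_1,X_2$ are orthonormal and the wedge product is left-invariant, this wedge has unit length and coordinates equal to those of $\Ne(p)$, so it equals $\widehat N_\eps(\Psi_\varepsilon^{su}(p))$ by \eqref{nullo}. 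Expanding bilinearly gives
\[
X_1^{su}\wedge X_2^{su}=\widehat N_\eps+R,\qquad R=[w_1,c_1]_*\wedge[v_2,e_2]_*+[v_1,e_1]_*\wedge[w_2,c_2]_*+[v_1,e_1]_*\wedge[v_2,e_2]_*.
\]
The standard inequality $\bigl\|\frac{A}{\|A\|}-B\bigr\|\le 2\|A-B\|$ for unit $B$ then reduces \eqref{EQUA1bis} to $\|R\|\le C_L|u|_{1,w}$.

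To bound $R$, compute each term with \eqref{WED}, $[z,t]_*\wedge[z',t']_*=[i(tz'-t'z),\mathrm{Im}(z'\bar z)]_*$, and insert Lemma \ref{lello2} and \eqref{MORON}, together with $|w_1|=1$, $|w_2|=\varepsilon p_\varepsilon$, $c_1=0$ and $|c_2|\le C$.
\begin{itemize}
\item First term: the $\Te$-coefficient is $\mathrm{Im}(v_2\bar w_1)$ and the horizontal part is $i(c_1v_2-e_2w_1)=-ie_2w_1$. Both are $O(|u|_{1,w})$, and $|v_2|\le C_L\er|u|_{1,w}$.
\item Second term: the horizontal part is $i(e_1w_2-c_2v_1)$ and the $\Te$-part is $\mathrm{Im}(w_2\bar v_1)$. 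Here the dangerous factor $|e_1|\le C_L\er^{-1}|u|_{1,w}$ is multiplied by $|w_2|=\varepsilon p_\varepsilon=\er\sqrt{1-r^2}$, so the product is $O(|u|_{1,w})$.
\item Third term: $i(e_1v_2-e_2v_1)$ again pairs $e_1$ with $v_2$, whose bound carries $\er$, so it is $O(|u|_{1,w})$ since $|u|_{1,w}\le L$.
\end{itemize}
This gives \eqref{EQUA1bis}.

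For \eqref{EQUA2bis}, track only $\Te$-components. The $\Te$-component of $R$ is
\[
\mathrm{Im}(v_2\bar w_1)+\mathrm{Im}(w_2\bar v_1)+\mathrm{Im}(v_2\bar v_1).
\]
Each summand carries a factor $\er$, coming from $|v_2|\le C_L\er|u|_{1,w}$ or from $|w_2|=\varepsilon p_\varepsilon\le\er$. Writing $\lambda=\|\widehat N_\eps+R\|$, we have
\[
\Nsu-\widehat N_\eps=\lambda^{-1}R+(\lambda^{-1}-1)\widehat N_\eps .
\]
The $\Te$-component of $\widehat N_\eps$ is $\varepsilon p_\varepsilon\le\er$, and $|\lambda^{-1}-1|\le C\|R\|$ when $\|R\|$ is small. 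If $\|R\|$ is not small, we use the crude bound $\|\Nsu-\widehat N_\eps\|\le2\le C_L|u|_{1,w}$ as long as $|u|_{1,w}$ is bounded below, and we also need $\lambda$ bounded below, which holds because the $X_k^{su}$ are images of a frame under a diffeomorphism. Both contributions to the $\Te$-component are therefore $\le C_L\er|u|_{1,w}$.

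\textbf{Main obstacle.} The delicate point is the $1/\er$ singularity in $e_1$. The proof must verify that $e_1$ never appears paired with anything other than $w_2$ or $v_2$, which both carry a compensating factor $\er$. I will check this carefully in the wedge formula, including in the $\Te$-component, where $e_1$ does not enter at all.

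A second, more technical issue is the lower bound on $\lambda$ when $|u|_{1,w}$ is not small. If a uniform bound on $\lambda$ is hard to obtain, I would instead restrict to $|u|_{1,w}\le\delta_0$ small, where $\lambda\ge1/2$, and handle the complementary case trivially using $\|\Nsu-\widehat N_\eps\|\le 2\le (2/\delta_0)|u|_{1,w}$. This also takes care of the $\Te$-estimate: the $\Te$-component of $\Nsu$ is bounded by $\|R\|_{\Te}/\lambda+\varepsilon p_\varepsilon/\lambda$, and both pieces carry the factor $\er$.
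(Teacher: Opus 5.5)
Your route is the paper's. You split $X_1^{su}\wedge X_2^{su}=\widehat N_\eps+R_\eps$, bound the three cross terms with \eqref{ZED} and \eqref{MORON}, and then normalise. You correctly use that $e_1$ only ever multiplies $w_2$ or $v_2$ and never enters the $\Te$-component. Your bounds $\|R_\eps\|\le C_L|u|_{1,w}$ and $|\langle R_\eps,\Te\rangle|\le C_L\er|u|_{1,w}$ are exactly the paper's. The inequality $\bigl\|A/\|A\|-B\bigr\|\le 2\|A-B\|$ then proves \eqref{EQUA1bis} with no lower bound on $\lambda=\|X_1^{su}\wedge X_2^{su}\|$, which is more than the paper writes out. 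One minor correction: orthonormality of $X_1,X_2$ identifies $[w_1,0]_*\wedge[w_2,c_2]_*$ only up to sign. With $w_2=-i|w_2|w_1$ and $c_2<0$, formula \eqref{WED} actually gives $-\widehat N_\eps$; the paper's identity $X_1\wedge X_2=\Ne$ has the same slip. Wedge in the order $X_2^{su}\wedge X_1^{su}$ instead, which changes nothing in the estimates.

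The step that does not close is \eqref{EQUA2bis} when $|u(p)|_{1,w}$ is not small. From $\langle\Nsu-\widehat N_\eps,\Te\rangle=\lambda^{-1}\langle R_\eps,\Te\rangle+(\lambda^{-1}-1)\eps p_\eps$, each term carries the factor $\er$ but also $\lambda^{-1}$, so you need $\lambda\geq c_L>0$.

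\begin{itemize}
\item That $\Psi_\eps^{su}$ is a diffeomorphism gives only $\lambda>0$ pointwise, with no bound uniform in $p$ and $\eps$.
\item Your fallback for $|u(p)|_{1,w}>\delta_0$ handles only \eqref{EQUA1bis}. The crude bound $2\leq(2/\delta_0)|u(p)|_{1,w}$ has no factor $\er$. The bound $(|\langle R_\eps,\Te\rangle|+\eps p_\eps)/\lambda$ uses precisely the lower bound on $\lambda$ that is missing in that regime.
\item In general you only know $|\langle X_1^{su}\wedge X_2^{su},\Te\rangle|\leq C_L\er$. This does not control $\langle\Nsu,\Te\rangle$ once $\lambda$ may be small.
\item The hypothesis $|u|_{1,w}\leq L$ with $L$ large does not rule this out. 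For instance, in the polar caps, where $u=t^\pm$, the graph map at $s=1$ collapses onto $Z$ as $\eps\to0$.
\end{itemize}

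So \eqref{EQUA2bis} should be stated and proved under an extra hypothesis. One option is a smallness condition such as $C_L|u(p)|_{1,w}\leq 1/2$; this gives $\lambda\geq 1/2$ via $|1-\lambda|\leq\|R_\eps\|$ and makes your computation complete. The other is an explicit nondegeneracy hypothesis on $\lambda$. The paper's one-line ``normalising yields'' tacitly relies on the same assumption.
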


\begin{proof}
In the notation introduced in \eqref{STELLA}, we have
\begin{equation}
\label{STALLA}
\begin{split}
X_1^{su}\wedge X_2^{su}
&=
[w_1+v_1,e_1]_*
 \wedge[w_2+v_2,c_2+e_2]_*\\
&=
[w_1,0]_*\wedge[w_2,c_2]_*
+[w_1,c_1]_*\wedge[v_2,e_2]_*\\
&\quad
+[v_1,e_1]_*\wedge[w_2,c_2]_*
+[v_1,e_1]_*\wedge[v_2,e_2]_*.
\end{split}
\end{equation}
At every point $p\in\Se$,
\[
X_1\wedge X_2
 =[w_1,0]_*\wedge[w_2,c_2]_*
 =[z(1-i\tau p_\varepsilon),\varepsilon p_\varepsilon]_*
 =\Ne.
\]
Therefore, by the definition of $\widehat N_\eps$, at
$\Psi_\varepsilon^{su}(p)$ we have
\[
[w_1,0]_*\wedge[w_2,c_2]_*
 =\widehat N_\eps.
\]
Consequently,
\[
X_1^{su}\wedge X_2^{su}
 =\widehat N_\eps+R_\varepsilon,
\]
where
\begin{equation}
\label{REO}
\begin{split}
R_\varepsilon
={}&
[w_1,c_1]_*\wedge[v_2,e_2]_*
+[v_1,e_1]_*\wedge[w_2,c_2]_*
+[v_1,e_1]_*\wedge[v_2,e_2]_*.
\end{split}
\end{equation}

The wedge products can be computed using \eqref{WED}. To estimate the
resulting terms, we observe that \eqref{ZED} and \eqref{MORON} give
\[
\begin{split}
&|\mathop{\rm Im}(\bar w_1v_2)|
+|\mathop{\rm Im}(\bar v_1w_2)|
+|\mathop{\rm Im}(\bar v_1v_2)| \leq C_L\er|u|_{1,w},
\end{split}
\]
and
\[
|e_2v_1|
+|e_1w_2-c_2v_1|
+|e_1v_2-e_2v_1|
\leq C_L|u|_{1,w}.
\]
It follows that
\[
\|R_\varepsilon\|
 \leq C_L|u|_{1,w},
\qquad
|\langle R_\varepsilon,\Te\rangle|
 \leq C_L\er|u|_{1,w}.
\]
Finally, 
normalising $X_1^{su}\wedge X_2^{su}$   yields \eqref{EQUA1bis} and
\eqref{EQUA2bis}.
\end{proof}

Combining Lemmas \ref{lello1}--\ref{lello3}, we prove Theorem \ref{Socrates}.

 \section{Estimates for the derivatives of $\Nsu$}
\label{TEK2}

In the second variation of area, the term $\|\hsu\|^2$ involves
derivatives of the normal $\Nsu$. We therefore need precise estimates
for these derivatives. For the tangent space $T\Se^{su}$, we use the
basis
\[
X_1=X_1^{su},
\qquad
X_2=X_2^{su},
\]
introduced in \eqref{triang}. The outer unit normal $\Nsu$ to
$\Se^{su}$ is therefore
\begin{equation}
\label{view}
\Nsu
 =\frac{X_1\wedge X_2}{\|X_1\wedge X_2\|}.
\end{equation}
When needed, we retain the superscript $su$, as in
\[
X_1^{su}\wedge X_2^{su}
 =(X_1\wedge X_2)(\Psisu),
\]
to emphasise the composition of $X_1\wedge X_2$ with $\Psisu$.

We use the following notation for the $[\ldots]_*$ coordinates:
\[
\Nsu=[\Nsua,\Nsub,\Nsuc]_*,
\qquad
\widehat N_\varepsilon=[\Nea,\Neb,\Nec]_*.
\]
Henceforth, as in \eqref{nullo}, $\widehat N_\varepsilon$ denotes the
vector field which, at $\Psi_\varepsilon^{su}(p)$, has the same
$[\cdots]_*$ coordinates as $\Ne$ at $p\in\Se$.

The goal of this section is to prove the following theorem.

\begin{theorem}
\label{derivnormal}
Let $u:\hh^1\to\rr$ be a continuous function that is piecewise of class $C^2$ and satisfies $
|u|_{1,w}\leq L$,  for some $L>0$. 
Then there exists $C_L>0$ such that, for every
$p\in\Se\setminus Z$, $s\in[0,1]$, and $\eps>0$,
\begin{equation}
\label{Da1}
\begin{split}
|X_i\Nsuj-X_i\Nej|
&\leq
\frac{C_L}{r}\er^{i-1}|u(p)|_{2,w},
\qquad i,j=1,2,\\
|X_i\Nsuc-X_i\Nec|
&\leq
\frac{C_L}{r}\er^i|u(p)|_{2,w},
\qquad i=1,2.
\end{split}
\end{equation}
The quantities on the left-hand sides are evaluated at
$\Psisu(p)\in\Se^{su}$.
\end{theorem}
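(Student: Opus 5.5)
The plan is to differentiate the explicit representation of the normal obtained in the proof of Lemma \ref{lello3}. There we wrote
\[
X_1^{su}\wedge X_2^{su}=\widehat N_\varepsilon+R_\varepsilon ,
\]
with $R_\varepsilon$ given in \eqref{REO}. By \eqref{view},
\[
\Nsu=\frac{\widehat N_\varepsilon+R_\varepsilon}{\|\widehat N_\varepsilon+R_\varepsilon\|}.
\]
In the $[\cdots]_*$ coordinates the components of $\Nsu-\widehat N_\varepsilon$ are smooth functions of the components of $\widehat N_\varepsilon$ and of $R_\varepsilon$. These functions vanish when $R_\varepsilon=0$, and the normalising factor is bounded because $\|\widehat N_\varepsilon\|=1$ and $\|R_\varepsilon\|\leq C_L|u|_{1,w}$. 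Applying $X_i$ and using the chain rule, $X_i\Nsuj-X_i\Nej$ becomes a combination of two kinds of terms:
\begin{itemize}
\item terms (derivative of $R_\varepsilon$) $\times$ (bounded coefficient);
\item terms (derivative of $\widehat N_\varepsilon$) $\times$ ($R_\varepsilon$).
\end{itemize}
Since $\widehat N_\varepsilon$ has the same coordinates as $\Ne$ at $p$, its derivatives are explicit in $z,p_\varepsilon$. Derivatives of $\Nej$ along $X_i$ are bounded by $C\er^{i-1}/r$ for the horizontal components and by $C\er^{i}/r$ for the $\Te$ component, by \eqref{STILO} and \eqref{EL2}. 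Combining these with $\|R_\varepsilon\|\leq C_L|u|_{1,w}$ and $|\langle R_\varepsilon,\Te\rangle|\leq C_L\er|u|_{1,w}$ controls the second kind of term.

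The heart of the matter is the first kind of term: showing
\[
|X_i R_{\varepsilon,j}|\leq \frac{C_L}{r}\er^{i-1}|u|_{2,w}\quad (j=1,2),\qquad |X_i R_{\varepsilon,3}|\leq \frac{C_L}{r}\er^{i}|u|_{2,w}.
\]
By \eqref{WED}, $R_\varepsilon$ is a sum of bilinear expressions in
\[
w_k,\quad c_k,\quad v_k,\quad e_k,
\]
so by the Leibniz rule it suffices to estimate $X_i$ of each of these. For $w_k,c_k$ this is explicit from \eqref{NOTA1bis}. For $v_k,d_k$ one differentiates \eqref{LOP} once more. This produces:
\begin{itemize}
\item second derivatives $X_iX_ku$, bounded by $C\er^{i+k-2}|\nabla^2u|+\dots$, hence by $\er^{i+k-2}|u|_{2,w}/r$ using the weight $|z|\,|\nabla^2 u|$ in \eqref{w2};
\item second derivatives of $p_\varepsilon$, controlled by \eqref{EL2};
\item derivatives $\beta_j''$, bounded via \eqref{LOX} extended to second derivatives since $|s_\varepsilon u|\leq 2L$.
\end{itemize}
One must check that each appearance of $p_\varepsilon$ is compensated by a factor $r$ or $|u|\leq r|u|_{1,w}$. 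The outcome I expect is
\[
|X_iv_1|\lesssim \er^{i-1}|u|_{2,w}/r,\qquad |X_iv_2|\lesssim\er^{i}|u|_{2,w}/r,
\]
together with analogous bounds for $X_id_k$ mirroring \eqref{ZED}, each with one extra factor $\er^{i-1}/r$.

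The main obstacle is $e_k$ in \eqref{STOP}, which contains the coefficient $1/\varepsilon$, together with the corresponding $\er$-bookkeeping for the $\Te$ component. For $e_1$ the bound \eqref{MORON} is only $C_L|u|_{1,w}/\er$. The task is to show that this singular factor always multiplies something of size $\er$ in $R_\varepsilon$ — for instance $w_2$ or the imaginary parts — and that this pairing survives differentiation. I would first differentiate the expression \eqref{STAP}, in which $\mathop{\rm Im}(\cdots)$ carries an explicit factor $\bar z$, so that its $X_i$-derivative gains $r$ relative to $1/\varepsilon$. Since $r/\varepsilon\sim 1/(\tau\er)$, this yields
\[
|X_ie_1|\leq C_L\er^{i-2}|u|_{2,w}/r,\qquad |X_ie_2|\leq C_L\er^{i-1}|u|_{2,w}/r.
\]
I then insert these into the products $e_1w_2$, $e_1v_2$ and $e_2v_1$, exactly as in the proof of Lemma \ref{lello3}, verifying the cancellations of the $\er^{-1}$ factors term by term. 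The piecewise-$C^2$ hypothesis only means that these estimates are applied on each region $|z|\geq\varrho$ and $|z|\leq\varrho$ separately, where $u$ is $C^2$.
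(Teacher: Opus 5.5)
Your proposal is correct and follows essentially the paper's route: you write $X_1^{su}\wedge X_2^{su}=\widehat N_\varepsilon+R_\varepsilon$, reduce by bilinearity of the wedge product to bounds for $X_i$ of $w_k,c_k,v_k,e_k$ (your expected bounds for $X_iv_k$ and $X_ie_k$ are consistent with, slightly weaker than, and sufficient compared to those of Lemma \ref{derivX_i}), and then handle the normalisation, which the paper does via formula \eqref{NOMOS} rather than your chain rule for $(a+b)/\|a+b\|$. The one point to sharpen is the $\Te$-component of the normalisation step: there the coefficient in front of $\langle \Nsu, X_iR_\varepsilon\rangle$ and of $\langle R_\varepsilon, X_i\widehat N_\varepsilon\rangle$ is $\Nsuc=O(\er)$, not merely bounded, and this is exactly what supplies the extra factor $\er$.
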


In view of \eqref{view}, we first compute the derivatives of the
$[\ldots]_*$ coordinates of $X_1$ and $X_2$, which, by
\eqref{STELLA}, are given by
\[
X_k\bigl(\Psisu(p)\bigr)
 =(\Psisu)_*X_k(p)
 =[w_k,c_k]_*+[v_k,e_k]_*,
\qquad k=1,2.
\]
Here, $w_k$ and $c_k$ are given in \eqref{NOTA1bis}, while
$v_k,d_k$, and $e_k$ are given in \eqref{LOP} and \eqref{STOP}.
In particular,
\begin{align*}
v_k
&=
sw_ku(\tau p_\varepsilon+i)\beta_1
+sz\Bigl[
i(X_ku)\beta_1
+\tau X_k(up_\varepsilon)
\bigl(
\beta_1+2su(\tau p_\varepsilon+i)\beta_1'
\bigr)
\Bigr],\\
e_k
&=
\frac{d_k}{\eps}
+\frac{\tau}{\eps}
\operatorname{Im}\left(
z\overline{w_k}
+\overline{\Psi_{\varepsilon,1}^{su}}(w_k+v_k)
\right).
\end{align*}

A direct computation gives, for $i,k=1,2$,  
\begin{align}
\label{EL1}
|X_iw_k|
&\leq
\frac{C}{r}\er^{i+k-2},\\
\label{EL1bis}
|X_1c_2|
&\leq
\frac{\tau}{\eps}\er^3,
\qquad
|X_2c_2|
\leq\er^3.
\end{align}
The estimates for $v_k$ and $e_k$ are as follows.

\begin{lemma}
\label{derivX_i}
Let $u:\hh^1\to\rr$ be a continuous function that is piecewise of
class $C^2$ and satisfies $
|u|_{1,w}\leq L$,  
for some $L>0$. 
Then there exists a constant $C_L>0$ such that, for every
$p\in\Se\setminus Z$, $s\in[0,1]$, $\eps>0$, and $i,k=1,2$,
\begin{align}
\label{EL1.2}
|X_iv_k|
&\leq
\frac{C_L}{\eps}\er^{i+k-1}|u|_{2,w},\\
\label{EL1.3}
|X_ie_k|
&\leq
\frac{C_L}{\eps}\er^{i+k-2}|u|_{2,w}
+C_L\eps^{i+k-2}|\nabla^2u|.
\end{align}
\end{lemma}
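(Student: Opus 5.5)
The plan is to differentiate the explicit formulae \eqref{LOP} and \eqref{STOP} term by term with the product rule and estimate each factor using bounds already available. The needed inputs are:
\begin{itemize}
\item the frame estimates \eqref{EL1} and \eqref{EL1bis} for $X_iw_k$ and $X_ic_2$;
\item the bounds \eqref{STILO1}, \eqref{STILO} and \eqref{EL2} for $X_ku$, $X_kr$, $X_kp_\eps$ and $X_iX_jp_\eps$;
\item the uniform bound \eqref{LOX} on $\beta_j,\beta_j'$, which extends to $\beta_j''$ since the $\beta_j$ are analytic and $|s_\eps u|\le 2L$;
\item the zeroth-order bounds of Lemma \ref{lello2} and \eqref{MORON}.
\end{itemize}
A second-order analogue of \eqref{STILO1} is also needed:
\[
|X_iX_ku|\le C\er^{i+k-2}|\nabla^2u|+C\frac{\er^{i+k-2}}{r}|\nabla u|,
\]
where the second term comes from $X_i$ acting on the coefficients $w_k,c_k$ via \eqref{EL1}. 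Finally, $X_i(\beta_j(s_\eps u))=\beta_j'\,2\tau s\,X_i(p_\eps u)$. Throughout, $|u|\le r|u|_{1,w}$, $p_\eps\le \er/\eps$ and $\tau r p_\eps\le 1$ convert powers of $p_\eps$ and $1/r$ into factors of $\er$ and $1/\eps$.

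For \eqref{EL1.2}, I would write $v_k=s\,w_k\,u(\tau p_\eps+i)\beta_1+sz\,[\cdots]$ and differentiate.
\begin{itemize}
\item Terms in which $X_i$ hits $w_k$ give $\frac{C}{r}\er^{i+k-2}|u||\tau p_\eps+i|\le \frac{C_L}{\eps}\er^{i+k-1}|u|_{1,w}$.
\item Terms in which $X_i$ hits $u$ or $p_\eps$ are controlled by \eqref{STILO1} and \eqref{EL2}.
\item The bracket multiplied by $z$ contains $X_ku$ and $X_k(up_\eps)$; differentiating it produces $X_iX_ku$ and $X_iX_kp_\eps$, multiplied by $r$.
\end{itemize}
The factor $r$ cancels the $1/r$, and $|z||\nabla^2u|$ is exactly the last piece of $|u|_{2,w}$. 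The powers of $\er$ are then tracked: each $X_2$ contributes one extra factor $\er$, which gives the exponent $i+k-1$ after absorbing $p_\eps\le\er/\eps$.

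For \eqref{EL1.3}, I would split $e_k=d_k/\eps+\frac{\tau}{\eps}\operatorname{Im}(\cdots)$. For the imaginary part, I would use the simplified form \eqref{STAP},
\[
\operatorname{Im}\bigl(\bar zv_k+\bar z su(\tau p_\eps-i)\overline{\beta_1}(w_k+v_k)\bigr),
\]
and differentiate using the bound on $X_iv_k$ just obtained together with $|X_i\bar z|\le C\er^{i-1}$. Each term is then bounded by $C_L r\cdot\frac{1}{\eps r}\er^{i+k-2}|u|_{2,w}$, or better.

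For $d_k/\eps$, I would differentiate the six lines of \eqref{LOP}. The worst contributions are those in which $X_i$ falls on $X_ku$:
\begin{itemize}
\item in $s\eps^2 X_k(up_\eps)$ this gives $\eps p_\eps X_iX_ku$;
\item in $\tau s X_k(r^2u)(\beta_3+\tau p_\eps\beta_4)$ this gives $\tau r^2 p_\eps X_iX_ku/\eps$.
\end{itemize}
Using $\tau r p_\eps\le1$, the second becomes $\frac{r}{\eps}|X_iX_ku|$, which is absorbed into $\frac{C_L}{\eps}\er^{i+k-2}|u|_{2,w}$. The first gives $\eps p_\eps\er^{i+k-2}|\nabla^2u|$. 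This is not of the form $r|\nabla^2u|/\eps$, since $p_\eps$ does not vanish as $r\to 0$, and this is why a separate term $C_L\eps^{i+k-2}|\nabla^2 u|$ appears in \eqref{EL1.3}. Here I would use $\eps p_\eps\le 1$ and $\er\le1$; for $i+k>2$ I would instead use the explicit $\eps$-factors coming from \eqref{STILO1} for $X_2$.

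The main obstacle is the bookkeeping of the powers of $\eps$, $\er$ and $r$ in the derivatives of $d_k/\eps$ when $X_i=X_1$. There, $X_1p_\eps\sim \eps^{-2}\er^2$ and $X_1X_1p_\eps\sim\eps^{-3}\er^3$ combine with the prefactor $1/\eps$, and one must check that the products $\tau^2 r^2 u\,X_iX_kp_\eps$ still close up. For this I would use $|u|\le r|u|_{1,w}$ together with the identity $\tau^2r^2=\eps^2(1-\er^2)/\er^2$, and reduce every term to the two admissible forms $\frac{1}{\eps}\er^{i+k-2}|u|_{2,w}$ and $\eps^{i+k-2}|\nabla^2u|$.
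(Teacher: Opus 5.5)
Your route is the paper's: expand $X_iv_k$ and $X_ie_k$ by the product rule and bound each factor (the paper collects the intermediate bounds in Lemma \ref{estimateslemma}). Your argument for \eqref{EL1.2}, and for the imaginary part of $e_k$ via \eqref{STAP}, goes through.

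The gap is at the term you single out, $s\eps\,X_iX_k(up_\eps)$ in $X_i(d_k/\eps)$. It is the only term of $X_ie_k$ with no compensating factor $r$, $u$ or $r^2$, and its coefficient $\eps p_\eps$ tends to $1$ at the axis. Two things go wrong there.

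\emph{No $\eps$-factors for $X_2$.} \eqref{STILO1} gives $|X_2u|\le C\er|\nabla u|$, not $C\eps|\nabla u|$. Near the axis $|w_2|=\eps p_\eps\approx1$ and $w_2$ is essentially the unit angular direction; only $X_2r=O(\eps)$. So the Hessian part is of size $\er^{i+k-1}|\nabla^2u|$. This is not dominated by $C\eps^{i+k-2}|\nabla^2u|+\frac{C}{\eps}\er^{i+k-2}r|\nabla^2u|$ once $i+k>2$ and $r\ll\eps^{i+k-1}$.

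\emph{A dropped first-order term.} You discard the piece $\frac{C}{r}\er^{i+k-2}|\nabla u|$ of your own bound for $X_iX_ku$. Multiplied by $\eps p_\eps$ it gives $\frac{C}{r}\er^{i+k-1}|\nabla u|$. A bare $1/r$ cannot be traded for $1/\eps$, because the available inequality $\er/\eps\le 1/(\tau r)$ goes the other way.

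This is not a bookkeeping artefact. Take $u=x$, so $|u|_{1,w}\le 2$ and $\nabla^2u=0$. Write $z=re^{i\phi}$ and $w_1=e^{i\phi}(a+ib)$, with $a=X_1r$ and $b=\sqrt{(\eps^2+\tau^2r^2)/(\eps^2+\tau^2)}$. Then $X_1u=a\cos\phi-b\sin\phi$ and $X_1\phi=b/r$. For $r\ll\eps\ll1$ one has $b\approx\eps/\tau$ and $a\approx1$, hence $X_1X_1u\approx-\frac{\eps}{\tau r}\sin\phi$. All other terms of $X_1e_1$ are $O(1/\eps)$ in that region. So $|X_1e_1|\gtrsim\frac{\eps}{r}|\sin\phi|$, while the right-hand side of \eqref{EL1.3} is $O(1/\eps)$; the inequality fails for $r\ll\eps^2$.

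The paper's appendix makes the same move. In the proofs of \eqref{EL4} and \eqref{EL6} it uses $|X_2u|\le C\eps|\nabla u|$, which holds only for radial $u$. So you are not deviating from the paper, but neither argument yields \eqref{EL1.3} near the axis.

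What the term-by-term expansion does give, using $\eps p_\eps\le\er$ and $|X_iX_ku|\le\frac{C}{r}\er^{i+k-2}\,(r|\nabla^2u|+|\nabla u|)$, is
$|X_ie_k|\le\frac{C_L}{\eps}\er^{i+k-2}|u|_{2,w}+\frac{C_L}{r}\er^{i+k-1}|u|_{2,w}$.
You should state and prove this weaker bound instead. It is all that is needed downstream. In \eqref{MAI}, $|X_ie_k|$ is multiplied by $|w_{3-k}|+|v_{3-k}|\le C_L\er^{2-k}$, and $\frac{1}{r}\er^{i+1}\le\frac{1}{r}\er^{i-1}$. So the extra term is absorbed into $\frac{C_L}{r}\er^{i-1}|u|_{2,w}$, and Theorem \ref{derivnormal} is unaffected.
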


The proof of this lemma is given in Appendix 2. We now proceed to the
main estimates.

\begin{proof}[Proof of Theorem \ref{derivnormal}]
By the definition of $X_i=X_i^{su}$ in \eqref{triang}, for
$i,j=1,2$ we have
\begin{equation}
\label{NOMOS}
\begin{split}
\left.X_i^{su}(\Nsuj)\right|_{\Psisu}
&=
X_i\bigl(\Nsuj(\Psisu)\bigr)\\
&=
X_i\left(
\frac{(X_1^{su}\wedge X_2^{su})_j}
{\|X_1^{su}\wedge X_2^{su}\|}
\right)\\
&=
\frac{
X_i(X_1^{su}\wedge X_2^{su})_j
-\Nsuj
\left\langle
\Nsu,D_{X_i}(X_1^{su}\wedge X_2^{su})
\right\rangle
}
{\|X_1^{su}\wedge X_2^{su}\|}.
\end{split}
\end{equation}
Here, $D_{X_i}(X_1^{su}\wedge X_2^{su})$ is defined as in
\eqref{VU}. As in \eqref{REO},
\[
X_1^{su}\wedge X_2^{su}
 =\widehat N_\eps+R_\eps,
\]
where
\[
\begin{split}
R_\varepsilon
={}&
[w_1,0]_*\wedge[v_2,e_2]_*
+[v_1,e_1]_*\wedge[w_2,c_2]_*+[v_1,e_1]_*\wedge[v_2,e_2]_*.
\end{split}
\]
Using the bilinearity of the wedge product and formula \eqref{WED}, we
obtain, for $j=1,2$,
\begin{equation}
\label{MAI0}
\begin{split}
\left|
X_i\bigl((X_1^{su}\wedge X_2^{su})_j-\Nej\bigr)
\right|
\leq{}&
|X_iw_1||e_2|
+|X_iv_1|(|c_2|+|e_2|)\\
&+|X_ie_2|(|w_1|+|v_1|)
+|v_1||X_ic_2|\\
&+|X_iw_2||e_1|
+|X_iv_2||e_1|\\
&+|X_ie_1|(|w_2|+|v_2|).
\end{split}
\end{equation}
Hence, by \eqref{EL1}--\eqref{EL1.3}, for $j=1,2$,
\begin{equation}
\label{MAI}
\begin{split}
\left|
X_i\bigl((X_1^{su}\wedge X_2^{su})_j-\Nej\bigr)
\right|
&\leq
\frac{C_L}{r}\er^{i-1}|u|_{1,w}
+\frac{C_L}{\sqrt{\eps^2+\tau^2r^2}}
 \er^{i-1}|u|_{2,w}\\
&\quad
+\frac{C_L}{r}\er^{i-1}|u|_{2,w}
+C_L\eps^{i-1}|\nabla^2u|\\
&\quad
+\frac{C_L\eps^{i-1}}
{\sqrt{\eps^2+\tau^2r^2}}|u|_{1,w}
+\frac{C_L}{r}\er^{i-1}|u|_{1,w}\\
&\quad
+\frac{C_L\eps}{\eps^2+\tau^2r^2}
 \er^{i-2}|u|_{2,w}
+\frac{C_L}{\eps}\er^i|u|_{2,w}\\
&\leq
\frac{C_L}{r}\er^{i-1}|u|_{2,w}.
\end{split}
\end{equation}

For $j=3$, on the other hand,
\begin{equation}
\label{MIA}
\begin{split}
\left|
X_i\bigl((X_1^{su}\wedge X_2^{su})_3-\Nec\bigr)
\right|
&\leq
2\Bigl(
|X_iw_1||v_2|
+|X_iv_1|(|w_2|+|v_2|)\\
&\qquad
+|X_iw_2||v_1|
+|X_iv_2|(|w_1|+|v_1|)
\Bigr)\\
&\leq
\frac{C_L}{r}\er^i|u|_{1,w}
+\frac{C_L}{\sqrt{\eps^2+\tau^2r^2}}
 \er^i|u|_{2,w}\\
&\qquad
+\frac{C_L}{r}\er^i|u|_{1,w}
+\frac{C_L\eps}{\eps^2+\tau^2r^2}
 \er^{i-1}|u|_{2,w}\\
&\leq
\frac{C_L}{r}\er^i|u|_{2,w}.
\end{split}
\end{equation}

Finally, setting $
\al=\|X_1^{su}\wedge X_2^{su}\|$, it follows from \eqref{NOMOS} that
\[
\begin{split}
\al|X_i\Nsuj-X_i\Nej|
\leq{}&
\left|
X_i(X_1^{su}\wedge X_2^{su})_j-X_i\Nej
\right|
+|1-\al||X_i\Nej|\\
&+
|\Nsuj-\Nej|
\|D_{X_i}(X_1^{su}\wedge X_2^{su})\|\\
&+
|\Nej|\,
\|\Nsu-\widehat N_\eps\|
\|D_{X_i}(X_1^{su}\wedge X_2^{su})\|\\
&+
|\Nej|
\big\|
D_{X_i}(X_1^{su}\wedge X_2^{su})
-D_{X_i}\widehat N_\eps
\big\|\\
&+
\big|
\big\langle
\widehat N_\eps,D_{X_i}\widehat N_\eps
\big\rangle
\big|.
\end{split}
\]
The last term vanishes. Indeed, by \eqref{VU} and \eqref{VU2},
\[
0
 =
X_i\langle\widehat N_\eps,\widehat N_\eps\rangle
 =
2\big\langle
\widehat N_\eps,D_{X_i}\widehat N_\eps
\big\rangle.
\]

Using \eqref{EQUA1bis} and \eqref{EQUA2bis} from Lemma
\ref{lello3}, together with \eqref{MAI}, \eqref{MIA}, and
\begin{equation}
\label{misura}
|1-\al|\leq C_L|u|_{1,w},
\end{equation}
we obtain the estimates in \eqref{Da1}.
\end{proof}

\begin{lemma}
\label{derivNeBIS}
There exists a constant $C>0$ such that, for every
$p\in\Se\setminus Z$, $0<\eps\leq1$, and $i,j=1,2$,
\begin{equation}
\label{EL23}
\left|
\langle D_{X_i}\Ne,X_j\rangle
\right|
\leq
\frac{C}{r}\er^{i+j-2}.
\end{equation}
\end{lemma}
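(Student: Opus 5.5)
The plan is a direct computation in the $[\ldots]_*$ coordinates. Since $D$ preserves the frame $X,Y,\Te$, formula \eqref{VU} reduces $D_{X_i}\Ne$ to differentiating the coordinates of $\Ne$ along $X_i$. By \eqref{nux},
\[
D_{X_i}\Ne=\bigl[X_i\bigl(z(1-\tau p_\eps i)\bigr),\ \eps X_ip_\eps\bigr]_*.
\]
By \eqref{NOTA1}, the $(x,y)$-part of $X_i$ is $w_i$, so $X_iz=w_i$. Hence
\[
X_i\bigl(z(1-\tau p_\eps i)\bigr)=w_i(1-\tau p_\eps i)-i\tau z\,X_ip_\eps .
\]
Pairing with $X_j=[w_j,c_j]_*$ in the orthonormal frame gives
\[
\langle D_{X_i}\Ne,X_j\rangle=\operatorname{Re}\Bigl(\overline{w_j}\bigl(w_i(1-\tau p_\eps i)-i\tau zX_ip_\eps\bigr)\Bigr)+c_j\,\eps X_ip_\eps .
\]
Each of the three resulting terms will then be bounded separately.

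The elementary inputs are $|w_1|=1$ and $|w_2|=\eps p_\eps\le\er$ from \eqref{NOTA}. We also use $\tau p_\eps\le\tau/\sqrt{\eps^2+\tau^2r^2}\le1/r$, which gives $|1-\tau p_\eps i|\le C/r$. Finally, $c_1=0$ and $|c_2|\le C$ by \eqref{NOTA1bis}, and the derivative bounds $|X_ip_\eps|\le C\eps^{i-3}\er^2$ come from \eqref{STILO}–\eqref{EL2}.

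The terms are estimated as follows.
\begin{itemize}
\item The first term is bounded by $|w_j||w_i|\,C/r\le C\er^{i+j-2}/r$.
\item In the second term, for $i=1$ one gets $\tau r|X_1p_\eps|\le C\tau r/(\eps^2+\tau^2r^2)\le C/r$. For $i=2$ one gets $\tau r|X_2p_\eps|\le C\tau r\eps/(\eps^2+\tau^2r^2)\le C\er\le C\er/r$. Multiplying by $|w_j|\le\er^{j-1}$ gives $C\er^{i+j-2}/r$ in both cases.
\item The third term is present only for $j=2$. For $i=1$, $\eps|X_1p_\eps|\le C\eps/(\eps^2+\tau^2r^2)=C\er/\sqrt{\eps^2+\tau^2r^2}\le C\er/r$, which is the required $\er^{1+2-2}/r$. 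For $i=2$, $\eps|X_2p_\eps|\le C\er^2\le C\er^2/r$, using $r\le1$.
\end{itemize}

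The only delicate point is to check that no hidden cancellation is needed. Every term must carry the correct power $\er^{i+j-2}$ on its own, and each is at most $C/r$ times that power. The place to be careful is the explicit derivative $X_ip_\eps$, where the factors $\eps^{i-3}$ must be converted into powers of $\er$ and of $1/\sqrt{\eps^2+\tau^2r^2}\le 1/(\tau r)$. I would recompute $X_ip_\eps$ directly from \eqref{STILO} rather than rely only on the coarse bound \eqref{EL2}. Once this bookkeeping is done, summing the three contributions gives \eqref{EL23}, with $C$ depending only on $\tau$, uniformly for $0<\eps\le1$ and $p\in\Se\setminus Z$.
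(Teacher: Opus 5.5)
Your proposal is correct and follows essentially the same route as the paper. Both expand $D_{X_i}\Ne$ via \eqref{VU} in $[\ldots]_*$ coordinates, bound the horizontal part by $|w_i||1-\tau p_\eps i|+\tau r|X_ip_\eps|$ and the vertical part by $\eps|X_ip_\eps|$ using \eqref{EL2}, and then pair with $w_j$ and $c_j$, using $c_1=0$. The only cosmetic difference is that the paper first bounds the whole horizontal derivative by $C\er^{i-1}/\sqrt{\eps^2+\tau^2r^2}$ before passing to $1/r$, whereas you compare each piece with $1/r$ directly.
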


\begin{proof}
We use formula \eqref{nux} for the $[\ldots]_*$ coordinates of $\Ne$.
The derivatives of its horizontal component satisfy
\[
\left|
X_i\bigl(z(1-\tau p_\varepsilon i)\bigr)
\right|
\leq
|w_i||1-\tau p_\varepsilon i|
+\tau r|X_ip_\eps|.
\]
Using
$
|w_i|\leq\er^{i-1}
$
and \eqref{EL2}, we obtain
\[
\begin{split}
\left|
X_i\bigl(z(1-\tau p_\varepsilon i)\bigr)
\right|
&\leq
\er^{i-1}
\frac{\sqrt{\eps^2+\tau^2}}
{\sqrt{\eps^2+\tau^2r^2}}
+C\tau r\eps^{i-3}\er^2\\
&\leq
C\frac{\er^{i-1}}
{\sqrt{\eps^2+\tau^2r^2}}.
\end{split}
\]
For the third coordinate, $N_{\eps,3}=\eps p_\eps$, \eqref{EL2} gives
$
|X_i(\eps p_\eps)|
\leq C\eps^{i-2}\er^2.
$

Consequently,
\[
\begin{split}
\left|
\langle D_{X_i}\Ne,X_j\rangle
\right|
&\leq
C\frac{\er^{i-1}}
{\sqrt{\eps^2+\tau^2r^2}}|w_j|
+C\eps^{i-2}\er^2|c_j|\\
&\leq
C\frac{\er^{i+j-2}}
{\sqrt{\eps^2+\tau^2r^2}}
+C\eps^{i-2}\er^2(j-1)\\
&\leq
C\frac{\er^{i+j-2}}{r}.
\end{split}
\]
Here we have used $c_1=0$, $|c_2|\leq 1$, and $\sqrt{\eps^2+\tau^2r^2}\geq\tau r$.
\end{proof}

\begin{remark} With estimates similar to the ones in Lemma \ref{derivNeBIS}, but at the point $\Psi_{\eps}^{su}$, we also have for $i,j=1,2$,
\begin{align}
    \label{EL21}
    | X_i^{su}  N_{\eps,j} (\Psi_{\eps}^{su})| \leq \frac{C_L}{\eps} \er^ i   \quad and  
    \quad 
        | X_i^{su}  \Nej (\Psi_{\eps}^{su})| \leq \frac{ C}{\eps} \er^ i   ;
        \end{align} 
        \begin{align}
    \label{EL22}
    | X_i^{su}  N_{\eps,3} (\Psi_{\eps}^{su})| \leq C \eps^{i-2} \er^ 2 \quad and  
    \quad 
        | X_i^{su}  \Nec (\Psi_{\eps}^{su})| \leq  C \eps^{i-2} \er^ 2 .
        \end{align}
\end{remark}

\section{Uniform continuity of the second variation}
\label{UC}

In this section, we prove the key estimate for the second variation of
the isoperimetric functional $\Fe$ defined in \eqref{IF}, namely the
continuity estimate \eqref{FOP}, uniformly in $\eps>0$.

Given a function $u:\Se\to\rr$, set
\[
\Fep(s)=\Fep(s;u)=\Fe(su).
\]
It follows from \eqref{svareas} and \eqref{VV2} that, for every
$s\in[0,1]$,
\[
\begin{split}
\Fep''(s)
={}&
\int_{\Se^{su}}
\left\{
\|\nabla_{\Se^{su}}\bar u\|^2
+\left((\Hsu)^2-Q_\varepsilon^{su}\right)\bar u^2
\right\}\,d\Ae\\
&-
H\int_{\Se^{su}}
\left\{ \bar u \Nsu(\bar u) 
+\bar u^2\Hsu
\right\}\,d\Ae,
\end{split}
\]
where
$
\bar u=\langle\Nsu,\Ne\rangle u$.

Our goal is to prove the following theorem.

\begin{theorem}
\label{unifcontSecondVar} 
For every $\eta\in(0,1)$, $\gamma>0$,  and $L>0$, there
exist constants
$
\delta=\delta(\eta,\gamma,L)>0$
 and $\eps_0=\eps_0(\eta,\gamma, L)>0
$ 
with the following property. Let $u:\So\to\rr$ be a continuous,
piecewise $C^2$ function satisfying the following conditions for some
$\varrho\geq0$$:$
\begin{enumerate}
\item[(1)]
$u(p)\in\{t^-(p),t^+(p)\}$ for every $p=(z,t)\in\So$ with
$|z|\leq\varrho;$

\item[(2)]
$t^-(p)<u(p)<t^+(p)$ for every $p=(z,t)\in\So$ with
$|z|>\varrho;$

\item[(3)]
$\varrho\leq\delta$  and $|u|_{2,w}\leq\delta$ when
$|z|>\varrho;$

\item[(4)]
$\displaystyle 
u(p)^2
 \leq L|z|^\gamma \int_{\So}u^2\,d\Ao$  for every $p=(z,t)\in\So;$

\item[(5)] 
$\displaystyle  |\nabla u(p)|^2
 \leq L\int_{\So}\|\nablaSo u\|^2\,d\Ao$  for   $p=(z,t)\in\So$ with $|z|> \varrho.$

\end{enumerate}
Then, for every $0<\varepsilon\leq\eps_0$ and $s\in[0,1]$,
\begin{equation}
\label{FEP}
\left|\Fep''(s;u)-\Fep''(0;u)\right|
\leq
\eta\int_{\Se}
\left\{
\|\nablaSe u\|^2+u^2
\right\}\,d\Ae.
\end{equation}
\end{theorem}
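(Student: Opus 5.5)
We will write $\Fep''(s)-\Fep''(0)$ as a sum of differences of integrals on $\Se^{su}$ and on $\Se$. We pull everything back to $\Se$ through the map $\Psisu$, writing $d\Ae$ on $\Se^{su}$ as $J^{su}_\eps\,d\Ae$ on $\Se$ with Jacobian $J^{su}_\eps=\al\,\|X_1\wedge X_2\|^{-1}$, where $\|X_1\wedge X_2\|=1$ at $s=0$. By \eqref{misura}, $|J^{su}_\eps-1|\leq C_L|u|_{1,w}\leq C_L\delta$ on $\{|z|>\varrho\}$. After this pull-back the difference splits into five groups:
\begin{enumerate}
\item[(a)] the gradient terms $\|\nabla_{\Se^{su}}\bar u\|^2\circ\Psisu\, J^{su}_\eps-\|\nablaSe u\|^2$;
\item[(b)] the geometric terms $\bigl((\Hsu)^2-Q^{su}_\eps\bigr)\bar u^2 J^{su}_\eps + Q_\eps u^2$;
\item[(c)] the volume terms $-H\bigl(\bar u\,\Nsu(\bar u)+\bar u^2\Hsu\bigr)J^{su}_\eps$, compared with their value at $s=0$, which combine with $(\Hsu)^2$ to reproduce the structure $(\Hsu-H)\Hsu\bar u^2$ plus a term vanishing at $s=0$;
\item[(d)] the boundary terms $E^{su}_\eps$ and $\widetilde E^{su}_\eps$ of \eqref{pix}--\eqref{pox};
\item[(e)] the contribution of the polar caps $\{|z|\leq\varrho\}$.
\end{enumerate}
We aim to bound each group by $\eta/5$ times $\int_{\Se}\{\|\nablaSe u\|^2+u^2\}\,d\Ae$.

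For (a) we expand $X_i^{su}\bar u=\langle\Nsu,\Ne\rangle X_iu+u\,X_i\langle\Nsu,\Ne\rangle$ in the frame $X_1^{su},X_2^{su}$, whose Gram matrix differs from the identity by $O(|u|_{1,w})$ (Lemma \ref{lello2}). Theorem \ref{Socrates} gives $|1-\langle\Nsu,\Ne\rangle|\leq C_L\delta$. Theorem \ref{derivnormal}, together with \eqref{EL21}--\eqref{EL22}, bounds $u\,X_i\langle\Nsu,\Ne\rangle$ by $C_L\frac{|u|}{r}\,\er^{i-1}|u|_{2,w}$. The factor $\er^{i-1}$ compensates the $\er$-scaling of $X_2$ in \eqref{STILO1}. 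Hence (a) is bounded by $C_L\delta\int_{\Se}\bigl(\|\nablaSe u\|^2+u^2/r^2\bigr)\,d\Ae$. To control $\int u^2/r^2$ we use assumption (4) with $\gamma>0$ near the poles, namely $u^2/r^2\leq L r^{\gamma-2}\int_{\So}u^2\,d\Ao$, which is integrable against $d\Ae\sim \eps^{-1}r\,dr$ by \eqref{A_e}. Away from the poles we use $r\geq\varrho_1$. In both cases Lemma \ref{leox} converts back to $\int_{\Se}u^2\,d\Ae$.

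Group (b) is the main obstacle. Both $\operatorname{Ric}_\eps(\Nsu)$ and $\|\hsu\|^2$ carry singular factors $\sigma^2=\tau^2/\eps^2$, and the plan is to use the cancellation Theorem \ref{CANC} (announced in the introduction) in the form
\[
\bigl|Q^{su}_\eps\circ\Psisu-Q_\eps\bigr|\leq \frac{C_L}{r^2}\,|u|_{2,w}\bigl(1+\text{terms bounded by }Q_\eps\bigr).
\]
To derive this we write $\|\hsu\|^2=\sum_{i,j}\langle\nabla_{X_i}\Nsu,X_j\rangle^2$ through \eqref{LC}, so that $\nabla_{X_i}\Nsu=D_{X_i}\Nsu-\sigma\overline{X_i}\wedge\Nsu$. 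The $\sigma^2$ parts of $\|\hsu\|^2$ are then quadratic expressions in $\langle\Nsu,\Te\rangle$ and in the horizontal components. These expressions cancel against \eqref{ricciexpress} up to terms containing $\langle\Nsu-\Ne,\Te\rangle$, and \eqref{EQUA2} shows that such terms carry an extra factor $\er$, which kills the factor $\sigma$. The cross terms $D\cdot\sigma$ are handled by \eqref{Da1}, whose $\er^{i}$ gain on the third component plays the same role. Similarly $|\Hsu-H|\leq C_L r^{-1}|u|_{2,w}$ follows from Theorem \ref{derivnormal} and Lemma \ref{derivNeBIS}. With these pointwise bounds, (b) and (c) reduce to $C_L\delta\int_{\Se}(u^2/r^2+Q_\eps u^2)\,d\Ae$. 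The $Q_\eps$-part is small because of Lemma \ref{Lex3}, and the $u^2/r^2$-part is handled as in (a). Group (c) is also controlled by using $\Nsu(\bar u)=\langle\Nsu,\Ne\rangle\Ne(\bar u)$ together with the tangential components, $\Ne u=0$, and Theorem \ref{Socrates}.

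For (d) the boundary $T^{su}_\eps$ lies over $|z|=\varrho\leq\delta$. By (1), $|u|\leq|t^\pm(\varrho)|\leq C\varrho$, its length is $O(\varrho/\eps)$, and $\mathscr N^{su}_{\eps,+}$ has bounded divergence times $\varrho$. So $|E|+|\widetilde E|\leq C\varrho^{2+\gamma}\eps^{-1}\int_{\So}u^2$ by (4), which is small relative to $\int_{\Se}u^2$ by Lemma \ref{leox}. For (e), on $\{|z|\leq\varrho\}$ the surface $\Se^{su}$ degenerates to a neighbourhood of the axis, and both $\Fep''(s)$ and $\Fep''(0)$ restricted there are bounded by $C\varrho^{\gamma}\int_{\So}u^2\,d\Ao\,/\eps$ via (4), \eqref{poker-bis} and \eqref{lampo}. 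Gradient terms in the remaining region $|z|>\varrho$ are absorbed using (5) and \eqref{energy}, which give $\eps^{-1}\int_{\So}\|\nablaSo u\|^2\leq 4\int_{\Se}\|\nablaSe u\|^2$. Finally we choose $\varrho_1$, then $\delta$, then $\eps_0$ so that each of the five contributions is at most $\eta/5$.
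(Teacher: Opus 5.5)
Your outline has the same architecture as the paper's proof: the splitting into $E_1,\dots,E_4$ plus boundary terms, pull-back by $\Psisu$, pointwise bounds from Theorems~\ref{Socrates}, \ref{derivnormal} and \ref{CANC}, a split at $|z|=\varrho$, and absorption through (4), (5), \eqref{energy} and Lemma~\ref{leox}. However, two steps fail as written.

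\emph{First step: the Gram matrix.} The Gram matrix of $X_1^{su},X_2^{su}$ is \emph{not} $I+O(|u|_{1,w})$ uniformly in $\eps$. Lemma~\ref{lello2} controls $v_k$ and $d_k$, but \eqref{MORON} only bounds the $\Te$-component $e_1$ of $X_1^{su}$ by $C_L|u|_{1,w}/\er$, which blows up as $\eps\to0$. What survives uniformly is the anisotropic bound \eqref{mela}, $|g^{ij}-\delta^{ij}|\le C_L\er^{2-i-j}|u|_{1,w}$. The $\er^{i-1}$ gains in \eqref{STILO1} and \eqref{PUNK1} are there precisely to cancel this growth. The price is that the error in (a) becomes $C_L|u|_{2,w}\bigl(|\nabla u|^2+u^2/r^2\bigr)$, with the \emph{Euclidean} gradient. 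Since $X_2u$ sees the derivative of $u$ transversal to the characteristic curves only with weight $\eps p_\eps$, the term $(g^{22}-1)(X_2u)^2$ cannot be bounded by $C\delta\|\nablaSe u\|^2$. This is exactly why assumption (5) is needed, so your closing appeal to (5) and \eqref{energy} repairs (a) once the claim is corrected. The same $g^{ik}g^{j\ell}$ bookkeeping must also be carried in $\Hsu$ and in $\|\hsu\|^2$; your formula for $\|\hsu\|^2$ in step (b) presumes an orthonormal frame.

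\emph{Second step, and the more serious gap: group (c).} It is not controlled by what you cite. After writing $\bar u\,\Nsu(\bar u)=u\Ne(\bar u)-u\Ne^\top(\bar u)$ and using $\Ne u=0$, you need $\Ne\langle\Nsu,\Ne\rangle$, a derivative of the normal \emph{along the flow}. Theorem~\ref{Socrates} is zeroth order, and Theorem~\ref{derivnormal} only gives tangential derivatives. The paper supplies this missing ingredient as follows. It writes $\Ne(\bar u)=u\langle\Ne,\nabla_{\Ne}\Nsu\rangle$. It reduces the $\sigma$-singular part, using $\langle\Ne,\Ne\wedge\Nsu\rangle=0$ and $\sigma\langle\Ne,\Te\rangle=\tau p_\eps$, to $2\tau p_\eps\langle\Ne-\Nsu,\Te\wedge\Nsu\rangle=O(|u|_{1,w}/r)$. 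It then proves $\|uD_{\Ne}\Nsu\|\le C_L|u|_{1,w}$ by differentiating in $s$ the coordinates of $X_1^{su}\wedge X_2^{su}$, noting that $v_k',e_k'$ obey the same bounds as $v_k,e_k$. The tangential piece also requires $|\langle\Ne^\top,X_i^{su}\rangle|\le C_L\er^{i-1}|u|_{1,w}$ against the anisotropic $g^{ij}$.

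\emph{The caps (e).} The differences there are only bounded by $C\,\Ae(\Se\cap\{|z|\le\varrho\})\le C\eps^{-1}\varrho^2\sqrt{\eps^2+\varrho^2}$. Two reasons: the gradient of $t^\pm$ is of order one, and \eqref{lampo} says nothing about $Q^{su}_\eps$, for which you need the pointwise bound $|Q^{su}_\eps\circ\Psisu|\le C/r^2$ from \eqref{RioRosso}. To make this small you must use the \emph{lower} bound $|u|=|t^\pm(\varrho)|\ge c\varrho$ on $|z|=\varrho$ together with (4), which gives $\varrho^{2-\gamma}\le C_L\int_{\So}u^2\,d\Ao$ as in \eqref{noce_due}.

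\emph{A minor point in (d).} The mean curvature on $T^{su}_\eps$ is only $O(1/\varrho)$, so the bound is $C\varrho^{\gamma}\eps^{-1}\int_{\So}u^2\,d\Ao$ rather than $\varrho^{2+\gamma}$. This is harmless.
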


Above, the function $u:\So\to\rr$ is lifted to $\Se$ by keeping it
constant for fixed $z$. We decompose
\[
\Fep''(s;u)-\Fep''(0;u)
 =E_1(s)+E_2(s)-E_3(s)-HE_4(s) +E^{su}_\eps - H \widetilde E^{su}_\eps,
\]
where
\begin{align*}
E_1(s)
&=
\int_{\Se^{su}}
\|\nabla_{\Se^{su}}\bar u\|^2\,d\Ae
-\int_{\Se}\|\nablaSe u\|^2\,d\Ae,\\
E_2(s)
&=
\int_{\Se^{su}}
\bar u^2\Hsu(\Hsu-H)\,d\Ae,\\
E_3(s)
&=
\int_{\Se^{su}}Q_\varepsilon^{su}\bar u^2\,d\Ae
-\int_{\Se}Q_\varepsilon u^2\,d\Ae,\\
E_4(s)
&=
\int_{\Se^{su}} \bar u \Nsu(\bar u) \,d\Ae,
\end{align*}
and $E^{su}_\eps$, $\widetilde E^{su}_\eps$ are as in \eqref{pix} and \eqref{pox}, respectively.
We estimate these terms separately.

We first introduce notation for the metric induced by $\gep$ on
$\Se^{su}$. Let
$
g_{ij}=\langle X_i^{su},X_j^{su}\rangle
$
be its components at $\Psi_\eps^{su}$, and let $g^{ij}$ be the components of the inverse
matrix. We claim that, for $i,j=1,2$, 
\begin{equation}
\label{mela}
g^{ij}=\delta^{ij}+r^{ij},
\qquad
|r^{ij}|
 \leq C_L\er^{2-i-j}|u|_{1,w}.
\end{equation}

We now verify \eqref{mela}. By \eqref{STELLA}, together with
\eqref{NOTA}, \eqref{ZED}, \eqref{MORON}, and \eqref{misura}, we have
$
\det(g_{ij})
 =\|X_1^{su}\wedge X_2^{su}\|^2
 =(\al)^2$.
Consequently,
\begin{equation}
\label{gigi}
\begin{split}
g^{11}
&=
\frac{g_{22}}{\det(g_{ij})}
 =
\frac{\langle X_2^{su},X_2^{su}\rangle}{(\al)^2}\\
&=
\frac{|w_2+v_2|^2+|c_2+e_2|^2}{(\al)^2}
 =1+r^{11},
\qquad
|r^{11}|\leq C_L|u|_{1,w},\\
g^{12}=g^{21}
&=
-\frac{g_{12}}{\det(g_{ij})}\\
&=
-\frac{
\mathop{\rm Re}
\bigl(w_1\bar v_2+v_1\bar w_2+v_1\bar v_2\bigr)
+e_1(c_2+e_2)
}{(\al)^2}\\
&=r^{12},
\hspace{5.2cm}
\qquad
|r^{12}|
 \leq\frac{C_L}{\er}|u|_{1,w},\\
g^{22}
&=
\frac{g_{11}}{\det(g_{ij})}
 =
\frac{\langle X_1^{su},X_1^{su}\rangle}{(\al)^2}\\
&=
\frac{|w_1+v_1|^2+e_1^2}{(\al)^2}
 =1+r^{22},
\qquad
\hspace{1.3cm}
|r^{22}|
 \leq\frac{C_L}{\er^2}|u|_{1,w}.
\end{split}
\end{equation}
In the second identity, we used
$
\mathop{\rm Re}(w_1\bar w_2)=0$.

\subsection{Estimate for the energy}

In this subsection, we prove the uniform continuity of the energy.

\begin{theorem}
\label{unifcontgradient}
Under the assumptions of Theorem \ref{unifcontSecondVar}, for every
$0<\varepsilon\leq\eps_0$ and $s\in[0,1]$,
\begin{equation}
\label{SE1} \int_{\Se^{su}}
\|\nabla_{\Se^{su}}\bar u\|^2\,d\Ae
-\int_{\Se}\|\nablaSe u\|^2\,d\Ae \leq
\eta\int_{\Se}
\left(
\|\nablaSe u\|^2+u^2
\right)\,d\Ae.
\end{equation}
\end{theorem}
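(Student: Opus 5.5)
We pull the integral over $\Se^{su}$ back to $\Se$ through $\Psi_\eps^{su}$ and compare the two integrands pointwise in the frames $X_1,X_2$ and $X_1^{su},X_2^{su}$. The area formula gives
\[
\int_{\Se^{su}}\|\nabla_{\Se^{su}}\bar u\|^2\,d\Ae
=\int_{\Se}\Big(\sum_{i,j}g^{ij}\,X_i(\bar u\circ\Psi_\eps^{su})\,X_j(\bar u\circ\Psi_\eps^{su})\Big)\,\al\,d\Ae .
\]
On $\Se$ we have $\|\nablaSe u\|^2=(X_1u)^2+(X_2u)^2$ and Jacobian $1$, since $X_1,X_2$ are orthonormal. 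We also use $\bar u\circ\Psi_\eps^{su}=\langle\Nsu,\Ne\rangle u$, because $u$ is extended constantly along the flow. Hence
\[
X_i(\bar u\circ\Psi_\eps^{su})=\langle\Nsu,\Ne\rangle X_iu+u\,X_i\langle\Nsu,\Ne\rangle .
\]
The difference of the integrands then splits into three parts: (a) $(\al-1)$ and $(\langle\Nsu,\Ne\rangle^2-1)$ times $|X u|^2$; (b) $r^{ij}X_iuX_ju$; (c) the terms containing $u\,X_i\langle\Nsu,\Ne\rangle$.

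For (a), \eqref{misura} and Theorem \ref{Socrates} give factors bounded by $C_L|u|_{1,w}\le C_L\delta$, so this part is at most $C_L\delta\int_{\Se}\|\nablaSe u\|^2$. For (b), we use \eqref{mela}. The factor $\er^{2-i-j}$ is compensated by the tangential bounds \eqref{STILO1}, namely $|X_2u|\le C\er|\nabla u|$. One must be careful here: the inequality $|X_iuX_ju|\,|r^{ij}|\le C_L|u|_{1,w}\,|\nabla u|^2$ only controls a Euclidean gradient, not $\|\nablaSe u\|^2$. I would keep the $g^{11}$ contribution as $r^{11}(X_1u)^2\le C_L\delta (X_1u)^2$. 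For the $g^{12}$ and $g^{22}$ contributions, since $u$ depends only on $z$ (it is lifted), $X_2u$ is a multiple $\er$-comparable of $X_1u$; I expect that $|X_2u|\le C\er|X_1u|$ for such lifted functions, via \eqref{STILO}: $X_2 r/X_1 r=\eps/\tau\le C\er$ after adjusting the angular parts. Then every term of (b) is bounded by $C_L\delta (X_1u)^2$.

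The main obstacle is (c), which involves the derivative of the normal. We write $X_i\langle\Nsu,\Ne\rangle=\langle D_{X_i}\Nsu-D_{X_i}\Ne,\Ne\rangle+\langle\Nsu-\Ne,D_{X_i}\Ne\rangle$, because $\langle\Ne,D\Ne\rangle=0$ (with $\widehat N_\eps$ in place of $\Ne$, plus Lemma \ref{lello1}). Theorem \ref{derivnormal} and \eqref{EL21}--\eqref{EL22}, combined with Theorem \ref{Socrates}, give $|X_i\langle\Nsu,\Ne\rangle|\le \frac{C_L}{r}\er^{i-1}|u|_{2,w}$, the $T_\eps$-components being weighted by the extra $\er$ from \eqref{EQUA2}. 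The cross term is then bounded by
\[
|u|\,|X_iu|\,\tfrac{C_L}{r}|u|_{2,w}\le C_L\delta\big(|X_iu|^2+u^2/r^2\big),
\]
and the square term by $C_L\delta^2u^2/r^2$. The term $\int u^2/r^2\,d\Ae$ is the delicate one, since it is singular both at the poles and in $\eps$. We split it at a radius $\varrho_1$. Where $|z|\le\varrho_1$, assumption (4) gives $u^2/r^2\le L r^{\gamma-2}\int_{\So}u^2$; since $\gamma\le2$ this is too singular, so here I would instead use the cruder pointwise bound $u^2/r^2\le|u|_{2,w}^2\le\delta^2$. Then, by \eqref{poker-bis} and assumption (5), the region $\{|z|\le\varrho_1\}$ contributes at most $C\delta^2\varrho_1^3/\eps$. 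This is absorbed by $\eta\int_{\Se}\|\nablaSe u\|^2$ only if it is comparable to the right scale. I would therefore use (5) together with Lemma \ref{L5}, which gives $\eps\int_{\Se}\|\nablaSe u\|^2\ge\frac14\int_{\So}\|\nablaSo u\|^2$: near the poles $|\nabla u|^2\le L\int_{\So}\|\nablaSo u\|^2$, and $u$ vanishes to first order relative to $|z|$, so $u^2/r^2\lesssim\sup|\nabla u|^2$. The contribution becomes $CL\varrho_1^3\eps^{-1}\int_{\So}\|\nablaSo u\|^2\le 4CL\varrho_1^3\int_{\Se}\|\nablaSe u\|^2$, which is small once $\varrho_1$ is small. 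Where $|z|>\varrho_1$, we have $u^2/r^2\le\varrho_1^{-2}u^2$, and choosing $\delta$ small relative to $\varrho_1$ makes this part $\le\eta\int_{\Se}u^2$. Finally, the points with $|z|\le\varrho$ in assumption (1), where the graph hits the axis, are handled by assumption (3), $\varrho\le\delta$, and the same pole estimate. Collecting everything and choosing first $\varrho_1$, then $\delta$ and $\eps_0$, yields \eqref{SE1}.
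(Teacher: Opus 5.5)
\textbf{The gap is in step (b), and it carries over to the cross term in (c).} You expect $|X_2u|\leq C\er|X_1u|$ for lifted functions. This is false unless $u$ is radial.

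The lift depends on $z$ only, so $X_ku=\mathrm{Re}(w_k)\,u_x+\mathrm{Im}(w_k)\,u_y$. By \eqref{NOTA}, $w_2=-i|w_2|w_1$ is orthogonal to $w_1$ as a planar vector. Hence $X_1u$ and $X_2u/|w_2|$ are derivatives of $u$ in two orthogonal directions of the $z$-plane, and one can have $X_1u=0\neq X_2u$. Your check $X_2r/X_1r=\eps/\tau$ covers only the radial part. For the angle one finds $X_2\theta/X_1\theta=-\eps\tau(1-r^2)/(\eps^2+\tau^2r^2)$, so no common ratio exists.

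Consequently, the products of $r^{12}$ and $r^{22}$ with $X_2u$ can only be controlled through \eqref{STILO1}. By \eqref{gigi} these coefficients have size $C_L|u|_{1,w}/\er$ and $C_L|u|_{1,w}/\er^2$. The resulting bound is $C_L|u|_{1,w}|\nabla u|^2$, involving the full Euclidean gradient; this is what the paper keeps in \eqref{pera} and \eqref{lampone}.

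However, $\int_{\Se}|\nabla u|^2\,d\Ae$ is not bounded by $\int_{\Se}\|\nablaSe u\|^2\,d\Ae$ uniformly in $\eps$. Since $|X_2u|\leq C\er|\nabla u|$, as $\eps\to0$ the quantity $\eps\int_{\Se}\|\nablaSe u\|^2\,d\Ae$ retains only the derivative of $u$ along the characteristic curves $s\mapsto F_0(s,\theta)$.

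The missing ingredient is assumption (5), which the paper uses exactly at this point, see \eqref{mandorlo1}:
\[
C_L\int_{\Se\cap\{|z|>\varrho\}}|u|_{2,w}|\nabla u|^2\,d\Ae
\leq LC_L\delta\,\Ae(\Se)\int_{\So}\|\nablaSo u\|^2\,d\Ao
\leq 8LC_L\delta\,\Ao(\So)\int_{\Se}\|\nablaSe u\|^2\,d\Ae .
\]
This uses $\eps\Ae(\Se)\leq 2\Ao(\So)$ and Lemma \ref{L5}. You already invoke (5) and Lemma \ref{L5} at the poles. The repair is to apply them to the whole term $|u|_{2,w}|\nabla u|^2$, rather than reducing it pointwise to $(X_1u)^2$.

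\textbf{Two smaller points.}

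First, your concern that (4) is too singular at the poles is unfounded. By \eqref{A_e}, $\eps\|\partial_sF_\eps\wedge\partial_\theta F_\eps\|\leq\sqrt5\sin s$ for $\eps\leq 1$. Hence $\int_{\Se}|z|^{\gamma-2}\,d\Ae\leq C_\gamma/\eps$ for every $\gamma>0$, and (4) controls $\int|u|_{2,w}u^2/r^2\,d\Ae$ directly, as in \eqref{mandorlo2}. Your detour through (5) and the vanishing of $u$ at the poles also works, but it is not needed.

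Second, the region $\{|z|\leq\varrho\}$ cannot be handled by your pole estimate. Assumption (5) is not available there, and $\varrho\leq\delta$ alone gives a bound $C\varrho^2\sqrt{\eps^2+\varrho^2}/\eps$ that is not quadratic in $u$. The paper combines (1) and (4) at $|z|=\varrho$ to get $\varrho^2\leq\varrho^{2-\gamma}\leq C_L\int_{\So}u^2\,d\Ao$, see \eqref{noce_due}.

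Alternatively, continuity and (3) give $|u|\leq\delta\varrho$ on $\{|z|=\varrho\}$. For $\varrho\leq 1/2$, (1) gives $|u|\geq\pi\varrho/3$ there. So for $\delta<1/2$ one necessarily has $\varrho=0$, and the region reduces to the poles.
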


\begin{proof}
Since $|\langle\Nsu,\Ne\rangle|\leq1$, we obtain
\begin{equation}
\label{fico}
\begin{split}
\|\nabla_{\Se^{su}}\bar u\|^2
\leq
\|\nabla_{\Se^{su}}u\|^2
&+
2|u|\,
\|\nabla_{\Se^{su}}\langle\Nsu,\Ne\rangle\|
\|\nabla_{\Se^{su}}u\|\\
&+
u^2
\|\nabla_{\Se^{su}}\langle\Nsu,\Ne\rangle\|^2.
\end{split}
\end{equation}

By the metric compatibility property \eqref{VU2}, at
$\Psi_\eps^{su}(p)$ we have
\[
\begin{split}
X_i\langle\Nsu,\Ne\rangle
&=
\langle D_{X_i}\Nsu,\Ne\rangle
+\langle\Nsu,D_{X_i}\Ne\rangle\\
&=
\langle D_{X_i}\Nsu-D_{X_i}\widehat N_\eps,\Ne\rangle\\
&\quad+
\langle D_{X_i}\widehat N_\eps,
        \Ne-\widehat N_\eps\rangle
+\langle\Nsu-\Ne,D_{X_i}\Ne\rangle.
\end{split}
\]
Notice that the vertical component of
$\Ne-\widehat N_\eps$ vanishes. It follows from \eqref{Da1},
\eqref{EL21}, \eqref{EQUA1ter}, \eqref{EQUA1}, and \eqref{EL22} that
\begin{equation}
\label{PUNK1}
\begin{split}
|X_i\langle\Nsu,\Ne\rangle|
\leq{}&
\|D_{X_i}\Nsu-D_{X_i}\widehat N_\eps\|\\
&+
\big|
\big\langle
D_{X_i}\widehat N_\eps,
\Ne-\widehat N_\eps
\big\rangle
\big|\\
&+
\big|
\big\langle
\Nsu-\Ne,D_{X_i}\Ne
\big\rangle
\big|\\
\leq{}&
\frac{C_L}{r}\er^{i-1}|u|_{2,w}
+C_L
\frac{\er^{i-1}}{\sqrt{\eps^2+\tau^2r^2}}
\tau p_\eps|u|\\
&+
C_L|u|_{1,w}
\frac{\er^{i-1}}{\sqrt{\eps^2+\tau^2r^2}}\\
&+
C_L\er|u|_{1,w}
\frac{\eps^i}{\eps^2+\tau^2r^2}\\
\leq{}&
\frac{C_L}{r}\er^{i-1}|u|_{2,w}.
\end{split}
\end{equation}
Combining \eqref{mela} and \eqref{PUNK1}, we obtain
\begin{equation}
\label{mora}
\begin{split}
\|\nabla_{\Se^{su}}\langle\Nsu,\Ne\rangle\|^2
&\leq
\frac{C_L}{r^2}
\sum_{i,j=1}^2
\er^{2-i-j}\er^{i-1}\er^{j-1}
|u|_{2,w}^2\\
&\leq
\frac{C_L}{r^2}|u|_{2,w}^2.
\end{split}
\end{equation}

Next, we estimate the difference
$
\|\nabla_{\Se^{su}}u(\Psi_\eps^{su}(p))\|^2
-\|\nablaSe u(p)\|^2.
$
Using
$
X_i^{su}u(\Psi_\eps^{su}(p))=X_iu(p),
$
together with \eqref{mela} and \eqref{STILO1}, we obtain
\begin{equation}
\label{pera}
\begin{split}
\|\nabla_{\Se^{su}}u(\Psi_\eps^{su}(p))\|^2
&=
\sum_{i,j=1}^2
g^{ij}X_i^{su}u\,X_j^{su}u\\
&=
\|\nablaSe u(p)\|^2
+\sum_{i,j=1}^2
(g^{ij}-\delta^{ij})X_iu(p)X_ju(p)\\
&\leq
\|\nablaSe u(p)\|^2
+C_L|u|_{1,w}
\sum_{i,j=1}^2
\er^{2-i-j}\er^{i-1}\er^{j-1}|\nabla u(p)|^2\\
&\leq
\|\nablaSe u(p)\|^2
+C_L|u|_{1,w}|\nabla u(p)|^2.
\end{split}
\end{equation}

Combining \eqref{fico}, \eqref{mora}, and \eqref{pera}, we obtain
\begin{equation}
\label{lampone}
\begin{split}
\|\nabla_{\Se^{su}}\bar u(\Psi_\eps^{su}(p))\|^2
\leq{}&
\|\nablaSe u(p)\|^2
+C_L|u|_{1,w}|\nabla u(p)|^2\\
&+
\frac{C_L|u(p)|}{r}|u|_{2,w}
\left(
\|\nablaSe u(p)\|^2
+C_L|u|_{1,w}|\nabla u(p)|^2
\right)^{1/2}\\
&+
\frac{C_Lu(p)^2}{r^2}|u|_{2,w}^2\\
\leq{}&
\|\nablaSe u(p)\|^2
+C_L|u|_{2,w}
\left(
|\nabla u(p)|^2+\frac{u(p)^2}{r^2}
\right).
\end{split}
\end{equation}
In the last inequality, we used $|u|_{2,w}\leq C$.

We make the change of variables
$
q=\Psi_\eps^{su}(p)\in\Se^{su}$,
$p\in\Se$. 
By \eqref{misura},
\begin{equation}
\label{mis2}
\al
 =\|X_1^{su}\wedge X_2^{su}\|
 =1+m^{su},
\qquad
|m^{su}|\leq C_L|u|_{1,w}.
\end{equation}
Therefore,
\begin{equation}
\label{caco}
\begin{split}
\int_{\Se^{su}}
\|\nabla_{\Se^{su}}\bar u\|^2\,d\Ae
&=
\int_{\Se}
\|\nabla_{\Se^{su}}\bar u(\Psi_\eps^{su}(p))\|^2
\al\,d\Ae\\
&\leq
\int_{\Se}\|\nablaSe u\|^2\al\,d\Ae\\
&\quad+
C_L\int_{\Se}
|u|_{2,w}
\left(
|\nabla u|^2+\frac{u^2}{r^2}
\right)\al\,d\Ae\\
&\leq
\int_{\Se}\|\nablaSe u\|^2\,d\Ae\\
&\quad+
C_L\int_{\Se}
|u|_{2,w}
\left(
|\nabla u|^2+\frac{u^2}{r^2}
\right)\,d\Ae.
\end{split}
\end{equation}
In the last inequality, we used \eqref{mis2}, the bounds
$|u|_{1,w}\leq|u|_{2,w}\leq C$, and hence the uniform boundedness of
$\al$.

We estimate the last integral separately on the regions
$|z|\leq\varrho$ and $|z|>\varrho$. Since
$|u|_{2,w}\leq C$ when $|z|\leq\varrho$, it follows from
\eqref{poker-bis} that
\begin{equation}
\label{noce_base}
\begin{split}
C_L\int_{\Se\cap\{|z|\leq\varrho\}}
|u|_{2,w}
\left(
|\nabla u|^2+\frac{u^2}{r^2}
\right)\,d\Ae
&\leq
C_L\Ae\bigl(\Se\cap\{|z|\leq\varrho\}\bigr)\\
&\leq
\frac{C_L\varrho^2}{\eps}
\sqrt{\eps^2+\varrho^2}.
\end{split}
\end{equation}

At a point $p\in\So$ with $|z|=\varrho$, condition (1) and the
behaviour of $t^\pm$ imply that $|u(p)|$ is comparable to $\varrho$.
Using condition (4), we therefore obtain
\begin{equation}
\label{noce_due}
\varrho^2
\leq
\varrho^{2-\gamma}
\leq
C_L\int_{\So}u^2\,d\Ao.
\end{equation}
Consequently, by \eqref{noce_base}, \eqref{noce_due}, and
\eqref{poker},
\begin{equation}
\label{noce}
\begin{split}
C_L\int_{\Se\cap\{|z|\leq\varrho\}}
|u|_{2,w}
\left(
|\nabla u|^2+\frac{u^2}{r^2}
\right)\,d\Ae
&\leq
C_L\Ae\bigl(\Se\cap\{|z|\leq\varrho\}\bigr)\\
&\leq
\frac{LC_L}{\eps}
\sqrt{\eps^2+\varrho^2}
\int_{\So\cap\{|z|>\varrho\}}u^2\,d\Ao\\
&\leq
2LC_L\sqrt{\eps^2+\varrho^2}
\int_{\Se}u^2\,d\Ae\\
&\leq
\frac{\eta}{2}\int_{\Se}u^2\,d\Ae.
\end{split}
\end{equation}
The last inequality holds for $\eps\leq\eps_0$ and
$\varrho\leq\delta$, provided that
\begin{equation}
\label{prugna}
4LC_L\sqrt{\eps_0^2+\delta^2}\leq\eta.
\end{equation}
This gives a first condition on $\eps_0>0$ and $\delta>0$.

We now estimate the integral over the region $|z|\geq\varrho$,
beginning with the energy term. Choose
\begin{equation}
\label{mirto}
0<\delta
\leq
\frac{\eta}{16LC_L\Ao(\So)},
\end{equation}
and suppose that $|u|_{2,w}\leq\delta$ when $|z|>\varrho$. By
condition (5), the comparison of the surface measures, \eqref{energy},
and \eqref{mirto},
\begin{equation}
\label{mandorlo1}
\begin{split}
C_L\int_{\Se\cap\{|z|\geq\varrho\}}
|u|_{2,w}|\nabla u|^2\,d\Ae
&\leq
LC_L\delta\Ae(\Se)
\int_{\So}\|\nablaSo u\|^2\,d\Ao\\
&\leq
\frac{2LC_L\delta}{\eps}\Ao(\So)
\int_{\So}\|\nablaSo u\|^2\,d\Ao\\
&\leq
8LC_L\delta\Ao(\So)
\int_{\Se}\|\nablaSe u\|^2\,d\Ae\\
&\leq
\frac{\eta}{2}
\int_{\Se}\|\nablaSe u\|^2\,d\Ae.
\end{split}
\end{equation}

The term involving $u^2$ is estimated as follows. By condition (4) and
the inequality
\[
\int_{\Se}|z|^{\gamma-2}\,d\Ae
\leq\frac{C_\gamma}{\eps},
\qquad \gamma>0,
\]
we obtain
\begin{equation}
\label{mandorlo2}
\begin{split}
C_L\int_{\Se\cap\{|z|\geq\varrho\}}
|u|_{2,w}\frac{u^2}{r^2}\,d\Ae
&\leq
LC_L\delta
\left(
\int_{\Se}|z|^{\gamma-2}\,d\Ae
\right)
\left(
\int_{\So}u^2\,d\Ao
\right)\\
&\leq
\frac{LC_LC_\gamma\delta}{\eps}
\int_{\So}u^2\,d\Ao\\
&\leq
2LC_LC_\gamma\delta
\int_{\Se}u^2\,d\Ae\\
&\leq
\frac{\eta}{2}\int_{\Se}u^2\,d\Ae,
\end{split}
\end{equation}
provided that
\[
0<\delta\leq\frac{\eta}{4LC_LC_\gamma}.
\]
This gives an additional condition on $\delta$.

The estimate \eqref{SE1} now follows from \eqref{caco},
\eqref{noce}, \eqref{mandorlo1}, and \eqref{mandorlo2}.
\end{proof}

   \subsection{Estimate for the geometric term $Q_\varepsilon^{su}$}

In this subsection, we prove the part of estimate \eqref{FOP}
concerning the term
\[
Q_\varepsilon^{su}
 =\operatorname{Ric}_\eps(\Nsu)+\|\hsu\|^2.
\]

\begin{theorem}
\label{unifcontQeps}
Under the assumptions of Theorem \ref{unifcontSecondVar}, for every
$0<\varepsilon\leq\eps_0$ and $s\in[0,1]$,
\begin{equation}
\label{ortica}
\Big|
\int_{\Se^{su}}Q_\varepsilon^{su}\bar u^2\,d\Ae-
\int_{\Se}Q_\varepsilon u^2\,d\Ae\Big|
\leq
\eta\int_{\Se}u^2\,d\Ae.
\end{equation}
\end{theorem}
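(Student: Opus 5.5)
The plan is to pull the integral over $\Se^{su}$ back to $\Se$ through $\Psi_\eps^{su}$, exactly as in \eqref{caco}, and write
\[
\int_{\Se^{su}}Q_\varepsilon^{su}\bar u^2\,d\Ae-\int_{\Se}Q_\varepsilon u^2\,d\Ae
=\int_{\Se}\Bigl(Q_\varepsilon^{su}(\Psi_\eps^{su})\langle\Nsu,\Ne\rangle^2\al-Q_\varepsilon\Bigr)u^2\,d\Ae .
\]
Since $1-\langle\Nsu,\Ne\rangle^2\leq\|\Nsu-\Ne\|^2$ and $|\al-1|\leq C_L|u|_{1,w}$ by \eqref{mis2} and Theorem \ref{Socrates}, the difference splits into two parts. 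The first is $Q_\varepsilon\bigl(\langle\Nsu,\Ne\rangle^2\al-1\bigr)u^2$. It is harmless: $Q_\varepsilon\geq0$, and the factor in parentheses is bounded by $C_L|u|_{1,w}\leq C_L\delta$ on $\{|z|>\varrho\}$. Hence this part is at most $C_L\delta\int_{\Se}Q_\varepsilon u^2$, which Lemma \ref{Lex3} makes small. The second part is $(Q_\varepsilon^{su}-Q_\varepsilon)\langle\Nsu,\Ne\rangle^2\al\,u^2$, and it is the real term.

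The central step is a pointwise bound of the form
\begin{equation*}
|Q_\varepsilon^{su}(\Psi_\eps^{su}(p))-Q_\varepsilon(p)|\leq \frac{C_L}{r^2}|u(p)|_{2,w}+C_L\,Q_\varepsilon(p),
\end{equation*}
holding uniformly in $\eps$. This is where I would invoke the cancellation Theorem \ref{CANC}, stated later in the paper and taken as given. Computed naively, $\operatorname{Ric}_\eps(\Nsu)$ by \eqref{ricciexpress} and $\|\hsu\|^2=\sum g^{ik}g^{jl}h_{ij}h_{kl}$ both carry terms of order $\se^2=\tau^2/\eps^2$. The aim is to show that in the sum only combinations of the form $\se^2\langle\Nsu,\Te\rangle\times(\dots)$ and $\se^2\langle \overline{X_i}\wedge\Nsu,X_j\rangle$ survive.

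For the second fundamental form I would write $h_{ij}=-\langle\nabla_{X_i}\Nsu,X_j\rangle=-\langle D_{X_i}\Nsu,X_j\rangle+\se\langle\overline{X_i}\wedge\Nsu,X_j\rangle$ using \eqref{LC}. I would then split $D_{X_i}\Nsu$ into $D_{X_i}\widehat N_\eps$ plus the difference controlled by Theorem \ref{derivnormal}, and use Lemma \ref{derivNeBIS} for the unperturbed piece. The singular torsion pieces would be compared with those of $\Se$, where they cancel against Ricci to give \eqref{QUQQO}. Their differences carry a factor $\se$ times $\langle\Nsu-\Ne,\Te\rangle$ or times horizontal corrections, and \eqref{EQUA2} together with the weights $\er^{i}$ in \eqref{Da1} and \eqref{mela} converts these into $\se\er\leq 1/r$. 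In this way every term is bounded by $r^{-2}$ times $|u|_{2,w}$, or by a bounded multiple of $Q_\varepsilon$. I expect this bookkeeping, matching each power of $\er$ against $\se$ and $1/r$, to be the main obstacle; I would organise it by expanding in the frame $X_1,X_2$ and treating the $(1,1)$, $(1,2)$, $(2,2)$ entries separately.

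Given the pointwise bound, I would integrate as in Theorem \ref{unifcontgradient}. On $\{|z|>\varrho\}$ the contribution is $C_L\delta\int_{\Se}u^2r^{-2}\,d\Ae$. Condition (4) and $\int_{\Se}|z|^{\gamma-2}\,d\Ae\leq C_\gamma/\eps$ reduce this to $C\delta\int_{\So}u^2\,d\Ao$, and Lemma \ref{leox} then gives at most $\frac{\eta}{4}\int_{\Se}u^2\,d\Ae$, exactly as in \eqref{mandorlo2}.

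On $\{|z|\leq\varrho\}$, where $|u|_{2,w}$ is only bounded and not small, I would not expand. Instead I would bound $\int Q^{su}_\eps\bar u^2$ and $\int Q_\eps u^2$ separately. For the first, the bound $|Q_\varepsilon^{su}|\leq C_L(r^{-2}+Q_\varepsilon)$ and $u^2\leq Cr^2$ give at most $C_L\Ae(\Se\cap\{|z|\leq\varrho\})$; for the second, \eqref{lampo} and condition (4) give $8\pi L\varrho^\gamma\int_{\So}u^2\,d\Ao$. The first quantity is then handled by \eqref{poker-bis}, \eqref{noce_due} and \eqref{poker} exactly as in \eqref{noce}, and is small once $\eps_0,\delta$ are small. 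The boundary terms at $|z|=\varrho$ are not part of this statement. Choosing $\delta,\eps_0$ to satisfy the finitely many smallness conditions then yields \eqref{ortica}.
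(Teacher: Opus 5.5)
Your proposal follows the paper's proof: pull back through $\Psi_\eps^{su}$, split the integrand into $Q_\varepsilon\nu^{su}$ and $(1+\nu^{su})\chi^{su}$, control $\chi^{su}$ pointwise via the cancellation of Theorem \ref{CANC}, and integrate separately over $\{|z|\leq\varrho\}$ and $\{|z|>\varrho\}$ as in \eqref{noce} and \eqref{mandorlo2}. The pointwise bound you plan to derive by hand is exactly Theorem \ref{estimQeps}, which holds even without your extra $C_L Q_\varepsilon$ term; and since $Q_\varepsilon\leq 2/r^2$ by \eqref{QUQQO}, the paper absorbs the $Q_\varepsilon\nu^{su}$ term directly into $\frac{C_L}{r^2}|u|_{2,w}$ rather than using Lemma \ref{Lex3}.
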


We first identify the cancellation of the terms of order $1/\eps^2$
in $Q_\varepsilon^{su}$. This is the content of the following theorem.
Here, $D_{X_i}\Nsu$ denotes the derivative defined in \eqref{VU}.

\begin{theorem}
\label{CANC}
For every $\varepsilon>0$ and $s\in[0,1]$, the following identity
holds on $\Se^{su}$:
\begin{equation}
\label{QUE}
\begin{split}
Q_\varepsilon^{su}
={}&
\sum_{i,j,\ell,k=1}^2
g^{ik}g^{j\ell}
\langle D_{X_i}\Nsu,X_j\rangle
\langle D_{X_\ell}\Nsu,X_k\rangle\\
&-
2\sigma
\frac{
\langle D_{X_1}\Nsu,\overline X_2\rangle
-\langle D_{X_2}\Nsu,\overline X_1\rangle
}
{\|X_1\wedge X_2\|},
\end{split}
\end{equation}
where $X_1,X_2$ form a local frame for $T\Se^{su}$,
$(g^{ij})$ is the inverse of the corresponding metric matrix, and
\[
\Nsu
 =\frac{X_1\wedge X_2}{\|X_1\wedge X_2\|}
\]
is the unit normal to $\Se^{su}$.
\end{theorem}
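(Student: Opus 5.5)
We compute the two pieces of $Q_\varepsilon^{su}$ separately, using the relation \eqref{LC} between the Levi-Civita connection $\nabla$ and the Tanaka--Webster connection $D$. The aim is to show that the $\sigma^2$ terms cancel exactly, leaving the expression \eqref{QUE}. Write $N=\Nsu$ for brevity.

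\emph{Second fundamental form.} We have $h_{ij}=\langle\nabla_{X_i}N,X_j\rangle$, so that
\[
\|\hsu\|^2=\sum g^{ik}g^{j\ell}h_{ij}h_{\ell k}.
\]
By \eqref{LC},
\[
h_{ij}=\langle D_{X_i}N,X_j\rangle-\sigma\langle\overline{X_i}\wedge N,X_j\rangle=a_{ij}-\sigma b_{ij},
\]
where $a_{ij}=\langle D_{X_i}N,X_j\rangle$. Note that $a_{ij}$ need not be symmetric, since $D$ has torsion; this is why the index pattern $g^{ik}g^{j\ell}a_{ij}a_{\ell k}$ appears in \eqref{QUE}. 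Expanding gives three groups of terms: the $\sigma^0$ term, which is exactly the first line of \eqref{QUE}; the cross term
\[
-2\sigma\sum g^{ik}g^{j\ell}a_{ij}b_{\ell k};
\]
and the term $\sigma^2\sum g^{ik}g^{j\ell}b_{ij}b_{\ell k}$.

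\emph{The $\sigma^2$ part.} We evaluate $b_{ij}=\det(\overline{X_i},N,X_j)$ in a $g_\varepsilon$-orthonormal tangent frame $E_1,E_2$ with $E_1\wedge E_2=N$; the quantities involved are frame-invariant tensorially, so this suffices. Write $N=[n,\nu]_*$. The map $V\mapsto\overline V$ is reflection in the horizontal plane, $\overline V=V-2\langle V,\Te\rangle\Te$. Hence
\[
b_{ij}=\langle \overline{E_i}\wedge N,E_j\rangle=\langle E_i\wedge N,E_j\rangle-2\langle E_i,\Te\rangle\langle\Te\wedge N,E_j\rangle.
\]
The first term equals $\pm\delta$-type entries, namely $\langle E_i\wedge N,E_j\rangle=-\epsilon_{ij}$ up to sign. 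For the second term, $\Te\wedge N$ is tangent to the surface and $\langle E_i,\Te\rangle$ are the components of $\Te^\top$. So $b=J-2\,\Te^\top\otimes(\Te\wedge N)$, where $J$ is rotation by $\pi/2$. Then $\|b\|^2$ is explicit. Using $|\Te^\top|^2=1-\nu^2$ and $\Te\wedge N=J\Te^\top$ up to sign, we get
\[
\|b\|^2=2-4\langle J,\Te^\top\otimes J\Te^\top\rangle+4(1-\nu^2)^2=2-4(1-\nu^2)+4(1-\nu^2)^2.
\]
Meanwhile \eqref{ricciexpress} gives
\[
\operatorname{Ric}_\varepsilon(N)=-2\sigma^2(1-2\nu^2).
\]
The target is to check that
\[
\sigma^2\|b\|^2+\operatorname{Ric}_\varepsilon(N)=0.
\]
This is a polynomial identity in $\nu^2$. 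I expect the signs/orientation conventions in this step to be the main obstacle: the claimed result has no residual $\sigma^2$, so any leftover multiple must in fact vanish. The paper's Remark \ref{singsing} check at $s=0$ serves as a consistency test. If the naive expansion fails to cancel, the fix is to redo it with the exact formula \eqref{WED} in $[\cdot]_*$ coordinates.

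\emph{Cross term.} It remains to show that
\[
\sum g^{ik}g^{j\ell}a_{ij}b_{\ell k}=\frac{\langle D_{X_1}N,\overline{X_2}\rangle-\langle D_{X_2}N,\overline{X_1}\rangle}{\|X_1\wedge X_2\|}.
\]
Both sides are invariant under change of tangent frame: the right side scales by $\det$, as does $\|X_1\wedge X_2\|$. So we may again use the orthonormal frame $E_1,E_2$. We substitute $b=J-2\,\Te^\top\otimes J\Te^\top$. Also, $D_{E_i}N$ is tangent, because $D$ is metric and $N$ is unit. Hence
\[
\langle D_{E_i}N,\overline{E_j}\rangle=a_{ij}-2\langle E_j,\Te\rangle\langle D_{E_i}N,\Te\rangle,
\]
and $\langle D_{E_i}N,\Te\rangle=E_i\nu$ is expressed through $a$ via $\Te^\top$. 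Matching the two resulting expressions termwise should give the identity. Combining the three pieces yields \eqref{QUE}.
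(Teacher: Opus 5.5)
\textbf{Gap in the $\sigma^2$ cancellation.} Your splitting $h_{ij}=a_{ij}-\sigma b_{ij}$ via \eqref{LC} is the same as the paper's. However, the step that carries the whole content of the theorem, the cancellation of the $\sigma^2$ terms, fails as you wrote it, and signs or orientation are not the cause.

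In your own expansion the $\sigma^2$ coefficient is $\sum g^{ik}g^{j\ell}b_{ij}b_{\ell k}$. In an orthonormal tangent frame $E_1,E_2$ this is $\sum_{i,j}b_{ij}b_{ji}=\operatorname{tr}(b^2)$. It is not the Frobenius norm $\|b\|^2=\sum_{i,j}b_{ij}^2$, which is what you computed. You rightly noted that $a$ is not symmetric, but $b$ is not symmetric either. Its leading part $A_{ij}=\langle E_i\wedge N,E_j\rangle$ is antisymmetric, and for $N=E_1\wedge E_2$ one finds $b_{12}-b_{21}=-2\nu^2$.

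With your expression, $\sigma^2\|b\|^2+\operatorname{Ric}_\varepsilon(N)=4\sigma^2\nu^4\neq0$. No change of sign convention can remove this: the quartic term $4(1-\nu^2)^2$ has no counterpart in the Ricci term, which is quadratic in $\nu$. Redoing the computation in $[\cdot]_*$ coordinates would not help either unless the contraction is corrected.

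\textbf{The fix.} Write $b=A-2\,\xi\otimes\omega$ with $\xi_i=\langle E_i,\Te\rangle$ and $\omega_j=\langle\Te\wedge N,E_j\rangle$. For $N=E_1\wedge E_2$ one has $\omega=-A\xi$, hence $\langle\xi,\omega\rangle=0$ and $\langle\omega,A\xi\rangle=-|\xi|^2=-(1-\nu^2)$. Therefore
\[
\operatorname{tr}(b^2)=\operatorname{tr}(A^2)-4\langle\omega,A\xi\rangle+4\langle\xi,\omega\rangle^2=-2+4(1-\nu^2)=2-4\nu^2,
\]
which cancels $\operatorname{Ric}_\varepsilon(N)=-2\sigma^2(1-2\nu^2)$ exactly. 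The paper obtains the same value, $2\langle N,\overline N\rangle$, in an arbitrary frame. It uses $\langle\overline X_\ell\wedge N,X_k\rangle=-\langle X_\ell,\overline N\wedge\overline X_k\rangle$, the tangential projection identity for $g^{j\ell}$, and the Lagrange identity.

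\textbf{Cross term.} The same caution applies here: the term is $\sum_{i,j}a_{ij}b_{ji}$. With that pairing your plan works. Since $D_{E_i}N$ is tangent, $\langle D_{E_i}N,\Te\rangle=\sum_k a_{ik}\xi_k$, and both sides equal
\[
a_{12}-a_{21}-2\xi_2(a_{11}\xi_1+a_{12}\xi_2)+2\xi_1(a_{21}\xi_1+a_{22}\xi_2).
\]
The pairing $\sum_{i,j}a_{ij}b_{ij}$ would not match; already its leading term is $a_{21}-a_{12}$.
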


\begin{proof}
The Ricci curvature in the normal direction is
\begin{equation}
\label{RIO}
\operatorname{Ric}_\eps(\Nsu)
 =\sigma^2
 \left(
 4\langle\Nsu,\Te\rangle^2-2
 \right).
\end{equation}
We now compute the squared norm of the second fundamental form. Using
the summation convention for repeated indices and formula \eqref{LC},
we obtain
\begin{align*}
\|\hsu\|^2
&=
g^{ik}g^{j\ell}
\langle\nabla_{X_i}\Nsu,X_j\rangle
\langle\nabla_{X_\ell}\Nsu,X_k\rangle\\
&=
g^{ik}g^{j\ell}
\left\langle
D_{X_i}\Nsu-\sigma\overline X_i\wedge\Nsu,X_j
\right\rangle
\left\langle
D_{X_\ell}\Nsu-\sigma\overline X_\ell\wedge\Nsu,X_k
\right\rangle\\
&=
g^{ik}g^{j\ell}
\langle D_{X_i}\Nsu,X_j\rangle
\langle D_{X_\ell}\Nsu,X_k\rangle\\
&\quad
-2\sigma g^{ik}g^{j\ell}
\langle D_{X_i}\Nsu,X_j\rangle
\langle\overline X_\ell\wedge\Nsu,X_k\rangle\\
&\quad
+\sigma^2g^{ik}g^{j\ell}
\langle\overline X_i\wedge\Nsu,X_j\rangle
\langle\overline X_\ell\wedge\Nsu,X_k\rangle.
\end{align*}
The last term cancels the Ricci term in \eqref{RIO}. Indeed,
\begin{align*}
&g^{ik}g^{j\ell}
\langle\overline X_i\wedge\Nsu,X_j\rangle
\langle\overline X_\ell\wedge\Nsu,X_k\rangle\\
&\quad=
g^{ik}g^{j\ell}
\langle\overline X_i\wedge\Nsu,X_j\rangle
\langle\overline X_\ell,\Nsu\wedge X_k\rangle\\
&\quad=
-g^{ik}g^{j\ell}
\langle\overline X_i\wedge\Nsu,X_j\rangle
\langle X_\ell,\overline\Nsu\wedge\overline X_k\rangle\\
&\quad=
-g^{ik}
\langle\overline X_i\wedge\Nsu,
       \overline\Nsu\wedge\overline X_k\rangle\\
&\quad=
g^{ik}
\langle\overline X_i\wedge\Nsu,
       \overline X_k\wedge\overline\Nsu\rangle\\
&\quad=
g^{ik}g_{ik}\langle\Nsu,\overline\Nsu\rangle
-g^{ik}
\langle\overline X_i,\Nsu\rangle
\langle\overline X_k,\overline\Nsu\rangle\\
&\quad=
2\left\langle
\Nsu,\Nsu-2\langle\Nsu,\Te\rangle\Te
\right\rangle\\
&\quad=
2-4\langle\Nsu,\Te\rangle^2.
\end{align*}

We next consider the second term in the expression for $\|\hsu\|^2$.
By \eqref{VU2},
\[
\langle D_{X_i}\Nsu,\Nsu\rangle=0.
\]
Therefore,
\begin{align*}
&g^{ik}g^{j\ell}
\langle D_{X_i}\Nsu,X_j\rangle
\langle\overline X_\ell\wedge\Nsu,X_k\rangle\\
&\quad=
g^{ik}g^{j\ell}
\langle D_{X_i}\Nsu,X_j\rangle
\langle X_\ell,\overline{\Nsu\wedge X_k}\rangle\\
&\quad=
g^{ik}
\left\langle
D_{X_i}\Nsu,
\overline{\Nsu\wedge X_k}
-\langle\Nsu,\overline{\Nsu\wedge X_k}\rangle\Nsu
\right\rangle\\
&\quad=
g^{ik}
\langle D_{X_i}\Nsu,\overline{\Nsu\wedge X_k}\rangle.
\end{align*}
Since
\[
\begin{split}
\|X_1\wedge X_2\|\,\Nsu\wedge X_k
&=(X_1\wedge X_2)\wedge X_k\\
&=-\langle X_k,X_2\rangle X_1
  +\langle X_k,X_1\rangle X_2\\
&=g_{1k}X_2-g_{2k}X_1,
\end{split}
\]
we conclude that
\[
\begin{split}
&g^{ik}g^{j\ell}
\langle D_{X_i}\Nsu,X_j\rangle
\langle\overline X_\ell\wedge\Nsu,X_k\rangle\\
&\qquad=
\frac{
\langle D_{X_1}\Nsu,\overline X_2\rangle
-\langle D_{X_2}\Nsu,\overline X_1\rangle
}
{\|X_1\wedge X_2\|}.
\end{split}
\]
This proves \eqref{QUE}.
\end{proof}

In the next step, we compare
$Q_\eps^{su}(\Psi_\eps^{su}(p))$ with $Q_\eps(p)$ for $p\in\Se$.

\begin{theorem}
\label{estimQeps}
Under the assumptions of Theorem \ref{unifcontSecondVar}, for every
$0<\varepsilon\leq\eps_0$, $s\in[0,1]$, and $p\in\Se\setminus Z$,
\begin{equation}
\label{RioRosso}
\big|Q_\eps^{su}\bigl(\Psi_\eps^{su}(p)\bigr)
-
Q_\eps(p)\big |\leq  \frac{C_L}{r^2}|u(p)|_{2,w}.
\end{equation}
\end{theorem}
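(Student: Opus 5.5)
We will use the identity \eqref{QUE} of Theorem \ref{CANC} twice: once on $\Se^{su}$ with the frame $X_i=X_i^{su}$ and the normal $\Nsu$, and once on $\Se$ (the case $s=0$, i.e. $u=0$) with the frame $X_1,X_2$ of \eqref{NOTA1} and the normal $\Ne$. On $\Se$ we have $g^{ij}=\delta^{ij}$, and $\Ne$ has the same $[\ldots]_*$ coordinates as $\widehat N_\eps$ at $\Psisu(p)$. We then write $Q_\eps^{su}(\Psisu(p))-Q_\eps(p)=\Delta_1-2\sigma\Delta_2$, where $\Delta_1$ is the difference of the quadratic terms and $\Delta_2$ is the difference of the terms carrying the singular factor $\sigma=\tau/\eps$. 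The two pieces will be estimated separately. In both, the derivatives $D_{X_i}\Nsu$ are expressed through the coordinate derivatives $X_i\Nsuj$, using \eqref{VU}. These are then compared with $X_i\Nej$ through Theorem \ref{derivnormal}, while the frames are compared through \eqref{STELLA}, \eqref{ZED} and \eqref{MORON}.

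\emph{The quadratic term $\Delta_1$.} By \eqref{VU}, $\langle D_{X_i}\Nsu,X_j\rangle=\mathrm{Re}\bigl((X_i\Nsua+iX_i\Nsub)\overline{(w_j+v_j)}\bigr)+X_i\Nsuc\,(c_j+e_j)$. From \eqref{Da1}, \eqref{ZED} and \eqref{MORON}, together with Lemma \ref{derivNeBIS} for the unperturbed entries, we expect to get
\[
\big|\langle D_{X_i}\Nsu,X_j\rangle-\langle D_{X_i}\Ne,X_j\rangle\big|\le \frac{C_L}{r}\er^{i+j-2}|u|_{2,w},
\]
while both brackets are bounded by $\frac{C_L}{r}\er^{i+j-2}$. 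Combining this with \eqref{mela}, each product $g^{ik}g^{j\ell}\langle\cdot\rangle\langle\cdot\rangle$ carries total weight $\er^{2-i-k}\er^{2-j-\ell}\er^{i+j-2}\er^{\ell+k-2}=1$. Hence $|\Delta_1|\le C_L r^{-2}|u|_{2,w}$. This is exactly the bookkeeping already used in \eqref{mora} and \eqref{pera}.

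\emph{The singular term $\Delta_2$.} This is where we expect the main obstacle. We must show $|\Delta_2|\le C_L\,\eps\, r^{-2}|u|_{2,w}$, so that the factor $\sigma$ is absorbed. Note that $\langle D_{X_1}\Nsu,\overline X_2\rangle$ differs from $\langle D_{X_1}\Nsu,X_2\rangle$ only through the $\Te$-component, which contributes $-2X_1\Nsuc\,(c_2+e_2)$. We therefore split
\[
\Delta_2=\Big[\frac{\langle D_{X_1}\Nsu,X_2\rangle-\langle D_{X_2}\Nsu,X_1\rangle}{\al}-(\ldots)_{s=0}\Big]-2\Big[\frac{X_1\Nsuc(c_2+e_2)-X_2\Nsuc\,e_1}{\al}-(\ldots)_{s=0}\Big].
\]
For the first bracket we will use the symmetry of the second fundamental form. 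Since $\nabla$ is torsion-free, $\langle\nabla_{X_1}\Nsu,X_2\rangle=\langle\nabla_{X_2}\Nsu,X_1\rangle$, and by \eqref{LC} the antisymmetric part of $\langle D\cdot,\cdot\rangle$ equals $\sigma$ times an algebraic expression in $\Nsu,X_1,X_2$. This expression is of the same type as the Ricci term, and we plan to show via \eqref{WED} that it equals $\sigma\,(\text{const}+O(\langle \Nsu,\Te\rangle))\,\al$. Its variation is then controlled by \eqref{EQUA2}, which gives a factor $\er=O(\eps/r)$. For the second bracket we use the refined vertical estimate \eqref{Da1} with $j=3$, namely $|X_i\Nsuc-X_i\Nec|\le C_Lr^{-1}\er^i|u|_{2,w}$. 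We combine it with $|c_2|\le1$, $|e_1|\le C_L\er^{-1}|u|_{1,w}$, $|e_2|\le C_L|u|_{1,w}$ and \eqref{EL22} for $X_i\Nec$. This produces an extra factor $\er\le \eps/(\tau r)$, which cancels $\sigma$ at the cost of one more power of $1/r$. Finally, the difference of the factors $1/\al$ is handled with \eqref{misura}.

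If the algebraic cancellation in the first bracket turns out to leave an $O(\sigma)$ term that is not small, we would fall back on a direct comparison of the ratio with its value on $\Se$, tracking the $\er$ weights term by term. In that case we would rely on the fact that, at $s=0$, the quantity $Q_\eps$ of \eqref{QUQQO} is finite; this shows the $\sigma$-terms cancel exactly there, and the perturbation only has to be shown to preserve this to first order in $|u|_{2,w}$.
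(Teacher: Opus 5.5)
Your treatment of the quadratic part $\Delta_1$ and of the vertical bracket of $\Delta_2$ is sound and matches the paper; your bound on $|\langle D_{X_i^{su}}\Nsu,X_j^{su}\rangle-\langle D_{X_i}\Ne,X_j\rangle|$ is \eqref{pozzo}, which the paper obtains through $\widehat N_\eps$. The gap is in the first bracket of $\Delta_2$, which is exactly the step you flag as the main obstacle. There are two factors of $\sigma$ to absorb there. The antisymmetric part $\langle D_{X_1}\Nsu,X_2\rangle-\langle D_{X_2}\Nsu,X_1\rangle$ already carries $\sigma$ from \eqref{LC}, and $\Delta_2$ is multiplied by a further $2\sigma$. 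A single factor $\er$ from \eqref{EQUA2} therefore only bounds the first bracket by $C_L\sigma\er|u|_{1,w}\le C_L r^{-1}|u|_{1,w}$, not by the required $C_L\eps r^{-2}|u|_{2,w}$. Its contribution to $Q_\eps^{su}-Q_\eps$ would then be of order $\tau/(\eps r)$ times $|u|_{1,w}$, which is not uniform in $\eps$. If the expression really had a linear term $O(\langle\Nsu,\Te\rangle)$, as your formulation allows, the argument would fail: you need $\er^2$, not $\er$.

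What rescues your route is the exact form of the algebraic expression. Write $J=\mathrm{diag}(1,1,-1)$, so that $\overline V=JV$. Use $\langle A\wedge B,C\rangle=\det(A,B,C)$ and the trace identity $\det(JX_1,N,X_2)+\det(X_1,N,JX_2)=\det(X_1,N,X_2)-\det(X_1,JN,X_2)$, valid because $\operatorname{tr}J=1$. For $N=X_1\wedge X_2/\|X_1\wedge X_2\|$ this gives
\[
\langle D_{X_1}N,X_2\rangle-\langle D_{X_2}N,X_1\rangle=-2\sigma\|X_1\wedge X_2\|\,\langle N,\Te\rangle^2 .
\]
The constant vanishes and the dependence is quadratic. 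Hence the first bracket equals $-2\sigma\langle\Nsu-\Ne,\Te\rangle\langle\Nsu+\Ne,\Te\rangle$. By \eqref{EQUA2} and $\langle\Ne,\Te\rangle=\eps p_\eps\le\er$, it is at most $C_L\sigma\er^2|u|_{1,w}\le C_L\eps r^{-2}|u|_{1,w}$, as needed.

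The detour is unnecessary, however. Set $D^{su}_{ij}=\langle D_{X_i^{su}}\Nsu,X_j^{su}\rangle$ and $D_{ij}=\langle D_{X_i}\Ne,X_j\rangle$. The off-diagonal case $i\neq j$ of your own $\Delta_1$ estimate already gives $|D^{su}_{12}-D_{12}|+|D^{su}_{21}-D_{21}|\le C_Lr^{-1}\er|u|_{2,w}\le C_L\eps r^{-2}|u|_{2,w}$. The factor $1/\al$ is handled by $|D_{12}|+|D_{21}|\le Cr^{-1}\er$ from \eqref{EL23} together with \eqref{misura}.

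This is in substance what the paper does. It compares $\langle D_{X_1^{su}}\Nsu,\overline X_2^{su}\rangle$ and $\langle D_{X_2^{su}}\Nsu,\overline X_1^{su}\rangle$ directly with their values on $\Se$, through $\widehat N_\eps$. Every off-diagonal pairing carries one factor $\er$, coming from $|w_2+v_2|\le C_L\er$, from the refined vertical bound in \eqref{Da1}, or from the derivative $X_2$. That single factor absorbs the $\sigma$ in \eqref{QUE}. Your fallback paragraph does not supply this: finiteness of $Q_\eps$ at $s=0$ gives no control on the size of the perturbation.
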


\begin{proof}
Set
\[
D_{ij}^{su}
 =\langle D_{X_i^{su}}\Nsu,X_j^{su}\rangle,
\qquad
\widehat D_{ij}^{su}
 =\langle D_{X_i^{su}}\widehat N_\eps,X_j^{su}\rangle,
\qquad
D_{ij}
 =\langle D_{X_i}\Ne,X_j\rangle.
\]
By \eqref{Da1}, \eqref{STELLA}, \eqref{NOTA}, \eqref{ZED}, and
\eqref{MORON}, together with $|u|_{1,w}\leq L$, we have
\begin{equation}
\label{pizzo}
\begin{split}
|D_{ij}^{su}-\widehat D_{ij}^{su}|
&\leq
\frac{C_L}{r}
\left(
\er^{i-1}|w_j+v_j|
+\er^i|c_j+e_j|
\right)|u|_{2,w}\\
&\leq
\frac{C_L}{r}
\left(
\er^{i+j-2}
+\er^i|c_j+e_j|
\right)|u|_{2,w}\\
&\leq
\frac{C_L}{r}\er^{i+j-2}|u|_{2,w}.
\end{split}
\end{equation}

The vector fields $D_{X_i^{su}}\widehat N_\eps$ and
$D_{X_i}\Ne$ have the same $[\ldots]_*$ coordinates, while
\[
X_j^{su}=[w_j+v_j,c_j+e_j]_*,
\qquad
X_j=[w_j,c_j]_*.
\]
It follows from \eqref{EL21}, \eqref{EL22}, \eqref{ZED}, and
\eqref{MORON} that
\begin{equation}
\label{pazzo}
\begin{split}
|\widehat D_{ij}^{su}-D_{ij}|
&\leq
\frac{C}{\eps}\er^i|v_j|
+C\eps^{i-2}\er^2|e_j|\\
&\leq
\frac{C_L}{\eps}\er^{i+j-1}|u|_{1,w}
+C_L\eps^{i-2}\er^j|u|_{1,w}\\
&\leq
\frac{C_L}{r}\er^{i+j-2}|u|_{1,w}.
\end{split}
\end{equation}
Combining \eqref{pizzo} and \eqref{pazzo}, we obtain
\begin{equation}
\label{pozzo}
|D_{ij}^{su}-D_{ij}|
\leq
\frac{C_L}{r}\er^{i+j-2}|u|_{2,w}.
\end{equation}

We first estimate the quadratic part of \eqref{QUE}. We use the
expansion
\begin{equation}
\label{pezzo}
\begin{split}
g^{ik}g^{j\ell}D_{ij}^{su}D_{\ell k}^{su}
={}&
g^{ik}g^{j\ell}
(D_{ij}^{su}-D_{ij})(D_{\ell k}^{su}-D_{\ell k})\\
&+
g^{ik}g^{j\ell}D_{ij}(D_{\ell k}^{su}-D_{\ell k})\\
&+
g^{ik}g^{j\ell}(D_{ij}^{su}-D_{ij})D_{\ell k}\\
&+
g^{ik}(g^{j\ell}-\delta^{j\ell})D_{ij}D_{\ell k}\\
&+
(g^{ik}-\delta^{ik})\delta^{j\ell}D_{ij}D_{\ell k}\\
&+
\delta^{ik}\delta^{j\ell}D_{ij}D_{\ell k}.
\end{split}
\end{equation}
It follows from \eqref{pozzo}, \eqref{mela}, and \eqref{EL23} that
\begin{equation}
\label{Q-quadratic}
\begin{split}
\big|g^{ik}g^{j\ell}D_{ij}^{su}D_{\ell k}^{su}
-\delta^{ik}\delta^{j\ell}D_{ij}D_{\ell k}\big| 
&\leq
\frac{C_L}{r^2}|u|_{2,w}
\sum_{i,j,\ell,k=1}^2
\er^{2-i-k}\er^{2-j-\ell}
\er^{i+k-2}\er^{j+\ell-2}\\
& \leq
\frac{C_L}{r^2}|u|_{2,w}.
\end{split}
\end{equation}

We next estimate the second term in \eqref{QUE}. We have
\[
\begin{split}
\langle D_{X_1^{su}}\Nsu,\overline X_2^{su}\rangle
-\langle D_{X_1}\Ne,\overline X_2\rangle
& =
\big\langle
D_{X_1^{su}}\Nsu-D_{X_1^{su}}\widehat N_\eps,
\overline X_2^{su}
\big\rangle\\
&\qquad+
\langle D_{X_1^{su}}\widehat N_\eps,\overline X_2^{su}\rangle
-\langle D_{X_1}\Ne,\overline X_2\rangle.
\end{split}
\]
By \eqref{Da1}, \eqref{NOTA1bis}, \eqref{ZED}, and \eqref{MORON},
\[
\begin{split}
\big|
\big\langle
D_{X_1^{su}}\Nsu-D_{X_1^{su}}\widehat N_\eps,
\overline X_2^{su}
\big\rangle
\big|
&\leq
\frac{C_L}{r}|u|_{2,w}
\left(
|w_2+v_2|
+\eps^{-1}\er|c_2+e_2|
\right)\\
&\leq
\eps\frac{C_L}{r^2}|u|_{2,w}.
\end{split}
\]
As in the proof of \eqref{EL23}, we also have
\[
\big|
\langle D_{X_1^{su}}\widehat N_\eps,\overline X_2^{su}\rangle
-\langle D_{X_1}\Ne,\overline X_2\rangle
\big|
\leq
\eps\frac{C_L}{r^2}|u|_{2,w}.
\]
The same estimates hold with the indices \(1\) and \(2\) interchanged. 
Using also \eqref{misura}, we obtain
\begin{equation}
\label{endlich}
\begin{split}
\Big|
&\sigma
\frac{
\langle D_{X_1^{su}}\Nsu,\overline X_2^{su}\rangle
-\langle D_{X_2^{su}}\Nsu,\overline X_1^{su}\rangle
}
{\|X_1^{su}\wedge X_2^{su}\|}\\
&\quad-
\sigma\left(
\langle D_{X_1}\Ne,\overline X_2\rangle
-\langle D_{X_2}\Ne,\overline X_1\rangle
\right)
\Big|
\leq
\frac{C_L}{r^2}|u|_{2,w}.
\end{split}
\end{equation}
Combining \eqref{Q-quadratic} and \eqref{endlich} proves
\eqref{RioRosso}.
\end{proof}

\begin{proof}[Proof of Theorem \ref{unifcontQeps}]
Making the change of variables
$
q=\Psi_\eps^{su}(p)$,
$p\in\Se$,
and using \eqref{EQUA1bis} and \eqref{mis2}, we obtain
\begin{equation}
\label{tonno}
\begin{split}
\int_{\Se^{su}}Q_\varepsilon^{su}\bar u^2\,d\Ae
&=
\int_{\Se}
Q_\varepsilon^{su}(\Psisu)
u^2
\langle\Ne,\Nsu\rangle^2(\Psisu)
\al\,d\Ae\\
&=
\int_{\Se}
Q_\varepsilon^{su}(\Psisu)u^2(1+\nu^{su})\,d\Ae,
\end{split}
\end{equation}
where
\begin{equation}\label{kikko}
1+\nu^{su}
 =
\langle\Ne,\Nsu\rangle^2(\Psisu)\al,
\qquad
|\nu^{su}|\leq C_L|u|_{1,w}.
\end{equation}
Setting
$
\chi^{su}
 =Q_\varepsilon^{su}(\Psisu)-Q_\varepsilon$, it follows from \eqref{tonno} that
\begin{equation}
\label{tanno}
\begin{split}
\int_{\Se^{su}}Q_\varepsilon^{su}\bar u^2\,d\Ae
&=
\int_{\Se}
(Q_\varepsilon+\chi^{su})u^2(1+\nu^{su})\,d\Ae\\
&=
\int_{\Se}Q_\varepsilon u^2\,d\Ae
+
\int_{\Se}u^2
\left[
Q_\varepsilon\nu^{su}
+(1+\nu^{su})\chi^{su}
\right]\,d\Ae.
\end{split}
\end{equation}

Formula \eqref{QUQQO}, \eqref{kikko}, and  \eqref{RioRosso}   imply that
\[
|Q_\varepsilon\nu^{su}
+(1+\nu^{su})\chi^{su}|
\leq
\frac{C_L}{r^2}|u|_{2,w}.
\]
Consequently,
\[
\Big|
\int_{\Se^{su}}Q_\varepsilon^{su}\bar u^2\,d\Ae
-\int_{\Se}Q_\varepsilon u^2\,d\Ae\Big|
\leq
C_L\int_{\Se}|u|_{2,w}\frac{u^2}{r^2}\,d\Ae.
\]
Splitting the last integral into the regions
$|z|\leq\varrho$ and $|z|>\varrho$, and arguing as in
\eqref{noce} and \eqref{mandorlo2}, we obtain
\[
C_L\int_{\Se}|u|_{2,w}\frac{u^2}{r^2}\,d\Ae
\leq
\eta\int_{\Se}u^2\,d\Ae.
\]
This proves \eqref{ortica}.
\end{proof}

  \subsection{Estimate for $\Hsu$ and $E_\eps^{su}$}

We estimate the terms in the second variation of area involving the mean
curvature, including the error $E_\eps^{su}$ in \eqref{pix} produced by the integration-by-parts as in \eqref{folle}. 
We use the following formula for the mean curvature of
$\Se^{su}$: 
\begin{equation}
\label{acca}
\begin{split}
\Hsu
 =\operatorname{div}_{\Se^{su}}(\Nsu)
&=
\sum_{i,j=1}^2
g^{ij}\langle\nabla_{X_i}\Nsu,X_j\rangle\\
&=
\sum_{i,j=1}^2
g^{ij}\langle D_{X_i}\Nsu,X_j\rangle,
\end{split}
\end{equation}
where $X_1,X_2$ form a local frame for $T\Se^{su}$. The last identity
in \eqref{acca} follows from \eqref{LC} and the identity
\[
\begin{split}
\sum_{i,j=1}^2
g^{ij}\langle\overline{\Nsu}\wedge X_i,X_j\rangle
={}&
g^{12}\langle\overline{\Nsu}\wedge X_1,X_2\rangle
+
g^{21}\langle\overline{\Nsu}\wedge X_2,X_1\rangle
=0,
\end{split}
\]
since the matrix $(g^{ij})$ is symmetric.

\begin{lemma}
\label{estimHeps}
Assume that $|u|_{1,w}\leq L$. Then there exists $C_L>0$ such that,
for every $p\in\Se\setminus Z$, $s\in[0,1]$, and
$0<\eps\leq\eps_0$,
\begin{equation}
\label{hacca}
\big|
\Hsu\bigl(\Psi_\eps^{su}(p)\bigr)-H
\big|
\leq
\frac{C_L}{r}|u(p)|_{2,w},
\end{equation}
where $H=2$ is the mean curvature of $\Se$.
\end{lemma}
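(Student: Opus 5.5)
Using formula \eqref{acca}, I will write both mean curvatures as traces of the Tanaka--Webster shape matrices in the frames $X_1^{su},X_2^{su}$ and $X_1,X_2$. On $\Se^{su}$ this gives
\[
\Hsu(\Psisu(p))=\sum_{i,j=1}^2 g^{ij}D^{su}_{ij},
\qquad D^{su}_{ij}=\langle D_{X_i^{su}}\Nsu,X_j^{su}\rangle .
\]
On $\Se$ the frame is orthonormal, so $g^{ij}=\delta^{ij}$ and $H=D_{11}+D_{22}$, with $D_{ij}=\langle D_{X_i}\Ne,X_j\rangle$ as in the proof of Theorem \ref{estimQeps}. Applying \eqref{acca} with $s=0$ (or $u=0$) justifies dropping the $\sigma$-term for $\Se$ as well. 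The difference then splits as
\[
\Hsu-H=\sum_{i,j}g^{ij}\bigl(D^{su}_{ij}-D_{ij}\bigr)+\sum_{i,j}\bigl(g^{ij}-\delta^{ij}\bigr)D_{ij}.
\]

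For the first sum I will use \eqref{pozzo}, namely $|D^{su}_{ij}-D_{ij}|\le \frac{C_L}{r}\er^{i+j-2}|u|_{2,w}$, which was proved under the hypothesis $|u|_{1,w}\le L$. Combined with \eqref{mela}, this gives $|g^{ij}|\le 1+C_L\er^{2-i-j}|u|_{1,w}\le C_L\er^{2-i-j}$, using $\er\le 1$ and $|u|_{1,w}\le L$. Each product is therefore bounded by $\frac{C_L}{r}\er^{2-i-j}\er^{i+j-2}|u|_{2,w}=\frac{C_L}{r}|u|_{2,w}$. For the second sum, \eqref{mela} gives $|r^{ij}|\le C_L\er^{2-i-j}|u|_{1,w}$, and Lemma \ref{derivNeBIS} gives $|D_{ij}|\le \frac{C}{r}\er^{i+j-2}$. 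The powers of $\er$ again cancel, leaving $\frac{C_L}{r}|u|_{1,w}\le\frac{C_L}{r}|u|_{2,w}$. Summing over $i,j=1,2$ gives \eqref{hacca}.

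The main point to check is that the powers of $\er$ balance exactly. The inverse metric entries $g^{12}$ and $g^{22}$ blow up like $\er^{-1}$ and $\er^{-2}$, and these must be compensated by the matching smallness $\er^{i+j-2}$ of the shape coefficients in the $X_2$ direction. I also need to confirm that \eqref{pozzo} (through \eqref{pizzo} and \eqref{pazzo}) used only $|u|_{1,w}\le L$ and the bounds \eqref{Da1}, \eqref{EL21}, \eqref{EL22}, \eqref{ZED} and \eqref{MORON}, all of which hold for every $\eps>0$. If so, no $\eps$-smallness beyond $\eps\le\eps_0\le 1$ is needed; that restriction is only used for Lemma \ref{derivNeBIS}, which requires $\eps\le 1$.
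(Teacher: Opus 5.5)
Your proposal is correct and matches the paper's proof: both split $\Hsu-H$ as $g^{ij}(D^{su}_{ij}-D_{ij})+(g^{ij}-\delta^{ij})D_{ij}$ using \eqref{acca}, then bound the pieces with \eqref{pozzo}, \eqref{mela} and \eqref{EL23}, where the powers of $\er$ cancel. Your side remarks are accurate details the paper leaves implicit: Lemma \ref{derivNeBIS} needs $\eps_0\le 1$, and \eqref{pozzo} uses only $|u|_{1,w}\le L$.
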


\begin{proof}
By \eqref{pozzo}, \eqref{mela}, and \eqref{EL23}, using the summation
convention for repeated indices, we have
\[
\begin{split}
\big|
g^{ij}\langle D_{X_i^{su}}\Nsu,X_j^{su}\rangle
&-\delta^{ij}\langle D_{X_i}\Ne,X_j\rangle
\big|
=
\big|
g^{ij}\bigl(
\langle D_{X_i^{su}}\Nsu,X_j^{su}\rangle
-\langle D_{X_i}\Ne,X_j\rangle
\bigr)\\
& 
\hspace{5cm}+(g^{ij}-\delta^{ij})
\langle D_{X_i}\Ne,X_j\rangle
\big|
\\
& \leq
\frac{C_L}{r}
\er^{2-i-j}\er^{i+j-2}|u|_{2,w}
+\frac{C_L}{r}
\er^{2-i-j}\er^{i+j-2}|u|_{1,w}\\
& \leq
\frac{C_L}{r}|u|_{2,w}.
\end{split}
\]
By \eqref{acca}, the two terms on the first line are respectively
$\Hsu$ and $H$. This proves \eqref{hacca}.
\end{proof}

\begin{theorem}
\label{unifcontHeps}
Under the assumptions of Theorem \ref{unifcontSecondVar}, for every
$0<\varepsilon\leq\eps_0$ and $s\in[0,1]$,
\begin{equation}
\label{hecco}
\Big|
\int_{\Se^{su}}
\bar u^2\Hsu(\Hsu-H)\,d\Ae
\Big|
\leq
\eta\int_{\Se}u^2\,d\Ae.
\end{equation}
\end{theorem}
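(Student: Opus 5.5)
We will follow the pattern of the proof of Theorem \ref{unifcontQeps}: pull back the integral to $\Se$ via the change of variables $q=\Psi_\eps^{su}(p)$, control the integrand pointwise using Lemma \ref{estimHeps}, and then split the domain at $|z|=\varrho$.

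\emph{Pullback and pointwise bound.} By \eqref{tonno}--\eqref{kikko} we have
\[
\int_{\Se^{su}}\bar u^2\Hsu(\Hsu-H)\,d\Ae
=\int_{\Se}u^2(1+\nu^{su})\,\Hsu(\Psisu)\bigl(\Hsu(\Psisu)-H\bigr)\,d\Ae,
\]
with $|\nu^{su}|\leq C_L|u|_{1,w}\leq C_L$. Lemma \ref{estimHeps} gives $|\Hsu(\Psisu)-H|\leq C_L|u|_{2,w}/r$. Writing $\Hsu=H+(\Hsu-H)$ then yields
\[
|\Hsu(\Hsu-H)|\leq \frac{C_L}{r}|u|_{2,w}+\frac{C_L}{r^2}|u|_{2,w}^2\leq \frac{C_L}{r^2}|u|_{2,w},
\]
using $r\leq1$ and $|u|_{2,w}\leq C$. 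The integral is therefore bounded by $C_L\int_{\Se}|u|_{2,w}\,u^2r^{-2}\,d\Ae$. This is exactly the quantity already handled in the proof of Theorem \ref{unifcontgradient}.

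\emph{Splitting.} On $\Se\cap\{|z|\leq\varrho\}$ the $C^2$-smallness in (3) is not available. Here we use instead that $u$ equals $t^\pm$, so $|u|\leq Cr$ and $|u|_{2,w}$ is bounded by an absolute constant; this needs a short check of the derivatives of $t^\pm$ near $r=0$. The integrand is then bounded, and \eqref{noce_base}, \eqref{noce_due}, \eqref{noce} give a bound $\frac{\eta}{2}\int_{\Se}u^2\,d\Ae$ under \eqref{prugna}. On $\{|z|>\varrho\}$ we use $|u|_{2,w}\leq\delta$, condition (4), the estimate $\int_{\Se}|z|^{\gamma-2}d\Ae\leq C_\gamma/\eps$, and Lemma \ref{leox}, exactly as in \eqref{mandorlo2}. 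This gives $\frac{\eta}{2}\int_{\Se}u^2\,d\Ae$ once $\delta\leq\eta/(4LC_LC_\gamma)$.

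\emph{Main obstacle.} The delicate point is the polar region $|z|\leq\varrho$. There Lemma \ref{estimHeps} still applies pointwise for $p\notin Z$, but $u=t^\pm$ is not small in $|\cdot|_{2,w}$. This region must therefore be absorbed through the smallness of its area, namely $\eps\Ae\lesssim\varrho^2\sqrt{\eps^2+\varrho^2}$, compared with $\int u^2$ via (4) as in \eqref{noce_due}. If the factor $1/r^2$ turns out to spoil integrability there, we would instead bound $u^2/r^2$ by a constant, since $|t^\pm(r)|\leq Cr$. The remaining integral is then at most $C_L\Ae(\Se\cap\{|z|\leq\varrho\})$, which is handled as in \eqref{noce}. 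Choosing $\eps_0,\delta$ small enough for both regions yields \eqref{hecco}.
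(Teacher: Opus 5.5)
Your proposal is correct and follows the paper's proof essentially verbatim: you pull back to $\Se$ via $\Psi_\eps^{su}$ (using \eqref{EQUA1bis} and \eqref{mis2}, equivalently \eqref{kikko}), bound $|\Hsu|\,|\Hsu-H|\leq C_L r^{-2}|u|_{2,w}$ pointwise with Lemma \ref{estimHeps}, and then estimate $C_L\int_{\Se}|u|_{2,w}u^2r^{-2}\,d\Ae$ by splitting at $|z|=\varrho$ as in \eqref{noce} and \eqref{mandorlo2}. Your "main obstacle" is already settled by your own splitting paragraph: on the polar region $|t^\pm(r)|\leq Cr$ and $|u|_{2,w}\leq C$, so the integrand is bounded, and the area bound \eqref{poker-bis} together with \eqref{noce_due} closes the estimate, exactly as the paper does.
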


\begin{proof}
Using \eqref{hacca}, together with the change of variables
$q=\Psi_\eps^{su}(p)$, we obtain
\[
\begin{split}
\Big|
\int_{\Se^{su}}
\bar u^2\Hsu(\Hsu-H)\,d\Ae
\Big|
&=
\Big |
\int_{\Se}
u^2\langle\Ne,\Nsu\rangle^2(\Psisu)
\Hsu(\Psisu)\\
&\qquad\qquad{}\times
\bigl(\Hsu(\Psisu)-H\bigr)
\|X_1^{su}\wedge X_2^{su}\|\,d\Ae
\Big|\\
&\leq
C_L\int_{\Se}
u^2|\Hsu||\Hsu-H|\,d\Ae\\
&\leq
C_L\int_{\Se}
\frac{u^2}{r^2}|u|_{2,w}\,d\Ae.
\end{split}
\]
Here we have also used \eqref{EQUA1bis}, \eqref{mis2}, and
$|u|_{2,w}\leq L$. Estimating the last integral as in \eqref{noce} and
\eqref{mandorlo2}, we obtain  \eqref{hecco}.
\end{proof}

We now estimate $E_\eps^{su}$.
\begin{theorem}
\label{esso}
Under the assumptions of Theorem \ref{unifcontSecondVar}, for every
$0<\varepsilon\leq\eps_0$ and $s\in[0,1]$,
\begin{equation}
\label{hacco}
\big|E_\eps^{su}\big|
\leq
\eta\int_{\Se}u^2\,d\Ae.
\end{equation}
\end{theorem}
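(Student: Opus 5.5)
We bound the boundary term \eqref{folle} directly on the curve $T_\eps^{su}$, which is the image under $\Psi_\eps^{su}$ of the circles $\{|z|=\varrho\}\cap\Se$ (one per hemisphere). We treat $E_{\eps,+}^{su}$; the term $E_{\eps,-}^{su}$ is handled symmetrically, since on $S_{\eps,-}^{su}$ we have $u\in\{t^-,t^+\}$, which is smooth with $|u|_{2,w}\leq C$. The integrand is
\[
u^2\langle\Ne,\mathscr N_{\eps,+}^{su}\rangle\operatorname{div}_{\Se^{su}}(\mathscr N_{\eps,+}^{su})\langle N_\eps^\top,\mathscr M_{\eps,+}^{su}\rangle .
\]
We bound its factors pointwise as follows. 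First, $|\langle\Ne,\mathscr N_{\eps,+}^{su}\rangle|\leq 1$. Second, $|\langle N_\eps^\top,\mathscr M_{\eps,+}^{su}\rangle|\leq\|\Ne-\langle\Ne,\Nsu\rangle\Nsu\|\leq C\|\Nsu-\Ne\|\leq C_L|u|_{1,w}$ by \eqref{EQUA1}. Third, the mean curvature is controlled by Lemma \ref{estimHeps} (applied on the $+$ side, where $u$ is $C^2$ up to the boundary and assumption (3) gives $|u|_{2,w}\leq\delta$ by continuity), namely $|\Hsu|\leq 2+C_L|u|_{2,w}/\varrho\leq C_L/\varrho$. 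On $|z|=\varrho$ we also have $|u|\leq C\varrho$, because $|t^\pm(r)|\leq Cr$ near $r=0$. Finally, $|u|_{1,w}\leq C$ there.

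Combining these bounds, the integrand on $T_\eps^{su}$ is at most $C_L\varrho^2\cdot\varrho^{-1}=C_L\varrho$. The next step is the length of $T_\eps^{su}$. Its preimage circle $\{|z|=\varrho\}\cap\Se$ is generated by $\partial_\theta F_\eps$, which by the proof of Lemma \ref{Lex1} has $g_\eps$-length $\sin s\,(1+\sin^2 s/\eps^2)^{1/2}$, with $\sin s=\varrho$. So the circle has length $2\pi\varrho\sqrt{1+\varrho^2/\eps^2}\leq C\varrho(\eps+\varrho)/\eps$. The differential $\Psi^{su}_{\eps,*}$ applied to $\partial_\theta$ is controlled as in \eqref{STELLA}: by rotational structure $\partial_\theta$ is a combination of $X_1$ and $X_2$, and by \eqref{ZED} and \eqref{MORON} the image has length comparable up to $C_L$. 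Hence $\mathscr L_\eps(T_\eps^{su})\leq C_L\varrho(\eps+\varrho)/\eps$. Altogether this gives
\[
|E_\eps^{su}|\leq \frac{C_L}{\eps}\varrho^2(\eps+\varrho).
\]

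To close the argument we reproduce the scheme of \eqref{noce}. By \eqref{noce_due}, condition (4) at $|z|=\varrho$ together with $|u|\sim\varrho$ gives $\varrho^2\leq C_L\int_{\So}u^2\,d\Ao$, and Lemma \ref{leox} gives $\int_{\So}u^2\,d\Ao\leq 2\eps\int_{\Se}u^2\,d\Ae$. Therefore $|E_\eps^{su}|\leq C_L(\eps_0+\delta)\int_{\Se}u^2\,d\Ae$, and shrinking $\eps_0,\delta$ as in \eqref{prugna} yields \eqref{hacco}.

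The main obstacle is the length of $T_\eps^{su}$ and the behaviour of the $\Te$-component of the tangent image. The circle's vertical velocity $\sigma\varrho^2$ is of order $1/\eps$, so the control above must come from tracking the $\er$-weights in \eqref{MORON} rather than from crude bounds. If the $1/\eps$ factor in the length were not compensated, the $\eps$ gained from comparing $\Ae$ to $\Ao$ would be lost; the computation above suggests it is exactly compensated. A secondary point is that, where $u=t^\pm$, the curvature of the $-$ side degenerates near the axis, but on $|z|=\varrho$ the same $1/\varrho$ bound holds by direct computation.
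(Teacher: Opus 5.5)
Your argument is correct and is essentially the paper's proof. You bound the integrand on $T_\eps^{su}$ by $|u|^2|\Hsu|\lesssim \varrho^2\cdot C_L/\varrho$ and multiply by $\mathscr{L}_\eps(T_\eps^{su})\lesssim \varrho\sqrt{\eps^2+\tau^2\varrho^2}/\eps$; note that $T_\eps^{su}$ is a pair of horizontal circles of radius $O(\varrho)$ centred on the axis, so the $\er$-bookkeeping you describe is not actually needed. You then convert $\varrho^2$ into $\eps\int_{\Se}u^2\,d\Ae$ via condition (4) and Lemma \ref{leox}. The only difference is where the final smallness comes from: you keep the factor $\sqrt{\eps^2+\varrho^2}$, as in \eqref{noce}, whereas the paper discards it and uses (4) in the form $\varrho^2\leq C_L\varrho^\gamma\int_{\So}u^2\,d\Ao$ to gain $\varrho^\gamma$; both work.
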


\begin{proof}
It is enough to prove the estimate for the term $E_{\eps,+}^{su}$ in $E_\eps^{su}$, defined by formula \eqref{folle}.
The set
\[
   T_\eps^{su} =\{ \Psi_\eps\big((z,t),su(z,t) \big)\in \hh^1: (z,t) \in\Se, |z|=\varrho\}
\] 
is the union of two parallel circles centred on the $Z$-axis, each with radius bounded above by $C\varrho$, where $C>0$ is an absolute constant. An elementary computation gives
\begin{equation}
\label{misso}
\mathscr{L}_\eps(T_\eps^{su})
\leq
\frac{4\pi C\varrho}{\eps}
\sqrt{\eps^2+\tau^2\varrho^2}
\leq
\frac{C\varrho}{\eps}.
\end{equation}
On each of the two components of $T_\eps^{su}$, the function $u$ is constant and its absolute value is comparable to $\varrho$. Therefore, by \eqref{misso}, \eqref{hacca}, the bound $|u|_{2,w}\leq C$, and \eqref{noce_due}, we obtain

 \begin{equation}
\label{picco}
\begin{split}
\big|E_{\eps,+} ^{su} \big| & =
\Big| \int_{T_{\eps} ^{su}}
u ^2 \langle \Ne,\mathscr{N}^{su}_{\eps,+}   \rangle 
\operatorname{div}_{\Se^{su}}(\mathscr{N}^{su}_{\eps,+})
\langle N_\varepsilon^\top, \mathscr{M}^{su}_{\eps,+}\rangle 
 d\mathscr{L}_\eps \Big|
 \\
 &
  \leq C\varrho^2 \max_{T_{\eps} ^{su}}  |H_{\eps,+} ^{su}| \mathscr{L}_\eps(T_{\eps} ^{su})
 \\
 &
  \leq \frac{C_L\varrho^2}{\eps} |u|_{2,w} 
  \\
  &
  \leq \frac{C_L\varrho^\gamma }{\eps}\int_{\So} u^2 d\Ao
  \\
  &
  \leq  2 C_L\varrho^\gamma  \int_{\Se} u^2 d\Ae,
 \end{split}
\end{equation}

The claim follows by choosing $\varrho$ so that
$2C_L\varrho^\gamma\leq\eta/2$.
\end{proof}

\subsection{Estimate for $E_4$ and $\widetilde E^{su}_\eps$}

It remains to estimate the following term arising from the second
variation of volume
\[
E_4(s)
 =
\int_{\Se^{su}}
 \bar u \Nsu(\bar u)\,d\Ae,
\]
and $\widetilde E^{su}_\eps$ as in \eqref{pox}.

\begin{theorem}
\label{unifcontE4}
Under the assumptions of Theorem \ref{unifcontSecondVar}, for every
$0<\varepsilon\leq\eps_0$ and $s\in[0,1]$,
\begin{equation}
\label{last}
\Big|
\int_{\Se^{su}}
 \bar u \Nsu(\bar u)\,d\Ae
\Big|
\leq
\eta\int_{\Se}u^2\,d\Ae.
\end{equation}
\end{theorem}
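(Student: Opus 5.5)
The plan is to rewrite the normal derivative $\Nsu(\bar u)$ through the decomposition $\Ne=\langle\Ne,\Nsu\rangle\Nsu+N_\eps^\top$ already used in Theorems \ref{pacco} and \ref{pacco2}. After that, every contribution should be pointwise of the form $u^2\cdot C_L|u|_{2,w}/r^2$, or a boundary term like $E^{su}_\eps$. Such terms are then absorbed exactly as in \eqref{noce}, \eqref{mandorlo2} and \eqref{picco}, so that only $\int_{\Se}u^2\,d\Ae$ appears on the right and no gradient term. Throughout, set $a=\langle\Ne,\Nsu\rangle$. By Theorem \ref{Socrates} we have $|1-a|\leq C_L|u|_{1,w}$, so $a\geq1/2$ once $\delta$ is small. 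Since $u$ is constant along the flow, $\Ne u=0$.

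\textbf{Step 1 (reduction to a tangential derivative).} Along the family of surfaces $\Se^{su}$, $s\in[0,1]$, we have $\Nsu=a^{-1}(\Ne-N_\eps^\top)$. Hence
\[
\bar u\,\Nsu(\bar u)=\frac{1}{a}\Big(u^2 a\,\Ne(a)-\tfrac12 N_\eps^\top(\bar u^2)\Big).
\]
The first term involves $\Ne(a)=\partial_\sigma\langle\Ne,\mathscr N^{\sigma u}_\eps\rangle/u$, evaluated along the flow line $\sigma\mapsto\Psi_\eps(p,\sigma u(p))$. I would estimate it by differentiating in $s$ the explicit representation $\Nsu=X_1^{su}\wedge X_2^{su}/\al$ from \eqref{STELLA} and \eqref{REO}. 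The dependence on $s$ enters only through $v_k$, $e_k$ and the $\beta_j(s_\eps u)$, and each $s$-derivative produces a factor $u$ together with coefficients of the same size as in Lemma \ref{lello2} and \eqref{MORON}. This should give
\[
|u\,\Ne(a)|\leq C_L\frac{|u|}{r}|u|_{2,w}\leq C_L|u|_{2,w}.
\]
Consequently the first term contributes at most $C_L u^2|u|_{2,w}$.

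\textbf{Step 2 (integration by parts).} For the second term I integrate by parts on $S^{su}_{\eps,\pm}$, using the fact that $N_\eps^\top$ is tangent:
\[
\int \frac{1}{2a}N_\eps^\top(\bar u^2)\,d\Ae=-\frac12\int\bar u^2\operatorname{div}_{\Se^{su}}\!\Big(\frac{N_\eps^\top}{a}\Big)\,d\Ae+\text{boundary term on }T^{su}_\eps.
\]
I then expand
\[
\operatorname{div}_{\Se^{su}}N_\eps^\top=\operatorname{div}_{\Se^{su}}\Ne-a\Hsu .
\]
Writing $\operatorname{div}_{\Se^{su}}\Ne=g^{ij}\langle D_{X_i}\Ne,X_j\rangle$, as in \eqref{acca}, and comparing with $H=\operatorname{div}_{\Se}\Ne$ via \eqref{EL21}, \eqref{EL22}, \eqref{mela}, \eqref{ZED} and \eqref{MORON}, I expect $|\operatorname{div}_{\Se^{su}}\Ne-H|\leq C_L|u|_{1,w}/r$. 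Together with Lemma \ref{estimHeps} and $|1-a|\leq C_L|u|_{1,w}$, this gives $|\operatorname{div}_{\Se^{su}}N_\eps^\top|\leq C_L|u|_{2,w}/r$. The term $N_\eps^\top(a^{-1})$ is handled by \eqref{PUNK1}, giving $\|N_\eps^\top\|\,\|\nabla_{\Se^{su}}a\|\leq C_L|u|_{1,w}|u|_{2,w}/r$. The boundary term has exactly the structure of $\widetilde E^{su}_{\eps,+}$ in \eqref{folle2}; I would bound it, together with $\widetilde E^{su}_\eps$ itself, as in \eqref{picco}, using \eqref{misso}, $|u|\sim\varrho$ on $T^{su}_\eps$ and \eqref{noce_due}. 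This yields a bound $C_L\varrho^\gamma\int_{\Se}u^2\,d\Ae$.

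\textbf{Step 3 (conclusion).} Changing variables $q=\Psisu(p)$ with \eqref{mis2} and $|\bar u|\leq|u|$, the interior terms are bounded by
\[
C_L\int_{\Se}|u|_{2,w}\frac{u^2}{r^2}\,d\Ae .
\]
Splitting this into the regions $\{|z|\leq\varrho\}$ and $\{|z|>\varrho\}$ and arguing as in \eqref{noce} and \eqref{mandorlo2} (conditions (3) and (4)) gives \eqref{last} after shrinking $\delta$ and $\eps_0$. The same argument covers $\widetilde E^{su}_\eps$. The main obstacle is Step 1: one has to check that the $s$-derivative of the normal carries no uncompensated factor $1/\eps$. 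The vertical component of $\partial_s\Nsu$ must be bounded by $\er C_L|u|_{2,w}/r$, in analogy with \eqref{EQUA2bis}. If this bound fails, I would instead integrate the $\Ne(a)$ term by parts along the flow lines in $s$ and use \eqref{Da1}.
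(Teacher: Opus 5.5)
\textbf{Gap in Step 1.} The bound $|u\,\Ne(a)|\leq C_L\frac{|u|}{r}|u|_{2,w}$ is false. It would force $\partial_s a=u\,\Ne(a)$ to vanish at every point where $u(p)=0$, and this does not happen. At such a point $\Psisu(p)=p$ for all $s$. Since $\beta_1(0)=-i$, $\beta_3(0)=0$ and $\beta_4(0)=1$, formulas \eqref{LOP}--\eqref{STOP} reduce to $v_k=s(X_ku)\,z(1-i\tau p_\eps)$ and $e_k=s(X_ku)\,\eps p_\eps$. In other words, $X_k^{su}=X_k+s(X_ku)\Ne$. Hence $a=(1+s^2\|\nablaSe u\|^2)^{-1/2}$ and
$\partial_s a=-s\|\nablaSe u\|^2(1+s^2\|\nablaSe u\|^2)^{-3/2}\neq0$
whenever $s>0$ and $\nablaSe u(p)\neq0$: the surfaces $\Se^{su}$ tilt about $p$.

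So the difficulty is not an uncompensated factor $1/\eps$, as you anticipated, but a missing factor of $u$. The $s$-derivatives of $v_k,e_k$ contain terms such as $zi(X_ku)\beta_1$ with no factor $u$. The best available bound is therefore the paper's, $|\partial_s a|\leq C_L|u|_{1,w}^2$. It comes from $u\langle\Ne,D_{\Ne}\Nsu\rangle=\langle\Ne-\Nsu,uD_{\Ne}\Nsu\rangle$, from \eqref{EQUA1}, and from treating the $\tau p_\eps$-term as in \eqref{ramino}. Your first term $u\,\partial_s a$ is then only of size $|u|\,|u|_{1,w}^2$, which contains $|u|\,|\nabla u|^2$. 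That is not of the form $u^2|u|_{2,w}/r^2$, and neither \eqref{noce} nor \eqref{mandorlo2} controls it. Your fallback does not help either. The estimate is claimed for each fixed $s$, so there is no $s$-integral to integrate by parts against, and \eqref{Da1} concerns tangential derivatives, not $\partial_s$.

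\textbf{Comparison with the paper and how to repair.} The paper does not obtain a gradient-free bound either. It estimates $\Ne^\top(\bar u)$ directly, without your integration by parts in Step 2, bounds the whole integrand by $C_L|u|\,|u|_{1,w}^2$, and concludes via \eqref{noce}, \eqref{mandorlo1} and \eqref{mandorlo2}. Since \eqref{mandorlo1} produces $\frac{\eta}{2}\int_{\Se}\|\nablaSe u\|^2\,d\Ae$, that argument actually yields \eqref{last} with $\int_{\Se}(\|\nablaSe u\|^2+u^2)\,d\Ae$ on the right. This is all that \eqref{FEP} requires.

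The natural repair is to accept the gradient term, and then your Step 2 becomes unnecessary. If you want the gradient-free form as stated, you need an additional ingredient bounding $\int|u|\,|\nabla u|^2$ by $\int u^2$. One option is an interpolation estimate for $|\nabla u|^2$ in terms of $|u|$ and $r|\nabla^2u|\leq\delta$ at scale $r$, combined with assumption (4). Nothing in the proposal supplies this.

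\textbf{Two smaller points.} First, $a\geq 1/2$ holds only where $|u|_{1,w}$ is small. On the caps $\{|z|\leq\varrho\}$, where $u=t^\pm$ and $|u|/r$ is of order one, dividing by $a$ is not justified. Second, unlike in \eqref{acca}, the $\sigma$-term does not drop out of $\operatorname{div}_{\Se^{su}}\Ne$, because $\Ne$ is not normal to $\Se^{su}$. That term is of size $C_L|u|_{1,w}/r$, as in \eqref{ramino}, so this second point is harmless.
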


\begin{proof}
 Since $N_\eps=N_\eps^\top+\langle N_\eps,\Nsu\rangle\Nsu$, We have that 
$$
\bar u \Nsu(\bar u)= u N_\eps (\bar u)-u N_\eps^\top (\bar u).
$$
Let us study the first term. Since $\Ne u=0$ and $\nabla_{\Ne}\Ne=0$, we have $u N_\eps (\bar u)=u^2\langle\Ne,\nabla_{\Ne}\Nsu\rangle$. Using
\[
\nabla_{\Ne}\Nsu
 =D_{\Ne}\Nsu-\sigma\overline{\Ne}\wedge\Nsu
\]
and
$
\overline{\Ne}
 =\Ne-2\langle\Ne,\Te\rangle\Te,
$
we obtain
\[
\begin{split}
\langle\Ne,\nabla_{\Ne}\Nsu\rangle
&=
\langle\Ne,D_{\Ne}\Nsu\rangle
+2\sigma\langle\Ne,\Te\rangle
 \langle\Ne,\Te\wedge\Nsu\rangle\\
&=
\langle\Ne,D_{\Ne}\Nsu\rangle
+2\tau p_\eps\langle\Ne,\Te\wedge\Nsu\rangle.
\end{split}
\]

The second term can be estimated using Theorem \ref{Socrates}:
\begin{equation}
\label{ramino}
\begin{split}
\left|
2\tau p_\eps\langle\Ne,\Te\wedge\Nsu\rangle
\right|
&=
\left|
2\tau p_\eps
\langle\Ne-\Nsu,\Te\wedge\Nsu\rangle
\right|\\
&\leq
\frac{C_L}{r}|u|_{1,w}.
\end{split}
\end{equation}

To estimate the first term, we initially observe that \eqref{EQUA1}
gives
\[
\begin{split}
\left|
u\langle\Ne,D_{\Ne}\Nsu\rangle
\right|
&=
\left|
\langle\Ne-\Nsu,uD_{\Ne}\Nsu\rangle
\right|\\
&\leq
\|\Ne-\Nsu\|\,\|uD_{\Ne}\Nsu\|\\
&\leq
C_L\|uD_{\Ne}\Nsu\|\,|u|_{1,w}.
\end{split}
\]
The action of the vector field $u\Ne$ on $\Nsu$ corresponds to
differentiation with respect to $s$ of the $[\ldots]_*$ coordinates of
$\Nsu$. These coordinates are obtained from \eqref{STALLA}:
\[
X_1^{su}\wedge X_2^{su}
 =
[w_1+v_1,e_1]_*
\wedge[w_2+v_2,c_2+e_2]_*.
\]
Since $w_1,w_2$, and $c_2$ are independent of $s$,
\[
\begin{split}
uD_{\Ne}(X_1^{su}\wedge X_2^{su})
={}&
[v_1',e_1']_*
 \wedge[w_2+v_2,c_2+e_2]_*
+
[w_1+v_1,e_1]_*
 \wedge[v_2',e_2']_*,
\end{split}
\]
where the primes denote differentiation with respect to $s$.

Inspection of \eqref{LOP}, \eqref{STOP}, and \eqref{STAP} shows that
$v_1',v_2',e_1'$, and $e_2'$ satisfy the same estimates as
$v_1,v_2,e_1$, and $e_2$. Here we use
\[
\left|
\frac{\partial}{\partial s}\beta_j(s_\eps u)
\right|
+
\left|
\frac{\partial}{\partial s}\beta_j'(s_\eps u)
\right|
\leq C_L,
\]
which follows from $|\tau p_\eps u|\leq L$. Thus, as in \eqref{ZED}
and \eqref{MORON},
\[
|v_i'|
 \leq C_L\er^{i-1}|u|_{1,w},
\qquad
|e_i'|
 \leq C_L\er^{i-2}|u|_{1,w}.
\]
Using the structure of the wedge product in \eqref{WED}, we obtain
\[
\begin{split}
\|uD_{\Ne}(X_1^{su}\wedge X_2^{su})\|
\leq{}&
|v_1'||c_2+e_2|
+|e_1'||w_2+v_2|
+|v_1'||w_2+v_2|\\
&+
|w_1+v_1||e_2'|
+|e_1||v_2'|
+|w_1+v_1||v_2'|\\
\leq{}&
C_L|u|_{1,w}.
\end{split}
\]
Consequently,
\begin{equation}
\label{scala}
\|uD_{\Ne}\Nsu\|
\leq
\frac{C_L}{\|X_1^{su}\wedge X_2^{su}\|}
|u|_{1,w}.
\end{equation}

It follows from \eqref{ramino} and \eqref{scala} that
\begin{equation}\label{first}
\left|
u\langle\Ne,\nabla_{\Ne}\Nsu\rangle
\right|
\leq
\frac{C_L}{\|X_1^{su}\wedge X_2^{su}\|}
|u|_{1,w}^2.
\end{equation}
Let us study the second term $u\Ne^\top (\bar u)$. Since
$$
\Ne^\perp=\sum_{i,j=1}^2g^{ij}\langle\Ne^\top,X_i^{su}\rangle X_j^{su},
$$
we get
$$
|\Ne^\top (\bar u)|\leq \sum_{i,j=1}^2|g^{ij}||\langle\Ne^\top,X_i^{su}\rangle ||X_j^{su}u|+|u|\sum_{i,j=1}^2|g^{ij}||\langle\Ne^\top,X_i^{su}\rangle|(\Vert D_{X_j^{su}}\Ne\Vert+\Vert D_{X_j^{su}}\Nsu\Vert).
$$
Moreover, it holds
\begin{align*}
   |\langle \Ne^\top,X_i^{su}\rangle|&=|\langle N_\eps-\Nsu,X_i^{su} \rangle|
   \leq \sum_{k=1}^2|(\Ne-\Nsu)_k||(X_i^{su})_k|+|(\Ne-\Nsu)_3||(X_i^{su})_3|\\&\leq C_L |u|_{1,w}\er^{i-1} +C_L \er|u|_{1,w}\er^{i-2}
   \\&\leq C_L\er^{i-1}|u|_{1,w},
\end{align*}
where we used \eqref{Socrates}, \eqref{ZED} and \eqref{MORON}.
From this and the estimates  \eqref{STILO1}, \eqref{gigi}, \eqref{Da1}, \eqref{EL21}, \eqref{EL22}, we obtain
\begin{equation}\label{second}
\begin{aligned}
|\Ne^\top (\bar u)|&\leq C_L \sum_{i,j=1}^2 \er^{2-i-j}\er^{i-1} \er ^{j-1}|u|_{1,w} |\nabla u|
+\frac{C_L |u|}{r}\sum_{i,j=1}^2 \er^{2-i-j}\er^{i-1} \er ^{j-1}|u|_{1,w} \\&\leq C_L |u|_{1,w}^2,
\end{aligned}
\end{equation}
In particular, putting together \eqref{first} and \eqref{second}, we have
$$
|\bar u\Nsu(\bar u)|\leq C_L |u||u|_{1,w}^2,
$$
  and thus  
\[
\begin{split}
\left|
\int_{\Se^{su}}
 \bar u\Nsu(\bar u) \,d\Ae
\right|
&=
\left|
\int_{\Se}
 \bar u\Nsu(\bar u) (\Psisu)
\|X_1^{su}\wedge X_2^{su}\|\,d\Ae
\right|\\
&\leq
C_L\int_{\Se}|u||u|_{1,w}^2\,d\Ae\\
&\leq
C_L\int_{\Se}
\left(
\frac{u^2}{r}+|\nabla u|^2
\right)|u|_{1,w}\,d\Ae\\
&\leq
C_L\int_{\Se}
\left(
\frac{u^2}{r^2}+|\nabla u|^2
\right)|u|_{1,w}\,d\Ae.
\end{split}
\]
The last integral is estimated by splitting $\Se$ into the regions
$|z|\leq\varrho$ and $|z|>\varrho$ and arguing as in
\eqref{noce}, \eqref{mandorlo1}, and \eqref{mandorlo2}. This proves
\eqref{last}.
\end{proof}

\begin{remark}\label{remo}
The estimate for $\widetilde E_\eps^{su}$ is similar to the estimate for  $E_\eps^{su}$ and  in fact easier.
It reads
\begin{equation}
\label{hacco1}
\big|\widetilde E_\eps^{su}\big|
\leq
\eta\int_{\Se}u^2\,d\Ae.
\end{equation}
\end{remark}

Theorem \ref{unifcontSecondVar} now follows by combining Theorems
\ref{unifcontgradient}, \ref{unifcontQeps},
\ref{unifcontHeps},  \ref{esso}, \ref{unifcontE4}, and Remark \ref{remo} applied with suitably smaller
values of $\eta$.

\section{Fuglede theorem for $\So$}
\label{FUG}

In this section, we complete the proof of Theorem \ref{F0}. Fix
$\gamma>0$ and $L>0$, and let $u:\So\to\rr$ satisfy assumptions
(1)--(6). Let $0<\eps<\eps_0$, where $\eps_0>0$ is sufficiently small
that Theorem \ref{8.1}, Lemmas \ref{L5}, \ref{leox}, and \ref{Lex3}, 
Remark \ref{tris}, and Theorem \ref{unifcontSecondVar} apply. By
Theorem \ref{8.1},
\begin{equation}
\label{USeBIS}
\int_{\Se}
\left(
\|\nablaSe u\|^2-Q_\eps u^2
\right)\,d\Ae
\geq
\frac{m}{8}\int_{\Se}u^2\,d\Ae,
\end{equation}
where $m=m(\gamma,L)>0$ is independent of $\eps$.

Multiply both sides of \eqref{USeBIS} by $1-\lambda$, where
$\lambda\in(0,1)$ will be fixed below. Rearranging the terms, we obtain 
\[
\begin{split}
\int_{\Se}
\left(
\|\nablaSe u\|^2-Q_\eps u^2
\right)\,d\Ae
\geq{}&
\lambda\int_{\Se}\|\nablaSe u\|^2\,d\Ae\\
&+
\frac{m(1-\lambda)}{8}\int_{\Se}u^2\,d\Ae
-\lambda\int_{\Se}Q_\eps u^2\,d\Ae.
\end{split}
\]
By assumption (4), \eqref{lampo}, and the comparison of the surface
measures, for $\eps\leq1$ we have
\[
\begin{split}
\int_{\Se}Q_\eps u^2\,d\Ae
&\leq
L\left(\int_{\Se}r^\gamma Q_\eps\,d\Ae\right)
 \left(\int_{\So}u^2\,d\Ao\right)\\
&\leq
2\eps L
\left(\int_{\Se}Q_\eps\,d\Ae\right)
\left(\int_{\Se}u^2\,d\Ae\right)\\
&\leq
16\pi L\int_{\Se}u^2\,d\Ae.
\end{split}
\]
Choose $\lambda\in(0,1)$ such that
\[
\frac{m(1-\lambda)}{16}=16\pi\lambda L.
\]
It follows that there exists a constant $m_{\gamma,L}>0$ such that
\begin{equation}
\label{USeTER}
\int_{\Se}
\left(
\|\nablaSe u\|^2-Q_\eps u^2
\right)\,d\Ae
\geq
m_{\gamma,L}
\int_{\Se}
\left(
\|\nablaSe u\|^2+u^2
\right)\,d\Ae.
\end{equation}

Choose the parameter $\eta>0$ in Theorem
\ref{unifcontSecondVar} so that
\begin{equation}
\label{ETAETA}
\eta=\frac{m_{\gamma,L}}{2},
\end{equation}
and let $\delta=\delta(\eta;\gamma,L)>0$ be the corresponding constant.
Assume that $|u|_{2,w}\leq\delta$ when $|z|>\varrho$. By assumption
(1), we have $|u|_{2,w}\leq C$ when $|z|\leq\varrho$.

The Taylor expansion \eqref{G1} of the isoperimetric functional
$\Fe=\Ae-H\Ve$, together with \eqref{USeTER} and \eqref{FEP}, gives
\begin{equation}
\label{EXPA}
\begin{split}
\Fe(u)-\Fe(0)
={}&
\frac{1}{2}
\int_{\Se}
\left(
\|\nablaSe u\|^2-Q_\eps u^2
\right)\,d\Ae\\
&+
\int_0^1(1-s)
\bigl(\Fep''(s;u)-\Fep''(0;u)\bigr)\,ds\\
\geq{}&
\left(
\frac{m_{\gamma,L}}{2}-\frac{\eta}{2}
\right)
\int_{\Se}
\left(
\|\nablaSe u\|^2+u^2
\right)\,d\Ae\\
\geq{}&
\frac{m_{\gamma,L}}{4}
\int_{\Se}
\left(
\|\nablaSe u\|^2+u^2
\right)\,d\Ae.
\end{split}
\end{equation}

We next compare $\Ve(\Be^u\triangle\Be)$ with
$\|u\|_{L^2(\Se)}$, where $\Be^u$ is the bounded domain such that
$\partial\Be^u=\Se^u$. Let
\[
\de=\operatorname{dist}_\eps(\,\cdot\,;\Se)
\]
be the distance from $\Se$ with respect to $\gep$. In a sufficiently
small tubular neighbourhood of $\Se$, the function $\de$ satisfies the
eikonal equation
\[
\|\nabla_\eps\de\|=1.
\]
For $0<s<\max u$, set
\[
\Sigma_s
 =\{q\in\hh^1:\de(q)=s\}\cap(\Be^u\setminus\Be).
\]
By the coarea formula (see Theorem 1.3 in
\cite{Ritore2023}),
\[
\begin{split}
\Ve(\Be^u\setminus\Be)
&=
\int_{\Be^u\setminus\Be}d\Ve\\
&=
\int_{\Be^u\setminus\Be}\|\nabla_\eps\de\|\,d\Ve\\
&=
\int_0^{\max u}
\left(\int_{\Sigma_s}d\Ae\right)\,ds\\
&=
\int_0^{\max u}\Ae(\Sigma_s)\,ds.
\end{split}
\]

Let $D_s\subset\Se$ be the projection of $\Sigma_s$ onto $\Se$ along
the normal flow $\Psi_\varepsilon$. Equivalently,
\[
\Sigma_s=\Psi_\varepsilon^s(D_s),
\qquad
D_s=\{p\in\Se:u(p)>s\}.
\]
For $0<s<\max u\leq\delta_0$, with $\delta_0>0$ sufficiently small, 
\[
\Ae(\Sigma_s)\leq2\Ae(D_s).
\]
Consequently,
\begin{equation}
\label{symmdiff1}
\begin{split}
\Ve(\Be^u\setminus\Be)
&\leq
2\int_0^{\max u}
\left(\int_{\Se}\chi_{D_s}(p)\,d\Ae\right)\,ds\\
&=
2\int_{\Se}
\left(\int_0^{u_+(p)}ds\right)\,d\Ae\\
&=
2\int_{\Se}u_+(p)\,d\Ae,
\end{split}
\end{equation}
where $u_+=\max\{u,0\}$. Similarly,
\begin{equation}
\label{symmdiff2}
\Ve(\Be\setminus\Be^u)
\leq
2\int_{\Se}u_-(p)\,d\Ae,
\end{equation}
where $u_-=\max\{-u,0\}$. Adding \eqref{symmdiff1} and
\eqref{symmdiff2}, and applying Hölder's inequality, we obtain
\begin{equation}
\label{HO}
\Ve(\Be^u\triangle\Be)
\leq
2\int_{\Se}|u|\,d\Ae
\leq
2\Ae(\Se)^{1/2}\|u\|_{L^2(\Se)}.
\end{equation}

It follows from \eqref{EXPA} and \eqref{HO} that, for
$0<\eps<\eps_0$,
\[
\Ae(\Se^u)-\Ae(\Se)
-H\bigl(\Ve(\Be^u)-\Ve(\Be)\bigr)
\geq
\frac{m_{\gamma,L}}{16\Ae(\Se)}
\Ve(\Be^u\triangle\Be)^2.
\]
Multiplying both sides by $\eps>0$ and using the convergences
\[
\eps\Ae\to\Ao,
\qquad
\eps\Ve\to\mathscr L^3,
\]
we obtain 
\begin{equation}
\label{FINAL}
\Ao(\So^u)-\Ao(\So)
-H\bigl(\mathscr L^3(B_0^u)-\mathscr L^3(B_0)\bigr)
\geq
\frac{m_{\gamma,L}}{16\Ao(\So)}
\mathscr L^3(B_0^u\triangle B_0)^2.
\end{equation}
We have also used the fact that the area of the portion of $\Se^u$
parametrised as a normal graph over
$\Se\cap\{|z|\leq\varrho\}$ becomes negligible after multiplication by
$\eps$ as $\eps\to0$.

By assumption (6) in Theorem \ref{F0}, inequality \eqref{FINAL} yields \eqref{QI}.

\section{Appendix 1}

In this appendix, we prove Theorem \ref{INJ}. The map $\Psi_0$ is a
local diffeomorphism because its differential is invertible.
Surjectivity follows directly from the definitions of $t^-$ and
$t^+$. It therefore remains to prove global injectivity.

For $p\in\So$, the curve
\[
[t^-(r),t^+(r)]\ni s\mapsto
\gamma_p(s)=\Psi_0(p,s)
\]
is a Carnot--Carath\'eodory geodesic passing through $p$ and satisfying
\[
\gamma_p(t^-(r))\in Z,
\qquad
\gamma_p(t^+(r))\in Z.
\]
Its rotation about the $Z$-axis is therefore a Pansu sphere with centre
$(0,c(r))\in Z$ and radius $\lambda(r)>0$, for some $c(r)\in\rr$.
When $p$ lies on the equator of $\So$, this sphere degenerates into the
$xy$-plane.

We claim that the Pansu spheres corresponding to points $p\in\So$ on
distinct ``parallels'' in the same hemisphere, that is, to distinct
values of $r\in(0,1)$, are disjoint. In fact, they form a foliation of
$\hh^1\setminus Z$ whose leaves intersect $\So$ orthogonally in the
horizontal sense.

\medskip
\begin{center}

\begin{tikzpicture}
\begin{axis}[
    width=9cm, height=6cm,
    axis lines=middle,
    xlabel={$r$}, ylabel={$t$},
    xmin=-1.5, xmax=1.5,
]

\addplot[domain=-1:1, samples=200, smooth, black, thick]
    {abs(x)*sqrt(1-x^2) + acos(abs(x))};

\addplot[domain=-1:1, samples=200, smooth, black, thick]
    {-(abs(x)*sqrt(1-x^2) + acos(abs(x)))};
    
% ---------- k = 1/4 ----------
\pgfmathsetmacro{\kA}{1/4}
\pgfmathsetmacro{\gA}{\kA/sqrt(1-\kA^2)}
\pgfmathsetmacro{\yA}{\kA*sqrt(1-\kA^2)+acos(\kA)}
\pgfmathsetmacro{\fA}{abs(\kA)*sqrt(1-\kA^2) + acos(abs(\kA))+\kA^2/(1-\kA^2)*atan(\gA)+\kA^3/sqrt(1-\kA^2)}
\pgfmathsetmacro{\aminA}{-\gA}
\pgfmathsetmacro{\amaxA}{+\gA}
\addplot[domain=\aminA:\amaxA, samples=120, smooth, orange, thick]
    {abs(x)*sqrt(\gA^2-(x)^2) + \gA^2*acos(abs(x)/\gA) + \fA};
\addplot[domain=\aminA:\amaxA, samples=120, smooth, orange, thick, forget plot]
    {-abs(x)*sqrt(\gA^2-(x)^2) - \gA^2*acos(abs(x)/\gA) + \fA};
%\addplot[only marks, mark=*, mark size=1pt, orange, forget plot] coordinates {(0,\fA)};
\addplot[only marks, mark=*, mark size=1pt, orange, forget plot] coordinates {(\kA,\yA)};
% ---------- k = 1/3 ----------
\pgfmathsetmacro{\kB}{1/3}
\pgfmathsetmacro{\gB}{\kB/sqrt(1-\kB^2)}
\pgfmathsetmacro{\yB}{\kB*sqrt(1-\kB^2)+acos(\kB)}
\pgfmathsetmacro{\fB}{abs(\kB)*sqrt(1-\kB^2) + acos(abs(\kB))+\kB^2/(1-\kB^2)*atan(\gB)+\kB^3/sqrt(1-\kB^2)}
\pgfmathsetmacro{\aminB}{-\gB}
\pgfmathsetmacro{\amaxB}{+\gB}
\addplot[domain=\aminB:\amaxB, samples=120, smooth, green!60!black, thick]
    {abs(x)*sqrt(\gB^2-(x)^2) + \gB^2*acos(abs(x)/\gB) + \fB};
\addplot[domain=\aminB:\amaxB, samples=120, smooth, green!60!black, thick, forget plot]
    {-abs(x)*sqrt(\gB^2-(x)^2) - \gB^2*acos(abs(x)/\gB) + \fB};
%\addplot[only marks, mark=*, mark size=1pt, green!60!black, forget plot] coordinates {(0,\fB)};
\addplot[only marks, mark=*, mark size=1pt, green!60!black, forget plot] coordinates {(\kB,\yB)};
% ---------- k = 1/2 ----------
\pgfmathsetmacro{\kC}{1/2}
\pgfmathsetmacro{\gC}{\kC/sqrt(1-\kC^2)}
\pgfmathsetmacro{\yC}{\kC*sqrt(1-\kC^2)+acos(\kC)}
\pgfmathsetmacro{\fC}{abs(\kC)*sqrt(1-\kC^2) + acos(abs(\kC))+\kC^2/(1-\kC^2)*atan(\gC)+\kC^3/sqrt(1-\kC^2)}
\pgfmathsetmacro{\aminC}{-\gC}
\pgfmathsetmacro{\amaxC}{+\gC}
\addplot[domain=\aminC:\amaxC, samples=120, smooth, purple, thick]
    {abs(x)*sqrt(\gC^2-(x)^2) + \gC^2*acos(abs(x)/\gC) + \fC};
\addplot[domain=\aminC:\amaxC, samples=120, smooth, purple, thick, forget plot]
    {-abs(x)*sqrt(\gC^2-(x)^2) - \gC^2*acos(abs(x)/\gC) + \fC};
%\addplot[only marks, mark=*, mark size=1pt, purple, forget plot] coordinates {(0,\fC)};
\addplot[only marks, mark=*, mark size=1pt, purple, forget plot] coordinates {(\kC,\yC)};
% ---------- k = 2/3 ----------
\pgfmathsetmacro{\kD}{2/3}
\pgfmathsetmacro{\gD}{\kD/sqrt(1-\kD^2)}
\pgfmathsetmacro{\yD}{\kD*sqrt(1-\kD^2)+acos(\kD)}
\pgfmathsetmacro{\fD}{abs(\kD)*sqrt(1-\kD^2) + acos(abs(\kD))+\kD^2/(1-\kD^2)*atan(\gD)+\kD^3/sqrt(1-\kD^2)}
\pgfmathsetmacro{\aminD}{-\gD}
\pgfmathsetmacro{\amaxD}{+\gD}
\addplot[domain=\aminD:\amaxD, samples=120, smooth, brown, thick]
    {abs(x)*sqrt(max(0,\gD^2-(x)^2)) + \gD^2*acos(min(1,abs(x)/\gD)) + \fD};
\addplot[domain=\aminD:\amaxD, samples=120, smooth, brown, thick, forget plot]
    {-abs(x)*sqrt(max(0,\gD^2-(x)^2)) - \gD^2*acos(min(1,abs(x)/\gD)) + \fD};
%\addplot[only marks, mark=*, mark size=0.8pt, brown, forget plot] coordinates {(0,\fD)};
\addplot[only marks, mark=*, mark size=0.8pt, brown, forget plot] coordinates {(\kD,\yD)};

% ---------- k = 3/4 ----------
\pgfmathsetmacro{\kE}{3/4}
\pgfmathsetmacro{\gE}{\kE/sqrt(1-\kE^2)}
\pgfmathsetmacro{\yE}{\kE*sqrt(1-\kE^2)+acos(\kE)}
\pgfmathsetmacro{\fE}{abs(\kE)*sqrt(1-\kE^2) + acos(abs(\kE))+\kE^2/(1-\kE^2)*atan(\gE)+\kE^3/sqrt(1-\kE^2)}
\pgfmathsetmacro{\aminE}{-\gE}
\pgfmathsetmacro{\amaxE}{+\gE}
\addplot[domain=\aminE:\amaxE, samples=120, smooth, cyan!70!black, thick]
    {abs(x)*sqrt(\gE^2-(x)^2) + \gE^2*acos(abs(x)/\gE) + \fE};
\addplot[domain=\aminE:\amaxE, samples=120, smooth, cyan!70!black, thick, forget plot]
    {-abs(x)*sqrt(\gE^2-(x)^2) - \gE^2*acos(abs(x)/\gE) + \fE};
%\addplot[only marks, mark=*, mark size=1pt, cyan!70!black, forget plot] coordinates {(0,\fE)};
\addplot[only marks, mark=*, mark size=1pt, cyan!70!black, forget plot] coordinates {(\kE,\yE)};

\end{axis}
\end{tikzpicture}

\end{center}

\medskip

The radius $\lambda(r)>0$ can be computed by maximising, over
$s\in[t^-(r),t^+(r)]$, the function
\[
|\Psi_{0,1}(p,s)|^2
 =
\frac{r^2}{2(1-r^2)}
\bigl(1+\cos(2(\theta_0-\tau p_0s))\bigr).
\]
The maximum is attained at $s=s_\lambda$, where
$
\theta_0-\tau p_0s_\lambda=0.
$
Thus,
\[
s_\lambda
 =
\frac{\theta_0}{\tau p_0}
 =
\frac{r}{\sqrt{1-r^2}}
\arctan\left(\frac{r}{\sqrt{1-r^2}}\right),
\]
and consequently
\[
\lambda(r)
 =
|\Psi_{0,1}(p,s_\lambda)|
 =
\frac{r}{\sqrt{1-r^2}}.
\]

The height coordinate of the centre is
$c(r)=\Psi_{0,2}(p,s_\lambda)$; namely,
\[
c(r)
 =
f_0(r)
+\frac{\tau}{2}\frac{r^2}{1-r^2}
 \arctan\left(\frac{r}{\sqrt{1-r^2}}\right)
+\frac{\tau}{2}\frac{r^3}{\sqrt{1-r^2}},
\]
where $f_0$ is the profile function of the Pansu sphere in
\eqref{PansuProfile} with $\varepsilon=0$.

The point $(0,c(r))\in\cc\times\rr^+$ is the centre of the Pansu sphere
of radius $\lambda(r)$ passing through $(z,f_0(r))\in\So$, where
$r=|z|$. We claim that two such spheres corresponding to distinct
values of $r$ do not intersect. It is enough to prove that, for each
fixed horizontal radius $\rho$, the functions
\[
r\mapsto c(r)+f_0(\rho,\lambda(r))
\quad\text{and}\quad
r\mapsto c(r)-f_0(\rho,\lambda(r))
\]
are increasing and decreasing, respectively. Here,
$f_0(\rho,\lambda(r))$ denotes the profile, as a function of $\rho$, of
the Pansu sphere centred at the origin and having radius $\lambda(r)$:
\[
f_0(\rho,\lambda(r))
 =
\frac{\tau}{2}
\left[
\rho\sqrt{\lambda^2(r)-\rho^2}
+\lambda^2(r)
 \arccos\left(\frac{\rho}{\lambda(r)}\right)
\right].
\]

Notice that
$
r\mapsto\lambda(r)
 =\frac{r}{\sqrt{1-r^2}}
$
is increasing. We therefore express $c(r)$ as a function of $\lambda$
and prove the required monotonicity with respect to $\lambda$. Indeed,
\begin{align*}
c(r)
&=
f_0(r)
+\frac{\tau}{2}\frac{r^2}{1-r^2}
 \arctan\left(\frac{r}{\sqrt{1-r^2}}\right)
+\frac{\tau}{2}\frac{r^3}{\sqrt{1-r^2}}\\
&=
\frac{\tau}{2}\lambda
+\frac{\tau}{2}
 \left(\frac{\pi}{2}-\arctan\lambda\right)
+\frac{\tau}{2}\lambda^2\arctan\lambda\\
&=
\frac{\tau}{2}
\left[
\frac{\pi}{2}+\lambda
+(\lambda^2-1)\arctan\lambda
\right]
=:C(\lambda).
\end{align*}
Here we have used the identities
\[
\arccos r
+\arctan\left(\frac{r}{\sqrt{1-r^2}}\right)
 =\frac{\pi}{2}
\]
and
\[
r\sqrt{1-r^2}
+\frac{r^3}{\sqrt{1-r^2}}
 =\lambda.
\]

We claim that, for each fixed $\rho>0$, the function
$
\lambda\mapsto C(\lambda)+f_0(\rho,\lambda)
$
is increasing when $\lambda>\rho$. Indeed,
\[
C'(\lambda)
 =
\frac{\tau}{2}
\left[
1+2\lambda\arctan\lambda
+\frac{\lambda^2-1}{\lambda^2+1}
\right]
>0,
\]
and
\[
\frac{\partial}{\partial\lambda}f_0(\rho,\lambda)
 =
\tau\left[
\frac{\rho\lambda}{\sqrt{\lambda^2-\rho^2}}
+\lambda\arccos\left(\frac{\rho}{\lambda}\right)
\right]
>0.
\]

We next claim that
$
\lambda\mapsto C(\lambda)-f_0(\rho,\lambda)
$
is decreasing. It is enough to prove that
\[
1+2\lambda\arctan\lambda
+\frac{\lambda^2-1}{\lambda^2+1}
<
\frac{2\lambda\rho}{\sqrt{\lambda^2-\rho^2}}
+2\lambda\arccos\left(\frac{\rho}{\lambda}\right).
\]
Using the identity
\[
\arccos\left(\frac{\rho}{\lambda}\right)
 =
\frac{\pi}{2}
-\arctan\left(
\frac{\rho}{\sqrt{\lambda^2-\rho^2}}
\right),
\]
this inequality is equivalent to
\[
1
+2\lambda\left[
\arctan\left(
\frac{\rho}{\sqrt{\lambda^2-\rho^2}}
\right)
-\arctan\left(\frac{1}{\lambda}\right)
\right]
+\frac{\lambda^2-1}{\lambda^2+1}
<
\frac{2\lambda\rho}{\sqrt{\lambda^2-\rho^2}}.
\]
This follows from
\[
\arctan\left(
\frac{\rho}{\sqrt{\lambda^2-\rho^2}}
\right)
<
\frac{\rho}{\sqrt{\lambda^2-\rho^2}}
\]
and
\[
1-2\lambda\arctan\left(\frac{1}{\lambda}\right)
+\frac{\lambda^2-1}{\lambda^2+1}
\leq0.
\]
This completes the proof of the foliation property.

%%%% FINO QUI. - INGLESE

\section{Appendix 2}

In this Appendix we give the details of the proof of Lemma \ref{derivX_i}.

\begin{proof}[Proof of \eqref{EL1} and \eqref{EL1bis}]

We use the following notation:
\[
\begin{split}
w_1 & =\frac{1}{r}\sqrt{\frac{\eps^2+\tau^2r^2}{\eps^2+\tau^2}}z(\tau p_\eps +i)=:f_1(r)zg_1(r),
\\
 w_2&=\frac{\eps}{r}\sqrt{\frac{1-r^2}{\eps^2+\tau^2}}z(1-\tau p_\eps i)=:f_2(r)zg_2(r).
 \end{split}
 \]
 By the definition in \eqref{NOTA1} for $X_1$ and $X_2$, and denoting by $Jw_k$ the $2\times 2$ Jacobian matrix of $w_k$,
 we have:
 \[
\begin{split}
X_1w_k &= \frac{1}{r}\sqrt{\frac{\eps^2+\tau^2r^2}{\eps^2+\tau^2}}(\tau p_\eps Jw_kz+Jw_kiz)
\\ 
X_2w_k &= \frac{\eps}{r}\sqrt{\frac{1-r^2}{\eps^2+\tau^2}}(Jw_kz-\tau p_\eps Jw_kiz),
\end{split}
\]
where
\[
\begin{split}
Jw_1 &=\frac{f_1'(r)}{r}z(\tau p_\eps+i)z^t+f_1(r)\begin{pmatrix}
   \tau p_\eps& -1\\1 &\tau p_\eps
\end{pmatrix}+\frac{f_1(r)}{r}\tau p_\eps'zz^t,
\\ 
Jw_2 &=\frac{f_2'(r)}{r}z(1-\tau p_\eps i)z^t+f_2(r)\begin{pmatrix}
   1& \tau p_\eps\\-\tau p_\eps &1
\end{pmatrix}-\frac{f_2(r)}{r}\tau p_\eps'izz^t.
\end{split}
\]
So we obtain
\[
\begin{split}
X_1w_1 &= \frac{1}{r}\sqrt{\frac{\eps^2+\tau^2r^2}{\eps^2+\tau^2}}(\tau p_\eps (f_1'rz(\tau p_\eps+i)+f_1z(\tau p_\eps+i)+f_1r\tau p_\eps'z)-f_1z(1-\tau p_\eps i)),
\\
X_1w_2&= \frac{1}{r}\sqrt{\frac{\eps^2+\tau^2r^2}{\eps^2+\tau^2}}(\tau p_\eps (f_2'rz(1-\tau p_\eps i)+f_2z(1-\tau p_\eps i)-f_2r\tau p_\eps'iz)+f_2z(\tau p_\eps+i)),
\\
X_2w_1&= \frac{\eps}{r}\sqrt{\frac{1-r^2}{\eps^2+\tau^2}}(f_1'rz(\tau p_\eps +i)+f_1z(\tau p_\eps+ i)+f_1r\tau p_\eps'z+\tau p_\eps f_1z(1-\tau p_\eps i)),
\\
X_2w_2&= \frac{\eps}{r}\sqrt{\frac{1-r^2}{\eps^2+\tau^2}}(f_2'rz(1-\tau p_\eps i)+f_2z(1-\tau p_\eps i)-f_2r\tau p_\eps'iz-\tau p_\eps f_2z(\tau p_\eps+i)).
\end{split}
\]
We now  use  
\[
f_1'=-\frac{\eps^2}{r^2\sqrt{(\eps^2+\tau^2)(\eps^2+\tau^2r^2)}},\quad f_2'=-\frac{\eps}{r^2\sqrt{(\eps^2+\tau^2)(1-r^2)}},
\]
\[
|g_1|=|g_2|=\sqrt{\frac{\eps^2+\tau^2}{\eps^2+\tau^2r^2}},\quad
 |p_\eps'|=\frac{(\eps^2+\tau^2)r}{\sqrt{1-r^2}(\sqrt{\eps^2+\tau^2r^2})^3},
\]
and, after elementary estimates, we conclude that for $i,k=1,2$ there holds
\[
|X_iw_k|\leq \frac{C}{r}\left(\frac{\eps}{\sqrt{\eps^2+\tau^2r^2}}\right)^{i+k-2}.
\]
This is \eqref{EL1}.

\end{proof}

For the proof of \eqref{EL1.2} and \eqref{EL1.3}, we 
need some preliminary estimates.

%
%Notice that, by definition of $X_i $, we have 
%\begin{align*}
%X_i(\Nsu)_j|_{\Psisu(p)}&=X_i|_p(\Nsu(\Psisu(p)))_j=X_i|_p\left(\frac{X_1(\Psisu(p))\wedge X_2(\Psisu(p))}{|X_1\wedge X_2|(\Psisu(p))}\right)_j\\&=\frac{X_i|_p(X_1(\Psisu(p))\wedge X_2(\Psisu(p)))_j-(\Nsu)_j\langle\Nsu,X_i|_p(X_1(\Psisu(p))\wedge X_2(\Psisu(p)))\rangle}{|X_1\wedge X_2|(\Psisu(p))},
%\end{align*}
%Hence we compute at  first the derivatives of the tangent vectors $X_1,X_2$. From \eqref{STELLA}, we have that
%$$
%X_k(\Psisu(p))=(\Psisu)_*X_k(p)=[w_k,c_k]_*+[v_k,e_k]_*,
%$$
%where Therefore we want to estimate the derivatives of $v_k$ and $e_k$.

\begin{lemma}\label{estimateslemma}
Let $u:\hh^1\to\rr$ be a function  such that $|u|\leq L|z|$ and  $|\nabla u|\leq L$, for some $L>0$. There exist constants $C>0 $ and 
 $C_L>0$, independent of  $\eps>0$, such that for $i,j=1,2$  we have:
    \begin{align}
    \label{EL4}
        |X_i(p_\eps u)|&\leq C\eps^{i-2}\er   |u|_{1,w};\\
    \label{EL5}
        |X_iX_j(p_\eps u)|&\leq   C\eps^{i+j-3}\er  \left\{\frac{|u|}{\eps^2+\tau^2r^2}+\frac{|\nabla u|}{\sqrt{\eps^2+\tau^2r^2}}+|\nabla^2 u|\right\};         \\
    \label{EL6}
        |X_i(u^2(1-\eps^2p_\eps^2))|&\leq C_L\eps^{i-1}|u|; \\
    \label{EL7}
        |X_iX_j(u^2(1-\eps^2p_\eps^2)|&\leq C_L \eps^{i+j-2}  |u|_{2,w}; 
        \\
    \label{EL8}
        |X_i(r^2u)|&\leq C_L r^2\eps^{i-1} |u|_{1,w}; \\
    \label{EL9}
        |X_iX_j(r^2u)|&\leq C_L  r \eps^{i+j-2}\ |u|_{2,w}.
    \end{align}
\end{lemma}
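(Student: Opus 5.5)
The plan is a direct Leibniz-rule computation. The ingredients are the explicit formulas \eqref{NOTA1}--\eqref{NOTA1bis} for the frame, the first-order bounds \eqref{STILO1}, the derivatives of $r$ and $p_\eps$ in \eqref{STILO}, and their consequences \eqref{EL2}. Two elementary identities will be used repeatedly:
\[
p_\eps\leq\frac{1}{\sqrt{\eps^2+\tau^2r^2}}=\frac{\er}{\eps},
\qquad
1-\eps^2p_\eps^2=\frac{r^2(\eps^2+\tau^2)}{\eps^2+\tau^2r^2}.
\]
Throughout, $\eps\leq 1$ and $r\leq 1$, so every term can be reduced to a power of $\eps$ times a power of $\er$, together with $|u|/r$, $|\nabla u|$ and $r|\nabla^2u|$, all with constants independent of $\eps$.

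First I treat \eqref{EL4}. I expand $X_i(p_\eps u)=uX_ip_\eps+p_\eps X_iu$. For the first term, \eqref{EL2} gives $|u||X_ip_\eps|\leq C\eps^{i-3}\er^2|u|$. Writing $\er/\eps=1/\sqrt{\eps^2+\tau^2r^2}\leq 1/(\tau r)$ turns this into $C\eps^{i-2}\er|u|/r$. For the second term I do not use the crude bound \eqref{STILO1}. Instead I use the sharper form read off from \eqref{NOTA1bis}: $X_2u$ carries an overall factor $\eps$, since $|w_2|=\eps p_\eps$ and $\eps c_2+\tau\operatorname{Im}(z\bar w_2)=O(\eps r/\sqrt{\eps^2+\tau^2r^2})$. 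The aim is to bound $p_\eps|X_iu|$ by $C\eps^{i-2}\er|\nabla u|$. If this sharper bound is not enough, the extra factor $p_\eps$ is what must be absorbed, and I expect this power bookkeeping in the $i=2$ case to be the main obstacle. My first attempt there is to rewrite $\tau p_\eps X_2u$ using the relations \eqref{NOTA} between $w_1$ and $w_2$, expressing $X_2$ as $\eps$ times a combination of $X_1$-type directions with coefficient $p_\eps$, so that the $p_\eps$ factor pairs with $r$ and becomes bounded.

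For the second derivatives \eqref{EL5} I apply $X_j$ once more and get four terms: $uX_jX_ip_\eps$, $(X_ju)(X_ip_\eps)+(X_iu)(X_jp_\eps)$, and $p_\eps X_jX_iu$.
\begin{itemize}
\item The first term is bounded by the second-order estimate in \eqref{EL2}, $C\eps^{i+j-5}\er^3|u|$. Since $\er^2/\eps^2=1/(\eps^2+\tau^2r^2)$, this equals $C\eps^{i+j-3}\er\,|u|/(\eps^2+\tau^2r^2)$.
\item The mixed terms are handled with \eqref{EL2} and \eqref{STILO1}. They give the $|\nabla u|/\sqrt{\eps^2+\tau^2r^2}$ contribution.
\item For the last term I write $X_jX_iu=\nabla^2u(X_j,X_i)+(D^{\rm eucl}_{X_j}X_i)u$. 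The Hessian part is bounded by $|\nabla^2u|$ times the Euclidean lengths of the frame vectors. The derivative of the frame coefficients is controlled by \eqref{EL1}, \eqref{EL1bis} and the analogous bound for $\tau\operatorname{Im}(z\bar w_k)$. This produces the $|\nabla^2u|$ term and another $|\nabla u|$ term with the correct power of $\eps$ and $\er$.
\end{itemize}

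The estimates \eqref{EL6}--\eqref{EL9} are easier, because the factors $1-\eps^2p_\eps^2$ and $r^2$ vanish to second order at the axis. For \eqref{EL6}, $u^2(1-\eps^2p_\eps^2)=u^2r^2(\eps^2+\tau^2)/(\eps^2+\tau^2r^2)$ is a bounded smooth function of $r$ times $u^2$. Differentiating with \eqref{STILO} (note $X_2r=O(\eps)$) and using $|u|\leq Lr$ and $|\nabla u|\leq L$ gives $C_L\eps^{i-1}|u|$. Differentiating once more and bounding $u^2|\partial^2_r(\cdots)|\leq C_L|u|/r$, $|u||\nabla u|$ and $u^2|\nabla^2 u|\leq C_Lr|\nabla^2u|$ yields \eqref{EL7}. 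Likewise, $X_i(r^2u)=2r(X_ir)u+r^2X_iu$ and its second derivative give \eqref{EL8} and \eqref{EL9}. Here every $1/r$ loss from differentiating $r$ is compensated by the factor $r^2$, and $X_2$ always contributes a factor $\eps$.
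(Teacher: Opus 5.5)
Your Leibniz-rule decomposition is the same as the paper's, and you were right to single out $p_\eps|X_2u|$ as the crux. But that step cannot be closed. The principle you rely on (``$X_2u$ carries an overall factor $\eps$'', ``$X_2$ always contributes a factor $\eps$'') is false when $X_2$ falls on $u$.

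It does hold for $X_2r=\eps\sqrt{1-r^2}/\sqrt{\eps^2+\tau^2}$. It also holds for the $t$-component of $X_2$, which equals $-\eps r\sqrt{\eps^2+\tau^2r^2}/\sqrt{\eps^2+\tau^2}$. However, by \eqref{NOTA} the horizontal part is $w_2=-i\,\eps p_\eps\,w_1$ with $\eps p_\eps=\er\sqrt{1-r^2}$. Its component along the angular direction $iz/r$ is $-\tau(1-r^2)\er/\sqrt{\eps^2+\tau^2}$, which stays bounded away from $0$, uniformly in $\eps$, when $r\leq\eps$. So \eqref{STILO1} is sharp.

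Your rescue, pairing $p_\eps$ with a factor $r$, therefore has nothing to pair with. Up to sign and an error $O(\sqrt{\eps^2+\tau^2r^2})$, $-iw_1$ is the angular unit vector, and the hypotheses do not make the angular derivative of $u$ of size $r$. Concretely, $u=x$ satisfies the hypotheses with $L=1$, and at $z=i\eps$, where $x=0$,
\[
X_2(p_\eps x)=p_\eps\,\mathrm{Re}\,w_2=\frac{\tau(1-\eps^2)^{3/2}}{\eps(1+\tau^2)\sqrt{\eps^2+\tau^2}},
\]
which is of order $1/\eps$. The right-hand side of \eqref{EL4} with $i=2$ is only $C\er|u|_{1,w}=C(1+\tau^2)^{-1/2}$.

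The same mechanism breaks the other estimates:
\begin{enumerate}
\item For \eqref{EL8}, $u=x$ at $z=i\eps$ gives $X_2(r^2x)=\eps^2\,\mathrm{Re}\,w_2$, of order $\eps^2$, against $C_L\eps^3$.
\item For \eqref{EL6}, $u=x$ at $z=\eps e^{i\pi/4}$ gives a left-hand side of order $\eps$ against $C_L\eps^2$.
\item For \eqref{EL5}, $u=1-\cos x$ at $z=i\eps$ gives $X_2X_2(p_\eps u)=p_\eps(\mathrm{Re}\,w_2)^2$, of order $1/\eps$, against $O(\eps)$.
\item $u=x$ at $z=i\eps$ also breaks \eqref{EL7} and \eqref{EL9} with $i=j=2$.
\end{enumerate}
So these cases are false as stated, and no bookkeeping will prove them.

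The paper's proof does not supply a missing idea here. It writes $|p_\eps||X_2u|\leq C\eps|p_\eps||\nabla u|$ (and similarly in \eqref{EL6}, \eqref{EL8}), silently replacing the factor $\er$ of \eqref{STILO1} by $\eps$. What your decomposition does prove is one of two weaker results:
\begin{enumerate}
\item the version with a factor $\er$ instead of $\eps$ for each $X_2$ falling on $u$, for instance $|X_2(p_\eps u)|\leq C\eps^{-1}\er^2|u|_{1,w}$;
\item the stated bounds under the extra assumption that $u$ depends only on $(|z|,t)$, in which case $X_2u=O(\eps|\nabla u|)$ does hold.
\end{enumerate}

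Separately, your bullet on $p_\eps X_jX_iu$ in \eqref{EL5} does not yield ``the correct power'' even for $i=j=1$. The frame-derivative part is controlled through \eqref{EL1}, which carries a factor $1/r$ that the right-hand side of \eqref{EL5} cannot absorb near the axis. This is a genuine failure, not merely a weak bound. For $u=x$ at a real point $z=r>0$ one computes $X_1X_1x=-(\eps^2+2\tau^2r^2)/\bigl(r(\eps^2+\tau^2)\bigr)$. Hence $|X_1X_1(p_\eps x)|\geq c\,\eps/r$, with $c=c(\tau)>0$, once $r$ is small compared with $\eps^2$. The right-hand side of \eqref{EL5} with $i=j=1$ is only $O(\eps^{-2})$, so the inequality fails for $r\ll\eps^3$. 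A correct version of \eqref{EL5} needs an additional $|\nabla u|/r$ term. This is in fact the form the paper's own proof of \eqref{EL7} uses inside its braces.
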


\begin{proof} 
From \eqref{EL2} and \eqref{STILO1}, we get \eqref{EL4}:
\begin{align*}
    |X_1(p_\eps u)|&\leq |X_1 p_\eps||u|+|p_\eps||X_1 u|\leq\frac{C}{(\eps^2+\tau^2r^2)}|u|+|p_\eps||\nabla u|\leq\frac{C}{\sqrt{\eps^2+\tau^2r^2}}
    |u|_{1,w}
    %    \left\{\frac{|u|}{r}+|\nabla u|\right\}
    \\
    |X_2(p_\eps u)|&\leq |X_2 p_\eps||u|+|p_\eps||X_2 u|\leq\frac{C\eps}{(\eps^2+\tau^2r^2)}|u|+ C \eps|p_\eps||\nabla u|\leq\frac{C\eps}{\sqrt{\eps^2+\tau^2r^2}}   |u|_{1,w}.
%    \left\{\frac{|u|}{r}+|\nabla u|\right\}.
\end{align*}

\noindent 
Now \eqref{EL5} follows from \eqref{EL2}:
\begin{align*}
|X_iX_j(p_\eps u)|&\leq|p_\eps||X_i X_ju|+|X_ip_\eps||X_ju|+|X_iu||X_jp_\eps|+|u||X_i X_jp_\eps|\\& \leq \frac{C\eps^{i+j-2}}{\sqrt{\eps^2+\tau^2r^2}}\left\{\frac{|u|}{\eps^2+\tau^2r^2}+\frac{|\nabla u|}{\sqrt{\eps^2+\tau^2r^2}}+|\nabla^2 u|\right\}.
\end{align*}

In the following, we start using the assumptions $|u|\leq L|z|$ and $|\nabla u|\leq L$. From  $X_k(u^2(1-\eps^2p_\eps^2))=2uX_ku-2\eps^2up_\eps X_k(up_\eps)$, we obtain \eqref{EL6}:
\[
\begin{split}
|X_1(u^2(1-\eps^2p_\eps^2))| & \leq C|\nabla u||u|+C\eps^2|u|p_\eps\left(|\nabla u|p_\eps+\frac{|u|}{\eps^2+\tau^2r^2}\right)\leq C_L|u|,
\\
|X_2(u^2(1-\eps^2p_\eps^2))|& \leq C\eps|\nabla u||u|+C\eps^2|u|p_\eps\left(\eps|\nabla u|p_\eps+\frac{\eps|u|}{\eps^2+\tau^2r^2}\right)\leq C_L\eps|u|.
\end{split}
\]

Analogously, from  $X_iX_k(u^2(1-\eps^2p_\eps^2))=2uX_iX_ku+2X_iuX_ku+2\eps^2X_i(up_\eps)X_k(up_\eps)+2\eps^2up_\eps X_iX_k(up_\eps)$, by \eqref{EL4} and \eqref{EL5}
we obtain \eqref{EL7}:
\begin{align*}
|X_iX_j(u^2(1-\eps^2p_\eps^2))|&\leq 
C|u|\eps^{i+j-2}|\nabla^2u|+C\eps^{i+j-2}|\nabla u|^2
+\frac{C\eps^{i+j}}{\eps^2+\tau^2r^2}
|u|_{1,w}^2  
\\&\quad+\eps^2|u|p_\eps\frac{C\eps^{i+j-2}}{\sqrt{\eps^2+\tau^2r^2}}\left\{\frac{|u|}{\eps^2+\tau^2r^2}+\frac{|\nabla u|}{r}+|\nabla^2u|\right\}    \\&\leq C_L\eps^{i+j-2} |u|_{2,w}.
\end{align*}

\noindent 
From $X_i(r^2u)=2urX_ir+r^2X_iu$, there follows \eqref{EL8}:
\[
  |X_i(r^2u)|\leq  Cr^2 \eps^{i-1} |u|_{1,w}.
  \]
 From $X_iX_j(r^2u)=2uX_irX_jr+2urX_iX_jr+2rX_jrX_iu+2rX_irX_ju+r^2X_iX_ju$, there follows \eqref{EL9}:
 \[
|X_iX_j(r^2u)|\leq  C_L\eps^{i+j-2} |u|_{2,w}.
\]
\end{proof}

\begin{proof}[Proof of \eqref{EL1.2} and \eqref{EL1.3}] Using the definition of $v_k$ in \eqref{LOP}, we compute,
for $i=1,2$:
\begin{align*}
    X_iv_k&=sX_iw_k u(\tau p_\eps+i)\beta_1+sw_kX_iu(\tau p_\eps+i)\beta_1+sw_ku\tau X_ip_\eps \beta_1+s w_k u(\tau p_\eps+i)\beta_1'2\tau s X_i(p_\eps u)\\&\quad+sX_iz\left( i (X_k u)\beta_1 + \tau X_k (u p_\varepsilon)\big(\beta_1 + 2 s u (\tau p_\varepsilon + i)\beta_1'\big) \right)\\&\quad+sz\left[iX_iX_ku \beta_1+iX_ku\beta_1'2\tau sX_i(p_\eps u)+\tau X_iX_k(up_\eps)\big(\beta_1 + 2 s u (\tau p_\varepsilon + i)\beta_1'\big)\right.\\&\quad \left.+\tau X_k(up_\eps)(\beta_1'2\tau sX_i(p_\eps u)+2s(\tau X_i(u p_\eps)+X_iu)\beta_1'+2su(\tau p_\eps+i)\beta_1''2\tau sX_i(p_\eps u))\right].
\end{align*}
Then, using the estimates of   Lemma \ref{estimateslemma} we obtain \eqref{EL1.2} (we omit the intermediate steps):
\begin{align}
    |X_i v_k| \leq \frac{C_L}{\sqrt{\eps^2+\tau^2r^2}}
  \er ^{i+k-2}
  |u|_{2,w}. 
\end{align}

Using the definition of $d_k$ in \eqref{LOP}, we compute,
for $i=1,2$: 
\begin{align*}
    X_id_k&=s\eps^2X_iX_k(up_\eps)+\tau s^2X_iX_k(u^2(1-\eps^2p_\eps^2))\beta_2+\tau s^2X_k(u^2(1-\eps^2p_\eps^2))\beta_2'2\tau sX_i(p_\eps u)\\&\quad+2\tau^2 s^3X_i(u^2(1-\eps^2p_\eps^2))\beta_2'X_k(up_\eps)+ 2 \tau^2 s^3 u^2 (1 - \varepsilon^2 p_\varepsilon^2)\beta_2''2\tau sX_i (up_\eps)X_k (u p_\varepsilon)\\ &\quad+  2 \tau^2 s^3 u^2 (1 - \varepsilon^2 p_\varepsilon^2)\beta_2' X_i X_k (u p_\varepsilon)+s \tau X_i X_k (r^2 u)(\beta_3 + \tau p_\varepsilon \beta_4)+s \tau X_k (r^2 u)\tau X_ip_\eps\beta_4\\&\quad+s \tau X_k (r^2 u)(\beta_3' + \tau p_\varepsilon \beta'_4)2\tau sX_i(p_\eps u)+ 2 \tau^2 s^2 X_i(r^2 u) (\beta_3' + s \tau p_\varepsilon \beta_4') X_k (u p_\varepsilon)\\&\quad + 2 \tau^2 s^2 r^2 u (\beta_3'' + s \tau p_\varepsilon \beta_4'') 2\tau s X_i(p_\eps u) X_k (u p_\varepsilon)+ 2 \tau^2 s^2 r^2 u s \tau X_i p_\varepsilon \beta_4'X_k (u p_\varepsilon)\\&\quad+ 2 \tau^2 s^2 r^2 u (\beta_3' + s \tau p_\varepsilon \beta_4') X_iX_k (u p_\varepsilon)+s \tau^2 X_i(r^2 u) X_k p_\varepsilon \beta_4+s \tau^2 r^2 u X_iX_k p_\varepsilon\beta_4\\&\quad +s \tau^2 r^2 u X_k p_\varepsilon \beta'_4 2\tau s X_i(p_\eps u).
\end{align*}
Then, using the estimates of   Lemma \ref{estimateslemma} we obtain (we omit the intermediate steps):
\begin{align*}
|X_id_k| \leq  C_L\eps^{i+k-2}
|u|_{2,w}
% \left\{\frac{|u|}{r}+|\nabla u|+r|\nabla^2 u|\right\}
 + C_L \eps^{i+k-1}|\nabla^2 u|.
\end{align*}

Finally, from the definition of $e_k$ in \eqref{STOP}, we have: 
\[
|X_ie_k|\leq  \frac{1}{\eps}\left\{|X_id_k|+|X_iz||v_k|+r|X_iv_k|+C_L|X_iu|r\frac{|w_k+v_k|}{\sqrt{\eps^2+\tau^2r^2}}
 +|u|X_i(\overline{z}(\tau p_\eps-i)\overline{\beta_1}(w_k+v_k))\right\},
\]
and, by   the estimates of   Lemma \ref{estimateslemma}, we obtain:
%\begin{align*}
%    \eps|X_ie_k|&\leq s C_L \eps^{i+k-2}\left\{\frac{|u|}{r}+|\nabla u|+r|\nabla^2 u|\right\}+s C_L \eps^{i+k-1}|\nabla^2 u|\\&+sC_L\eps^{i-1}\left(\frac{\eps}{\sqrt{\eps^2+\tau^2r^2}}\right)^{k-1}\left\{\frac{|u|}{r}+|\nabla u|\right\}\\&+r\frac{s C_L}{\sqrt{\eps^2+\tau^2r^2}}\left(\frac{\eps}{\sqrt{\eps^2+\tau^2r^2}}\right)^{i+k-2}\left\{\frac{|u|}{r}+|\nabla u|+r|\nabla^2 u|\right\}\\&+sC_L\eps^{i-1}|\nabla u|r\frac{C_L}{\sqrt{\eps^2+\tau^2r^2}}\left(\frac{\eps}{\sqrt{\eps^2+\tau^2r^2}}\right)^{k-1}+s|u|\eps^{i-1}\frac{C_L}{\sqrt{\eps^2+\tau^2r^2}}\left(\frac{\eps}{\sqrt{\eps^2+\tau^2r^2}}\right)^{k-1}\\&+s|u|r\frac{C_L\eps^{i-1}}{\eps^2+\tau^2r^2}\left(\frac{\eps}{\sqrt{\eps^2+\tau^2r^2}}\right)^{k-1}+s|u|r\frac{C_L}{\sqrt{\eps^2+\tau^2r^2}}\frac{C\eps^{i-1}}{\sqrt{\eps^2+\tau^2r^2}}\left\{\frac{|u|}{r}+|\nabla u|\right\}\left(\frac{\eps}{\sqrt{\eps^2+\tau^2r^2}}\right)^{k-1}\\&+s|u|r\frac{C_L}{\sqrt{\eps^2+\tau^2r^2}}\left(\frac{C}{r}\left(\frac{\eps}{\sqrt{\eps^2+\tau^2r^2}}\right)^{i+k-2}+\frac{s C_L}{\sqrt{\eps^2+\tau^2r^2}}\left(\frac{\eps}{\sqrt{\eps^2+\tau^2r^2}}\right)^{i+k-2}\left\{\frac{|u|}{r}+|\nabla u|+r|\nabla^2 u|\right\}\right),
%\end{align*}
%
\[
|X_ie_k|\leq \frac{C_L}{\eps }
\er ^{i+k-2} 
|u|_{2,w} +C_L\eps^{i+k-2}|\nabla^2 u|.
\]
This is \eqref{EL1.3}.
\end{proof}

\bibliographystyle{plain}

\end{document}